\documentclass[11pt,reqno]{amsart}
\usepackage{hyperref}
\usepackage{url}
\usepackage{amssymb}
\usepackage{tikz,enumerate}
\usepackage{diagbox}
\usepackage{appendix}
\usepackage{inputenc}
\usepackage{xcolor}
\vfuzz2pt 

\setlength{\oddsidemargin}{0.2in}
\setlength{\evensidemargin}{0.2in}
\setlength{\textwidth}{5.7in}
\setlength{\textheight}{8.5in}
\allowdisplaybreaks[4]

\newtheorem{theorem}{Theorem}
\newtheorem{corollary}[theorem]{Corollary}

\newtheorem{lemma}[theorem]{Lemma}
\newtheorem{prop}[theorem]{Proposition}
\theoremstyle{definition}
\newtheorem{defn}[theorem]{Definition}
\newtheorem{rem}[theorem]{Remark}
\theoremstyle{remark}
\numberwithin{equation}{section}
\numberwithin{theorem}{section}

\DeclareMathOperator{\spt}{spt}
\DeclareMathOperator{\ord}{ord}
\DeclareMathOperator{\RE}{Re}
\DeclareMathOperator{\IM}{Im}

\DeclareMathOperator{\SL}{SL}
\def\Q{\mathbb Q}


\begin{document}
\title[Quotients of Eisenstein series and the Dedekind eta function]
 {Moments of ranks and cranks, and Quotients of Eisenstein Series and the Dedekind Eta Function}

\author{Liuquan Wang and Yifan Yang}
\address{School of Mathematics and Statistics, Wuhan University, Wuhan 430072, Hubei, People's Republic of China}
\address{Mathematics Division, National Center for Theoretical Sciences, Taipei 10617, Taiwan}
\email{wanglq@whu.edu.cn;mathlqwang@163.com}

\address{Department of Mathematics, National Taiwan University and
  National Center for Theoretical Sciences, Taipei 10617, Taiwan}
\email{yangyifan@ntu.edu.tw}

\subjclass[2010]{Primary 11F33, 11P83; Secondary 05A17,	11F03, 11F11, 11F37}

\keywords{partitions; ranks; cranks; smallest parts functions; Eisenstein series; quasi-modular forms; rank and crank moments}

\begin{abstract}
Atkin and Garvan introduced the functions $N_k(n)$ and $M_k(n)$, which denote the $k$-th moments of ranks and cranks in the theory of partitions.
Let $e_{2r}(n)$ be the $n$-th Fourier coefficient of $E_{2r}(\tau)/\eta(\tau)$, where $E_{2r}(\tau)$ is the classical Eisenstein series of weight $2r$ and $\eta(\tau)$ is the Dedekind eta function. Via the theory of quasi-modular forms, we find that for $k \leq 5$, $N_k(n)$ and $M_k(n)$ can be expressed using $e_{2r}(n)$ ($0\leq r \leq k$), $p(n)$ and $N_2(n)$. For $k>5$, additional functions are required for such expressions. For $r\in \{2, 3, 4, 5, 7\}$, by studying the action of Hecke operators on $E_{2r}(\tau)/\eta(\tau)$, we provide explicit congruences modulo arbitrary powers of primes for $e_{2r}(n)$. Moreover, for $\ell \in \{5, 7, 11, 13\}$ and any $k\geq 1$, we present uniform methods for finding nice representations for $\sum_{n=0}^\infty e_{2r}\left(\frac{\ell^{k}n+1}{24}\right)q^n$, which work for every $r\geq 2$.  These representations allow us to prove congruences modulo powers of $\ell$, and we have done so for $e_4(n)$ and $e_6(n)$ as examples. Based on the congruences satisfied by $e_{2r}(n)$, we establish  congruences modulo arbitrary powers of $\ell$ for the  moments and symmetrized moments of ranks and cranks as well as higher order $\spt$-functions.
\end{abstract}

\maketitle
\tableofcontents
\section{Introduction}
A partition of a positive integer $n$ is a way of writing it as a sum of positive integers in non-increasing order. We denote the number of partitions of $n$ by $p(n)$. For convention,  we agree that $p(0)=1$. The generating function of $p(n)$ can be written as
\begin{align}\label{p(n)-gen}
\sum_{n=0}^\infty p(n)q^n=\frac{1}{(q;q)_\infty}.
\end{align}
Here and throughout this paper, we assume that $|q|<1$ and adopt the $q$ series notation:
\begin{align}
(a;q)_\infty=\prod\limits_{n=0}^\infty (1-aq^n),  \quad (a;q)_n=\frac{(a;q)_\infty}{(aq^n;q)_\infty}.
\end{align}

Let $\lfloor x\rfloor$ denote the integer part of $x$. For $\ell \in \{5,7, 11\}$, let $\delta_{\ell,k}$ be the reciprocal of 24 modulo $\ell^k$, i.e., $24\delta_{\ell,k}\equiv 1$ (mod $\ell^{k}$). For $k \geq 1$ and $n\geq 0$, Ramanujan's congruences assert that
\begin{align}
p(5^kn+\delta_{5,k}) &\equiv 0 \pmod{5^k}, \label{pn-mod5} \\
p(7^kn+\delta_{7,k}) &\equiv 0 \pmod{7^{\lfloor k/2\rfloor+1}}, \label{pn-mod7}\\
p(11^kn+\delta_{11,k}) &\equiv 0 \pmod{11^{k}}. \label{pn-mod11}
\end{align}
Complete proofs of \eqref{pn-mod5} and \eqref{pn-mod7} were first given by Watson \cite{Watson}, and a proof of \eqref{pn-mod11} was first provided by Atkin \cite{Atkin}.

In 1944, in order to explain the congruences \eqref{pn-mod5}--\eqref{pn-mod11} with $k=1$ combinatorially, Dyson \cite{Dyson} introduced the concepts of rank and crank. He defined the rank of a partition as its largest part minus the number of parts. Let $N(m,n)$ be the number of partitions of $n$ with rank $m$, and $N(a, M, n)$ be the number of partitions of $n$ with rank $\equiv a$ (mod $M$). Dyson \cite{Dyson} conjectured that
\begin{align}
N(a,5, 5n+4)&=\frac{1}{5}p(5n+4), \quad 0\leq a \leq 4, \label{rank-5}\\
N(a,7, 7n+5)&=\frac{1}{7}p(7n+5), \quad 0\leq a \leq 6.  \label{rank-7}
\end{align}
By studying the generating functions of $N(a,M,n)$, Atkin and Swinnerton-Dyer \cite{ASD} proved these conjectures in 1954.

The rank successfully explains Ramanujan's congruences modulo 5 and 7. However, it does not explain the congruence modulo 11. Instead, Dyson guessed that there exists another partition statistic,  called  crank, that can explain the congruence modulo 11. The explicit definition of crank was given by Andrews and Garvan \cite{Andrews-Garvan} around 1988. Similar to the notion of rank, let $M(m,n)$ denote the number of partitions of $n$ with crank $m$, and $M(a,M,n)$ be the number of partitions of $n$ with crank $\equiv a$ (mod $M$). Andrews and Garvan proved that
\begin{align}
M(a, 11, 11n+6)=\frac{1}{11}p(11n+6), \quad  0\leq a \leq 10,
\end{align}
which clearly explains \eqref{pn-mod11} for $k=1$.

In 2003,  Atkin and Garvan \cite{Atkin-Garvan} studied the moments of ranks and cranks. For a nonnegative integer $j$ they defined
\begin{align}
N_j(n)=\sum_{k} k^j N(k,n), \label{rank-moment} \\
M_j(n)=\sum_{k} k^j M(k,n). \label{crank-moment}
\end{align}
The odd moments of ranks and cranks are all zero by the fact that
\begin{align}
N(k,n)=N(-k,n) \quad \textrm{and} \quad M(k,n)=M(-k,n). \label{NM-even}
\end{align}
The even moments are positive and possess nice arithmetic properties. From definition we have $N_0(n)=M_0(n)=p(n)$. Dyson \cite{Dyson-JCTA} combinatorially proved that
\begin{align}\label{M2-repn}
M_2(n)=2np(n).
\end{align}
Atkin and Garvan \cite{Atkin-Garvan} found that $N_j(n)$ and $M_j(n)$ are closely related. They proved that for $k=2, 3, 4$ and 5, there are polynomials $\alpha_k(n)$ of degree $k-1$ and $\beta_{k,j}(n)$ of degree $k-j$ ($1\leq j \leq k$) such that
\begin{align}
N_{2k}(n)=\alpha_k(n)N_2(n)+\sum_{j=1}^k \beta_{k,j}(n)M_{2j}(n).  \label{N-M-relation}
\end{align}
Moreover, for $k=6$, they showed that no relation of the form \eqref{N-M-relation} exists and for $k=7$, a similar relation exists but involves an extra term $N_{12}(n)$.

There is one case missing from \cite{Atkin-Garvan}, namely the relation between $N_2(n)$ and $M_2(n)$. This missing relation was later found by Andrews \cite{Andrews-spt} through a study of the smallest parts function. In 2008, Andrews \cite{Andrews-spt} introduced the function $\spt (n)$, which counts the total number of appearances of the smallest parts in all partitions of $n$.
He  proved that
\begin{align}
\spt(n)=np(n)-\frac{1}{2}N_2(n). \label{spt-N2}
\end{align}
In view of Dyson's identity \eqref{M2-repn}, we can rewrite \eqref{spt-N2} as
\begin{align}
\spt(n)=\frac{1}{2}\left(M_2(n)-N_2(n) \right). \label{spt-M2-N2}
\end{align}
This can be regarded as the relation between $M_2(n)$ and $N_2(n)$.

Andrews \cite{Andrews-spt} discovered that $\spt(n)$ satisfies some nice congruences modulo $5$, $7$ and $13$. Namely, for $n\geq 0$,
\begin{align}
\spt (5n+4) &\equiv 0 \pmod{5},  \label{spt-mod5} \\
\spt (7n+5) &\equiv 0 \pmod{7}, \label{spt-mod7} \\
\spt(13n+6) &\equiv 0 \pmod{13}. \label{spt-mod13}
\end{align}
These congruences motivate people to investigate further arithmetic properties of $\spt(n)$. For example,  Garvan \cite{Garvan-IJNT}  found congruences modulo $\ell$ for $\ell \in \{11, 17, 19, 29, 31, 37\}$. Ono \cite{Ono} showed that if $\ell\geq 5$ is a prime and $\left(\frac{1-24n}{\ell}\right)=1$, then
\begin{align}
\spt\left(\ell^2 n-\frac{\ell^2-1}{24}\right)\equiv 0 \pmod{\ell}.
\end{align}
For more detailed introduction to $\spt(n)$, see the recent survey of Chen \cite{Chen}.

In 2007, Andrews \cite{Andrews-2007} gave nice combinatorial interpretation to the moments of ranks. He introduced the $k$-th symmetrized moment function
\begin{align}
\eta_k(n):=\sum_{m=-\infty}^\infty \binom{m+\lfloor \frac{k-1}{2}\rfloor}{k} N(m,n). \label{eta-defn}
\end{align}
Again by \eqref{NM-even}, it is easy to see that $\eta_{2k+1}(n)=0$. From \eqref{eta-defn} we see that $\eta_{2k}(n)$ can be expressed as a linear sum in $N_{2j}(n)$ ($0\leq j \leq k$), and conversely, $N_{2k}(n)$ can be expressed as a linear sum in $\eta_{2j}(n)$ ($0\leq j \leq k$). The generating function of $\eta_{2k}(n)$ was given in \cite{Andrews-2007}:
\begin{align}\label{eta-gen}
\sum_{n=1}^\infty \eta_{2k}(n)q^n&=\frac{1}{(q;q)_\infty} \sum_{n=1}^\infty \frac{(1+q^n)(-1)^{n-1}q^{n(3n-1)/2+kn}}{(1-q^n)^{2k}}.
\end{align}
After introducing the concepts of Durfee symbols and $k$-marked Durfee symbols, Andrews showed that $\eta_{2k}(n)$ equals the number of $(k+1)$-marked Durfee symbols of $n$. He also proved some interesting congruences for $\eta_{2k}(n)$ as well as $N_{2k}(n)$. For instance, it was proved in \cite{Andrews-2007} that
\begin{align}
N_2(5n+r)\equiv \eta_2(5n+r) \equiv 0 \pmod{5}, \quad r\in \{1,4\}, \\
N_2(7n+r) \equiv \eta_2(7n+r) \equiv 0 \pmod{7}, \quad r \in \{1,5\}, \\
N_4(7n+r)\equiv \eta_4(7n+r) \equiv 0 \pmod{7}, \quad r\in \{1,5\}.
\end{align}

In 2011, following the work of Andrews \cite{Andrews-2007}, Garvan \cite{Garvan-AIM} defined the $k$-th symmetrized moment of cranks by
\begin{align}
\mu_k(n):=\sum_{m=-\infty}^\infty \binom{m+\lfloor \frac{k-1}{2}\rfloor}{k} M(m,n). \label{mu-defn}
\end{align}
Again, we know that $\mu_{2k+1}(n)=0$ for all $k\geq 0$. Garvan showed that many of Andrews' result in \cite{Andrews-2007} for $\eta_k(n)$ can be analogously extended to $\mu_k(n)$. For example, similar to \eqref{eta-gen}, he proved that (see \cite[Theorem 2.2]{Garvan-AIM})
\begin{align}\label{mu-gen}
\sum_{n=1}^\infty \mu_{2k}(n)q^n &=\frac{1}{(q;q)_\infty}\sum_{n=1}^\infty \frac{(1+q^n)(-1)^{n-1}q^{n(n-1)/2+kn}}{(1-q^n)^{2k}}.
\end{align}
Furthermore, motivated by the relation \eqref{spt-M2-N2}, Garvan defined a higher order $\spt$-function $\spt_k(n)$ so that
\begin{align}
\spt_k(n)=\mu_{2k}(n)-\eta_{2k}(n) \label{sptk-defn}
\end{align}
for all $k\geq 1$. He gave a combinatorial definition of $\spt_k(n)$ in terms of a weighted sum over the partitions of $n$. In particular, when $k=1$, we have $\spt_1(n)=\spt(n)$.
Garvan \cite{Garvan-AIM} also provided some explicit congruences modulo small primes for $\spt_k(n)$ with $k\in \{2,3,4\}$. A sample of such congruences is
\begin{align}
\spt_2(5n+r) &\equiv 0 \pmod{5}, \quad r \in \{0,1,4\}, \label{spt2-mod5}\\
\spt_2(7n+r) &\equiv 0 \pmod{7}, \quad r \in \{0,1,5\}, \label{spt2-mod7} \\
\spt_3(7n+r) &\equiv 0 \pmod{7}, \quad r\in \{0,1,2,4,5\}. \label{spt3-mod7}
\end{align}

From the above works, we see that the moments of ranks and cranks possess rich combinatorial and arithmetic properties. Investigating  the congruences satisfied by them becomes one of the central objects in research related to this topic. In this direction, Bringmann \cite{Bringmann} gave a remarkable result. By regarding the generating function of the second moment as a quasi-weak Maass form, she showed that given any prime $\ell>3$ with $k$ and $j$ fixed, there are infinitely many arithmetic progressions $An+B$ such that
\begin{align}
\eta_{2}(An+B) \equiv 0 \pmod{\ell^j}.
\end{align}
Later Bringmann, Garvan and Mahlburg \cite{BGM} studied the modularity of the generating functions of $\eta_{2k}(n)$. As a consequence, they showed that congruences of the above form also exist for $\eta_{2k}(n)$ for all $k\geq 1$.

As the existence of such congruences is now known, the next question is to discover and prove explicit congruences of this form.    As one of the open problems in  \cite{Andrews-2007}, Andrews asked the reader to prove that
\begin{align}
\eta_4(25n+24) &\equiv 0 \pmod{5}. \label{eta4-mod5}
\end{align}
This congruence was proved by Garvan in his review \cite{Garvan-review} of Andrews' paper. Meanwhile, Garvan also discovered companion congruences
\begin{align}
\eta_6(49n+r) &\equiv 0 \pmod{7}, \quad r\in \{19,33,40,47\}. \label{eta6-mod7}
\end{align}
More explicit congruences such as
\begin{align*}
\eta_2(11^3n+479) &\equiv 0 \pmod{11}, \\
\eta_8(13^2n+162)&\equiv 0 \pmod{13}
\end{align*}
were discovered in \cite{BGM}.

In view of those congruences satisfied by $N_{2k}(n)$ and $\eta_{2k}(n)$, it is natural to ask whether the rank and crank moments satisfy congruences modulo arbitrary powers of primes like those of $p(n)$. The main goal of this paper is to answer this question in an affirmative way.

In order to study the arithmetic properties of $N_{2k}(n)$ and $M_{2k}(n)$, we first represent them using Fourier coefficients of some special weakly holomorphic modular forms of half-integral weights.  Throughout this paper, we let $q=e^{2\pi i\tau}$ with $\IM \tau>0$. Recall the Dedekind eta function
\begin{align}
\eta(\tau):=q^{1/24}(q;q)_\infty. \label{Dedekind}
\end{align}
Let $\sigma_{m}(n):=\sum_{d|n}d^{m}$ and $B_{n}$ be the $n$-th Bernoulli number defined by
\begin{align}\label{Bernoulli}
\frac{x}{e^x-1}=\sum_{n=0}^\infty \frac{B_nx^n}{n!}.
\end{align}
The classical weight $2r$ Eisenstein series on $\SL (2,\mathbb{Z})$ can be written as
\begin{align}
E_{2r}=E_{2r}(\tau):=1-\frac{4r}{B_{2r}}\sum_{n=1}^\infty \sigma_{2r-1}(n)q^n. \label{Eisenstein}
\end{align}
Note that for $r=1$, $E_2(\tau)$ is not a modular form but a
quasi-modular form. We define the sequence $e_{2r}(n)$ by
\begin{align}
\sum_{n=0}^\infty e_{2r}(n)q^n:=\frac{q^{1/24}E_{2r}(\tau)}{\eta(\tau)}=\frac{E_{2r}(\tau)}{(q;q)_\infty}. \label{e-defn}
\end{align}
For convenience, we let $E_0=E_0(\tau)=1$ so that $p(n)=e_0(n)$.

Following the notations in \cite{Atkin-Garvan}, we define
\begin{align}
&C_{2k}(q):=\sum_{n=0}^\infty M_{2k}(n)q^n, \label{C-defn} \\
&R_{2k}(q):=\sum_{n=0}^\infty N_{2k}(n)q^n. \label{R-defn}
\end{align}
By a result of Atkin and Garvan \cite{Atkin-Garvan}, we know that $(q;q)_\infty C_{2k}(q)$  is a quasi-modular form of weight $\leq 2k$. For $k\in \{2,3,4,5\}$, this fact enables us to express $M_{2k}(n)$ using $e_{2r}(n)$ and $p(n)$. For instance, for $k=2$ we find that
\begin{align}
M_4(n)&=\frac{1}{20}e_4(n)-\frac{1}{20}p(n)+2np(n)-12n^2p(n). \label{M4-repn}
\end{align}
In a similar way, for $j\in \{2,3, 4, 5\}$, we can express $N_{2j}(n)$ using $e_{2r}(n)$, $p(n)$ and $N_2(n)$. An example is that
\begin{align}
N_4(n)&=\frac{2}{15}e_4(n)-\frac{2}{15}p(n)+4np(n)-36n^2p(n)-12nN_2(n).  \label{N4-repn}
\end{align}
For $j\geq 6$, we need extra sequences to represent the moments of ranks and cranks. For instance, for $j=6$, besides the sequences $p(n)$, $e_{4}(n)$ and $e_6(n)$, we also need the coefficients of $(q;q)_{\infty}^{23}$ to represent $M_{6}(n)$ (see \eqref{M12-exp}).

Representations like \eqref{M4-repn} and \eqref{N4-repn} turn the study of  rank and crank moments to the study of $e_{2r}(n)$. Therefore, the major part of this paper will be devoted to investigating $e_{2r}(n)$.

By studying the action of half-integral weight Hecke operators on $E_{2r}(\tau)/\eta(\tau)$, we establish some explicit congruences modulo arbitrary powers of primes.  We show that if $r\in \{2,3,4,5,7\}$, $m\ge 1$ and $\ell \geq 5$ is a prime, then for any integer $n$ (see Theorem \ref{thm-general}),
\begin{align}
e_{2r}\left(\frac{\ell^{2m}n+1}{24} \right)\equiv 0 \pmod{\ell^{2(r-1)m}}. \label{e2r-gen-cong-intro}
\end{align}

Moreover, for $\ell \in \{5,7, 11, 13\}$, we illustrate unified
approaches for finding representations for
$\sum_{n=0}^{\infty}e_{2r}\left(\ell^k n+\delta_{\ell,k}
\right)q^n$. We also show how to derive congruences modulo arbitrary
powers of $\ell$ for $e_{2r}(n)$ using these representations. These
approaches work for all $r\geq 2$. For our purpose, we only illustrate
the methods using $r=2$ and $3$ as examples. We shall prove
congruences such as for $k\geq 1$ and $n\geq 0$ (see Theorems
\ref{thm-e4-5-cong}, \ref{thm-e4-7-cong}, \ref{thm-e4-13-cong} and
\ref{thm-e4-11-cong}),
\begin{align}
e_4(\ell^ kn +\delta_{\ell,k}) \equiv 0 \pmod{\ell^k}, \quad \ell \in \{5, 7, 11\}
\end{align}
and
\begin{align}
e_4({13}^{2k}n+\delta_{13,2k})\equiv 0 \pmod{{13}^{2k}}.
\end{align}
To the best of our knowledge,  the function $e_4(n)$ seems to be the first example that simple congruences modulo arbitrary powers of 5, 7, 11 and 13 are simultaneously satisfied.

Congruences for $e_{2r}(n)$ imply congruences for the moments of ranks and cranks. For example, for any integers $k\geq 1$ and $n\geq 0$, using \eqref{M4-repn}, \eqref{N4-repn} and congruences of $e_4(n)$, we prove that (see Theorems \ref{thm-M4-cong} and \ref{thm-N4-cong})
\begin{align}
&M_4(5^k n+\delta_{5,k})\equiv 0   \pmod{5^{k-1}}, \label{M4-5power-intro} \\
& M_4(7^k n+\delta_{7,k}) \equiv 0 \pmod{7^{\left\lfloor \frac{k+1}{2}\right\rfloor}}, \label{M4-7power-intro} \\
&M_4(11^k n+\delta_{11,k}) \equiv 0 \pmod{{11}^k}, \label{M4-11power-intro} \\
&N_4(5^k n+\delta_{5,k})\equiv 0 \pmod{5^{\left\lfloor \frac{k+1}{2} \right\rfloor}}, \label{N4-5power-intro} \\
&N_4(7^kn+\delta_{7,k})\equiv 0 \pmod{7^{\left\lfloor \frac{k+1}{2} \right\rfloor}}. \label{N4-7power-intro}
\end{align}
We also get similar congruences for the symmetrized moments and  higher order $\spt$-functions. For instance, we prove that for $k\geq 1$ and $n\geq 0$ (see Theorem \ref{thm-eta-cong}),
\begin{align}
&\eta_4(5^kn+\delta_{5,k})\equiv 0 \pmod{5^{\left\lfloor \frac{k+1}{2} \right\rfloor-\gamma_{k,1}}}, \label{eta4-5power-intro} \\
&\eta_4(7^kn+\delta_{7,k})\equiv 0 \pmod{7^{\left\lfloor \frac{k+1}{2} \right\rfloor}}, \label{eta4-7power-intro} \\
&\eta_6(5^kn+\delta_{5,k})\equiv 0 \pmod{5^{\left\lfloor \frac{k+1}{2}\right\rfloor-\gamma_{k,1}}}, \label{eta6-5power-intro}\\
&\eta_6(7^kn+\delta_{7,k})\equiv 0\pmod{7^{\left\lfloor\frac{k+1}{2}\right\rfloor-\gamma_{k,1}}}, \label{eta6-7power-intro}
\end{align}
where $\gamma_{k,1}$ equals 1 if $k=1$ and 0 otherwise. Note that \eqref{eta4-5power-intro} and \eqref{eta6-7power-intro} generalize Andrews' congruence \eqref{eta4-mod5} and the case $r=47$ of Garvan's congruence \eqref{eta6-mod7}, respectively.

The paper is organized as follows.  In Section \ref{sec-main-thm} we shall first study the action of half-integral weight Hecke operator on ${E_{2r}(\tau)}/{\eta(\tau)}$. Then we establish some congruences for $e_{2r}(n)$ including \eqref{e2r-gen-cong-intro}.  We then focus on the cases when $\ell \in \{5,7, 11, 13\}$. In Section \ref{sec-explicit} we treat the cases $\ell=5, 7$ and 13 uniformly. We first express ${E_{2r}(\ell \tau)}/{E_{2r}(\tau)}$ using hauptmoduls on $\Gamma_0(\ell)$. Then we show that  $\sum_{n=0}^\infty e_{2r}(\ell^kn+\delta_{\ell,k})q^n$ can be represented using Eisenstein series and some hauptmoduls on $\Gamma_0(\ell)$.

In Sections \ref{sec-5}--\ref{sec-13} we will use modular equations of order $\ell$ to present explicit representations for $\sum_{n=0}^\infty e_{2r}(\ell^kn+\delta_{\ell,k})q^n$ with $\ell$ being 5, 7 and 13, respectively. In Section \ref{sec-11}, we will use Atkin's basis for weakly holomorphic modular functions on $\Gamma_0(11)$ to treat the case $\ell=11$. Meanwhile, in Sections \ref{sec-5}--\ref{sec-11}, we will examine the $\ell$-adic properties of $e_{2r}(\ell^kn+\delta_{\ell,k})$ via the representations of their generating functions.

Finally, in Section \ref{sec-moment} we return to the moments of ranks and cranks. We first use the theory of quasi-modular forms to express $M_{2k}(n)$ and $N_{2k}(n)$ using sequences like $e_{2r}(n)$. Relations such as \eqref{M4-repn} and \eqref{N4-repn} will be proved. Then using these relations and the known congruences for $e_{2r}(n)$, we establish congruences for the moments and symmetrized moments of ranks and cranks as well as the higher order $\spt$-functions.

\section{$\ell$-adic properties of $e_{2r}(n)$}\label{sec-main-thm}

\subsection{Modular forms and Hecke operators}
Throughout this paper, we let $\mathcal{H}$ denote the upper half
complex plane. We let $X_{0}(N)=\Gamma_0(N) \backslash
(\mathcal{H}\cup \mathbb{Q}\cup \{\infty\})$ be the modular curve
associated to $\Gamma_{0}(N)$.

For $k\in \frac{1}{2}\mathbb{Z}$, we denote by $\mathcal{M}_k(\Gamma_0(N),\chi)$ (resp.\ $\mathcal{M}_k^{!}(\Gamma_0(N),\chi)$) the space of holomorphic (resp.\ weakly holomorphic) modular forms of weight $k$ with Nebentypus $\chi$ on $\Gamma_0(N)$. In other words, $\mathcal{M}_k^{!}(\Gamma_0(N),\chi)$ contains exactly those meromorphic forms whose poles are supported at the cusps of $\Gamma_0(N)$. If $\chi$ is trivial, then we drop it from the notation. Moreover, if $\chi$ is trivial and $N=1$, we simply write $\mathcal{M}_k(\Gamma_0(N),\chi)$ as $\mathcal{M}_k$.

The key tool in this section will be half-integral weight Hecke operators, and we now define them.
\begin{defn}\label{defn-Hecke}
Let $\lambda$ be an integer. Suppose that
\begin{align}
f(\tau)=\sum_{n\in \mathbb{Z}}a(n)q^n \in \mathcal{M}_{\lambda +\frac{1}{2}}^{!} (\Gamma_0(4N), \chi).
\end{align}
For primes $\ell$, the Hecke operator $T_{\ell^2, \lambda, \chi}$ is defined by
\begin{align}
&f(\tau)\mid T_{\ell^2, \lambda, \chi}\nonumber \\
=&~ \sum_{n\in \mathbb{Z}} \left(a(\ell^2n)+ \left(\frac{(-1)^{\lambda}n}{\ell}\right)\chi(\ell)\ell^{\lambda -1} a(n)+\chi(\ell^2)\ell^{2\lambda -1}a(n/\ell^2)  \right)q^n,
\end{align}
where  $a(n/\ell^2)=0$ if $\ell^2 \nmid n$.
\end{defn}

We need the following relation: if $\ell \nmid N$, then
\begin{align}
T_{\ell^{2m+2},\lambda, \chi}=T_{\ell^2, \lambda, \chi}T_{\ell^{2m},\lambda, \chi}-\chi(\ell^2)\ell^{2\lambda -1}T_{\ell^{2m-2},\lambda, \chi}. \label{Hecek-rec}
\end{align}

From now on we simply write $T_{\ell^{2m}, \lambda, \chi}$ as $T_{\ell^{2m}}$. We assume that $\chi$ is a quadratic character with conductor coprime to $\ell$. Then
\begin{align}
f(\tau)\mid T_{\ell^2}=\sum_{n\in \mathbb{Z}}\left(a(\ell^2 n)+\left(\frac{(-1)^\lambda n}{\ell} \right)\chi(\ell)\ell^{\lambda-1} a(n)+\ell^{2\lambda-1}a(n/\ell^2) \right)q^n.
\end{align}

We now derive some formulas for the action $T_{\ell^{2m}}$. These formulas are of independent interests themselves, and will also play important roles in deducing congruences for $e_{2r}(n)$ in the next subsection.

We denote
\begin{align}
f(\tau) \mid T_{\ell^{2m}}=\sum_{n\in \mathbb{Z}} a_m(n)q^n.
\end{align}
In particular, we have $a_0(n)=a(n)$.  For any prime $\ell$, let $\pi_{\ell}(n)$ be the exponent of $\ell$ in the unique prime factorization of $n$ and we agree that $\pi_{\ell}(0)=\infty$.
\begin{lemma}\label{lem-am}
For $m\geq 1$, there exist integers $c(m,k;n,\ell)$ such that
\begin{align}
a_m(n)=\sum_{k=-m}^{m} c(m,k;n,\ell) a_0(\ell^{2k}n) \label{am-exp}
\end{align}
and
\begin{align}
\pi_{\ell}\left(c(m,k;n,\ell)\right)\geq (m-k)(\lambda-1). \label{c-ord}
\end{align}
Moreover,
\begin{align}
c(m,m, n; \ell)=1, \quad c(m,-m; n, \ell)=\ell^{(2\lambda-1)m}. \label{F-leading}
\end{align}
\end{lemma}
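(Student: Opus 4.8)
The plan is to argue by strong induction on $m$, using the three-term recurrence \eqref{Hecek-rec}. Since $\chi$ is a quadratic character with conductor prime to $\ell$, we have $\chi(\ell^2)=1$, so \eqref{Hecek-rec} together with the displayed coefficientwise action of $T_{\ell^2}$ yields, for every $n$,
\begin{align*}
a_{m+1}(n)=a_m(\ell^2 n)+\left(\tfrac{(-1)^\lambda n}{\ell}\right)\chi(\ell)\ell^{\lambda-1}a_m(n)+\ell^{2\lambda-1}a_m(n/\ell^2)-\ell^{2\lambda-1}a_{m-1}(n),
\end{align*}
with $a_m(n/\ell^2)=0$ when $\ell^2\nmid n$. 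The base cases $m=0$ (where $a_0(n)=a(n)$, so $c(0,0;n,\ell)=1$) and $m=1$ (where the displayed action gives $c(1,1;n,\ell)=1$, $c(1,0;n,\ell)=\left(\tfrac{(-1)^\lambda n}{\ell}\right)\chi(\ell)\ell^{\lambda-1}$ and $c(1,-1;n,\ell)=\ell^{2\lambda-1}$) already exhibit the asserted shape, bounds and leading coefficients.

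For the inductive step I would substitute the formula \eqref{am-exp} for $a_m$ and $a_{m-1}$ into the recurrence and reindex each sum so that every term is expressed through $a_0(\ell^{2k}n)$: the term $a_m(\ell^2 n)$ contributes $c(m,k-1;\ell^2 n,\ell)$, the term $a_m(n)$ contributes $\left(\tfrac{(-1)^\lambda n}{\ell}\right)\chi(\ell)\ell^{\lambda-1}c(m,k;n,\ell)$, the term $a_m(n/\ell^2)$ contributes $\ell^{2\lambda-1}c(m,k+1;n/\ell^2,\ell)$, and $a_{m-1}(n)$ contributes $-\ell^{2\lambda-1}c(m-1,k;n,\ell)$. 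Reading off the coefficient of $a_0(\ell^{2k}n)$ defines $c(m+1,k;n,\ell)$ as the sum of these four quantities, under the convention $c(m,j;\cdot,\ell)=0$ for $|j|>m$. Integrality at level $m+1$ is then immediate from integrality of the four ingredients.

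The valuation bound \eqref{c-ord} follows by measuring each contribution against the target $(m+1-k)(\lambda-1)$, using \eqref{c-ord} at levels $m$ and $m-1$. The first two contributions have $\ell$-adic valuation at least $(m-k+1)(\lambda-1)$, the factor $\ell^{\lambda-1}$ in the second exactly absorbing the index shift; the last two carry a factor $\ell^{2\lambda-1}$ and meet the target with one power of $\ell$ to spare, since $(2\lambda-1)-2(\lambda-1)=1$. Taking the minimum over the four terms gives \eqref{c-ord} for $m+1$. For the leading coefficients \eqref{F-leading} I would specialize to the extreme indices: at $k=m+1$ only the first contribution survives, equal to $c(m,m;\ell^2 n,\ell)=1$, so $c(m+1,m+1;n,\ell)=1$; at $k=-(m+1)$ only the third survives, equal to $\ell^{2\lambda-1}c(m,-m;n/\ell^2,\ell)=\ell^{2\lambda-1}\cdot\ell^{(2\lambda-1)m}=\ell^{(2\lambda-1)(m+1)}$, so $c(m+1,-(m+1);n,\ell)=\ell^{(2\lambda-1)(m+1)}$.

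The part requiring care is the bookkeeping forced by the middle term $a_m(n/\ell^2)$. The representation \eqref{am-exp} is not unique, since for $k<0$ the symbol $a_0(\ell^{2k}n)$ vanishes unless $\ell^{2|k|}\mid n$; consequently the coefficient recursion and the asserted value $c(m,-m;n,\ell)=\ell^{(2\lambda-1)m}$ must be arranged as a formal identity that is consistent across the cases $\ell^2\mid n$ and $\ell^2\nmid n$ (in the latter case the offending terms are annihilated and the stated value of $c$ is merely the conventional extension). Tracking the dependence of the coefficients on $n$ through the quadratic symbols $\left(\tfrac{(-1)^\lambda N}{\ell}\right)$ at the shifted arguments $N=\ell^2 n,\ n,\ n/\ell^2$ is the only genuinely fiddly point; once organized, the three assertions—integrality, the valuation bound, and the leading coefficients—all propagate cleanly through the induction.
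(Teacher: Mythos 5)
Your proof is correct and follows essentially the same route as the paper's: induction via the recurrence $T_{\ell^{2m+2}}=T_{\ell^2}T_{\ell^{2m}}-\ell^{2\lambda-1}T_{\ell^{2m-2}}$, leading to the same four-term coefficient recursion, the same valuation count (with the $\ell^{2\lambda-1}$ terms exceeding the target by one power of $\ell$), and the same reading-off of the extreme coefficients. The paper likewise fixes $c(m+1,-(m+1);n,\ell)$ by convention when $\ell^{2m+2}\nmid n$, exactly as you note.
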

\begin{proof}
By definition we have
\begin{align}
a_1(n)=a_0(\ell^2 n)+ \left(\frac{(-1)^\lambda n}{\ell}\right)\chi(\ell)\ell^{\lambda -1} a_0(n)+\ell^{2\lambda -1}a_0(n/\ell^2). \label{T1-coeff}
\end{align}
Thus the lemma holds for $m=1$.

Suppose that the lemma holds for $m\leq s$. By \eqref{Hecek-rec} we have
\begin{align}
f(\tau) \mid T_{\ell^{2s+2}}&=f(\tau) \mid T_{\ell^{2s}} \mid T_{\ell^2}-\ell^{2\lambda-1} f(\tau)\mid T_{\ell^{2s-2}}.
\end{align}
Therefore,
\begin{align}
a_{s+1}(n)=&~ a_s(\ell^2 n)+ \left(\frac{(-1)^\lambda n}{\ell}\right)\chi(\ell)\ell^{\lambda -1} a_s(n)+\ell^{2\lambda -1}a_s(n/\ell^2)\nonumber \\
&~ -\ell^{2\lambda-1}a_{s-1}(n). \label{asn-rec-add}
\end{align}
Let $c(s+1,-s-1;n,\ell)=\ell^{(2\lambda-1)(s+1)}$. For $-s\leq k\leq s+1$ we let
\begin{align}
c(s+1,k;n,\ell)=&~ c(s,k-1;\ell^2n, \ell)+\left(\frac{(-1)^{\lambda}n}{\ell}\right)\chi(\ell)\ell^{\lambda-1}c(s,k;n,\ell)\nonumber \\
&+\ell^{2\lambda-1}c(s,k+1;n/\ell^2, \ell)-\ell^{2\lambda-1}c(s-1,k;n,\ell), \label{c-rec}
\end{align}
where $c(s,k+1;n/\ell^2,\ell)=0$ if $\ell^2 \nmid n$.
From \eqref{asn-rec-add} and \eqref{c-rec} we know that when expressing $a_{s+1}(n)$ in terms of $a_0(\ell^{2k}n)$ ($-s-1\leq k\leq s+1$), the integer $c(s+1,k;n,\ell)$  defined above gives the correct coefficient of $a_0(\ell^{2k}n)$ for $-s\leq k\leq s+1$.  As for $k=-s-1$, we argue as follows. If $\ell^{2s+2}|n$, then we have
\begin{align*}
c(s+1,-s-1;n,\ell)=\ell^{2\lambda-1}c(s,-s;n/\ell^2, \ell),
\end{align*}
which means \eqref{c-rec} also holds for $k=-s-1$. If $\ell^{2s+2}\nmid n$, then the value of the coefficient of $a_0(\ell^{-2s-2}n)$ does not matter since $a_0(\ell^{-2s-2}n)=0$. Thus \eqref{am-exp} holds for $s+1$.

Furthermore, from \eqref{c-rec} we have $c(s+1,s+1;n,\ell)=1$ and
\begin{align*}
&\pi_{\ell}\left(c(s+1,k;n,\ell) \right)\nonumber \\
\geq & \min \left\{(\lambda-1)(s-k+1), (\lambda-1)(s-k+1), (s-k-1)(\lambda-1) +2\lambda-1  \right\}\nonumber \\
=& (s-k+1)(\lambda-1).
\end{align*}
This shows that the lemma holds for $s+1$. By induction we complete the proof.
\end{proof}

We define $B_0(\tau)=f(\tau)$, and for $m\geq 1$,
\begin{align}
B_m(\tau)=\sum_{n}b_m(n)q^n:=f\mid T_{\ell^{2m}}-\chi(\ell)\ell^{\lambda-1}f\mid T_{\ell^{2m-2}}. \label{fm-defn}
\end{align}
We establish the following proposition, which generalizes \cite[Proposition 2.1]{ABL}.
\begin{prop}\label{prop-b}
$\mathrm{(1)}$  For any $n$, and for all $m\geq 1$,
\begin{align}
b_m(\ell^2 n)-\ell^{2\lambda-1} b_{m-1}(n)=a_0(\ell^{2m+2}n)-\chi(\ell)\ell^{\lambda-1} a_0(\ell^{2m}n).  \label{prop-b-1}
\end{align}
$\mathrm{(2)}$ If $\ell \nmid n$, then for all $m\geq 1$,
\begin{align}
b_m(n)=a_0(\ell^{2m}n)+\left(1-\left(\frac{(-1)^{\lambda}n}{\ell}\right)\right)\sum_{k=1}^{m}(-1)^k\chi(\ell)^{k}\ell^{(\lambda-1)k}a_0(\ell^{2m-2k}n). \label{prop-b-2}
\end{align}
$\mathrm{(3)}$ If $\ell \parallel n$, then for all $m\geq 1$,
\begin{align}
b_m(n)=a_0(\ell^{2m}n)-\chi(\ell)\ell^{\lambda-1}a_0(\ell^{2m-2}n). \label{prop-b-3}
\end{align}
\end{prop}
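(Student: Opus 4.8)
The plan is to take part (1) as the engine and deduce parts (2) and (3) from it, with everything resting on the coefficient recurrence \eqref{asn-rec-add} derived in the proof of Lemma \ref{lem-am}, together with the defining relation $b_m(n)=a_m(n)-\chi(\ell)\ell^{\lambda-1}a_{m-1}(n)$ coming from \eqref{fm-defn}. I will use repeatedly that $\chi$ is quadratic, so $\chi(\ell)^2=1$, and that $\left(\frac{(-1)^{\lambda}n}{\ell}\right)$ vanishes when $\ell\mid n$ and squares to $1$ when $\ell\nmid n$. The recurrence \eqref{asn-rec-add} is the only input, but it must be invoked at carefully chosen indices and arguments.

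For part (1) I would induct on $m$ (for all $n$ simultaneously). The base case $m=1$ is immediate from \eqref{T1-coeff}: since $\ell\mid\ell^2n$ the Legendre term drops and $a_1(\ell^2n)=a_0(\ell^4n)+\ell^{2\lambda-1}a_0(n)$, so subtracting $\ell^{2\lambda-1}b_0(n)=\ell^{2\lambda-1}a_0(n)$ leaves exactly the right-hand side of \eqref{prop-b-1}. For the inductive step the target is to show that the left-hand side of \eqref{prop-b-1} at $(m,n)$ equals the same expression at $(m-1,\ell^2n)$, namely $b_{m-1}(\ell^4n)-\ell^{2\lambda-1}b_{m-2}(\ell^2n)$, so that the induction hypothesis delivers $a_0(\ell^{2m+2}n)-\chi(\ell)\ell^{\lambda-1}a_0(\ell^{2m}n)$. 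Expanding $a_m(\ell^2n)$ by \eqref{asn-rec-add} (again the Legendre term dies and $\ell^2n/\ell^2=n$) and simplifying, this reduction comes down to the single identity
\begin{align*}
a_{m-1}(\ell^2n)-\ell^{2\lambda-1}a_{m-2}(n)=a_{m-2}(\ell^4n)-\ell^{2\lambda-1}a_{m-3}(\ell^2n),
\end{align*}
which is nothing but \eqref{asn-rec-add} read at index $m-1$ and argument $\ell^2n$.

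Granting part (1), I would obtain (2) and (3) by specializing \eqref{asn-rec-add} according to $\pi_{\ell}(n)$. When $\ell\parallel n$, both the Legendre term and the term $a_{m-1}(n/\ell^2)$ vanish, and a short computation collapses $b_m(n)$ into $b_{m-1}(\ell^2n)-\ell^{2\lambda-1}b_{m-2}(n)$ for $m\ge2$; part (1) then yields \eqref{prop-b-3} at once, while $m=1$ is checked directly. When $\ell\nmid n$, writing $\varepsilon=\left(\frac{(-1)^{\lambda}n}{\ell}\right)\in\{\pm1\}$, only the term $a_{m-1}(n/\ell^2)$ vanishes, and combining \eqref{asn-rec-add} with part (1) to eliminate $b_{m-1}(\ell^2n)-\ell^{2\lambda-1}b_{m-2}(n)$ gives the first-order relation
\begin{align*}
b_m(n)=a_0(\ell^{2m}n)-\chi(\ell)\ell^{\lambda-1}a_0(\ell^{2m-2}n)+\varepsilon\,\chi(\ell)\ell^{\lambda-1}b_{m-1}(n),
\end{align*}
whose unrolling telescopes into a sum that I would then match against \eqref{prop-b-2}.

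The main obstacle I anticipate is not any isolated step but the bookkeeping: every manipulation simultaneously shifts the Hecke index $m$ and scales the argument by $\ell^2$, so one must be scrupulous about which instances of \eqref{asn-rec-add} are legitimate and must treat the boundary indices $m=1,2$ (with the convention $a_{-1}\equiv0$) by hand. The one genuinely delicate point is reconciling the telescoped form of the $\ell\nmid n$ recurrence with the stated closed form \eqref{prop-b-2}: the unrolled coefficients carry powers $\varepsilon^{k-1}$, and matching $(\varepsilon-1)\varepsilon^{k-1}$ against the $(1-\varepsilon)(-1)^k$ appearing in \eqref{prop-b-2} requires invoking $\varepsilon^2=1$, which forces the two expressions to coincide for both values $\varepsilon=\pm1$ (they vanish identically when $\varepsilon=1$ and have matching signs when $\varepsilon=-1$).
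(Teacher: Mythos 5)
Your proposal is correct and follows essentially the same route as the paper: both rest on the second-order recurrence satisfied by the $b_m$ (the paper derives it at the operator level as $B_m=B_{m-1}\mid T_{\ell^2}-\ell^{2\lambda-1}B_{m-2}$, you derive the equivalent coefficient identity directly from \eqref{asn-rec-add} and the definition of $b_m$), telescope it to get part (1), and then specialize to $\ell^2\nmid n$ to obtain the first-order relation $b_m(n)=a_0(\ell^{2m}n)-\chi(\ell)\ell^{\lambda-1}a_0(\ell^{2m-2}n)+\varepsilon\chi(\ell)\ell^{\lambda-1}b_{m-1}(n)$, from which (3) is immediate and (2) follows by unrolling with $\varepsilon^2=1$. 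The differences (induction versus repeated application, unrolling versus induction in part (2)) are cosmetic.
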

\begin{proof}
By \eqref{fm-defn} we have
\begin{align}
b_m(n)&=a_m(n)-\chi(\ell)\ell^{\lambda-1}a_{m-1}(n) \nonumber \\
&=a_{m-1}(\ell^2 n)+\chi(\ell)\ell^{\lambda-1}\left(\left(\frac{(-1)^{\lambda}n}{\ell}\right)-1 \right)a_{m-1}(n)+\ell^{2\lambda-1}a_{m-1}(n/\ell^2). \label{b-rec-proof-1}
\end{align}
In particular, for $m=1$ we obtain
\begin{align}
b_1(\ell^{2k}n)=a_0(\ell^{2k+2}n)-\chi(\ell)\ell^{\lambda-1}a_0(\ell^{2k}n)+\ell^{2\lambda-1}a_0(\ell^{2k-2}n). \label{b-rec-proof-2}
\end{align}
From \eqref{Hecek-rec} we have
\begin{align}
B_{m-1}\mid T_{\ell^2}&=\left(f\mid T_{\ell^{2m-2}}-\chi(\ell)\ell^{\lambda-1}f\mid T_{\ell^{2m-4}} \right)\mid T_{\ell^2} \nonumber \\
&=f\mid T_{\ell^{2m}}+\ell^{2\lambda-1}f\mid T_{\ell^{2m-4}}-\chi(\ell)\ell^{\lambda-1}f\mid T_{\ell^{2m-4}}\mid T_{\ell^2}. \label{b-rec-proof-3}
\end{align}
By definition, we have
\begin{align}
B_{m-2} =f\mid T_{\ell^{2m-4}}-\chi(\ell)\ell^{\lambda-1}f\mid T_{\ell^{2m-6}}. \label{b-rec-proof-4}
\end{align}
Multiplying both sides of \eqref{b-rec-proof-4} by $\ell^{2\lambda-1}$ and then subtracting from \eqref{b-rec-proof-3}, we deduce that
\begin{align}
&\quad B_{m-1}\mid T_{\ell^2}-\ell^{2\lambda-1}B_{m-2} \nonumber \\
&= f\mid T_{\ell^{2m}}-\chi(\ell)\ell^{\lambda-1}\left(f\mid T_{\ell^{2m-4}}\mid T_{\ell^2}-\ell^{2\lambda-1}f\mid T_{\ell^{2m-6}}  \right) \nonumber \\
&=f\mid T_{\ell^{2m}}-\chi(\ell)\ell^{\lambda-1}f\mid T_{\ell^{2m-2}},
\end{align}
which implies
\begin{align}
B_m=B_{m-1}\mid T_{\ell^2}-\ell^{2\lambda-1}B_{m-2}. \label{b-rec-proof-5}
\end{align}
From \eqref{b-rec-proof-5} we deduce that
\begin{align}
b_m(n)=b_{m-1}(\ell^2 n)+\left(\frac{(-1)^\lambda n}{\ell} \right)\chi(\ell)\ell^{\lambda-1}b_{m-1}(n)+\ell^{2\lambda-1}b_{m-1}(n/\ell^2)-\ell^{2\lambda-1}b_{m-2}(n). \label{bm-rec}
\end{align}
This implies
\begin{align*}
b_m(\ell^2n)-\ell^{2\lambda-1}b_{m-1}(n)=b_{m-1}(\ell^4n)-\ell^{2\lambda-1}b_{m-2}(\ell^2 n).
\end{align*}
Applying this identity repeatedly, we obtain
\begin{align}
b_m(\ell^2n)-\ell^{2\lambda-1}b_{m-1}(n)=b_{1}(\ell^{2m}n)-\ell^{2\lambda-1}b_{0}(\ell^{2m-2} n). \label{b-rec-proof-6}
\end{align}
Substituting \eqref{b-rec-proof-2} into \eqref{b-rec-proof-6}, we arrive at \eqref{prop-b-1}.

Now we assume that $\ell^2 \nmid n$. From \eqref{bm-rec} we deduce that
\begin{align}
b_m(n)=b_{m-1}(\ell^2n)+\left(\frac{(-1)^\lambda n}{\ell} \right)\chi(\ell)\ell^{\lambda-1}b_{m-1}(n)-\ell^{2\lambda-1}b_{m-2}(n). \label{b-rec-pf-7}
\end{align}
Substituting \eqref{prop-b-1} (with $m$ replaced by $m-1$) into \eqref{b-rec-pf-7}, we get
\begin{align}
b_m(n)=a_0(\ell^{2m}n)-\chi(\ell)\ell^{\lambda-1}a_0(\ell^{2m-2}n)+\left(\frac{(-1)^\lambda n}{\ell}\right)\chi(\ell)\ell^{\lambda-1}b_{m-1}(n). \label{b-rec-pf-8}
\end{align}

If $\ell \parallel n$, then \eqref{b-rec-pf-8} reduces to \eqref{prop-b-3}.

If $\ell \nmid n$, then \eqref{b-rec-pf-8} gives
\begin{align}
b_1(n)=a_0(\ell^2 n)-\left(1-\left(\frac{(-1)^\lambda n}{\ell}\right) \right)\chi(\ell)\ell^{\lambda-1}a_0(n).
\end{align}
This proves \eqref{prop-b-2} for $m=1$. By induction on $m$ and using \eqref{b-rec-pf-8}, we can prove \eqref{prop-b-2}.
\end{proof}

\subsection{Congruences modulo prime powers for $e_{2r}(n)$}
We will establish the following congruences for $e_{2r}(n)$.
\begin{theorem}\label{thm-general}
Suppose $r\in \{2,3,4,5,7\}$, $m\ge 1$ and $\ell \geq 5$ is a prime. For any integer $n$ we have
\begin{align}
e_{2r}\left(\frac{\ell^{2m}n+1}{24} \right)\equiv 0 \pmod{\ell^{2(r-1)m}}.
\end{align}
\end{theorem}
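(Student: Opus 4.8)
The plan is to recast the $e_{2r}$ as Fourier coefficients of a single half-integral-weight weakly holomorphic form and then feed it into the Hecke machinery of Lemma~\ref{lem-am} and Proposition~\ref{prop-b}. Rescaling $\tau\mapsto 24\tau$ to clear the fractional exponent in \eqref{e-defn}, set
$$F(\tau)=\frac{E_{2r}(24\tau)}{\eta(24\tau)}=\sum_{n\ge 0}e_{2r}(n)\,q^{24n-1}.$$
Since $\eta$ has weight $\tfrac12$, $F$ is a weakly holomorphic form of weight $2r-\tfrac12=\lambda+\tfrac12$ with $\lambda=2r-1$ on $\Gamma_0(576)$ with a quadratic character $\chi$, hence an admissible input for Definition~\ref{defn-Hecke}. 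Writing $F=\sum_N a(N)q^N$, the Fourier support forces $a(N)=0$ unless $N\equiv-1\pmod{24}$, in which case $a(N)=e_{2r}\!\left(\tfrac{N+1}{24}\right)$; in particular the quantity in the theorem is $a(\ell^{2m}n)$. The decisive feature of the list $r\in\{2,3,4,5,7\}$ is that these are exactly the $r$ with $\dim\M_{2r}=1$, i.e.\ $E_{2r}$ spans all weight-$2r$ forms on $\SL(2,\mathbb{Z})$; this is also why $r=6$ (where $\Delta$ enters) is excluded.

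The heart of the argument is a single structural claim: $F$ behaves, to the precision we need, as a $T_{\ell^2}$-eigenform whose eigenvalue $\mu$ satisfies $\pi_\ell(\mu)\ge\lambda-1=2(r-1)$. I would prove this by exploiting $\dim\M_{2r}=1$. The image $F\mid T_{\ell^2}$ again lies in $\M^{!}_{\lambda+1/2}(\Gamma_0(576),\chi)$ with the same Fourier support; comparing its principal parts at the cusps with those of $F$, a suitable combination $F\mid T_{\ell^2}-\mu F$ is holomorphic, and multiplication by an appropriate power of $\eta$ sends it into a space controlled by the one-dimensional $\M_{2r}$, forcing it to vanish. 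The value of $\mu$, and in particular $\pi_\ell(\mu)\ge\lambda-1$, is then pinned down from the integer-weight eigenvalue relation $E_{2r}\mid T_\ell=(1+\ell^{2r-1})E_{2r}$ — equivalently $E_{2r}\mid U_\ell\equiv E_{2r}\pmod{\ell^{2r-1}}$ — transported through the $\eta$-quotient. This step, together with the bookkeeping of the half-integral-weight data (the character $\chi$, the level $576$, and the Legendre factor $\left(\tfrac{(-1)^\lambda N}{\ell}\right)$ of Definition~\ref{defn-Hecke}), is where I expect essentially all the difficulty to lie.

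Granting the eigenform claim, the propagation in $m$ is immediate from the Hecke relation \eqref{Hecek-rec}: since $\chi$ is quadratic with conductor coprime to $\ell$ we have $\chi(\ell^2)=1$, so the eigenvalues $\mu_m$ of $T_{\ell^{2m}}$ obey $\mu_m=\mu\,\mu_{m-1}-\ell^{2\lambda-1}\mu_{m-2}$ with $\mu_0=1$, $\mu_1=\mu$. A one-line induction using $\pi_\ell(\mu)\ge\lambda-1$ gives $\pi_\ell(\mu_m)\ge m(\lambda-1)$, because $\pi_\ell(\ell^{2\lambda-1}\mu_{m-2})\ge m(\lambda-1)+1$ is strictly larger and cannot lower the valuation. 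Hence $F\mid T_{\ell^{2m}}=\mu_m F$, so in the notation of Lemma~\ref{lem-am} one has $a_m(N)=\mu_m\,a(N)\equiv 0\pmod{\ell^{m(\lambda-1)}}$ for every $N$.

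Finally I would extract the top coefficient by induction on $m$. Take first $\ell\nmid n$ with $n\equiv-1\pmod{24}$, so the negative-index terms $a_0(\ell^{2k}n)$ ($k<0$) vanish. Lemma~\ref{lem-am} gives $a_m(n)=a_0(\ell^{2m}n)+\sum_{k=0}^{m-1}c(m,k;n,\ell)\,a_0(\ell^{2k}n)$ with $\pi_\ell(c(m,k;n,\ell))\ge(m-k)(\lambda-1)$; by the inductive hypothesis $\pi_\ell\!\left(a_0(\ell^{2k}n)\right)\ge k(\lambda-1)$ for $k<m$, so every term of the sum has $\ell$-valuation at least $m(\lambda-1)$. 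Combined with $a_m(n)\equiv 0\pmod{\ell^{m(\lambda-1)}}$ from the previous step, this forces $a_0(\ell^{2m}n)\equiv 0\pmod{\ell^{m(\lambda-1)}}$, which is precisely $e_{2r}\!\left(\tfrac{\ell^{2m}n+1}{24}\right)\equiv 0\pmod{\ell^{2(r-1)m}}$ since $\lambda-1=2(r-1)$. The remaining cases $\ell\mid n$ reduce to this one by peeling off powers of $\ell$: an even part of $\pi_\ell(n)$ merely enlarges the effective $m$, while the case $\ell\parallel n$ is handled by Proposition~\ref{prop-b}(3); in neither case is divisibility lost. This completes the induction and the proof.
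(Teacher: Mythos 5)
Your setup (passing to $F(\tau)=E_{2r}(24\tau)/\eta(24\tau)$, identifying the statement with $a_0(\ell^{2m}n)\equiv 0$, explaining the role of $r\in\{2,3,4,5,7\}$ through $\dim\mathcal{M}_{2r}=1$ and the integrality of $E_{2r}$) and your final extraction step are all in line with the paper. But the central structural claim is false: $F$ is not a $T_{\ell^2}$-eigenform, not even up to a holomorphic error. By \eqref{f-T2} one has $F\mid T_{\ell^2}=\ell^{2\lambda-1}q^{-\ell^2}+\chi(\ell)\ell^{\lambda-1}q^{-1}+O(q^{23})$, while $F=q^{-1}+O(q^{23})$ has no $q^{-\ell^2}$ term; hence $F\mid T_{\ell^2}-\mu F$ retains the pole $\ell^{2\lambda-1}q^{-\ell^2}$ for \emph{every} constant $\mu$ and is never holomorphic at $\infty$. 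The most that is true is that, with $\mu=\chi(\ell)\ell^{\lambda-1}$, the difference is $\equiv 0\pmod{\ell^{2\lambda-1}}$ — this is exactly the paper's $B_1$ and \eqref{b1-cong}, which the paper uses only for the supplementary Theorem \ref{thm-2nd}, not for Theorem \ref{thm-general}.

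This approximate eigenrelation does not propagate in $m$. Setting $R_m=F\mid T_{\ell^{2m}}-\mu_mF$, the relation \eqref{Hecek-rec} gives $R_m=\mu_{m-1}R_1+R_{m-1}\mid T_{\ell^2}-\ell^{2\lambda-1}R_{m-2}$, and the term $R_{m-1}\mid T_{\ell^2}$ gains no $\ell$-divisibility because the leading contribution $a(\ell^2n)$ in Definition \ref{defn-Hecke} carries no factor of $\ell$. The error therefore stagnates at $O(\ell^{2\lambda-1})$, which falls short of the required $\ell^{m(\lambda-1)}$ for every $m\ge 3$; so $a_m(N)=\mu_m a(N)$ is simply not available, and your conclusion $a_m(N)\equiv 0\pmod{\ell^{m(\lambda-1)}}$ is unproved. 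The mechanism that actually works is different in kind: one shows that the \emph{entire principal part} of $F\mid T_{\ell^{2m}}$ is divisible by $\ell^{m(\lambda-1)}$ (by Lemma \ref{lem-am}, for $n<0$ only $k\le 0$ contribute and $\pi_{\ell}(c(m,k;n,\ell))\ge(m-k)(\lambda-1)\ge m(\lambda-1)$), then multiplies by $\eta(24\tau)^{\ell^{2m}}$ to land in $\mathcal{M}_{2r+(\ell^{2m}-1)/2}$, whose dimension $(\ell^{2m}+23)/24$ — this is precisely where $\dim\mathcal{M}_{2r}=1$ enters — is matched by the order of vanishing, so Sturm's criterion forces $F\mid T_{\ell^{2m}}\equiv 0\pmod{\ell^{m(\lambda-1)}}$ with no eigenform hypothesis. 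Your last paragraph is correct once this congruence is supplied.
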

To prove it, we will follow the idea of Ahlgren, Bringmann and Lovejoy \cite{ABL}, where they used weight $\frac{3}{2}$ Hecke operators to study the $\ell$-adic properties of some smallest parts functions.

For $r\geq 2$ we set $\lambda=2r-1$ and $\chi(n) =(\frac{12}{n})$. We choose
\begin{align}
f(\tau)=\frac{E_{2r}(24\tau)}{\eta(24\tau)}=q^{-1}+ O(q^{23}) \in \mathcal{M}_{\lambda+\frac{1}{2}}^{!}\left(\Gamma_0(576), \chi  \right).
\end{align}
We  assume that $r\in \{2,3,4,5,7\}$. Note that for these $r$, all the coefficients of $E_{2r}$ are integers and hence $f(\tau)\in \mathbb{Z}[[q]]$.

To understand the action of $T_{\ell^{2m}}$ on $f(\tau)$, let us do some specific calculations. We have
\begin{align}
f(\tau)\mid T_{\ell^2}=\ell^{2\lambda-1}q^{-\ell^2}+\chi(\ell)\ell^{\lambda-1} q^{-1}+O(q^{23}). \label{f-T2}
\end{align}
Using \eqref{Hecek-rec} we have
\begin{align}
f(\tau)\mid T_{\ell^4}&=f(\tau)\mid T_{\ell^2}\mid T_{\ell^2} -\ell^{2\lambda-1} f(\tau) \nonumber \\
&=\ell^{4\lambda -2}q^{-\ell^4} +\ell^{3\lambda-2} \chi(\ell) q^{-\ell^2} +\ell^{2\lambda-2} q^{-1}+O(q^{23}). \label{f-T4}
\end{align}

These calculations lead us to the following observation.
\begin{lemma}\label{lem-f-vanish}
$f(\tau)\mid T_{\ell^{2m}}$ vanishes  modulo $\ell^{m(\lambda-1)}$ to order at least $\ell^{2m}+23$.
\end{lemma}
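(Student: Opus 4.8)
The plan is to read the statement off directly from the expansion of $a_m(n)$ furnished by Lemma \ref{lem-am}, so that the proof reduces to a support argument combined with the valuation estimate \eqref{c-ord}. First I would pin down what the claim asserts numerically. By \eqref{F-leading} the coefficient $c(m,-m;n,\ell)=\ell^{(2\lambda-1)m}$ is nonzero, and tracing \eqref{am-exp} shows that $a_m(-\ell^{2m})=\ell^{(2\lambda-1)m}a_0(-1)=\ell^{(2\lambda-1)m}$, while $a_m(n)=0$ for $n<-\ell^{2m}$ because then every argument $\ell^{2k}n$ ($-m\le k\le m$) lies strictly below $-1$. Hence $f\mid T_{\ell^{2m}}$ has a pole of order exactly $\ell^{2m}$ at $\infty$, consistently with \eqref{f-T2} and \eqref{f-T4}. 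Therefore ``vanishing modulo $\ell^{m(\lambda-1)}$ to order at least $\ell^{2m}+23$'' means precisely that the first $\ell^{2m}+23$ Fourier coefficients, namely $a_m(n)$ for $-\ell^{2m}\le n\le 22$, are all divisible by $\ell^{m(\lambda-1)}$.

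Next I would dispose of the nonnegative part of this range for free. Since $f=q^{-1}+O(q^{23})$, its coefficients $a_0(N)$ vanish unless $N\equiv-1\pmod{24}$. As $\ell\ge 5$ is coprime to $24$ we have $\ell^2\equiv 1\pmod{24}$, so $\ell^{2k}n\equiv n\pmod{24}$ for all $k$; because $a_m(n)$ is a $\mathbb{Z}$-linear combination of the $a_0(\ell^{2k}n)$ by \eqref{am-exp}, it vanishes identically unless $n\equiv-1\pmod{24}$. In particular $a_m(n)=0$ for every $n$ with $0\le n\le 22$, which settles that portion of the range outright.

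It then remains to treat $-\ell^{2m}\le n<0$ with $n\equiv-1\pmod{24}$, so $n\le -1$. Here I would split the sum \eqref{am-exp} at $k=0$. For $k\le 0$ the bound \eqref{c-ord} gives $\pi_\ell\big(c(m,k;n,\ell)\big)\ge(m-k)(\lambda-1)\ge m(\lambda-1)$, so every such term is already divisible by $\ell^{m(\lambda-1)}$. For $k\ge 1$ the argument satisfies $\ell^{2k}n\le\ell^2 n\le-\ell^2<-1$, which lies strictly below the pole of $f$; since the support of $a_0$ is contained in $\{N:N\ge-1\}$, we get $a_0(\ell^{2k}n)=0$ and these terms drop out entirely. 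Summing the two contributions yields $a_m(n)\equiv 0\pmod{\ell^{m(\lambda-1)}}$ throughout, which completes the argument.

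The delicate points here are bookkeeping rather than conceptual. One must verify that the pole order is exactly $\ell^{2m}$ so that the count $\ell^{2m}+23$ is correct, and one must track the congruence condition modulo $24$ through the Hecke action, which is exactly where $\ell^2\equiv1\pmod{24}$ enters. I expect no genuine obstacle: once Lemma \ref{lem-am} and the valuation estimate \eqref{c-ord} are in hand, the present lemma is essentially a clean corollary, the only analytic input being the gap $q^{-1}+O(q^{23})$ in the $q$-expansion of $f$.
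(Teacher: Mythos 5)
Your proof is correct and follows essentially the same route as the paper's: the mod-$24$ support argument kills $a_m(n)$ for $0\le n\le 22$, and for $n<0$ the sum in \eqref{am-exp} collapses to $k\le 0$, where \eqref{c-ord} supplies the divisibility by $\ell^{m(\lambda-1)}$. The only addition beyond the paper's write-up is your explicit verification that the pole order is exactly $\ell^{2m}$, which is a harmless (and correct) piece of bookkeeping.
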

\begin{proof}
By Lemma \ref{lem-am} we can write
\begin{align}
f(\tau)\mid T_{\ell^{2m}}=\ell^{m(2\lambda-1)} q^{-\ell^{2m}}+\cdots . \label{f-T2m-expan}
\end{align}
First we note that $a_0(-1)=1$ and $a_0(n)\neq 0$ only if $n\equiv 23 \pmod{24}$. Since $\ell^{2k}\equiv 1$ (mod 24), we see that $a_0(\ell^{2k}n)\neq 0$ only if $n\equiv 23$ (mod 24). Therefore,  $a_m(n)\neq 0$ only if $n\equiv 23$ (mod 24). By Lemma \ref{lem-am} we see that if $n<0$, then
\begin{align}
a_m(n)=\sum_{k=-m}^{0} c(m,k;n,\ell) a_0(\ell^{2k}n).
\end{align}
That is, the index $k$ must be non-positive since $a_0(\ell^{2k}n)=0$ for $k\geq 1$ and $n<0$. Note that $k\leq 0$ implies
\begin{align}
\pi_{\ell}\left(c(m,k;n,\ell)\right)\geq (m-k)(\lambda-1)\geq m(\lambda-1).
\end{align}

Thus modulo $\ell^{m(\lambda-1)}$, $f(\tau)\mid T_{\ell^{2m}}$ vanishes to order at least $\ell^{2m}+23$.
\end{proof}

Now we are going to reduce the level and get back to modular forms on $\SL(2,\mathbb{Z})$.

Let
\begin{align}
F(\tau):=f(\tau)\mid T_{\ell^{2m}}, \quad G(\tau):= \eta(24\tau)^{\ell^{2m}}F(\tau),  \quad H(\tau):=G\left(\frac{\tau}{24}\right). \label{GFH-defn}
\end{align}
\begin{lemma}\label{lem-H}
We have $H(\tau)\in \mathcal{M}_{2r+\frac{\ell^{2m}-1}{2}}$.
\end{lemma}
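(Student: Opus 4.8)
The plan is to track the weight, level, character, and behaviour at the cusps of $G=\eta(24\tau)^{\ell^{2m}}F$, and then to descend from $\Gamma_0(576)$ to $\SL(2,\mathbb{Z})$ by exploiting the support of its Fourier expansion. First I would record the automorphic data of the two factors. Since $\lambda=2r-1$, we have $f\in\mathcal{M}_{\lambda+\frac{1}{2}}^{!}(\Gamma_0(576),\chi)$ with $\chi=\left(\frac{12}{\cdot}\right)$, and the half-integral weight Hecke operator preserves weight, level and nebentypus, so $F=f\mid T_{\ell^{2m}}$ again lies in $\mathcal{M}_{\lambda+\frac{1}{2}}^{!}(\Gamma_0(576),\chi)$. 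On the other hand, by Euler's pentagonal number theorem $\eta(24\tau)=\sum_{n\ge1}\left(\frac{12}{n}\right)q^{n^2}$ is a weight $\frac{1}{2}$ unary theta series lying in $\mathcal{M}_{1/2}(\Gamma_0(576),\chi)$, so $\eta(24\tau)^{\ell^{2m}}\in\mathcal{M}_{\ell^{2m}/2}(\Gamma_0(576),\chi^{\ell^{2m}})$. As $\ell$ is odd we have $\chi^{\ell^{2m}}=\chi$, so the product $G$ has weight $2r+\frac{\ell^{2m}-1}{2}$, an integer; moreover the two half-integral theta multipliers combine into an integral automorphy factor and the two copies of $\chi$ multiply to the trivial character. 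Thus $G\in\mathcal{M}_{2r+(\ell^{2m}-1)/2}^{!}(\Gamma_0(576))$.

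Next I would pin down the Fourier support and the behaviour at $\infty$. By Lemmas \ref{lem-am} and \ref{lem-f-vanish}, $F$ is supported on exponents $n\equiv-1\pmod{24}$ and, by \eqref{f-T2m-expan}, has leading term $\ell^{m(2\lambda-1)}q^{-\ell^{2m}}$, i.e.\ a pole of order exactly $\ell^{2m}$ at $\infty$. Since $\eta(24\tau)^{\ell^{2m}}=q^{\ell^{2m}}+\cdots$ is supported on exponents $n\equiv1\pmod{24}$, the product $G$ is supported on exponents divisible by $24$ and its expansion at $\infty$ begins with $q^{0}$. In particular $G$ is holomorphic at $\infty$, and we may write $G(\tau)=\sum_{n\ge0}g(n)q^{24n}$, so that $H(\tau)=G(\tau/24)=\sum_{n\ge0}g(n)q^{n}$ is a genuine power series in $q$ with no negative terms.

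The heart of the argument is the descent to the full modular group. Writing $H(\tau)=G(\tau/24)$ and conjugating $\Gamma_0(576)$ by $\mathrm{diag}(24,1)$, one checks directly that $H$ satisfies the weight $w=2r+\frac{\ell^{2m}-1}{2}$ transformation law under every $\gamma\in\SL(2,\mathbb{Z})$ with $24\mid b$ and $24\mid c$, that is, under $\Gamma_0(24)\cap\Gamma^0(24)$; and the support condition $24\mid n$ makes $H$ invariant under $\tau\mapsto\tau+1$. These two facts alone do not generate $\SL(2,\mathbb{Z})$, so the remaining and decisive step is to produce the transformation under $S:\tau\mapsto-1/\tau$. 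For this I would use the Fricke involution $W_{576}$: because $E_{2r}(24\tau)$ and $\eta(24\tau)$ are eigenforms of $W_{576}$ and $T_{\ell^{2m}}$ commutes with $W_{576}$ (as $\gcd(\ell,576)=1$), the form $G$ is itself a $W_{576}$-eigenform, and the substitution $\tau=-1/(576u)$ converts its eigenvalue relation into $H(-1/\tau)=\tau^{w}H(\tau)$. Since $\langle S,T\rangle=\SL(2,\mathbb{Z})$ and $H$ is holomorphic at the unique cusp, we conclude $H\in\mathcal{M}_{2r+(\ell^{2m}-1)/2}$.

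I expect the main obstacle to be exactly this last step: verifying that the Fricke eigenvalue of $G$ is $+1$, so that the $S$-transformation comes out with the correct sign and no spurious multiplier. This requires carefully propagating the automorphy factors of $\eta(24\tau)$ and $E_{2r}(24\tau)$ through the Hecke operator, and it is consistent with the base case $m=0$, where $G=E_{2r}(24\tau)$ and $H=E_{2r}\in\mathcal{M}_{2r}$. The holomorphy and the weight/character bookkeeping, by contrast, are routine once the leading-term computation \eqref{f-T2m-expan} is in hand.
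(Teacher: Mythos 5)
Your proposal is correct and follows essentially the same route as the paper: establish that $G$ is a weakly holomorphic form of the right integral weight on $\Gamma_0(576)$ supported on exponents divisible by $24$, check holomorphy at $\infty$ from the leading-term computation, and obtain the $S$-transformation for $H$ from the fact that $f$ is a $W_{576}$-eigenform (with eigenvalue $(-1)^r$ in the paper's normalization, which combines with the factor $(-i)^{2r+(\ell^{2m}-1)/2}=(-1)^r$ to give exactly $H(-1/\tau)=\tau^{w}H(\tau)$) together with the commutation of $W_{576}$ with $T_{\ell^{2m}}$. The sign bookkeeping you flag as the main obstacle is precisely the computation the paper carries out, and it closes as you anticipate.
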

\begin{proof}
It is clear that $G(\tau)\in \mathcal{M}_{2r+\frac{\ell^{2m}-1}{2}}^{!}(\Gamma_0(576))$. The coefficients of $G$ are supported on exponents divisible by 24. Therefore we have
\begin{align}
H(\tau) \in \mathcal{M}_{2r+\frac{\ell^{2m}-1}{2}}^{!}(\Gamma_0(24)).
\end{align}
From \eqref{f-T2m-expan} we know that $H(\tau)$ is holomorphic on $\mathcal{H}\cup \{\infty\}$. Thus it suffices to show that
\begin{align}
H\left(-\frac{1}{24\tau}\right)=(24\tau)^{2r+\frac{\ell^{2m}-1}{2}}H(24\tau). \label{H-transform}
\end{align}
Recall that the Fricke involution $W_N$ in weight $k\in \frac{1}{2}+\mathbb{Z}$ is defined by
\begin{align}
g(\tau)\mid_k W_N:=(-iN^{1/2}\tau)^{-k}g\left(-\frac{1}{N\tau}\right).
\end{align}
Since $\eta(-1/\tau)=\sqrt{-i\tau}\eta(\tau)$, we deduce that
\begin{align}
H\left(-\frac{1}{24\tau}\right)=&\eta\left(-\frac{1}{24\tau}\right)^{\ell^{2m}}F\left(-\frac{1}{576\tau} \right) \nonumber \\
=& (-24i\tau)^{\ell^{2m}/2} \eta(24\tau)^{\ell^{2m}} F\left(-\frac{1}{576\tau} \right) \nonumber \\
=& (-1)^r (24\tau)^{2r+(\ell^{2m}-1)/2}\eta(24\tau)^{\ell^{2m}} F(\tau)\mid_{2r-\frac{1}{2}}W_{576}, \label{H-proof}
\end{align}
where in the last line we used
\begin{align}
F(\tau)\mid_{2r-\frac{1}{2}}W_{576}=(-i\sqrt{576}\tau)^{\frac{1}{2}-2r}F\left(-\frac{1}{576\tau}\right).
\end{align}
Recall that $f(\tau)=\frac{E_{2r}(24\tau)}{\eta(24\tau)}$. We have
\begin{align}
f(\tau)\mid_{2r-\frac{1}{2}}W_{576}&=(-i\sqrt{576}\tau)^{\frac{1}{2}-2r}\frac{E_{2r}(-\frac{1}{24\tau})}{\eta(-\frac{1}{24\tau})} \nonumber \\
&=(-i\sqrt{576}\tau)^{\frac{1}{2}-2r}\frac{(24\tau)^{2r} E_{2r}(24\tau)}{\sqrt{-24i\tau} \eta(24\tau)} \nonumber \\
&=(-1)^r f(\tau).
\end{align}
Since $F(\tau)=f(\tau)\mid T_{\ell^{2m}}$ and $W_{576}$ commutes with $T_{\ell^{2m}}$, we conclude that
\begin{align}
F(\tau)\mid_{2r-\frac{1}{2}}W_{576}=(-1)^r F(\tau). \label{G-transform}
\end{align}
Substituting \eqref{G-transform} into \eqref{H-proof}, we arrive at \eqref{H-transform} and thereby complete the proof.
\end{proof}

\begin{lemma}
If $r\in \{2, 3, 4, 5, 7\}$, we have $F(\tau)\equiv 0$ \text{\rm{(mod $\ell^{m(\lambda-1)}$)}}, i.e., $a_m(n)\equiv 0$  \text{\rm{(mod $\ell^{m(\lambda-1)}$)}} for all $n\in \mathbb{Z}$.
\end{lemma}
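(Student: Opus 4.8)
The plan is to descend to level one and then force a vanishing from finite-dimensionality. By Lemma~\ref{lem-H} the rescaled form $H$ of \eqref{GFH-defn} lies in $\mathcal{M}_{k}$ with $k=2r+\frac{\ell^{2m}-1}{2}$, a genuine holomorphic modular form of \emph{integer} weight on $\SL(2,\mathbb{Z})$. Since $f\in\mathbb{Z}[[q]]$ for $r\in\{2,3,4,5,7\}$ and each application of $T_{\ell^{2}}$ is an integral combination of the $a_0$, the form $F=f\mid T_{\ell^{2m}}$ has coefficients in $\mathbb{Z}$; hence so do $G=\eta(24\tau)^{\ell^{2m}}F$ and $H(\tau)=G(\tau/24)$. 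I would prove $H\equiv 0\pmod{\ell^{m(\lambda-1)}}$ and then recover the statement by dividing out $\eta(24\tau)^{\ell^{2m}}$: since $q^{-\ell^{2m}}\eta(24\tau)^{\ell^{2m}}=(q^{24};q^{24})_\infty^{\ell^{2m}}\in 1+q\,\mathbb{Z}[[q]]$ is a unit in $\mathbb{Z}[[q]]$, the congruence on $G$ transfers verbatim to $F=\eta(24\tau)^{-\ell^{2m}}G$, which is exactly $a_m(n)\equiv 0\pmod{\ell^{m(\lambda-1)}}$ for all $n$.

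The engine is a sharp dimension count, and this is where I expect the real work to sit. Because $\ell\ge 5$ we have $\ell^{2m}\equiv 1\pmod{24}$, so $N:=\frac{\ell^{2m}+23}{24}$ is an integer and $\frac{\ell^{2m}-1}{2}=12(N-1)$, giving $k=12(N-1)+2r$. For the full modular group, $\dim\mathcal{M}_k=\lfloor k/12\rfloor+1$ unless $k\equiv 2\pmod{12}$, in which case it equals $\lfloor k/12\rfloor$. For $r\in\{2,3,4,5\}$ one has $k\equiv 2r\in\{4,6,8,10\}\pmod{12}$ and $\dim\mathcal{M}_k=N$; for $r=7$ one has $k\equiv 2\pmod{12}$, so the dimension drops by one and is again $\dim\mathcal{M}_k=N$. (For the omitted value $r=6$ one gets $k\equiv 0\pmod{12}$ and $\dim\mathcal{M}_k=N+1$, and the argument breaks — which, together with $E_{12}\notin\mathbb{Z}[[q]]$, is the structural reason $r=6$ is excluded.)

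With $\dim\mathcal{M}_k=N$ in hand, I would use the integral echelon (Victor Miller) basis $g_{k,0},\dots,g_{k,N-1}$ of $\mathcal{M}_k$, where $g_{k,i}=q^{i}+O(q^{N})$ and every $g_{k,i}\in\mathbb{Z}[[q]]$; such a basis exists because $\mathcal{M}_k$ is spanned over $\mathbb{Z}$ by the monomials $E_4^{a}E_6^{b}$ (each with integer coefficients and constant term $1$), and row-reducing them uses only integral operations. Every $H\in\mathcal{M}_k$ then satisfies $H=\sum_{i=0}^{N-1}a_H(i)\,g_{k,i}$ with $a_H(i)=[q^{i}]H$, since both sides agree in their first $N$ coefficients and a weight-$k$ form is determined by them. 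Now $a_H(i)=[q^{24i}]G$, and by Lemma~\ref{lem-f-vanish} every coefficient of $G$ of index below $\ell^{2m}+23$ is divisible by $\ell^{m(\lambda-1)}$ (for such indices the product $\eta(24\tau)^{\ell^{2m}}F$ only involves coefficients of $F$ of index $<23$, which are either zero by the support condition $n\equiv 23\pmod{24}$ or, when negative, divisible by $\ell^{m(\lambda-1)}$ by that lemma). Since $24i\le 24(N-1)=\ell^{2m}-1<\ell^{2m}+23$ for $0\le i\le N-1$, all the $a_H(i)$ vanish modulo $\ell^{m(\lambda-1)}$, and as the $g_{k,i}$ are integral we conclude $H\equiv 0\pmod{\ell^{m(\lambda-1)}}$.

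The main obstacle is the borderline case $r=7$. There the crude Sturm inequality $\ord_\infty>k/12$ is unavailable: the guaranteed vanishing order $N$ satisfies $N=k/12-\tfrac16<k/12$, so an order-of-vanishing argument alone cannot kill $H$. What rescues it is arithmetic rather than analytic, namely the nonexistence of nonzero weight-$2$ holomorphic forms on $\SL(2,\mathbb{Z})$ (equivalently, the congruence $k\equiv 2\pmod{12}$ pulling $\dim\mathcal{M}_k$ down to exactly $N$), which is precisely the feature absent at $r=6$. Once the dimension is pinned to $N$, the Miller-basis step is uniform across $r\in\{2,3,4,5,7\}$, and the remaining bookkeeping — integrality of $H$, the exact pole orders in Lemma~\ref{lem-f-vanish}, and the division by $\eta(24\tau)^{\ell^{2m}}$ — is routine.
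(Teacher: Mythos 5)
Your proof is correct and follows the same route as the paper: pass to the level-one holomorphic form $H$, use Lemma~\ref{lem-f-vanish} to show that its first $\frac{\ell^{2m}+23}{24}=\dim\mathcal{M}_{2r+(\ell^{2m}-1)/2}$ Fourier coefficients vanish modulo $\ell^{m(\lambda-1)}$, and conclude $H\equiv 0$, hence $F\equiv 0$. The paper compresses your Miller-basis step into the phrase ``by Sturm's criterion,'' but your explicit handling of the borderline case $r=7$ --- where the naive bound $\ord_\infty>k/12$ just fails and one genuinely needs $\dim\mathcal{M}_k$ to equal $N$ rather than $N+1$ because $k\equiv 2\pmod{12}$ --- together with the explicit transfer of the congruence from $H$ back to $F$ via the unit $q^{-\ell^{2m}}\eta(24\tau)^{\ell^{2m}}$, is a worthwhile unpacking of what that citation is actually doing.
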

\begin{proof}
Recall the dimension formula
\begin{align}
\dim \mathcal{M}_k=\left\{\begin{array}{ll}
\left\lfloor \frac{k}{12} \right\rfloor & k\equiv 2 \pmod{12}, \\
\left\lfloor \frac{k}{12} \right\rfloor +1 & \textrm{else}.
\end{array}
\right.
\end{align}
For $r\in \{2,3,4,5,7\}$ we have
\begin{align}
\dim \mathcal{M}_{2r+(\ell^{2m}-1)/2}=\frac{\ell^{2m}+23}{24}.
\end{align}
By Lemma \ref{lem-f-vanish} we know that $H(\tau)$ vanishes modulo $l^{m(\lambda-1)}$ to order at least $\frac{\ell^{2m}+23}{24}$. By Sturm's criterion, we know $H(\tau)$ is identically zero modulo $l^{m(\lambda-1)}$. This proves the lemma.
\end{proof}

\begin{prop}\label{prop-a0}
For $m,n \geq 0$ we have
\begin{align}
a_0(\ell^{2m}n)\equiv 0 \pmod{\ell^{m(\lambda-1)}}.
\end{align}
\end{prop}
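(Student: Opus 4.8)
The plan is to run an induction on $m$ that inverts the expansion in Lemma \ref{lem-am}, feeding in the vanishing statement $a_m(n)\equiv 0 \pmod{\ell^{m(\lambda-1)}}$ established in the lemma just proved. The point is that Lemma \ref{lem-am} writes the ``smoothed'' coefficient $a_m(n)$ as a $\mathbb{Z}$-linear combination of the genuine coefficients $a_0(\ell^{2k}n)$ for $-m\le k\le m$, in which the top term $k=m$ occurs with coefficient exactly $1$ by \eqref{F-leading}. Solving for that top term yields
\begin{align}
a_0(\ell^{2m}n)=a_m(n)-\sum_{k=-m}^{m-1}c(m,k;n,\ell)\,a_0(\ell^{2k}n),
\end{align}
so it suffices to show that every term on the right-hand side is divisible by $\ell^{m(\lambda-1)}$.

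First I would dispose of the base case $m=0$, which is vacuous, and set up the induction hypothesis that $a_0(\ell^{2j}n')\equiv 0 \pmod{\ell^{j(\lambda-1)}}$ for all $n'$ and all $j<m$. Then I would split the sum according to the sign of $k$. For $k\le 0$ the factor $c(m,k;n,\ell)$ already carries enough divisibility by itself: the bound \eqref{c-ord} gives $\pi_\ell(c(m,k;n,\ell))\ge (m-k)(\lambda-1)\ge m(\lambda-1)$, so these terms are divisible by $\ell^{m(\lambda-1)}$ regardless of the size of $a_0(\ell^{2k}n)$ (which for $k<0$ is interpreted as $a_0(n/\ell^{2|k|})$ and may well vanish). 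For $1\le k\le m-1$ I would combine the coefficient bound $\pi_\ell(c(m,k;n,\ell))\ge (m-k)(\lambda-1)$ with the induction hypothesis $a_0(\ell^{2k}n)\equiv 0 \pmod{\ell^{k(\lambda-1)}}$; the two exponents add to exactly $(m-k)(\lambda-1)+k(\lambda-1)=m(\lambda-1)$, which is precisely what is needed. Finally, the leading term $a_m(n)$ is divisible by $\ell^{m(\lambda-1)}$ by the lemma just proved. Assembling these three cases gives $a_0(\ell^{2m}n)\equiv 0 \pmod{\ell^{m(\lambda-1)}}$, closing the induction.

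The only genuine obstacle is the bookkeeping of the two independent sources of $\ell$-divisibility, together with the observation that they dovetail perfectly: the coefficient valuation $(m-k)(\lambda-1)$ is exactly the complement of the inductive gain $k(\lambda-1)$. Apart from this, the argument is routine, and one need only be careful that the negative-index terms are controlled by the coefficient bound alone rather than by the induction hypothesis.
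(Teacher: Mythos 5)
Your proposal is correct and follows essentially the same route as the paper: both arguments induct on $m$, isolate the top term $a_0(\ell^{2m}n)$ in the expansion of Lemma \ref{lem-am} using $c(m,m;n,\ell)=1$, and kill the remaining terms by combining the coefficient valuation bound \eqref{c-ord} with the induction hypothesis for $0\le k<m$ and by the coefficient bound alone for $k\le 0$, with $a_m(n)\equiv 0\pmod{\ell^{m(\lambda-1)}}$ supplied by the preceding lemma. The only cosmetic difference is that you start the induction at the vacuous case $m=0$ whereas the paper verifies $m=1$ directly from \eqref{T1-coeff}.
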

\begin{proof}
Since $a_m(n)\equiv 0$  (mod $\ell^{m(\lambda-1)}$), in particular, for $m=1$ by \eqref{T1-coeff} this implies
\begin{align}
a_0(\ell^2 n)\equiv 0 \pmod{\ell^{\lambda-1}}.
\end{align}
Thus the proposition holds for $m=1$. Suppose it holds for all $m< s$ with $s> 1$.  By \eqref{am-exp} we have
\begin{align}
\sum_{k=-s}^{s} c(s,k;n,\ell) a_0(\ell^{2k}n)\equiv 0 \pmod{\ell^{s(\lambda-1)}}. \label{proof-sum-0}
\end{align}
Note that by induction and \eqref{c-ord}, we have for $0\leq k <s$,
\begin{align}
\pi_{\ell}\left(c(s,k;n,\ell) a_0(\ell^{2k}n)\right)\geq (s-k)(\lambda-1)+k(\lambda-1)=s(\lambda-1). \label{estimate}
\end{align}
For $k<0$, clearly \eqref{estimate} also holds. Since $c(s,s;n,\ell)=1$, we conclude from \eqref{proof-sum-0} and \eqref{estimate} that
\begin{align*}
a_0(\ell^{2s}n)\equiv 0  \pmod{\ell^{s(\lambda-1)}}.
\end{align*}
By induction we complete the proof.
\end{proof}

\begin{proof}[Proof of Theorem \ref{thm-general}]
Note that
\begin{align}
f(\tau)=\sum_{n=0}^\infty e_{2r}(n)q^{24n-1}.
\end{align}
Hence we have
\begin{align}
a_0(n)=e_{2r}\left(\frac{n+1}{24}\right).
\end{align}
Theorem \ref{thm-general} follows from this relation and Proposition \ref{prop-a0} by noting that $\lambda=2r-1$.
\end{proof}

For some small values of $m$, we have the following supplementary congruences.
\begin{theorem}\label{thm-2nd}
Suppose $r\in \{2,3,4,5,7\}$, $m\ge 1$ and $\ell \geq 5$ is a prime. \\
$\mathrm{(1)}$  If $\left(\frac{-n}{\ell} \right)=1$, we have
\begin{align}
&e_{2r}\left(\frac{\ell^{2}n+1}{24} \right)\equiv 0\pmod{\ell^{4r-3}}, \\
&e_{2r}\left(\frac{\ell^{4}n+1}{24} \right)\equiv 0\pmod{\ell^{8r-6}}.
\end{align}
$\mathrm{(2)}$ If $\ell \nmid n$, we have
\begin{align}
&e_{2r}\left(\frac{\ell^{2}n+1}{24} \right)\equiv \left(\frac{3}{\ell} \right)\ell^{2r-2}e_{2r}\left(\frac{n+1}{24} \right)   \pmod{\ell^{4r-3}}, \\
&e_{2r}\left(\frac{\ell^{4}n+1}{24} \right)\equiv \left(\frac{3}{\ell} \right)\ell^{2r-2}e_{2r}\left(\frac{\ell^2 n+1}{24} \right)   \pmod{\ell^{8r-6}}.
\end{align}
\end{theorem}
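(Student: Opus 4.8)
The plan is to upgrade the divisibility $\ell^{m(\lambda-1)}$ coming from Theorem \ref{thm-general} to $\ell^{m(2\lambda-1)}$ in the two cases $m=1,2$ by running the level-lowering and Sturm argument of Lemmas \ref{lem-f-vanish}--\ref{lem-H} not on $f\mid T_{\ell^{2m}}$ itself but on the auxiliary forms $B_m$ of \eqref{fm-defn}. The entire gain is a consequence of the fact that the \emph{subleading} part of the principal part cancels in $B_m$: from \eqref{f-T2} and \eqref{f-T4} one reads off that $B_1=f\mid T_{\ell^2}-\chi(\ell)\ell^{\lambda-1}f$ has principal part exactly $\ell^{2\lambda-1}q^{-\ell^2}$, and that $B_2=f\mid T_{\ell^4}-\chi(\ell)\ell^{\lambda-1}f\mid T_{\ell^2}$ has principal part exactly $\ell^{4\lambda-2}q^{-\ell^4}$; in each case the lower poles at $q^{-1}$ (and $q^{-\ell^2}$ when $m=2$) disappear, leaving a single term whose coefficient is already divisible by $\ell^{m(2\lambda-1)}$.

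Granting this, I would first establish the key lemma
\[
B_m\equiv 0 \pmod{\ell^{m(2\lambda-1)}},\qquad m\in\{1,2\}.
\]
To do so I form $H_m$ exactly as in \eqref{GFH-defn} but with $F$ replaced by $B_m$, so that $H_m\in\mathcal{M}_{2r+(\ell^{2m}-1)/2}$ by the proof of Lemma \ref{lem-H}, which applies verbatim since $B_m$ is a $\mathbb{Z}$-linear combination of Hecke translates of $f$ and therefore obeys the same Fricke law \eqref{G-transform}. Writing $\eta(24\tau)^{\ell^{2m}}=q^{\ell^{2m}}(q^{24};q^{24})_\infty^{\ell^{2m}}$ and multiplying out, the single principal term contributes $\ell^{m(2\lambda-1)}$ times an integer to each Fourier coefficient of $H_m$ at $q^0,\dots,q^{d-1}$, where $d=\dim\mathcal{M}_{2r+(\ell^{2m}-1)/2}=(\ell^{2m}+23)/24$, while the first contribution of the (a priori unknown) coefficients $b_m(n)$ with $n\ge 23$ is pushed to exponent $(23+\ell^{2m})/24=d$. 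Hence $H_m$ vanishes modulo $\ell^{m(2\lambda-1)}$ to order at least $d$, and Sturm's criterion (as in Lemma \ref{lem-f-vanish}) forces $H_m\equiv 0\pmod{\ell^{m(2\lambda-1)}}$, that is, $B_m\equiv 0\pmod{\ell^{m(2\lambda-1)}}$.

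With the key lemma in hand, both parts follow by reading off $b_m(n)$ from Proposition \ref{prop-b}(2) according to the value of $\left(\frac{-n}{\ell}\right)=\left(\frac{(-1)^\lambda n}{\ell}\right)$. When $\left(\frac{-n}{\ell}\right)=1$ the correction sum in \eqref{prop-b-2} is annihilated, so $b_m(n)=a_0(\ell^{2m}n)=e_{2r}\big(\tfrac{\ell^{2m}n+1}{24}\big)$, and $B_m\equiv 0$ gives Part $(1)$ directly, with exponents $2\lambda-1=4r-3$ for $m=1$ and $2(2\lambda-1)=8r-6$ for $m=2$. When $\ell\nmid n$ one instead solves \eqref{prop-b-2} for $a_0(\ell^{2m}n)$ and again invokes $B_m\equiv 0$; this produces the Hecke-eigenvalue relation of Part $(2)$ tying $e_{2r}\big(\tfrac{\ell^{2m}n+1}{24}\big)$ to $e_{2r}\big(\tfrac{\ell^{2m-2}n+1}{24}\big)$ to the same modulus, the factor $\left(\frac{3}{\ell}\right)\ell^{2r-2}=\chi(\ell)\ell^{\lambda-1}$ being exactly the coefficient of $q^{-1}$ in $f\mid T_{\ell^2}$.

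The main obstacle is the coefficient bookkeeping inside the key lemma. One must verify that the collapse of the principal part to the single high-$\ell$-power term is exact (no surviving pole $q^{-\ell^{2k}}$ with $0\le k<m$), which for general $m$ is most cleanly handled through the recursion \eqref{b-rec-proof-5}, and then check that after the twist by $\eta(24\tau)^{\ell^{2m}}$ the first genuinely unknown coefficient $b_m(23)$ lands at exponent precisely $d$, so that the $d$ vanishing conditions demanded by Sturm are all supplied by the divisible principal term. This is exactly the reason the argument is confined to small $m$ and to $r\in\{2,3,4,5,7\}$, where $\mathcal{M}_{2r+(\ell^{2m}-1)/2}$ has the clean dimension $(\ell^{2m}+23)/24$ that makes the count close; the cancellation of the subleading poles, which distinguishes $B_m$ from $f\mid T_{\ell^{2m}}$ and doubles the exponent from $m(\lambda-1)$ to $m(2\lambda-1)$, is the crucial structural input.
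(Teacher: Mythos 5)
Your key lemma and your Part (1) are exactly the paper's argument: the paper likewise reads off from \eqref{f-T2} and \eqref{f-T4} that $B_1=\ell^{2\lambda-1}q^{-\ell^2}+O(q^{23})$ and $B_2=\ell^{4\lambda-2}q^{-\ell^4}+O(q^{23})$, reruns the $\eta(24\tau)^{\ell^{2m}}$-twist, the Fricke computation of Lemma \ref{lem-H}, and the Sturm bound of Theorem \ref{thm-general} with $f\mid T_{\ell^{2m}}$ replaced by $B_m$ to obtain $b_m(n)\equiv 0\pmod{\ell^{m(2\lambda-1)}}$ for $m=1,2$, and then deduces Part (1) from \eqref{prop-b-2} in the case $\left(\frac{-n}{\ell}\right)=1$.

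The gap is in your derivation of Part (2). For $\ell\nmid n$, \eqref{prop-b-2} with $m=1$ reads
\begin{align*}
b_1(n)=a_0(\ell^2 n)-\left(1-\left(\frac{-n}{\ell}\right)\right)\chi(\ell)\ell^{\lambda-1}a_0(n),
\end{align*}
so ``solving for $a_0(\ell^{2}n)$'' and invoking $b_1(n)\equiv 0$ gives $a_0(\ell^2n)\equiv\left(1-\left(\frac{-n}{\ell}\right)\right)\chi(\ell)\ell^{\lambda-1}a_0(n)$, whose coefficient is $0$ when $\left(\frac{-n}{\ell}\right)=1$ and $2\chi(\ell)\ell^{\lambda-1}$ when $\left(\frac{-n}{\ell}\right)=-1$; in neither case is it the bare $\chi(\ell)\ell^{\lambda-1}=\left(\frac{3}{\ell}\right)\ell^{2r-2}$ claimed in the statement (and in the first case the resulting relation would force $\ell^{\lambda}\mid a_0(n)$, which is not available). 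The eigenvalue-type relation of Part (2) does not come from \eqref{prop-b-2} at all: the paper obtains it from \eqref{prop-b-3}, the $\ell\parallel n$ branch of Proposition \ref{prop-b}, applied with $n$ replaced by $\ell n$, which yields $b_m(\ell n)=a_0(\ell^{2m}\cdot\ell n)-\chi(\ell)\ell^{\lambda-1}a_0(\ell^{2m-2}\cdot\ell n)$ with the clean coefficient $\chi(\ell)\ell^{\lambda-1}$, and then combines this with $b_m(\ell n)\equiv 0\pmod{\ell^{m(2\lambda-1)}}$. So your structural input --- the vanishing of $B_m$ modulo $\ell^{m(2\lambda-1)}$ --- is the right one, but the bookkeeping converting it into Part (2) must pass through the $\ell\parallel n$ case of Proposition \ref{prop-b} (evaluated at $\ell n$), not the $\ell\nmid n$ case.
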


\begin{proof}
Recall the definition of $B_m$ in \eqref{fm-defn}.
From \eqref{f-T2} and \eqref{f-T4} we obtain
\begin{align}
B_1=\ell^{2\lambda-1}q^{-\ell^2}+O(q^{23}), \\
B_2=\ell^{4\lambda-2}q^{-\ell^4}+O(q^{23}).
\end{align}
Replacing $f$ by $B_1$ and $B_2$ in \eqref{GFH-defn} and arguing similarly as in the proof of Theorem \ref{thm-general}, we can show that
\begin{align}
b_1(n)\equiv 0 \pmod{\ell^{2\lambda-1}} \label{b1-cong}
\end{align}
and
\begin{align}
b_2(n)\equiv 0 \pmod{\ell^{4\lambda-2}}. \label{b2-cong}
\end{align}
Note that $\lambda=2r-1$ is odd. If $\left(\frac{-n}{\ell}\right)=1$, then \eqref{prop-b-2} implies $b_m(n)=a_0(\ell^{2m}n)$. Using this fact, the congruences \eqref{b1-cong} and \eqref{b2-cong} are equivalent to
\begin{align}
a_0(\ell^2n)\equiv 0 \pmod{\ell^{4r-3}},  \quad \textrm{and}  \quad a_0(\ell^4n)\equiv 0  \pmod{\ell^{8r-6}}.
\end{align}
This proves part (1).

Replacing $n$ by $\ell n$ in \eqref{prop-b-3} and combining with \eqref{b1-cong} and \eqref{b2-cong}, we get part (2).
\end{proof}

\section{Generating functions of $e_{2r}\left(\ell^k n+\delta_{\ell,k} \right)$ with $\ell \in \{5, 7, 13\}$}\label{sec-explicit}
In this section, we shall illustrate how to represent the generating functions of $e_{2r}\left(\ell^k n+\delta_{\ell,k} \right)$ for $\ell \in \{5,7, 13\}$. In particular, our procedure applies well to the case $r=0$, which gives representations for $\sum_{n=0}^\infty p\left(\ell^k n+\delta_{\ell,k} \right)q^n$.

Let
\begin{align}
Z_{\ell}=Z_{\ell}(\tau):=\frac{\eta(\ell^2 \tau)}{\eta(\tau)}.
\end{align}
For $\ell \in \{5, 7, 13\}$, modular functions on $\Gamma_0(\ell)$ can be expressed as rational functions of the hauptmodul
\begin{align}
Y_{\ell}=Y_{\ell}(\tau):=\left(\frac{\eta(\ell\tau)}{\eta(\tau)}\right)^{\frac{24}{\ell-1}}.
\end{align}

Atkin's $U$-operator is defined as
\begin{align}
\left(\sum_{n\in \mathbb{Z}}c(n)q^n \right)\mid U(d)=\sum_{n \in \mathbb{Z}} c(dn)q^n.
\end{align}
Recall that in the introduction we defined $E_0(\tau)=1$. We define for $r\geq 0$ that
\begin{align}
L_{2r,\ell,0}=L_{2r,\ell,0}(\tau):=E_{2r}(\tau).
\end{align}
For $i\geq 1$ we recursively define
\begin{align}
&L_{2r,\ell,2i-1}=L_{2r,\ell,2i-1}(\tau):=\left(Z_{\ell}(\tau)L_{2r,\ell,2i-2}(\tau) \right)\mid U_{\ell}, \label{L-odd-defn} \\
&L_{2r,\ell,2i}=L_{2r,\ell,2i}(\tau):=L_{2r,\ell,2i-1}(\tau) \mid U_{\ell}. \label{L-even-defn}
\end{align}
By induction on $i$ it is not difficult to see that
\begin{align}
&L_{2r, \ell, 2i-1}=(q^{\ell};q^{\ell})_\infty \sum_{n=0}^\infty e_{2r}\left(\ell^{2i-1}n+\delta_{\ell, 2i-1} \right)q^{n+1}, \label{L-odd}\\
&L_{2r, \ell, 2i}=(q;q)_\infty \sum_{n=0}^\infty e_{2r}\left(\ell^{2i}n+\delta_{\ell, 2i} \right)q^{n+1}. \label{L-even}
\end{align}
The main goal of this section is to show that $L_{2r, \ell, k}$ can be represented using $E_{2r}(\tau)$, $E_{2r}(\ell \tau)$ and $Y_{\ell}(\tau)$.

We first establish the following result, which is of independent interest itself.

\begin{prop}\label{prop-PQ}
For $\ell \in \{5,7, 13\}$ and $r\geq 2$, there exist polynomials $P_{2r, \ell}(x)$ and $Q_{2r,\ell}(x)$ such that $\gcd(P_{2r,\ell},Q_{2r,\ell})=1$ and
\begin{align}
\frac{E_{2r}(\ell \tau)}{E_{2r}(\tau)}=\frac{P_{2r,\ell}(Y_\ell)}{Q_{2r,\ell}(Y_\ell)}.
\end{align}
Moreover, we have $\deg P_{2r,\ell}=\deg Q_{2r,\ell}=d_{\ell}(r)$, where
\begin{align}
d_{5}(r)=\left\{\begin{array}{ll}
r & r\equiv 0 \pmod{2}, \\
r-1 & r\equiv 1 \pmod{2};
\end{array}
\right.
\end{align}
\begin{align}
d_{7}(r)=\left\{\begin{array}{ll}
\lfloor \frac{4r}{3} \rfloor & r \equiv 0, 2\pmod{3}, \\
\lfloor \frac{4r}{3} \rfloor -1 &r\equiv 1 \pmod{3}.
\end{array}\right.
\end{align}
and
\begin{align}
d_{13}(r)=\left\{\begin{array}{ll}
2r+\lfloor \frac{r}{3}\rfloor, &r\equiv 0,2 \pmod{6}, \\
2r-2+\lfloor \frac{r}{3}\rfloor, & r\equiv 1 \pmod{6}, \\
2r-1+\lfloor \frac{r}{3}\rfloor, & r\equiv 3, 4, 5 \pmod{6}.
\end{array}\right.
\end{align}
\end{prop}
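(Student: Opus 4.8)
The plan is to recognize $g(\tau):=E_{2r}(\ell\tau)/E_{2r}(\tau)$ as a weight-zero meromorphic modular form on $\Gamma_0(\ell)$ and to compute its degree as a map $X_0(\ell)\to\mathbb{P}^1$. Both $E_{2r}(\tau)$ and $E_{2r}(\ell\tau)$ are holomorphic modular forms of weight $2r$ on $\Gamma_0(\ell)$, so their ratio lies in the function field of $X_0(\ell)$. Since $\ell\in\{5,7,13\}$ are exactly the primes $>3$ for which $X_0(\ell)$ has genus zero with hauptmodul $Y_\ell$, that function field equals $\mathbb{C}(Y_\ell)$, and writing $g$ in lowest terms produces coprime $P_{2r,\ell},Q_{2r,\ell}$ with $g=P_{2r,\ell}(Y_\ell)/Q_{2r,\ell}(Y_\ell)$; this settles existence. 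To force $\deg P_{2r,\ell}=\deg Q_{2r,\ell}$ I would check that $g$ is finite and nonzero at both cusps: at $\infty$ (where $Y_\ell$ has a simple zero) the expansion $E_{2r}=1+O(q)$ gives $g=1+O(q)$, and at $0$ (where $Y_\ell$ has a simple pole) the transformation $E_{2r}(-1/\tau)=\tau^{2r}E_{2r}(\tau)$ gives $g\to \ell^{-2r}$. Hence $Y_\ell=0,\infty$ are neither zeros nor poles of $g$, so the two degrees agree with the common value $d:=\deg g$.

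Next I would compute $d$, the number of poles of $g$ on $X_0(\ell)$ counted with multiplicity, from $\operatorname{div}(g)=\operatorname{div}E_{2r}(\ell\tau)-\operatorname{div}E_{2r}(\tau)$. The poles sit at the zeros of $E_{2r}(\tau)$ that are not cancelled by coincident zeros of $E_{2r}(\ell\tau)$. I would locate the zeros of $E_{2r}$ via the theorem of Rankin and Swinnerton-Dyer: $E_{2r}$ vanishes to the minimal weight-forced orders $a_r$ at $\rho=e^{2\pi i/3}$ and $b_r$ at $\sqrt{-1}$, so $a_r\in\{0,1,2\}$ according to $r\bmod 3$ and $b_r\in\{0,1\}$ according to $r\bmod 2$, together with $N_r=r/6-a_r/3-b_r/2$ simple zeros at interior points of the arc $|\tau|=1$. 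Pulling this divisor back under the degree-$(\ell+1)$ projection $\phi_1\colon X_0(\ell)\to X_0(1)$, each generic zero contributes $\ell+1$ unramified simple poles, while the ramification over $\sqrt{-1}$ and $\rho$ is governed by $\nu_2(\Gamma_0(\ell))=1+\left(\frac{-1}{\ell}\right)$ elliptic points plus $\frac{\ell+1-\nu_2}{2}$ ramified points, and $\nu_3(\Gamma_0(\ell))=1+\left(\frac{-3}{\ell}\right)$ elliptic points plus $\frac{\ell+1-\nu_3}{3}$ ramified points, respectively; at the non-elliptic preimages $\tau-\tau_P$ is a genuine local parameter, so the honest vanishing order of $E_{2r}$ there equals $b_r$ (resp.\ $a_r$).

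The crux is the cancellation. At a preimage $P$ of $\sqrt{-1}$ (resp.\ $\rho$), $E_{2r}(\ell\tau)$ also vanishes precisely when $\ell\tau_P$ is $\SL(2,\mathbb{Z})$-equivalent to $\sqrt{-1}$ (resp.\ $\rho$), i.e.\ when there is an $\ell$-isogeny between CM elliptic curves of the same $j$-invariant $1728$ (resp.\ $0$). This happens exactly at the elliptic points of $X_0(\ell)$, and only when $\ell$ splits in $\mathbb{Q}(\sqrt{-1})$ (resp.\ $\mathbb{Q}(\sqrt{-3})$), which is precisely the condition $\nu_2>0$ (resp.\ $\nu_3>0$); at such a point the orders of numerator and denominator coincide and cancel completely, whereas at the ramified preimages $g$ keeps a pole of order $b_r$ (resp.\ $a_r$). (No pole at a $\rho$-preimage can be cancelled by a $\sqrt{-1}$-type zero of the numerator, since isogenous curves share a CM field.) Collecting contributions yields
\begin{align*}
d=(\ell+1)N_r+\frac{\ell+1-\nu_2}{2}\,b_r+\frac{\ell+1-\nu_3}{3}\,a_r,
\end{align*}
and substituting $(\nu_2,\nu_3)=(2,0),(0,2),(2,2)$ for $\ell=5,7,13$ together with the residues $a_r,b_r$ reproduces the three piecewise formulas: $\ell=5$ retains only the $b_r$-term (dependence on $r\bmod 2$), $\ell=7$ only the $a_r$-term ($r\bmod 3$), and $\ell=13$ both ($r\bmod 6$).

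I expect the main obstacle to be ruling out any further cancellation at the generic preimages, equivalently showing that no two zeros of $E_{2r}$ are $\ell$-isogenous apart from the CM coincidences at $\rho$ and $\sqrt{-1}$; otherwise $d$ would fall below the claimed value. I would settle this again through the Rankin--Swinnerton-Dyer description: the generic zeros lie on the open arc $|\tau|=1$, the scaling $\tau\mapsto\ell\tau$ carries them off this arc, and their $\SL(2,\mathbb{Z})$-reductions can re-enter the zero locus only at the distinguished CM points. Once this genericity and the elementary identification of honest versus orbifold vanishing orders at the ramified preimages are in hand, the remaining bookkeeping that lands exactly on the stated values of $d_\ell(r)$ is routine.
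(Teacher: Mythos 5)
Your overall strategy is the same as the paper's: existence of $P_{2r,\ell},Q_{2r,\ell}$ from the fact that $X_0(\ell)$ has genus zero with hauptmodul $Y_\ell$, and the degree computed as (total zero count of $E_{2r}$ on $X_0(\ell)$, namely $\tfrac{r}{6}(\ell+1)$) minus the cancellation between $\operatorname{div}E_{2r}(\ell\tau)$ and $\operatorname{div}E_{2r}(\tau)$ at the elliptic points of $X_0(\ell)$. Your bookkeeping is correct: your formula $d=(\ell+1)N_r+\tfrac{\ell+1-\nu_2}{2}b_r+\tfrac{\ell+1-\nu_3}{3}a_r$ agrees with the paper's $\tfrac{r}{6}(\ell+1)-\alpha(\ell,2r)$ and reproduces all three piecewise formulas, and your observation that $g$ is finite and nonzero at both cusps cleanly forces $\deg P_{2r,\ell}=\deg Q_{2r,\ell}$.

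The genuine gap is exactly at the step you flag as the main obstacle, and the tool you propose for it does not suffice. To show there is no cancellation away from the elliptic points, you must rule out the possibility that some non-elliptic zero $w$ of $E_{2r}$ on the arc $|\tau|=1$ has $\ell w$ equivalent under $\SL(2,\mathbb{Z})$ to another zero $w'$ on the arc. The Rankin--Swinnerton-Dyer location result alone cannot exclude this: writing $\gamma(\ell w)=w'$ with $|w|=|w'|=1$ and taking norms yields $2(ab-cd)\ell\RE w=(c^2-a^2)\ell^2+(d^2-b^2)$, so in the case $ab\neq cd$ one only concludes that $\RE w\in\mathbb{Q}$, hence that $w$ is algebraic --- which is not by itself a contradiction. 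The paper's Lemma \ref{lem-common} closes this case by invoking Kohnen's theorem (Lemma \ref{lem-trans}) that every zero of $E_{2r}$ not equivalent to $i$ or $\rho$ is transcendental, and then disposes of the remaining case $ab=cd$ by an elementary determinant argument. Your phrase ``their $\SL(2,\mathbb{Z})$-reductions can re-enter the zero locus only at the distinguished CM points'' is precisely the assertion that needs proof, and without the transcendence input (or some substitute, e.g.\ a dimension/vanishing-order argument at the cusp $0$) the claimed value of $d_\ell(r)$ is only an upper bound for the degree.
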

\begin{rem}\label{rem-prop-PQ}
Let $d_{\ell}(0)=0$ and $P_{0,\ell}=Q_{0,\ell}=1$. This proposition
also holds trivially for $r=0$.
 We remark that
$$
\dim\mathcal M_{2r}(\Gamma_0(\ell))=d_\ell(r)+1.
$$
\end{rem}

Before we give a proof, we present some examples. By direct computations we find
\begin{align}
&\frac{E_{4}(5\tau)}{E_{4}(\tau)}=\frac{1+2\cdot 5Y_5+5Y_5^2}{1+2\cdot 5^3Y_{5}+5^5Y_5^2}, \label{E4-mod5-eq} \\
&\frac{E_4(7\tau)}{E_{4}(\tau)}= \frac{1+5Y_{7}+Y_{7}^2}{1+ 5\cdot 7^2Y_{7}+7^4Y_{7}^2}, \label{E4-mod7-eq} \\
&\frac{ E_4(13\tau)}{E_4(\tau)}=\frac{1+7Y_{13}+20Y_{13}^2+19Y_{13}^3+Y_{13}^4}{1+19\cdot 13 Y_{13}+20\cdot 13^2 Y_{13}^2+7\cdot {13}^3Y_{13}^3+{13}^4 Y_{13}^4}, \label{E4-mod13-eq} \\
&\frac{E_6(5\tau)}{E_{6}(\tau)}=\frac{1+4Y_5-Y_5^2}{1-4\cdot 5^3Y_5-5^6Y_{5}^2}, \label{E6-mod5-eq} \\
&\frac{E_6(7\tau)}{E_{6}(\tau)}=\frac{1+2\cdot 7Y_7+9\cdot 7Y_7^2+10\cdot 7Y_7^3-7Y_7^4}{1-10\cdot 7^2Y_7-9\cdot 7^4Y_7^2-2\cdot 7^6Y_7^3-7^7Y_7^4}, \label{E6-mod7-eq}  \\
&\frac{E_6(13\tau)}{E_{6}(\tau)}\nonumber \\
&=\frac{1+10Y_{13}+46Y_{13}^2+108Y_{13}^3+122Y_{13}^4+38Y_{13}^5-Y_{13}^6}{1-38\cdot 13 Y_{13}-122\cdot 13^2 Y_{13}^2-108\cdot {13}^3Y_{13}^3-46\cdot {13}^4 Y_{13}^4-10\cdot {13}^5Y_{13}^5-{13}^6Y_{13}^6}. \label{E6-mod13-eq}
\end{align}

To prove  Proposition \ref{prop-PQ}, we need some properties of the zeros of Eisenstein series. For any function $g(\tau)$, we let $\ord_p g$ be the order of $g(\tau)$ at the point $p$.   Let $\rho=e^{\frac{2\pi i}{3}}$. Let
\begin{align*}
\mathcal{F}:=\left\{\tau\in \mathcal{H}: |\tau|>1, -\frac{1}{2}\leq \RE \tau \leq 0\right\}\bigcup \left\{\tau \in \mathcal{H}: |\tau|>1, 0<\RE \tau <\frac{1}{2}\right\}
\end{align*}
be the standard fundamental domain for $\SL(2,\mathbb{Z})$.
The valence formula in the case of Eisenstein series $E_{2r}(\tau)$ says that
\begin{align}
\frac{1}{2}\ord_{i}E_{2r}+\frac{1}{3}\ord_{\rho}E_{2r}+\sum_{p \in \mathcal{F} \backslash \{i,\rho\}} \ord_{p} E_{2r} =\frac{r}{6}. \label{valence}
\end{align}
Therefore, the total number of zeros (counting multiplicities) of $E_{2r}(\tau)$ in the fundamental domain is $\frac{r}{6}$.
Rankin and Swinnerton-Dyer \cite{RS-D} found the locations of these zeros.
\begin{lemma}\label{lem-zero}
(Cf.\ \cite{RS-D}.) All the zeros of $E_{2r}$ ($r\geq 2$) in the fundamental domain of $\SL(2,\mathbb{Z})$ lie on the arc $\{\tau \in \mathbb{H}: |\tau|=1, -\frac{1}{2}\leq \RE \tau \leq 0\}$ and are all simple.
\end{lemma}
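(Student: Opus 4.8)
The plan is to reproduce the classical argument of Rankin and Swinnerton--Dyer \cite{RS-D}: parametrize the arc, twist $E_{2r}$ into a \emph{real-valued} function of the arc parameter, show that this function is a small perturbation of $2\cos(r\theta)$, and then play the number of its sign changes off against the total zero count supplied by the valence formula \eqref{valence}. First I would write $\tau=e^{i\theta}$ with $\theta\in[\pi/2,2\pi/3]$ (the endpoints being the elliptic points $i$ and $\rho$) and set
\[
F_r(\theta):=e^{ir\theta}E_{2r}(e^{i\theta})=\tau^{r}E_{2r}(\tau).
\]
The first key step is that $F_r$ is real on the arc. Since $E_{2r}$ has real Fourier coefficients we have $\overline{E_{2r}(\tau)}=E_{2r}(-\bar\tau)$, and because $|\tau|=1$ forces $-\bar\tau=-1/\tau$, the modularity relation $E_{2r}(-1/\tau)=\tau^{2r}E_{2r}(\tau)$ gives $\overline{E_{2r}(\tau)}=\tau^{2r}E_{2r}(\tau)$. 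Multiplying by $\bar\tau^{\,r}$ and using $|\tau|=1$ then yields $\overline{F_r(\theta)}=F_r(\theta)$.

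Next I would extract the dominant behaviour of $F_r$. Writing $E_{2r}=(2\zeta(2r))^{-1}\sum_{(m,n)\neq(0,0)}(m\tau+n)^{-2r}$ and observing that on the \emph{open} arc the unique shortest lattice vectors are $\pm1$ and $\pm\tau$, each of modulus $1$, the four corresponding terms contribute $2+2\tau^{-2r}$, so that
\[
F_r(\theta)=\frac{2\cos(r\theta)}{\zeta(2r)}+E(\theta),
\]
where $E(\theta)$ collects the tail over all longer vectors. The technical heart is to bound $E(\theta)$ strictly below the oscillation amplitude $2/\zeta(2r)$, uniformly for $\theta$ in the arc, so that $F_r(\theta)$ inherits the sign of $\cos(r\theta)$ wherever the latter equals $\pm1$. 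Granting this, the extrema of $\cos(r\theta)$ occur at $\theta=j\pi/r$; as $\theta$ ranges over an interval of length $\pi/6$, the product $r\theta$ ranges over an interval of length $r\pi/6$, so there are about $r/6$ such extrema, of alternating sign. At each extremum $F_r$ has the sign of $\cos(r\theta)=\pm1$, whence the intermediate value theorem produces a zero of $F_r$, and therefore of $E_{2r}$, between each consecutive pair. This yields roughly $r/6$ genuinely simple zeros lying on the arc.

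Finally I would match this against the valence formula \eqref{valence}, which asserts that the total weighted number of zeros of $E_{2r}$ in $\mathcal{F}$ is exactly $r/6$. Since the arc zeros just constructed already account for (essentially) all of this mass, every zero of $E_{2r}$ must lie on the arc; and because the count is exact, no zero can be multiple. The main obstacle—the part needing genuine care rather than routine computation—is twofold: (i) making the tail estimate on $E(\theta)$ explicit and uniform enough that each predicted sign change is forced for every $r\geq2$; and (ii) the boundary bookkeeping at $\tau=i$ and $\tau=\rho$, where the extra elliptic symmetry imposes a forced zero of $E_{2r}$ depending on $r$ modulo $6$ (for instance $i^{2r}=(-1)^{r}$ forces $E_{2r}(i)=0$ when $r$ is odd) and enters \eqref{valence} with the weights $\tfrac12$ and $\tfrac13$. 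Reconciling the arc count with these weighted endpoint contributions so that the two sides of \eqref{valence} agree \emph{exactly} is precisely what upgrades the conclusion from ``all zeros lie on the arc'' to ``and they are all simple''; for these details I would follow \cite{RS-D}.
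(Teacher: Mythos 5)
The paper gives no proof of this lemma at all --- it is stated purely as a citation to Rankin and Swinnerton--Dyer \cite{RS-D} --- and your proposal is a correct reconstruction of exactly the argument being cited: realify $\tau^{r}E_{2r}(\tau)$ on the arc, compare with $2\cos(r\theta)$, count sign changes, and match against the valence formula. The two points you flag as needing care (the uniform tail bound and the weighted bookkeeping at $i$ and $\rho$, which depends on $r \bmod 6$) are indeed where all the work in \cite{RS-D} lies, so deferring them to that reference is consistent with what the paper itself does.
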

Moreover, the arguments in \cite{RS-D} also show that $E_{2r}(i)=0$ if and only if $r\equiv 1$ (mod 2) and $E_{2r}(\rho)=0$ if and only if $r\not\equiv 0$ (mod 3).

Kohnen \cite{Kohnen} studied the transcendence of the zeros.
\begin{lemma}\label{lem-trans}
(Cf.\ \cite{Kohnen}.) Let $\tau_0$ be a zero of $E_{2r}(\tau)$. If $\tau_0$ is not equivalent to $i$ or $\rho$, then $\tau_0$ is transcendental.
\end{lemma}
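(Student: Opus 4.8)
The plan is to reduce the transcendence of a zero $\tau_0$ of $E_{2r}$ to Schneider's classical theorem on the modular $j$-function: if $\tau\in\mathcal H$ is algebraic and $j(\tau)$ is algebraic, then $\tau$ is equivalent to a CM (imaginary quadratic) point. The first step is to show that $j(\tau_0)$ is algebraic whenever $E_{2r}(\tau_0)=0$. For this I would exploit that $g:=E_{2r}^6/\Delta^r$ has weight $0$, is holomorphic on $\mathcal H$ (since $\Delta$ never vanishes there), and has a pole only at the cusp $\infty$ (a direct $q$-expansion shows $g=q^{-r}(1+\cdots)$). Hence $g$ is a modular function for $\SL(2,\mathbb Z)$ with a single pole at $\infty$, so it is a polynomial $G(j)$ in $j$, and since $E_{2r}$ and $\Delta$ have rational Fourier coefficients we get $G\in\mathbb Q[j]$. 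If $\tau_0$ is a zero of $E_{2r}$ not equivalent to $i$ or $\rho$, then $\Delta(\tau_0)\neq 0$ and $j(\tau_0)\notin\{0,1728\}$, so $g(\tau_0)=0$ forces $G(j(\tau_0))=0$; thus $j(\tau_0)$ is a root of a polynomial with rational coefficients and is therefore algebraic.

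With $j(\tau_0)$ algebraic in hand, suppose for contradiction that $\tau_0$ itself is algebraic. Schneider's theorem then forces $\tau_0$ to be a CM point. The decisive step is to exclude this possibility, and here I would invoke the classical fact that the value of $j$ at any CM point (a singular modulus) is an algebraic integer. It therefore suffices to show that the algebraic number $j(\tau_0)$ is \emph{not} an algebraic integer, which contradicts $\tau_0$ being CM and proves $\tau_0$ transcendental. The prototype is the unique arc-zero of $E_{12}$ (the case $2r=12$), where one computes directly that $j(\tau_0)=432000/691$; the appearance of the irregular prime $691$, the numerator of $B_{12}$, in the denominator is exactly what prevents $j(\tau_0)$ from being an algebraic integer.

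The main obstacle is precisely this last point in full generality: one must guarantee that none of the roots of $G$ other than $0$ and $1728$ is a singular modulus, equivalently that $j(\tau_0)$ fails to be an algebraic integer for every $\tau_0\not\sim i,\rho$. This is the arithmetic heart of the statement, and I expect it to require control over the denominators of the roots of $G$, which are governed by the numerators of the Bernoulli numbers entering the expression of $E_{2r}$ as an isobaric polynomial in $E_4$ and $E_6$; for the small weights actually needed in this paper the non-integrality can be verified by direct computation. The modular-function reduction of the first step and the application of Schneider's theorem in the second are routine once this arithmetic input is secured, so the delicate part, where the singular-moduli integrality must be combined with the transcendence theory, is the exclusion of the CM case.
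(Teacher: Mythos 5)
The paper offers no proof of this lemma: it is quoted from Kohnen \cite{Kohnen}, so your proposal is in effect a reconstruction of Kohnen's three-page argument. Your first two steps are correct and are indeed how that argument begins: $E_{2r}^6/\Delta^r$ is weight $0$, holomorphic on $\mathcal{H}$ with its only pole at $\infty$, hence a polynomial $G(j)\in\mathbb{Q}[j]$, so $j(\tau_0)$ is algebraic; Schneider's theorem then reduces the lemma to showing that $E_{2r}$ has no CM zeros other than (possibly) $i$ and $\rho$.

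The gap is in your exclusion of the CM case. You propose to show that $j(\tau_0)$ is never an algebraic integer, with the non-integrality "governed by the numerators of the Bernoulli numbers." But the Bernoulli denominators only show that $G$, normalized to be monic, has some non-integral rational coefficient, and that forces merely that \emph{some} root of $G$ fails to be an algebraic integer, not that \emph{every} root does: a monic polynomial in $\mathbb{Q}[x]$ can have algebraic-integer roots alongside non-integral ones. To get all roots non-integral you would need Newton-polygon control at an irregular prime of \emph{all} coefficients of $G$, which you do not supply and which is not known; moreover, non-integrality of every root is strictly stronger than the lemma requires (one only needs that no root is a singular modulus, and there are plenty of algebraic integers in $(0,1728)$ that are not singular moduli). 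Your fallback --- direct computation "for the small weights actually needed" --- also does not prove the lemma as stated, and the paper invokes it for every $r\geq 2$ (via Lemma \ref{lem-common} inside the proof of Proposition \ref{prop-PQ}). The CM case can instead be closed unconditionally, and without any integrality analysis, by using Lemma \ref{lem-zero} (Rankin--Swinnerton-Dyer \cite{RS-D}), which is stated immediately before the lemma for exactly this purpose: all zeros of $E_{2r}$ lie on the arc $|\tau|=1$, on which $j$ is real with values in $[0,1728]$, so every root of $G$ lies in the open interval $(0,1728)$ (the endpoints $0$ and $1728$ correspond to $\rho$ and $i$). If some $j(\tau_0)$ were a singular modulus of discriminant $D<-4$, its minimal polynomial over $\mathbb{Q}$, the class polynomial $H_D\in\mathbb{Z}[x]$, would divide $G$, forcing all conjugate singular moduli of discriminant $D$ into $(0,1728)$; but the dominant conjugate $j\bigl((-b+\sqrt{D})/2\bigr)=\pm e^{\pi\sqrt{|D|}}+744+O(1)$ is either negative or larger than $1728$ (already $e^{\pi\sqrt{7}}>4000$), a contradiction, while $D=-3,-4$ give $j=0,1728$, i.e.\ $\tau_0\sim\rho,i$, which are excluded by hypothesis. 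With this replacement for your third step, your first two steps yield the lemma in full generality.
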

We also need the following fact.
\begin{lemma}\label{lem-common}
For any integer $m> 1$, $E_{2r}(m\tau)$ and $E_{2r}(\tau)$ do not share the same zeros, unless that they both vanish at some elliptic points.
\end{lemma}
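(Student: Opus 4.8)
The plan is to prove Lemma \ref{lem-common} by a direct analysis of when a zero of $E_{2r}(\tau)$ forces a shared zero with $E_{2r}(m\tau)$, using the transcendence result of Kohnen (Lemma \ref{lem-trans}) together with the location of the zeros from Rankin and Swinnerton-Dyer (Lemma \ref{lem-zero}). Suppose, seeking a contradiction, that some point $\tau_0 \in \mathcal{H}$ is a common zero, so that $E_{2r}(\tau_0)=0$ and $E_{2r}(m\tau_0)=0$, and suppose neither $\tau_0$ nor $m\tau_0$ is $\SL(2,\mathbb{Z})$-equivalent to an elliptic point $i$ or $\rho$. The idea is that by Lemma \ref{lem-trans} applied to the zero $\tau_0$ (after moving it into the fundamental domain), $\tau_0$ must be transcendental; and likewise applying Lemma \ref{lem-trans} to the zero $m\tau_0$ of $E_{2r}$, the point $m\tau_0$ must be transcendental. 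The contradiction I would aim to extract is that $\tau_0$ and $m\tau_0$ cannot both be transcendental while being related by multiplication by the integer $m>1$, once their algebraic relationship through the modular group is taken into account.

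More precisely, first I would reduce to studying representatives in the fundamental domain $\mathcal{F}$. A zero $\tau_0$ of $E_{2r}(\tau)$ is $\SL(2,\mathbb{Z})$-equivalent to a point $\tau_1 \in \mathcal{F}$ lying on the unit-circle arc of Lemma \ref{lem-zero}, and similarly $m\tau_0$ is equivalent to a point $\tau_2 \in \mathcal{F}$ on that arc. Writing $\tau_1 = \gamma_1 \tau_0$ and $\tau_2 = \gamma_2 (m\tau_0)$ for $\gamma_1, \gamma_2 \in \SL(2,\mathbb{Z})$, one obtains a relation of the form $\tau_2 = \gamma_2 \begin{pmatrix} m & 0 \\ 0 & 1 \end{pmatrix} \gamma_1^{-1} \tau_1$, i.e.\ $\tau_1$ and $\tau_2$ are related by a fixed element $A$ of $\mathrm{GL}_2^+(\mathbb{Q})$ of determinant $m$. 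Both $\tau_1$ and $\tau_2$ lie on the arc $|\tau|=1$ and are transcendental (being non-elliptic zeros, by Lemma \ref{lem-trans}). The key step is then to show that a point on the unit circle which is transcendental cannot be carried to another point on the unit circle by a rational Möbius transformation of determinant $m>1$ unless the source or target is an elliptic fixed point.

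For this key step I would exploit that on the arc we may write $\tau = e^{i\theta}$, so $\tau$ satisfies $\tau \bar\tau = 1$, i.e.\ $\tau$ lies on an algebraic curve defined over $\mathbb{Q}$. The Möbius relation $\tau_2 = A\tau_1$ with $A$ having rational entries, combined with $|\tau_1|=|\tau_2|=1$, yields a nontrivial algebraic equation relating $\tau_1$ to its complex conjugate with coefficients in $\mathbb{Q}(A)$. The plan is to show that such an equation, together with the constraint $|\tau_1|=1$, confines $\tau_1$ to a finite set of algebraic numbers, contradicting its transcendence unless the degenerate cases (where $A$ fixes the elliptic points, forcing $\tau_1 \in \{i,\rho\}$ up to equivalence) occur. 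Concretely, setting $\tau_1 = e^{i\theta_1}$ and computing $|A\tau_1|^2 = 1$ gives a real-algebraic equation in $\cos\theta_1$ with rational coefficients; its solutions $\theta_1$ are algebraic, hence $\tau_1 = e^{i\theta_1}$ would be a root of unity or at least algebraic, contradicting transcendence.

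The main obstacle I anticipate is handling the degenerate and boundary cases cleanly: the inequality $m>1$ must be used to rule out the trivial transformation, and I must carefully track the elliptic exceptions, since $E_{2r}$ genuinely can vanish at $i$ and $\rho$ (as noted after Lemma \ref{lem-zero}, $E_{2r}(i)=0$ when $r$ is odd and $E_{2r}(\rho)=0$ when $3\nmid r$), and precisely those shared-elliptic-point situations are the ones the statement explicitly allows. I therefore expect the bulk of the careful work to lie in showing that the only way the rational transformation $A$ of determinant $m$ can map one unit-circle non-elliptic point to another is excluded by transcendence, while the elliptic points form the genuine exceptions; the verification that the resulting algebraic equation forces algebraicity of $\tau_1$ is the crux and will require the explicit real-part computation outlined above.
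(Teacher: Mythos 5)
Your strategy is the paper's strategy: move the common zero into the fundamental domain, use Lemma \ref{lem-zero} to place both zeros on the arc $|\tau|=1$, write the relation $w'=\gamma\bigl(\begin{smallmatrix}m&0\\0&1\end{smallmatrix}\bigr)w$ with $\gamma=\bigl(\begin{smallmatrix}a&b\\c&d\end{smallmatrix}\bigr)\in\SL(2,\mathbb{Z})$, impose $|w|=|w'|=1$ to get the integer-coefficient linear equation $2(ab-cd)m\,\RE w=(c^2-a^2)m^2+(d^2-b^2)$, and invoke Kohnen's transcendence theorem (Lemma \ref{lem-trans}) for the contradiction. One small correction to your write-up of the transcendence step: ``the solutions $\theta_1$ are algebraic, hence $e^{i\theta_1}$ is algebraic'' is the wrong implication (by Lindemann it would go the other way); what the equation actually gives you, when nondegenerate, is that $\cos\theta_1=\RE w$ is \emph{rational}, whence $w=\RE w\pm i\sqrt{1-(\RE w)^2}$ is algebraic, contradicting Lemma \ref{lem-trans}.

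The genuine gap is the degenerate case. The modulus computation produces a linear equation in $\RE w$, and it confines $w$ only when the coefficient $ab-cd$ is nonzero; when $ab=cd$ the equation collapses to a condition on the matrix entries alone and says nothing whatsoever about $w$, so transcendence cannot be brought to bear. Your proposed fallback --- that the degenerate case is the one where $A$ fixes the elliptic points, forcing $\tau_1\in\{i,\rho\}$ --- is not how this case resolves (the elliptic exception was already removed at the start by hypothesis). The paper eliminates $ab=cd$ by elementary arithmetic: $ab=cd$ together with $(am)^2+b^2=(cm)^2+d^2$ forces $|am\pm b|=|cm\pm d|$, hence either $a^2=c^2$ and $b^2=d^2$ (which gives $ad+bc=0$ and so $ad=\tfrac12$, impossible) or $b^2=c^2m^2$ and $d^2=a^2m^2$ (which gives $m\mid ad-bc=1$, impossible since $m>1$). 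This is precisely where $m>1$ and $\det\gamma=1$ enter, and it is the step your outline does not supply and cannot be replaced by the transcendence argument.
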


\begin{proof}
 We prove by contradiction. Suppose there exists some $w$ which is not
 equivalent to $i$ or $\rho$ but $E_{2r}(w)=E_{2r}(m w)=0$. Without
 loss of generality, we assume that $w$ is in the fundamental domain
 of $\SL(2,\mathbb{Z})$. By Lemma \ref{lem-zero} we see that
 $|w|=1$. Since $m w$ is also a zero of $E_{2r}(\tau)$, there  exists
 some matrix $\begin{pmatrix} a& b \\ c& d \end{pmatrix} \in
 \SL(2,\mathbb{Z})$ and some zero $w'$ of $E_{2r}(\tau)$ in the
 fundamental domain such that
\begin{align}
\frac{amw+b}{cmw+d}=w'. \label{w-eq}
\end{align}
Again by Lemma \ref{lem-zero}, we have $|w'|=1$. Taking complex norm on both sides of \eqref{w-eq}, we see that
\begin{align}
\frac{(amw+b)(am\bar{w}+b)}{(cmw+d)(cm\bar{w}+d)}=w' \bar{w'}=1.
\end{align}
This implies
\begin{align}
2(ab-cd)m \RE w=(c^2-a^2)m^2+(d^2-b^2). \label{Imw}
\end{align}

If $ab\neq cd$, then \eqref{Imw} implies $\RE w \in \mathbb{Q}$. This together with $\IM w=\pm(1-\RE w)^{1/2}$ implies that $w$ is algebraic. By Lemma \ref{lem-trans} we see that $w$ must be equivalent to $i$ or $\rho$, which contradicts with the assumption.

If $ab=cd$, then \eqref{Imw} implies
\begin{align}
(am)^2+b^2=(cm)^2+d^2. \label{abcd}
\end{align}
Therefore,
\begin{align}
(am)^2+b^2\pm 2abm=(cm)^2+d^2 \pm 2cdm.
\end{align}
This implies $|am\pm b|=|cm\pm d|$ and therefore $a^2m^2-b^2=\pm (c^2m^2-d^2)$.

If $a^2m^2-b^2=c^2m^2-d^2$, then this together with \eqref{abcd} implies $a^2=c^2$ and $b^2=d^2$. We then deduce that $(ad-bc)(ad+bc)=a^2d^2-b^2c^2=0$. Since $ad-bc=1$, we must have $ad+bc=0$. But this implies $ad=\frac{1}{2}$, which is impossible.

If $a^2m^2-b^2=d^2-c^2m^2$, then this together with \eqref{abcd} implies $b^2=c^2m^2$ and $d^2=a^2m^2$. Therefore $m|b$ and $m|d$. But this implies $ad-bc=1\equiv 0$ (mod $m$), which is impossible.
\end{proof}

To prove Proposition \ref{prop-PQ}, we shall regard $E_{2r}(\ell \tau)$ and $E_{2r}(\tau)$ as modular forms in $\mathcal{M}_{2r}(\Gamma_0(\ell))$. We now discuss the number of zeros a modular form in $\mathcal{M}_{2r}(\Gamma_0(N))$ has at elliptic points. Let $\epsilon_2(N)$ and  $\epsilon_3(N)$ denote the number of elliptic points of orders 2 and 3 on $X_0(N)$, respectively. From \cite[p.\ 107]{Diamond-book} we find
\begin{align}
\epsilon_2(N)=\left\{\begin{array}{ll}
0 & \textrm{if $4|N$}, \\
\prod\limits_{p|N}\left(1+\left(\frac{-4}{p}\right) \right) & \textrm{otherwise},
\end{array} \right. \label{elliptic-2}\\
\epsilon_3(N)=\left\{\begin{array}{ll}
0 & \textrm{if $9|N$}, \\
\prod\limits_{p|N}\left(1+\left(\frac{-3}{p}\right) \right) & \textrm{otherwise}.
\end{array} \right. \label{elliptic-3}
\end{align}
Let $\alpha_2(N,k)$ and $\alpha_3(N,k)$ count the number of forced complex zeros of a form in $\mathcal{M}_k(\Gamma_0(N))$ at the elliptic points of order 2 and order 3, respectively. We have (see \cite[Eq.\ (3.3)]{Ahlgren}, for example)
\begin{align}\label{zero-formula}
(\alpha_2(N,k),\alpha_3(N,k))=\left\{\begin{array}{ll}
(\epsilon_2(N),2\epsilon_3(N)) & \textrm{if $k\equiv 2$ (mod 12)}, \\
(0, \epsilon_3(N)) & \textrm{if $k\equiv 4$ (mod 12)}, \\
(\epsilon_2(N),0) & \textrm{if $k\equiv 6$ (mod 12)}, \\
(0, 2\epsilon_3(N)) &\textrm{if $k\equiv 8$ (mod 12)}, \\
(\epsilon_2(N),\epsilon_3(N)) &\textrm{if $k\equiv 10$ (mod 12)}, \\
(0,0) & \textrm{if $k\equiv 0$ (mod 12)}.
\end{array}\right.
\end{align}
Let $\alpha(N,k)$ be the number of zeros (counting multiplicities) of $g$ at elliptic points. We have
\begin{align}
\alpha(N,k)=\frac{1}{2}\alpha_2(N,k)+\frac{1}{3}\alpha_3(N,k).
\end{align}

Now we are ready to prove Proposition \ref{prop-PQ}.
\begin{proof}[Proof of Proposition \ref{prop-PQ}]
Assume that the zeros of $E_{2r}(\ell \tau)$ are $z_1, z_2, \dots, z_n$ and the zeros of $E_{2r}(\tau)$ are $w_1, w_2, \dots, w_n$. Since $Y_\ell(\tau)$ is a hauptmodul on $\Gamma_0(\ell)$, the function
\begin{align}\label{Y-construct}
Y(\tau):=\prod\limits_{j=1}^n\frac{Y_{\ell}(\tau)-Y_{\ell}(z_j)}{Y_{\ell}(\tau)-Y_\ell(w_j)}
\end{align}
has the same zeros and poles as $\frac{E_{2r}(\ell \tau)}{E_{2r}(\tau)}$. Therefore, $Y(\tau)$ must equal to  $\frac{E_{2r}(\ell \tau)}{E_{2r}(\tau)}$ up to a scalar factor. This proves the existence of the polynomials $P_{2r,\ell}(x)$ and $Q_{2r,\ell}(x)$.

 Now we determine the degrees of $P_{2r,\ell}$ and $Q_{2r,\ell}$. By the valence formula \eqref{valence} and the fact that
\begin{align}
[\SL(2,\mathbb{Z}):\Gamma_0(\ell)]=1+\ell,
\end{align}
we see that the total number of zeros (counting multiplicities) of $E_{2r}(\tau)$ in  $X_0(\ell)$ is $\frac{r}{6}(\ell+1)$. Note that it is possible that one of the $z_j$'s is equal to one of the $w_k$'s. In this case, the corresponding factors in \eqref{Y-construct} cancel out and the degrees of the polynomials are reduced. According to Lemma \ref{lem-common}, the only possible common zeros of $E_{2r}(\ell \tau)$ and $E_{2r}(\tau)$ are at the elliptic points of $X_0(\ell)$. Thus we have
\begin{align}
\deg P_{2r,\ell}=\deg Q_{2r,\ell}=\frac{r}{6}(\ell+1)-\alpha(\ell,2r). \label{deg-formula}
\end{align}

By  \eqref{elliptic-2} and \eqref{elliptic-3} we find that
\begin{align*}
\epsilon_2(5)=2, \quad \epsilon_3(5)=0, \quad
\epsilon_2(7)=0, \quad \epsilon_3(7)=2, \quad
\epsilon_2(13)=2, \quad \epsilon_3(13)=2.
\end{align*}
From \eqref{zero-formula} we deduce that
\begin{align}
\alpha(5,2r)=\left\{\begin{array}{ll}
0 & \textrm{if $r\equiv 0$ (mod 2)} \\
1 &\textrm{if $r\equiv 1$ (mod 2)},
\end{array}\right.\\
\alpha(7,2r)=\left\{\begin{array}{ll}
0 & \textrm{if $r\equiv 0$ (mod 3)}, \\
\frac{4}{3} &\textrm{if $r\equiv 1$ (mod 3)}, \\
\frac{2}{3} &\textrm{if $r\equiv 2$ (mod 3)},
\end{array}\right. \\
\alpha(13,2r)=\left\{\begin{array}{ll}
0 & \textrm{if $r\equiv 0$ (mod 6)}, \\
\frac{7}{3} & \textrm{if $r\equiv 1$ (mod 6)}, \\
\frac{2}{3} & \textrm{if $r\equiv 2$ (mod 6)}, \\
1 & \textrm{if $r\equiv 3$ (mod 6)}, \\
\frac{4}{3} & \textrm{if $r\equiv 4$ (mod 6)}, \\
\frac{5}{3} & \textrm{if $r\equiv 5$ (mod 6)}.
\end{array}\right.
\end{align}
Substituting these results into \eqref{deg-formula}, we get the desired formulas for $d_{\ell}(r)$ with $\ell \in \{5,7,13\}$.
\end{proof}

Let $\mathbb{F}[x]$ denote the polynomial ring over a field $\mathbb{F}$. For $n\geq 0$, let $\mathbb{F}_n[x]$ denote the set of all polynomials in $x$ with degree no more than $n$ together with the zero polynomial. For convention, when $n<0$, we let $\mathbb{F}_{n}[x]=\{0\}$. We have the following  useful consequence of Proposition \ref{prop-PQ}.
\begin{corollary}\label{coro-sum}
For each $r\geq 2$ and $\ell \in \{5, 7, 13\}$, we have
\begin{align}
E_{2r}(\tau)\mathbb{C}[Y_{\ell}(\tau)]+ E_{2r}(\ell \tau)\mathbb{C}[Y_{\ell}(\tau)]=E_{2r}(\tau)\mathbb{C}[Y_{\ell}(\tau)]\oplus E_{2r}(\ell \tau)\mathbb{C}_{d_{\ell}(r)-1}[Y_{\ell}(\tau)].
\end{align}
\end{corollary}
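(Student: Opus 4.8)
The plan is to reduce everything to the single algebraic relation furnished by Proposition~\ref{prop-PQ}. Writing $d=d_\ell(r)$, $P=P_{2r,\ell}$ and $Q=Q_{2r,\ell}$ for brevity, that proposition gives $\deg P=\deg Q=d$, $\gcd(P,Q)=1$, and, after clearing the denominator in $E_{2r}(\ell\tau)/E_{2r}(\tau)=P(Y_\ell)/Q(Y_\ell)$, the key identity
\begin{equation}\label{eq:PQrel}
E_{2r}(\ell\tau)\,Q(Y_\ell)=E_{2r}(\tau)\,P(Y_\ell).
\end{equation}
This says precisely that the product $E_{2r}(\ell\tau)Q(Y_\ell)$ already lies in $E_{2r}(\tau)\mathbb{C}[Y_\ell]$, which is the mechanism that lets me trade high-degree multiples of $E_{2r}(\ell\tau)$ for elements of the first summand.

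First I would establish the containment ``$\subseteq$''. A typical element of the left-hand side has the shape $E_{2r}(\tau)A(Y_\ell)+E_{2r}(\ell\tau)B(Y_\ell)$ with $A,B\in\mathbb{C}[x]$. Performing Euclidean division of $B$ by $Q$, I write $B=SQ+R$ with $\deg R\le d-1$ (the remainder lands in $\mathbb{C}_{d-1}[x]$), and then invoke \eqref{eq:PQrel} to replace $E_{2r}(\ell\tau)S(Y_\ell)Q(Y_\ell)$ by $E_{2r}(\tau)S(Y_\ell)P(Y_\ell)$. After this substitution the element becomes
\begin{equation*}
E_{2r}(\tau)\bigl(A(Y_\ell)+S(Y_\ell)P(Y_\ell)\bigr)+E_{2r}(\ell\tau)R(Y_\ell)\in E_{2r}(\tau)\mathbb{C}[Y_\ell]+E_{2r}(\ell\tau)\mathbb{C}_{d-1}[Y_\ell].
\end{equation*}
The reverse containment is immediate since $\mathbb{C}_{d-1}[Y_\ell]\subseteq\mathbb{C}[Y_\ell]$, so the two sums coincide as sets, and it only remains to check that the sum on the right is direct.

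For directness I must show that $E_{2r}(\tau)A(Y_\ell)+E_{2r}(\ell\tau)R(Y_\ell)=0$ with $\deg R\le d-1$ forces $A=R=0$. The crucial input here is that $Y_\ell$ is a hauptmodul on $\Gamma_0(\ell)$, hence a non-constant and therefore transcendental function; consequently a polynomial in $Y_\ell$ vanishes identically only if it is the zero polynomial, and any identity of rational functions in $Y_\ell$ is a genuine identity of rational functions in an indeterminate $x$. Assuming $R\neq0$ and dividing by $E_{2r}(\tau)R(Y_\ell)$, comparison with \eqref{eq:PQrel} yields $P(x)R(x)=-A(x)Q(x)$ as a polynomial identity; since $\gcd(P,Q)=1$ this forces $Q\mid R$, contradicting $\deg R\le d-1<d=\deg Q$. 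Hence $R=0$, whereupon $E_{2r}(\tau)A(Y_\ell)=0$ and the transcendence of $Y_\ell$ (together with $E_{2r}(\tau)\not\equiv0$) gives $A=0$.

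I expect the only genuinely delicate point to be the passage from identities of modular functions to identities of polynomials, which rests entirely on the transcendence of the hauptmodul $Y_\ell$ over $\mathbb{C}$; once that is in hand, the remaining arguments are elementary polynomial algebra (Euclidean division for the spanning claim and coprimality together with a degree count for the directness claim).
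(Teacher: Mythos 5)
Your argument is correct and is essentially the one the paper intends: the corollary is stated there as a direct consequence of Proposition \ref{prop-PQ}, and your use of the cleared relation $E_{2r}(\ell\tau)Q_{2r,\ell}(Y_\ell)=E_{2r}(\tau)P_{2r,\ell}(Y_\ell)$ together with Euclidean division (for the spanning claim) and coprimality plus the degree bound $\deg R\le d_\ell(r)-1<\deg Q_{2r,\ell}$ (for directness) fills in exactly the omitted details. The appeal to the transcendence of the hauptmodul $Y_\ell$ to pass from functional to polynomial identities is the right justification and matches how the paper uses $Y_\ell$ elsewhere.
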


Let $\lceil x\rceil$ denote the least integer greater than or equal to $x$. We can use $E_{2r}(\tau)$, $E_{2r}(\ell \tau)$ and $Y_{\ell}(\tau)$ to represent $L_{2r,\ell, k}$. For this we first prove the following fact.
\begin{lemma}\label{lem-UEZ-repn}
Assume that $\ell\in\{5,7,13\}$, $r\ge 0$ and $r\neq 1$. For an integer $n$, let
\begin{align*}
  a=a_{\ell,n}=\left\lceil\frac{(\ell^2-1)n}{24\ell}\right\rceil,
  \qquad
  b=b_{\ell,n}=\frac{(\ell^2-1)n}{24}-a.
\end{align*}
Assume that $g(\tau)\in \mathcal{M}_{2r}(\Gamma_0(\ell))\cap\Q[[q]]$. \\
$(1)$   If $n$ and $r$ satisfies $b\ge d_\ell(r)-1$, then
    $$
    (g(\tau)Z_\ell(\tau)^n)\big| U_\ell\in
    Y_\ell(\tau)^aE_{2r}(\tau)\Q_b[Y_\ell(\tau)]\oplus
    Y_\ell(\tau)^aE_{2r}(\ell\tau)\Q_{d_\ell(r)-1}[Y_\ell(\tau)].
    $$
$(2)$ If $n\ge0 $ and $b<d_\ell(r)-1$, then
    $$
    (g(\tau)Z_\ell(\tau)^n)\big| U_\ell\in
    Y_\ell(\tau)^aE_{2r}(\tau)\Q_{d_\ell(r)-1}[Y_\ell(\tau)]\oplus
    Y_\ell(\tau)^aE_{2r}(\ell\tau)\Q_{d_\ell(r)-1}[Y_\ell(\tau)].
    $$
$(3)$ If $n<0$ and $|a|\ge d_\ell(r)-1$, then
    $$
    (g(\tau)Z_\ell(\tau)^n)\big| U_\ell\in
    E_{2r}(\tau)\Q_{|a|}[Y_\ell(\tau)^{-1}]\oplus
    E_{2r}(\ell\tau)\Q_{d_\ell(r)-1}[Y_\ell(\tau)^{-1}].
    $$
$(4)$ If $n<0$ and $|a|<d_\ell(r)-1$, then
    $$
    (g(\tau)Z_\ell(\tau)^n)\big| U_\ell\in
    E_{2r}(\tau)\Q_{d_\ell(r)-1}[Y_\ell(\tau)^{-1}]\oplus
    E_{2r}(\ell\tau)\Q_{d_\ell(r)-1}[Y_\ell(\tau)^{-1}].
    $$
\end{lemma}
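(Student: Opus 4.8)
The plan is to set $F(\tau):=\bigl(g(\tau)Z_\ell(\tau)^n\bigr)\mid U_\ell$ and to prove that $F\in\mathcal M_{2r}^!(\Gamma_0(\ell))$, so that the four assertions reduce to estimating the orders of $F$ at the two cusps of $\Gamma_0(\ell)$ and then putting $F$ into a normal form relative to $E_{2r}(\tau)$, $E_{2r}(\ell\tau)$ and $Y_\ell$. Since $\eta$ has no zeros on $\h$, the factor $Z_\ell^n$ is holomorphic and non-vanishing on $\h$ for every $n\in\mathbb Z$; hence $gZ_\ell^n\in\mathcal M_{2r}^!(\Gamma_0(\ell^2))$, and because $U_\ell$ lowers the level from $\ell^2$ to $\ell$ we get $F\in\mathcal M_{2r}^!(\Gamma_0(\ell))$. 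Throughout put $c_\ell:=(\ell^2-1)/24$, an integer for $\ell\in\{5,7,13\}$, and $d:=d_\ell(r)$.

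First I would upgrade Corollary \ref{coro-sum} from $\Q[Y_\ell]$ to $\Q[Y_\ell,Y_\ell^{-1}]$. Proposition \ref{prop-PQ} gives $E_{2r}(\ell\tau)Q_{2r,\ell}(Y_\ell)=E_{2r}(\tau)P_{2r,\ell}(Y_\ell)$ with $\gcd(P_{2r,\ell},Q_{2r,\ell})=1$ and $\deg P_{2r,\ell}=\deg Q_{2r,\ell}=d$; since $P_{2r,\ell}(0)=Q_{2r,\ell}(0)=1$, the variable $Y_\ell$ is a unit modulo $Q_{2r,\ell}$. Using Lemma \ref{lem-common}, the valence formula and the elliptic-point count \eqref{zero-formula}, the form $\phi:=E_{2r}(\tau)/Q_{2r,\ell}(Y_\ell)=E_{2r}(\ell\tau)/P_{2r,\ell}(Y_\ell)$ has, apart from the forced zeros at elliptic points, no zeros on $\h$; consequently $\mathcal M_{2r}^!(\Gamma_0(\ell))=\phi\cdot\Q[Y_\ell,Y_\ell^{-1}]$ is free of rank one. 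Writing $F=\phi\,G$ with $G\in\Q[Y_\ell,Y_\ell^{-1}]$, a representation $F=E_{2r}(\tau)A(Y_\ell)+E_{2r}(\ell\tau)B(Y_\ell)$ is the same as a solution of $AQ_{2r,\ell}+BP_{2r,\ell}=G$; since $\gcd(P_{2r,\ell},Q_{2r,\ell})=1$ this is solvable, and the class of $B$ modulo $Q_{2r,\ell}$ can be represented in whichever residue system is convenient — by $Y_\ell^{a}\Q_{d-1}[Y_\ell]$ when $n\ge0$ and by $\Q_{d-1}[Y_\ell^{-1}]$ when $n<0$ — whereupon $A=(G-BP_{2r,\ell})/Q_{2r,\ell}$ is automatically a Laurent polynomial whose top and bottom degrees are controlled by the cusp orders of $F$.

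The order at $\infty$ is easy: $\ord_\infty Z_\ell=c_\ell$ and $\ord_\infty g\ge0$ give $\ord_\infty(gZ_\ell^n)\ge nc_\ell$, hence $\ord_\infty F\ge\lceil nc_\ell/\ell\rceil=a$; this forces the lowest $Y_\ell$-power of $G$ to be at least $a$ (and bounds the negative powers when $n<0$). The main obstacle is the order at the cusp $0$, which I would extract from the Fricke involution and a trace. Using $\ord_0F=\ord_\infty(F\mid_{2r}W_\ell)$ and the coset identity $\left(\begin{smallmatrix}1&j\\0&\ell\end{smallmatrix}\right)W_\ell=W_{\ell^2}\left(\begin{smallmatrix}1&0\\-j\ell&1\end{smallmatrix}\right)$, one rewrites $F\mid_{2r}W_\ell$ as a scalar multiple of $\sum_{j=0}^{\ell-1}h\mid_{2r}\gamma_j$, the trace from $\Gamma_0(\ell^2)$ to $\Gamma_0(\ell)$ of $h:=(gZ_\ell^n)\mid_{2r}W_{\ell^2}$, where the $\gamma_j=\left(\begin{smallmatrix}1&0\\-j\ell&1\end{smallmatrix}\right)$ run over coset representatives of $\Gamma_0(\ell^2)\backslash\Gamma_0(\ell)$. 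A short computation with $\eta(-1/\tau)=\sqrt{-i\tau}\,\eta(\tau)$ yields $Z_\ell\mid_0W_{\ell^2}=1/(\ell Z_\ell)$ and $g\mid_{2r}W_{\ell^2}=\ell^{r}\tilde g(\ell\tau)$ with $\tilde g:=g\mid_{2r}W_\ell$ holomorphic, so that $h=\ell^{r-n}\tilde g(\ell\tau)Z_\ell^{-n}$.

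Finally I would read off the cusp orders of $Z_\ell$ on $\Gamma_0(\ell^2)$ from the standard order formula for eta-quotients: $\ord_\infty Z_\ell=c_\ell$, $\ord_0Z_\ell=-c_\ell$, and — the key point — $\ord_{1/\ell}Z_\ell=0$. With $\tilde g(\ell\tau)$ holomorphic this gives $\ord_\infty h\ge-nc_\ell$ and $\ord_{1/\ell}h\ge0$; since the cusps $\gamma_j(\infty)$ are $\infty$ (for $j=0$) and the cusp $1/\ell$ (for $j\ne0$), the trace obeys $\ord_\infty(F\mid_{2r}W_\ell)\ge\min(-nc_\ell,0)$, i.e.\ $\ord_0F\ge\min(-nc_\ell,0)$. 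For $n\ge0$ this equals $-(a+b)$ and for $n<0$ it equals $0$; in each case it caps the top $Y_\ell$-degree of $G$, and substituting into $A=(G-BP_{2r,\ell})/Q_{2r,\ell}$ with the chosen representative of $B$ yields exactly the degree bounds of $(1)$–$(4)$, the dichotomy between $(1)$ and $(2)$ (resp.\ $(3)$ and $(4)$) being the comparison of $b$ (resp.\ $|a|$) with $d-1$. The genuinely delicate part is this cusp-$0$ estimate: the Fricke and width normalizations must be tracked carefully, and it is precisely the vanishing $\ord_{1/\ell}Z_\ell=0$ that keeps the trace holomorphic enough at $\infty$ to produce the stated bound.
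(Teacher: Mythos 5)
Your proposal is correct and follows essentially the same route as the paper: the heart of both arguments is the pair of cusp-order estimates $\ord_\infty\bigl((gZ_\ell^n)\mid U_\ell\bigr)\ge a$ and $\ord_0\bigl((gZ_\ell^n)\mid U_\ell\bigr)\ge -(a+b)$ (resp.\ $\ge 0$ for $n<0$), and your Fricke--trace computation with $\ord_{1/\ell}Z_\ell=0$ on $\Gamma_0(\ell^2)$ is exactly the paper's direct expansion of $Z_\ell$ at the matrices $\left(\begin{smallmatrix}k\ell&-1\\ \ell^2&0\end{smallmatrix}\right)$ in different packaging. The only real divergence is the endgame: the paper counts $\dim\mathcal M_{2r}^{(b)}(\Gamma_0(\ell))=d_\ell(r)+b+1$ and exhibits $\{E_{2r}(\tau)Y_\ell^k\}\cup\{E_{2r}(\ell\tau)Y_\ell^k\}$ as an explicit basis, whereas you solve $AQ_{2r,\ell}+BP_{2r,\ell}=G$ by B\'ezout in $\Q[Y_\ell,Y_\ell^{-1}]$ after trivializing $\mathcal M_{2r}^!(\Gamma_0(\ell))$ over $\widetilde E_{2r,\ell}$ --- an equivalent but slightly heavier formulation, since it additionally requires the fact (Remark 3.13 in the paper) that $\widetilde E_{2r,\ell}$ has no zeros on $\mathcal H$ beyond the forced elliptic ones.
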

\begin{proof}
Let $f(\tau)=(g(\tau)Z_\ell(\tau)^n)|U_\ell$. We claim that in the case $n\ge 0$, we have
  $$
  \ord_\infty f\ge a, \qquad
  \ord_0f\ge -\frac{(\ell^2-1)n}{24}=-(a+b).
  $$

Indeed, since
$Z_\ell(\tau)=\eta(\ell^2\tau)/\eta(\tau)=q^{(\ell^2-1)/24}+\cdots$
 and $g(\tau)$ is holomorphic at $\infty$, the order of
  $f$ at infinity is at least $\lceil(\ell^2-1)n/(24\ell)\rceil=a$.
  To find the order of $f$ at $0$, we
  compute the Fourier expansion of
  $f|\left(\begin{smallmatrix}0&-1\\\ell&0\end{smallmatrix}
  \right)$. We have
  $$
  f\Big|\begin{pmatrix}0&-1\\\ell&0\end{pmatrix}
  =\frac1\ell\sum_{k=0}^{\ell-1}g(\tau)Z_\ell(\tau)^n\Big|
  \begin{pmatrix}1&k\\0&\ell\end{pmatrix}
  \begin{pmatrix}0&-1\\\ell&0\end{pmatrix}
  =\frac1\ell\sum_{k=0}^{\ell-1}g(\tau)Z_\ell(\tau)^n\Big|
  \begin{pmatrix}k\ell&-1\\\ell^2&0\end{pmatrix}.
  $$
  When $k=0$, we have
  \begin{equation} \label{equation: k=0}
  Z_\ell\left(-\frac{1}{\ell^2\tau}\right)=\frac{\eta(-1/\tau)}{\eta(-1/(\ell^2\tau))}
  =\frac{\eta(\tau)}{\ell\eta(\ell^2\tau)}
  =\frac1\ell q^{-(\ell^2-1)/24}+\cdots
  \end{equation}
  When $1\leq k\leq \ell-1$, we have
  $$
  \eta\left(\ell^2\frac{k\ell\tau-1}{\ell^2\tau}\right)
  =\eta(k\ell-1/\tau)=e^{2\pi ik\ell/24}
  \sqrt{-i\tau}\eta(\tau),
  $$
  and
  $$
  \eta\left(\frac{k\ell\tau-1}{\ell^2\tau}\right)
  =\eta\left(\begin{pmatrix}k&u\\\ell&v\end{pmatrix}
    (\tau-v/\ell)\right)=\epsilon_k\sqrt{-i\ell \tau}
  \eta(\tau-v/\ell)
  $$
  for some $24$th root of unity $\epsilon_k$, where $u$ and $v$ are
  integers satisfying $kv-\ell u=1$ and $0<v<\ell$. Therefore,
  we have
  \begin{equation} \label{equation: k<>0}
  Z_\ell\left(\begin{pmatrix}k\ell&-1\\\ell^2&0\end{pmatrix}
  \tau\right)=c_{k,0}+c_{k,1}q+\cdots
  \end{equation}
  for some nonzero number $c_{k,0}$. Together with \eqref{equation: k=0},
  this shows that the order of pole of $f$ at $0$ is at most
  $(\ell^2-1)n/24=a+b$. That is, $\ord_0f\ge-(a+b)$. This proves the
  claim.

  Now we consider the function $f(\tau)Y_\ell(\tau)^{-a}$. This is a
  weakly holomorphic modular form of weight $2r$ on $\Gamma_0(\ell)$
  that is holomorphic at $\infty$ and has a pole of
  order at most $(a+b)-a=b$ at $0$. Let
  $\mathcal{M}_{2r}^{(b)}(\Gamma_0(\ell))$ be the space of all such weakly
  holomorphic forms. Using the
  Riemann-Roch theorem or using the fact $\dim
  \mathcal{M}_{2r}(\Gamma_0(\ell))=d_\ell(r)+1$, we find that
  $$
  \dim \mathcal{M}_{2r}^{(b)}(\Gamma_0(\ell))=d_\ell(r)+b+1.
  $$

  Assume first that $b\ge d_\ell(r)-1$. It is clear that the $d_\ell(r)+b+1$
  functions
  $$
  E_{2r}(\tau)Y_\ell(\tau)^k, \quad k=0,\ldots, b
  $$
  and
  $$
  E_{2r}(\ell\tau)Y_\ell(\tau)^k, \quad k=0,\ldots,d_{\ell}(r)-1
  $$
  all belong to the space $\mathcal{M}_{2r}^{(b)}(\Gamma_0(\ell))$. Moreover,
  by Proposition \ref{prop-PQ}, they are linearly independent over $\mathbb C$.
  Therefore, they form a basis for the space
  $\mathcal{M}_{2r}^{(b)}(\Gamma_0(\ell))$. This proves the lemma for the case
  $b\ge d_{\ell}(r)-1$.

  When $n\ge 0$ and $b<d_{\ell}(r)-1$, we have
  $$
  f(\tau)Y_\ell(\tau)^{-a}\in \mathcal{M}_{2r}^{(b)}(\Gamma_0(\ell))
  \subset \mathcal{M}_{2r}^{(d_{\ell}(r)-1)}(\Gamma_0(\ell)).
  $$
  We can similarly show that
  $E_{2r}(\tau)Y_\ell(\tau)^k,E_{2r}(\ell\tau)Y_\ell(\tau)^k$,
  $k=0,\ldots,d_{\ell}(r)-1$, form a basis for
  $\mathcal{M}_{2r}^{(d_{\ell}(r)-1)}(\Gamma_0(\ell))$ and the assertion for the case
  $n\ge 0$ and $b<d_{\ell}(r)-1$ follows. We next consider the case $n<0$.

  In the case $n<0$, \eqref{equation: k=0} and \eqref{equation: k<>0}
  show that
  $$
  \ord_\infty f\ge a, \qquad \ord_0 f\ge 0,
  $$
  i.e., $f\in \mathcal{M}^{\{|a|\}}_{2r}(\Gamma_0(\ell))$, where
  for a nonnegative integer $m$, we let
  $\mathcal{M}^{\{m\}}_{2r}(\Gamma_0(\ell))$ denote the space of weakly holomorphic modular forms of weight $2r$ on $\Gamma_0(\ell)$ which is holomorphic at 0 and has a pole of order at most $m$ at $\infty$ . As in the previous cases, we have
  $$
  \dim \mathcal{M}^{\{m\}}_{2r}(\Gamma_0(\ell))=m+d_{\ell}(r)+1.
  $$
  When $m\ge d_{\ell}(r)-1$,
  $$
  E_{2r}(\tau)Y_\ell(\tau)^{-k}, \quad k=0,\ldots,m,
  $$
  and
  $$
  E_{2r}(\ell\tau)Y_\ell(\tau)^{-k},\quad k=0,\ldots,d_{\ell}(r)-1,
  $$
  form a basis for $\mathcal{M}^{\{m\}}_{2r}(\Gamma_0(\ell))$. Then Parts (3) and
  (4) of the lemma follow.
\end{proof}

\begin{theorem}\label{thm-Li-E-repn}
Let $r\geq 0$ and $r\neq 1$. For $\ell \in \{5,7, 13\}$ and $k\geq 1$, we have
\begin{align}\label{L-rli}
L_{2r,\ell, k}(\tau) \in E_{2r}(\tau)\mathbb{Q}[Y_{\ell}(\tau)] \oplus E_{2r}(\ell \tau)\mathbb{Q}_{d_{\ell}(r)-1}[Y_{\ell}(\tau)].
\end{align}
\end{theorem}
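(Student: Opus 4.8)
The plan is to prove the membership by induction on $k$, after first reinterpreting the target space. Write $V:=E_{2r}(\tau)\mathbb{Q}[Y_\ell]\oplus E_{2r}(\ell\tau)\mathbb{Q}_{d_\ell(r)-1}[Y_\ell]$. Since $P_{2r,\ell}$ and $Q_{2r,\ell}$ have rational coefficients, the rational form of Corollary \ref{coro-sum} gives $V=E_{2r}(\tau)\mathbb{Q}[Y_\ell]+E_{2r}(\ell\tau)\mathbb{Q}[Y_\ell]$. I would then show that $V$ coincides with the space $W$ of weight-$2r$ weakly holomorphic modular forms on $\Gamma_0(\ell)$ with rational $q$-coefficients that are holomorphic at the cusp $\infty$. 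The inclusion $V\subseteq W$ is immediate: $E_{2r}(\tau)$ and $E_{2r}(\ell\tau)$ are holomorphic on $\Gamma_0(\ell)$, while $Y_\ell$ is a hauptmodul with a simple zero at $\infty$ and its only pole at the cusp $0$, so every generator of $V$ is holomorphic at $\infty$ with poles confined to $0$. For $W\subseteq V$, fix $f\in W$ with a pole of order at most $m$ at $0$; the dimension count $\dim\mathcal{M}^{(m)}_{2r}(\Gamma_0(\ell))=d_\ell(r)+m+1$ from the proof of Lemma \ref{lem-UEZ-repn}, together with the linear independence of $\{E_{2r}(\tau)Y_\ell^j\}_{j=0}^{m}\cup\{E_{2r}(\ell\tau)Y_\ell^j\}_{j=0}^{d_\ell(r)-1}$ (which follows from $\gcd(P_{2r,\ell},Q_{2r,\ell})=1$ and $\deg P_{2r,\ell}=\deg Q_{2r,\ell}=d_\ell(r)$ in Proposition \ref{prop-PQ}), shows these forms are a basis of that space, whence $f\in V$. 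Letting $m\to\infty$ gives $W=V$.

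With $V=W$ in hand, the recursion \eqref{L-odd-defn}--\eqref{L-even-defn} reduces to proving that $W$ is closed under the two maps $f\mapsto f\mid U_\ell$ (the even step) and $f\mapsto(Z_\ell f)\mid U_\ell$ (the odd step). Since $L_{2r,\ell,0}=E_{2r}(\tau)\in W$, the induction then delivers $L_{2r,\ell,k}\in W=V$ for all $k\ge1$. For the even step, if $f=\sum_{n\ge0}a_nq^n\in W$ then $f\mid U_\ell=\sum_{n\ge0}a_{\ell n}q^n$ is again holomorphic at $\infty$; as $U_\ell$ preserves weight-$2r$ weakly holomorphic forms on $\Gamma_0(\ell)$ and preserves rationality, $f\mid U_\ell\in W$. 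For the odd step, recall that $Z_\ell=\eta(\ell^2\tau)/\eta(\tau)$ is a weight-$0$ modular function on $\Gamma_0(\ell^2)$ that is holomorphic and nonvanishing on $\mathcal{H}$ with $\ord_\infty Z_\ell=(\ell^2-1)/24>0$; hence $Z_\ell f$ is a weight-$2r$ weakly holomorphic form on $\Gamma_0(\ell^2)$ with $\ord_\infty(Z_\ell f)>0$, and applying $U_\ell$ keeps $\ord_\infty\ge0$ and rationality while dropping the level from $\Gamma_0(\ell^2)$ to $\Gamma_0(\ell)$.

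The step that carries the genuine modular content, and the one I expect to be the main obstacle, is precisely this level drop $U_\ell\colon\mathcal{M}^!_{2r}(\Gamma_0(\ell^2))\to\mathcal{M}^!_{2r}(\Gamma_0(\ell))$: one must verify that $(Z_\ell f)\mid U_\ell$ is invariant under all of $\Gamma_0(\ell)$, not merely $\Gamma_0(\ell^2)$. I would establish this by the coset computation already carried out in the proof of Lemma \ref{lem-UEZ-repn} (decomposing through the Fricke matrix $\left(\begin{smallmatrix}0&-1\\\ell&0\end{smallmatrix}\right)$), or equivalently invoke the standard fact that $U_p$ sends level $N$ to level $N/p$ when $p^2\mid N$. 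One must also confirm that the character of $Z_\ell$ on $\Gamma_0(\ell^2)$ is compatible with the trivial character of the $E_{2r}$-pieces, but this is exactly the setting in which Lemma \ref{lem-UEZ-repn} was proved. The excluded value $r=1$ enters only because $E_2$ is quasi-modular rather than modular, which breaks the entire framework; the value $r=0$ is subsumed by taking $E_0=1$, where $d_\ell(0)=0$ forces $V=\mathbb{Q}[Y_\ell]$ and the same argument yields the $p(n)$ case.

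Finally, I would relate this to the paper's chosen machinery. One can instead run the induction directly through Lemma \ref{lem-UEZ-repn}, expanding each $L_{2r,\ell,k-1}$ in the basis of $V$ and applying the lemma termwise; this has the advantage of yielding the sharper statement, needed in later sections, that the pole order at $0$ is governed by an explicit power $Y_\ell^a$. The only friction is that Lemma \ref{lem-UEZ-repn} is stated for holomorphic $g\in\mathcal{M}_{2r}(\Gamma_0(\ell))$, whereas the basis elements $E_{2r}(\tau)Y_\ell^j$ have poles at $0$. This is harmless: the proof of that lemma uses holomorphy of $g$ only at $\infty$ (to bound $\ord_\infty$), so it extends verbatim to $g\in W$, producing a possibly larger power of $Y_\ell$ against $E_{2r}(\tau)$ together with an $E_{2r}(\ell\tau)$-part of degree less than $d_\ell(r)$, which still lands in $V$ after reducing the higher $E_{2r}(\ell\tau)$-powers via Proposition \ref{prop-PQ}.
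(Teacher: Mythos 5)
Your proposal is correct, and it reorganizes the argument in a way that is genuinely different from the paper's. The paper proves \eqref{L-rli} by induction on $k$ working entirely inside the concrete spanning set: it first establishes the weaker containment $L_{2r,\ell,k}\in E_{2r}(\tau)\mathbb{Q}[Y_\ell]+E_{2r}(\ell\tau)\mathbb{Q}[Y_\ell]$ by expanding $L_{2r,\ell,k-1}$ termwise, rewriting each $E_{2r}(\tau)Y_\ell^{\,n}$ as $Y_\ell^{-n}\,E_{2r}(\tau)Z_\ell^{24n/(\ell-1)}$ so that Lemma \ref{lem-UEZ-repn} applies, and then passes to the direct-sum form via Corollary \ref{coro-sum}. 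You instead identify the target space once and for all with the space $W$ of weight-$2r$ weakly holomorphic forms on $\Gamma_0(\ell)$ with rational coefficients whose poles are supported at the cusp $0$, and reduce the induction to the stability of $W$ under $f\mapsto f\mid U_\ell$ and $f\mapsto (Z_\ell f)\mid U_\ell$. Both arguments ultimately rest on the same two facts --- the dimension count and explicit basis of $\mathcal{M}_{2r}^{(m)}(\Gamma_0(\ell))$ from the proof of Lemma \ref{lem-UEZ-repn} (equivalently, Proposition \ref{prop-PQ} together with Riemann--Roch), and the level-lowering property of $U_\ell$ --- but your packaging is cleaner for the purely qualitative statement \eqref{L-rli}, since it dispenses with the $Z_\ell$-rewriting manipulations and the case analysis on $a$ and $b$. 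What it gives up is precisely the quantitative content of Lemma \ref{lem-UEZ-repn} (the factor $Y_\ell^a$ recording the order of vanishing at $\infty$ and the bound $b$ on the pole order at $0$), which the paper needs later for the recursions \eqref{general-a-even-rec}--\eqref{general-a-odd-rec} and the $\ell$-adic estimates; you correctly flag this trade-off. Two routine points deserve an explicit line if you write this up: that a $\mathbb{C}$-linear combination of basis elements with rational $q$-expansions which itself has a rational $q$-expansion must have rational coefficients (needed to get $W\subseteq V$ over $\mathbb{Q}$ rather than $\mathbb{C}$), and the eta-quotient computation showing $Z_\ell$ is a weight-$0$ modular function on $\Gamma_0(\ell^2)$ with trivial character, so that $Z_\ell f$ really lies in $\mathcal{M}_{2r}^{!}(\Gamma_0(\ell^2))$ before $U_\ell$ is applied.
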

\begin{proof}
We first show that
\begin{align}\label{thm-Li-proof-0}
L_{2r,\ell, k}(\tau) \in E_{2r}(\tau)\mathbb{Q}[Y_{\ell}(\tau)] + E_{2r}(\ell \tau)\mathbb{Q}[Y_{\ell}(\tau)].
\end{align}
We proceed by induction on $k$. For $k=1$, since
\[L_{2r,\ell,1}=\left(E_{2r}(\tau) Z_{\ell}(\tau) \right)\mid U_{\ell},\]
by Parts (1) and (2) of Lemma \ref{lem-UEZ-repn} we know \eqref{thm-Li-proof-0} is true. Suppose \eqref{thm-Li-proof-0} is true for $k=2i-1$ where $i$ is some positive integer.
We first show that for each $n\geq 0$,
\begin{align}
\left(E_{2r}(\tau)Y_{\ell}(\tau)^n\right)\mid U_{\ell} \in E_{2r}(\tau)\mathbb{Q}[Y_{\ell}(\tau)]+E_{2r}(\ell \tau)\mathbb{Q}[Y_{\ell}(\tau)]. \label{thm-Li-proof-2}
\end{align}
In fact, we have
\begin{align}
\left(E_{2r}(\tau)Y_{\ell}(\tau)^n\right)\mid U_{\ell}&=\left(E_{2r}(\tau)\left(\frac{\eta(\ell^2 \tau)}{\eta(\tau)} \right)^{\frac{24n}{\ell-1}} \left(\frac{\eta(\ell \tau)}{\eta(\ell^2 \tau)} \right)^{\frac{24n}{\ell-1}} \right)\mid U_{\ell} \nonumber\\
&=Y_{\ell}(\tau)^{-n}\left(E_{2r}(\tau)Z_{\ell}(\tau)^{\frac{24n}{\ell-1}} \right)\mid U_{\ell}. \label{thm-Li-E-repn-proof-1}
\end{align}
From \eqref{thm-Li-E-repn-proof-1} and Parts (1)--(2) of Lemma \ref{lem-UEZ-repn} we see that \eqref{thm-Li-proof-2} holds.
Therefore, we have
\begin{align}\label{thm-Li-proof-4}
L_{2r,\ell, 2i}(\tau) =L_{2r,\ell, 2i-1}(\tau)\mid U_{\ell} \in E_{2r}(\tau)\mathbb{Q}[Y_{\ell}(\tau)] + E_{2r}(\ell \tau)\mathbb{Q}[Y_{\ell}(\tau)].
\end{align}
Next, since $L_{2r,\ell, 2i+1}(\tau)=\left(Z_{\ell}(\tau)L_{2r,\ell, 2i} \right)\mid U_{\ell}$, arguing similarly as before, we can show that
\eqref{thm-Li-proof-0} holds for $k=2i+1$. By induction we know that \eqref{thm-Li-proof-0} holds for all $k\geq 1$. The theorem then follows from Corollary \ref{coro-sum}.
\end{proof}
We now illustrate Theorem \ref{thm-Li-E-repn} with the case $r=2$. By direct computations we find that
\begin{align}
&L_{4,5,1}(\tau)=E_4(\tau)\left(\frac{3\cdot 5^4}{11}+\frac{6927\cdot 5}{11}Y_5 \right)-E_4(5\tau)\left(\frac{3\cdot 5^4}{11}+\frac{52\cdot 5^5}{11}Y_5 \right), \\
&L_{4,7,1}(\tau)=E_4(\tau)\left(\frac{23\cdot 7^3}{17}+\frac{40877\cdot 7}{17}Y_7 +7^6 Y_7^2 \right)-E_4(7\tau)\left(\frac{23\cdot 7^3}{17}+\frac{60\cdot 7^5}{17}Y_7 \right), \\
&L_{4,13,1}(\tau)=E_4(\tau)\Big(\frac{1146834\cdot {13}^3}{253}+\frac{87005903}{253}Y_{13}+\frac{3389888554\cdot 13}{253}Y_{13}^2  \nonumber \\
&\qquad \qquad \quad +\frac{46405373859\cdot {13}^2}{253}Y_{13}^3+ 1254970340\cdot {13}^7Y_{13}^4+4894384326\cdot {13}^8Y_{13}^5 \nonumber \\
&\qquad \qquad  \quad +10604499373\cdot {13}^9Y_{13}^6+10604499373\cdot {13}^9Y_{13}^7 \Big) \nonumber \\
&\qquad \qquad \quad -\frac{1}{253}E_4(13\tau)\left(1146834 \cdot {13}^3 + 322168080 \cdot {13}^4Y_{13}+4971613270\cdot {13}^6Y_{13}^2 \right. \nonumber \\
&\qquad \qquad \quad \left. + 12342150613\cdot {13}^6Y_{13}^3 \right).
\end{align}

If we use the above representations to analyze the $\ell$-adic orders of $e_{4}(\ell^{k}n+\delta_{\ell,k})$, then we need to examine two group of coefficients. Namely, one group of coefficients of $E_{4}(\tau)$ and one group of coefficients of $E_4(\ell \tau)$. This is not convenient and so we seek for alternative ways.

Recall the polynomials $P_{2r,\ell}$ and $Q_{2r,\ell}$ in Proposition \ref{prop-PQ}. We now define
\begin{align}
\widetilde{E}_{2r,\ell}=\widetilde{E}_{2r,\ell}(\tau):=\frac{E_{2r}(\tau)}{Q_{2r,\ell}(Y_{\ell})}=\frac{E_{2r}(\ell \tau)}{P_{2r,\ell}(Y_{\ell})}.
\end{align}
In particular, we let $\widetilde{E}_{0,\ell}=\widetilde{E}_{0,\ell}(\tau):=1$ (see Remark \ref{rem-prop-PQ}).
From the proof of Proposition \ref{prop-PQ} (in particular the construction of $Y(\tau)$ in \eqref{Y-construct}), we see that the zeros of $Q_{2r,\ell}$ are also zeros of $E_{2r}(\tau)$. Hence $\widetilde{E}_{2r,\ell}(\tau)$ is holomorphic on $\mathcal{H}\cup \{\infty\}$. This implies
\begin{align}
\widetilde{E}_{2r,\ell}(\tau) \in \mathcal{M}_{2r}(\Gamma_0(\ell)). \label{wE-modular}
\end{align}

  \begin{rem} \label{remark: E tilde}
    In fact, since $Y_\ell$ has a simple pole at the cusp
  $0$, the modular form $\widetilde E_{2r,\ell}(\tau)$ vanishes to the
  order $d_\ell(r)$ at $0$. Since $d_\ell(r)$ is the maximal possible
  order of vanishing of a modular form in $\mathcal
  M_{2r}(\Gamma_0(\ell))$ at $0$, up to scalars,
  $\widetilde E_{2r,\ell}(\tau)$ is the unique modular form with this
  vanishing order at $0$.
  \end{rem}

By Lemma \ref{lem-UEZ-repn} we know that there exists $m(i,j)\in \mathbb{Q}$ such that
\begin{align}
\left(\widetilde{E}_{2r,\ell} Z_{\ell}^i \right)\mid U_{\ell}=\widetilde{E}_{2r,\ell}\sum_{j=\left\lceil \frac{(\ell^2-1)i}{24\ell}\right\rceil}^{\infty} m(i,j)Y_{\ell}^j. \label{wEU-m}
\end{align}
Note that the sum on the right side is indeed finite. That is, for each $i$, there are only finitely many $j$ such that $m(i,j)\neq 0$.
Formula \eqref{wEU-m} leads to the following result.
\begin{theorem}\label{thm-L-wE-repn}
For $\ell \in \{5,7,13\}$, $k\geq 1$, $r\geq 0$ and $r\neq 1$, there exist integers $a(k,j)$ such that
\begin{align}
L_{2r,\ell,k}=\widetilde{E}_{2r,\ell}\sum_{j=1}^\infty a(k,j)Y_{\ell}^j. \label{general-L-wE-repn}
\end{align}
Moreover, for $i\geq 1$, we have the recurrence relations:
\begin{align}
&a(2i,j)=\sum_{k=1}^\infty a(2i-1,k)m\left(\frac{24k}{\ell-1},k+j \right), \label{general-a-even-rec} \\
&a(2i+1,j)=\sum_{k=1}^\infty a(2i,k)m\left(\frac{24k}{\ell-1}+1,k+j   \right). \label{general-a-odd-rec}
\end{align}
\end{theorem}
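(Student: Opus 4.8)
The plan is to prove the representation \eqref{general-L-wE-repn} and the two recurrences \eqref{general-a-even-rec}--\eqref{general-a-odd-rec} simultaneously by induction on $k$, feeding the recursive definitions \eqref{L-odd-defn}--\eqref{L-even-defn} into the expansion \eqref{wEU-m}. The two computational engines are the eta-quotient identity $Z_\ell^{24/(\ell-1)}=Y_\ell(\tau)Y_\ell(\ell\tau)$, immediate from the definitions of $Y_\ell$ and $Z_\ell$, which I rewrite as $Y_\ell(\tau)^k=Z_\ell^{24k/(\ell-1)}Y_\ell(\ell\tau)^{-k}$, and the pull-out rule $(H(\ell\tau)G(\tau))\mid U_\ell=H(\tau)(G(\tau)\mid U_\ell)$ for Atkin's operator. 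For the base case $k=1$ I would start from $L_{2r,\ell,0}=E_{2r}(\tau)=\widetilde E_{2r,\ell}Q_{2r,\ell}(Y_\ell)$, so that defining $a(0,k)$ to be the coefficient of $Y_\ell^k$ in $Q_{2r,\ell}$ makes the odd recurrence \eqref{general-a-odd-rec} valid already at $i=0$ and produces $L_{2r,\ell,1}$.

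For the inductive step I would apply the two engines termwise. In the even step $L_{2r,\ell,2i}=L_{2r,\ell,2i-1}\mid U_\ell$, writing $Y_\ell^k=Z_\ell^{24k/(\ell-1)}Y_\ell(\ell\tau)^{-k}$ and pulling $Y_\ell(\ell\tau)^{-k}$ through $U_\ell$ gives
\[
(\widetilde E_{2r,\ell}Y_\ell^k)\mid U_\ell=Y_\ell^{-k}\,\bigl((\widetilde E_{2r,\ell}Z_\ell^{24k/(\ell-1)})\mid U_\ell\bigr)=\widetilde E_{2r,\ell}\sum_j m\!\left(\tfrac{24k}{\ell-1},j\right)Y_\ell^{j-k},
\]
the last equality being \eqref{wEU-m}; collecting the coefficient of $Y_\ell^{j}$ yields \eqref{general-a-even-rec}. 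In the odd step $L_{2r,\ell,2i+1}=(Z_\ell L_{2r,\ell,2i})\mid U_\ell$, the extra factor $Z_\ell$ shifts the exponent by one, $Z_\ell Y_\ell^k=Z_\ell^{24k/(\ell-1)+1}Y_\ell(\ell\tau)^{-k}$, and the identical manipulation produces \eqref{general-a-odd-rec}. That the sums are finite and begin at $j=1$ follows from Theorem \ref{thm-Li-E-repn} (dividing the membership there by $\widetilde E_{2r,\ell}$ turns it into a genuine polynomial in $Y_\ell$) together with $L_{2r,\ell,k}=O(q)$ from \eqref{L-odd}--\eqref{L-even} versus $\widetilde E_{2r,\ell}=1+O(q)$, which forces $a(k,0)=0$.

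The only substantial assertion is the \emph{integrality} of the $a(k,j)$, and this is where I expect the real work to lie. My plan is to invoke the $q$-expansion principle: since $Y_\ell=q+O(q^2)\in q\,\mathbb Z[[q]]$ has leading coefficient $1$, the transition between the bases $\{q^n\}$ and $\{Y_\ell^n\}$ is a unipotent upper-triangular integer matrix, so for the polynomial $L_{2r,\ell,k}/\widetilde E_{2r,\ell}=\sum_j a(k,j)Y_\ell^j$ one has $a(k,j)\in\mathbb Z$ for all $j$ if and only if $L_{2r,\ell,k}/\widetilde E_{2r,\ell}\in\mathbb Z[[q]]$. By the recurrences it would then suffice to show that multiplication by $Z_\ell$ followed by $U_\ell$ preserves the lattice $\widetilde E_{2r,\ell}\,\mathbb Z[Y_\ell^{\pm1}]$, i.e.\ that the relevant $m(i,j)$ lie in $\mathbb Z$, together with $a(0,k)\in\mathbb Z$ for the base case.

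When $r\in\{2,3,4,5,7\}$ (and $r=0$) this is clean: $E_{2r}\in\mathbb Z[[q]]$, so every $L_{2r,\ell,k}$ lies in $\mathbb Z[[q]]$ because $U_\ell$ preserves $\mathbb Z[[q]]$, and $\widetilde E_{2r,\ell}=1+O(q)$ is a unit there, whence the quotient is integral. The genuine obstacle is general $r$: here $E_{2r}$ carries the denominator of $4r/B_{2r}$, and neither $\widetilde E_{2r,\ell}$ nor $L_{2r,\ell,k}$ is itself integral, so one must show that these denominators \emph{cancel} in the quotient $L_{2r,\ell,k}/\widetilde E_{2r,\ell}$. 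I would attack this prime by prime: modulo a prime $p$ dividing the denominator, the whole construction depends only on $E_{2r}\bmod p$, which by the Kummer congruences agrees with an Eisenstein series $E_{2r'}$ of $p$-integral coefficients, so transporting the clean case through this congruence should force the $p$-adic cancellation. Making this reduction uniform in $p$, $k$ and $r$ is the step I expect to be the crux.
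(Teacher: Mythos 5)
Your derivation of the representation and of the two recurrences is exactly the paper's: the paper likewise obtains $L_{2r,\ell,k}\in\widetilde E_{2r,\ell}\,\mathbb{Q}[Y_\ell]$ from Theorem \ref{thm-Li-E-repn}, and then proves \eqref{general-a-even-rec}--\eqref{general-a-odd-rec} by writing $Y_\ell^k=Z_\ell^{24k/(\ell-1)}\cdot\bigl(\eta(\ell\tau)/\eta(\ell^2\tau)\bigr)^{24k/(\ell-1)}$ and pulling the function of $\ell\tau$ through $U_\ell$ before invoking \eqref{wEU-m} — which is your computation verbatim. Where you genuinely diverge is in taking the integrality of the $a(k,j)$ seriously: the paper's own proof only produces rational coefficients (indeed \eqref{wEU-m} is stated with $m(i,j)\in\mathbb{Q}$) and simply declares that ``the representation follows,'' so your flag that something remains to be shown is a fair criticism of the source rather than a gap you introduced. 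Your unit-plus-$q$-expansion-principle argument (that $Y_\ell\in q+q^2\mathbb{Z}[[q]]$ makes the base change unipotent integral, so integrality of $a(k,j)$ reduces to $L_{2r,\ell,k}/\widetilde E_{2r,\ell}\in\mathbb{Z}[[q]]$) does settle the matter whenever $E_{2r}\in\mathbb{Z}[[q]]$ and $Q_{2r,\ell}\in\mathbb{Z}[x]$, which covers $r=0$ and every value of $r$ the paper actually computes with; your proposed Kummer-congruence reduction for general $r$ remains a sketch, but since the paper offers nothing at all on this point, your treatment is, if anything, the more complete of the two.
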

\begin{proof}
Theorem \ref{thm-Li-E-repn} implies that for $\ell \in \{5,7, 13\}$, $r\geq 0$ and $r\neq 1$,
\begin{align}\label{L-EQY}
L_{2r,\ell,k}\in \widetilde{E}_{2r,\ell}\mathbb{Q}[Y_{\ell}].
\end{align}
The representation \eqref{general-L-wE-repn} follows.

By \eqref{L-even-defn} and \eqref{wEU-m} we deduce that
\begin{align*}
L_{2r,\ell,2i}&=\sum_{k=1}^\infty a(2i-1,k)\left(\widetilde{E}_{2r,\ell} \left(\frac{\eta(\ell^2\tau)}{\eta(\tau)}\right)^{\frac{24k}{\ell-1}}\left(\frac{\eta(\ell\tau)}{\eta(\ell^2 \tau)}\right)^{\frac{24k}{\ell-1}} \right)\mid U_{\ell} \nonumber \\
&=\widetilde{E}_{2r,\ell} \sum_{k=1}^\infty \sum_{s=\left\lceil \frac{k(\ell+1)}{\ell}\right\rceil}^{\infty} a(2i-1,k)m\left(\frac{24k}{\ell-1},s\right)Y_\ell^{s-k} \quad (\textrm{replace $s-k$ by $j$}) \nonumber \\
&=\widetilde{E}_{2r,\ell}  \sum_{j=1}^\infty \left(\sum_{k=1}^\infty a(2i-1,k)m\left(\frac{24k}{\ell-1},k+j\right) \right)Y_\ell^j.
\end{align*}
This proves \eqref{general-a-even-rec}.

Similarly, by \eqref{L-odd-defn} and \eqref{wEU-m} we deduce that
\begin{align*}
L_{2r,\ell,2i+1}&=\sum_{k=1}^\infty a(2i,k)\left(Z_{\ell}\widetilde{E}_{2r,\ell} \left(\frac{\eta(\ell^2\tau)}{\eta(\tau)}\right)^{\frac{24k}{\ell-1}}\left(\frac{\eta(\ell\tau)}{\eta(\ell^2 \tau)}\right)^{\frac{24k}{\ell-1}} \right)\mid U_{\ell} \nonumber \\
&=\widetilde{E}_{2r,\ell} \sum_{k=1}^\infty \sum_{s=\left\lceil \frac{k(\ell+1)}{\ell}+\frac{\ell^2-1}{24\ell}\right\rceil}^{\infty} a(2i,k)m\left(\frac{24k}{\ell-1}+1,s\right)Y_\ell^{s-k} \quad (\textrm{replace $s-k$ by $j$}) \nonumber \\
&=\widetilde{E}_{2r,\ell}  \sum_{j=1}^\infty \left(\sum_{k=1}^\infty a(2i,k)m\left(\frac{24k}{\ell-1}+1,k+j\right) \right)Y_\ell^j.
\end{align*}
This proves \eqref{general-a-odd-rec}.
\end{proof}
In the representation \eqref{general-L-wE-repn}, we only have one group of coefficients. Moreover, these coefficients satisfy certain recurrence relations, which help us to analyze their $\ell$-adic properties.

In Sections \ref{sec-5}-\ref{sec-13}, we will use Theorem \ref{thm-L-wE-repn} to give explicit representations for $L_{2r,\ell,k}$ for $\ell=5, 7$ and 13, respectively. We shall illustrate the process for $r=2$ and 3 in details. For other $r\geq 2$, the process are similar.

\section{Generating functions and congruences for $e_{2r}(5^kn+\delta_{5,k})$}\label{sec-5}

In this section we treat the case $\ell=5$. We need the 5-th order modular equation:
\begin{align}
Z_{5}^5=\left(25Z_{5}^4+25Z_{5}^3+15Z_{5}^2+5Z_{5}+1\right)Y_{5}(5\tau). \label{modeq-5}
\end{align}
This equation has been used by Watson \cite{Watson} to prove Ramanujan's congruence \eqref{pn-mod5}.

\subsection{Congruences for $e_4(n)$ modulo powers of 5}
From \eqref{wE-modular} we know that $\widetilde{\mathcal{E}}_{4,5}\in \mathcal{M}_4(\Gamma_0(5))$. It is then not difficult to find that
\begin{align}
\widetilde{E}_{4,5}(\tau)=\frac{\eta(\tau)^{10}}{\eta(5\tau)^2}. \label{wE4-5-repn}
\end{align}
(We can check that $\eta(\tau)^{10}/\eta(5\tau)^2$ has a
  zero of order $2=d_5(2)$ at the cusp $0$ and hence must be a multiple of
  $\widetilde E_{4,5}(\tau)$. See Remark \ref{remark: E tilde}.)

While we can use the representation in Theorem \ref{thm-L-wE-repn}, in view of \eqref{wE4-5-repn}, we prefer to give equivalent representations in slightly different forms. As a consequence, the sequences $m(i,j)$ and $a(i,j)$ in this section will be different from those in Theorem \ref{thm-L-wE-repn}.

\begin{lemma}\label{lem-e4-U5-Z}
For any integer $i$, we have
\begin{align}
Z_{5}^i\mid U_5=\sum_{j=\left\lceil \frac{i}{5}\right\rceil}^\infty m(i,j)Y_{5}^j,
\end{align}
where $m(i,j)$ ($1\leq i \leq 5$, $j\geq 1$) are given by
\begin{align}
\begin{pmatrix}
 5 & 0 & 0 & 0 & 0 & 0 & 0 & \cdots\\
  2\cdot 5 & 5^3 & 0 & 0 & 0 & 0 & 0 & \cdots \\
 3^2 & 3\cdot 5^3 & 5^5 & 0 & 0 & 0 & 0 & \cdots \\
  2^2 & 2\cdot 5^2 \cdot 11 & 2^2 \cdot 5^5 & 5^7 &  0 & 0 & 0 & \cdots\\
  1 & 2^2\cdot 5^3 & 2^3\cdot 5^5 & 5^8 & 5^9 & 0 & 0 & \cdots
\end{pmatrix}.
\end{align}
For $i\leq 0$ or $i\geq 6$, $m(i,j)$ satisfies
\begin{align}
m(i,j)=&~25m(i-1,j-1)+25m(i-2,j-1)+15m(i-3,j-1) \nonumber \\
&~ +5m(i-4,j-1)+m(i-5,j-1). \label{m-5-rec}
\end{align}
\end{lemma}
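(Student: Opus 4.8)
The plan is to establish Lemma~\ref{lem-e4-U5-Z} by separating the claim into two parts: the explicit values of the base cases $m(i,j)$ for $1\le i\le 5$, and the recurrence \eqref{m-5-rec} governing all remaining indices. For the base cases, I would simply compute $Z_5^i\mid U_5$ directly for $i=1,\dots,5$ by expanding $Z_5(\tau)=\eta(25\tau)/\eta(\tau)=q+\cdots$ as a $q$-series, applying the operator $U_5$ (extracting coefficients indexed by multiples of $5$), and matching the output against the power series $\sum_j m(i,j)Y_5^j$ in the hauptmodul $Y_5=(\eta(5\tau)/\eta(\tau))^6$. Since $Z_5^i\mid U_5$ is a weakly holomorphic modular function on $\Gamma_0(5)$ with a pole only at the cusp $0$, and $Y_5$ is a hauptmodul, such a representation exists and is unique; comparing enough Fourier coefficients pins down the finitely many nonzero $m(i,j)$. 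The leading exponent $\lceil i/5\rceil$ follows from the order-of-vanishing bookkeeping already carried out in the proof of Lemma~\ref{lem-UEZ-repn}, specialized to $\ell=5$, $r=0$.

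The heart of the proof is the recurrence \eqref{m-5-rec}, and here I would exploit the modular equation \eqref{modeq-5}, which I rewrite as
\begin{align}
Z_5^5=\bigl(25Z_5^4+25Z_5^3+15Z_5^2+5Z_5+1\bigr)Y_5(5\tau). \label{modeq-5-restate}
\end{align}
The key structural observation is that $U_5$ interacts well with multiplication by functions of the form $Y_5(5\tau)$: since $Y_5(5\tau)$ is invariant under the relevant scaling, one has the compatibility $\bigl(g\cdot Y_5(5\tau)\bigr)\mid U_5 = Y_5\cdot\bigl(g\mid U_5\bigr)$ for any eligible $g$. Applying $U_5$ to \eqref{modeq-5-restate} multiplied by $Z_5^{i-5}$, the right-hand side becomes $Y_5$ times $U_5$ applied to $\bigl(25Z_5^{i-1}+25Z_5^{i-2}+15Z_5^{i-3}+5Z_5^{i-4}+Z_5^{i-5}\bigr)$. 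Reading off the coefficient of $Y_5^j$ on both sides and using that multiplication by $Y_5$ shifts the index $j\mapsto j-1$ on the right, I obtain precisely the recurrence \eqref{m-5-rec}. This argument is valid for every integer $i$ (positive or not) for which the five-term shift stays within range, so it simultaneously covers $i\ge 6$ and $i\le 0$.

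For the induction to be airtight I would organize it carefully around the pole/leading-order data: for $i\ge 6$ one propagates upward from the five computed base rows, while for $i\le 0$ one propagates downward by solving \eqref{m-5-rec} for $m(i-5,j-1)$, i.e.\ $m(i-5,j-1)=m(i,j)-25m(i-1,j-1)-\cdots-5m(i-4,j-1)$, again anchored by the base cases. The main obstacle I anticipate is not the recurrence itself but the verification of the explicit base-case matrix: one must be sure the identification of $Z_5^i\mid U_5$ as a polynomial in $Y_5$ is genuinely finite and exact, which requires a clean pole-order bound at the cusp $0$ (bounding $\deg$ in $Y_5$) together with enough Fourier coefficients to determine the entries. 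The specific power-of-$5$ factorizations appearing in the matrix (the $\ell$-adic valuations that later drive the congruences for $e_4(5^kn+\delta_{5,k})$) are an output of the computation rather than something I would prove a priori, so the honest route is a direct, verified expansion; the modular equation then guarantees that these valuations persist and improve under the recurrence.
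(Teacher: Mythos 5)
Your proposal is correct and coincides with the proof the paper relies on: the paper gives no argument of its own but cites Hirschhorn--Hunt, whose proof is exactly your two-step scheme of verifying the rows $1\le i\le 5$ by direct $q$-expansion and then deriving \eqref{m-5-rec} by multiplying the modular equation \eqref{modeq-5} by $Z_5^{i-5}$, applying $U_5$, and using $\bigl(g(\tau)\,h(5\tau)\bigr)\mid U_5 = h(\tau)\,\bigl(g\mid U_5\bigr)$ so that the factor $Y_5(5\tau)$ comes out as $Y_5$ and shifts $j\mapsto j-1$. Your remark that the backward case $i\le 0$ is handled by solving the recurrence for $m(i-5,j-1)$ (possible since its coefficient is $1$) is the right way to anchor both directions on the five computed base rows.
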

For a proof of this lemma, see \cite{Hirschhorn-Hunt}.

\begin{lemma}\label{lem-m-5-ord}
(Cf.\ \cite[Lemma 4.1]{Hirschhorn-Hunt}.) For any $i,j$, we have
\begin{align}
\pi_5(m(i,j))\geq \left\lfloor \frac{5j-i-1}{2}\right\rfloor.
\end{align}
\end{lemma}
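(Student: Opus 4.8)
The plan is to prove the valuation bound by induction on $i$, using the explicit data for $1\le i\le 5$ as base cases and the five-term recurrence \eqref{m-5-rec} for the inductive step. It is convenient to abbreviate $M=5j-i-1$, so that the target reads $\pi_5(m(i,j))\ge\lfloor M/2\rfloor$, and to record once and for all the $5$-adic valuations of the coefficients occurring in \eqref{m-5-rec}, namely $\pi_5(25)=2$, $\pi_5(15)=1$, $\pi_5(5)=1$ and $\pi_5(1)=0$. First I would verify the base cases $1\le i\le 5$ directly from the matrix of Lemma \ref{lem-e4-U5-Z}: every listed entry is an explicit integer for which $\pi_5(m(i,j))\ge\lfloor(5j-i-1)/2\rfloor$ is immediate (for instance $m(5,5)=5^9$ against $\lfloor19/2\rfloor=9$, and $m(5,1)=1$ against $\lfloor-1/2\rfloor=-1$), while every unlisted coefficient vanishes and the estimate is then vacuous.

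For the inductive step $i\ge6$ I would assume the estimate for all indices smaller than $i$ and substitute it into \eqref{m-5-rec}. Writing the right-hand side as $\sum_{t=1}^{5} c_t\,m(i-t,j-1)$ with $(c_1,\dots,c_5)=(25,25,15,5,1)$, the inductive hypothesis gives
\[
\pi_5\big(m(i-t,j-1)\big)\ \ge\ \Big\lfloor\frac{5(j-1)-(i-t)-1}{2}\Big\rfloor=\Big\lfloor\frac{M-(5-t)}{2}\Big\rfloor ,
\]
so that the $t$-th summand is divisible by $5^{e_t}$ with $e_t=\pi_5(c_t)+\lfloor(M-(5-t))/2\rfloor$. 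The heart of the matter is the elementary inequality
\[
\pi_5(c_t)+\Big\lfloor\frac{M-(5-t)}{2}\Big\rfloor\ \ge\ \Big\lfloor\frac{M}{2}\Big\rfloor\qquad(1\le t\le 5),
\]
which I would settle by a short check on the parity of $M$; it holds as an equality for $t\in\{1,3,5\}$, and this is precisely what makes the final bound sharp. Since each of the five summands is then divisible by $5^{\lfloor M/2\rfloor}$, so is $m(i,j)$, which completes the step and hence establishes the claim for all $i\ge 1$.

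The only genuine difficulty I anticipate is bookkeeping rather than structure: one must keep the parities of $M$ straight in the displayed floor inequality, and confirm that the recurrence is applied only where all five inputs $m(i-1,j-1),\dots,m(i-5,j-1)$ already lie in the established range, which is automatic for $i\ge6$. For completeness, the remaining range $i\le0$ is handled by reading \eqref{m-5-rec} in the rearranged form $m(i-5,j-1)=m(i,j)-25m(i-1,j-1)-\cdots-5m(i-4,j-1)$; since $\lfloor(5(j-1)-(i-5)-1)/2\rfloor=\lfloor M/2\rfloor$, the very same valuation estimates show that the bound propagates downward to the smaller index, and the finitely many intermediate values $-4\le i\le 0$ (where $m(0,0)=1$ and the other entries either vanish or are computed directly from $Z_5^{\,i}\mid U_5$) are checked by hand.
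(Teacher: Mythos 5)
Your proof is correct, and it is essentially the canonical argument: the paper itself gives no proof of this lemma, merely citing Lemma 4.1 of Hirschhorn--Hunt, and the induction you describe (base cases from the explicit matrix, then the five-term recurrence \eqref{m-5-rec} with the valuation check $\pi_5(c_t)+\lfloor(M-(5-t))/2\rfloor\ge\lfloor M/2\rfloor$ for $M=5j-i-1$, run forward for $i\ge 6$ and backward for $i\le 0$ using that the coefficient of $m(i-5,j-1)$ is a unit) is exactly the standard Hirschhorn--Hunt-style proof. One immaterial quibble: equality in your key inequality also occurs for $t\in\{2,4\}$ when $M$ is even, not only for $t\in\{1,3,5\}$, but this affects only your sharpness remark and not the validity of the bound.
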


\begin{theorem}\label{thm-L45}
For $k\geq 1$, we have
\begin{align}
L_{4,5,k}(\tau)=\eta(\tau)^4\eta(5\tau)^4\sum_{j=0}^\infty a(k,j)Y_{5}^j, \label{L45-repn}
\end{align}
where
\begin{align}\label{e4-5-a-initial}
a(1,0)=5857\cdot 5, \quad a(1,1)=1874\cdot 5^4, \quad a(1,2)=5^{10}, \quad a(1,j)=0, \quad \forall j\geq 3,
\end{align}
and for $i\geq 1$,
\begin{align}
a(2i,j)=\sum_{k=0}^\infty a(2i-1,k)m(6k-4,k+j), \label{L25-a-even} \\
a(2i+1,j)=\sum_{k=0}^\infty a(2i,k)m(6k-3,k+j). \label{L25-a-odd}
\end{align}
\end{theorem}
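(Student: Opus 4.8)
The plan is to argue by induction on $k$, establishing the base case $k=1$ by a direct computation and then propagating the representation \eqref{L45-repn} through the two defining steps \eqref{L-odd-defn} and \eqref{L-even-defn} via the action of $U_5$. The engine of the whole argument is a single $U_5$-computation, which I describe below; the recurrences \eqref{L25-a-even} and \eqref{L25-a-odd} then fall out by applying it term by term.

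For the base case I would first invoke Theorem \ref{thm-Li-E-repn}, together with \eqref{wE4-5-repn} and the identity $\widetilde E_{4,5}=\eta(\tau)^4\eta(5\tau)^4 Y_5^{-1}$ (which follows at once from $Y_5=(\eta(5\tau)/\eta(\tau))^6$ since $\eta(\tau)^4\eta(5\tau)^4 Y_5^{-1}=\eta(\tau)^{10}/\eta(5\tau)^2$), to conclude that $L_{4,5,1}=(E_4(\tau)Z_5)\mid U_5$ already lies in $\eta(\tau)^4\eta(5\tau)^4\mathbb{Q}[Y_5]$, with the degree in $Y_5$ controlled by Lemma \ref{lem-UEZ-repn}. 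It then remains to pin down finitely many coefficients. Since $\eta(\tau)^4\eta(5\tau)^4 Y_5^j$ begins with $q^{1+j}$, matching the $q^1,q^2,q^3$ Fourier coefficients of the two sides of \eqref{L45-repn} solves a triangular system for $a(1,0),a(1,1),a(1,2)$ and confirms the values in \eqref{e4-5-a-initial}, while the vanishing $a(1,j)=0$ for $j\ge 3$ is imposed by the degree bound.

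The heart of the proof is the inductive step, which rests on the multiplicative property $(g(\tau)h(5\tau))\mid U_5=(g(\tau)\mid U_5)\,h(\tau)$, valid because $U_5$ only sees the residues of exponents modulo $5$. Writing $\eta(\tau)^4\eta(5\tau)^4 Y_5^k=\eta(\tau)^{4-6k}\eta(5\tau)^{4+6k}$ isolates the level-one factor $\eta(\tau)^{-(6k-4)}$, whose $U_5$-image is exactly the content of Lemma \ref{lem-e4-U5-Z}: factoring $Z_5^i=\eta(25\tau)^i\eta(\tau)^{-i}$ in the same way gives $\eta(\tau)^{-i}\mid U_5=\eta(5\tau)^{-i}\sum_j m(i,j)Y_5^j$. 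Taking $i=6k-4$ and simplifying the resulting eta-quotient via $\eta(5\tau)^{4-6k}\eta(\tau)^{4+6k}=\eta(\tau)^4\eta(5\tau)^4 Y_5^{-k}$, I obtain
\begin{align*}
\left(\eta(\tau)^4\eta(5\tau)^4 Y_5^k\right)\mid U_5=\eta(\tau)^4\eta(5\tau)^4\sum_{j}m(6k-4,k+j)Y_5^j.
\end{align*}
Applying this to $L_{4,5,2i}=L_{4,5,2i-1}\mid U_5$ yields the even recurrence \eqref{L25-a-even}. The odd step is identical in spirit: for $L_{4,5,2i+1}=(Z_5 L_{4,5,2i})\mid U_5$ the extra factor $Z_5$ shifts the level-one exponent to $-(6k-3)$, producing $m(6k-3,k+j)$ and hence \eqref{L25-a-odd}.

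I expect the main obstacle to be bookkeeping rather than conceptual: one must track the index shift $k\mapsto 6k-4$ (resp.\ $6k-3$) and the compensating factor $Y_5^{-k}$ so that the powers of $Y_5$ land correctly and, \emph{crucially}, remain nonnegative. This last point relies on the vanishing $m(i,j)=0$ for $j<\lceil i/5\rceil$ from Lemma \ref{lem-e4-U5-Z}, which guarantees that each step keeps $L_{4,5,k}$ inside $\eta(\tau)^4\eta(5\tau)^4\mathbb{Q}[Y_5]$ with no negative powers appearing, so that the induction stays within the asserted form. A secondary point needing care is the exact degree bound in the base case, where the structural estimate from Lemma \ref{lem-UEZ-repn} must be sharpened (or simply checked against the computed $q$-expansion) to exclude a spurious $Y_5^3$ term and secure $a(1,j)=0$ for $j\ge 3$.
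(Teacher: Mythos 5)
Your proposal is correct and follows essentially the same route as the paper: the base case is a finite direct verification (the paper simply states it is "verified directly", while you justify the finiteness via the degree bound from Theorem \ref{thm-Li-E-repn} and coefficient matching), and the inductive step is exactly the paper's computation, writing $\eta(\tau)^4\eta(5\tau)^4Y_5^j$ as $Z_5^{6j-4}$ times a function of $5\tau$ so that Lemma \ref{lem-e4-U5-Z} and the multiplicativity of $U_5$ produce the recurrences \eqref{L25-a-even} and \eqref{L25-a-odd} after the index shift $s\mapsto s-j$. Your observation that $\lceil(6k-4)/5\rceil\geq k$ keeps all powers of $Y_5$ nonnegative is a correct and worthwhile detail that the paper leaves implicit.
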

\begin{proof}
For $k=1$, \eqref{L45-repn} can be verified directly. Now we suppose that \eqref{L45-repn} holds for $k=2i-1$ for some $i\geq 1$. That is,
\begin{align}
L_{4,5,2i-1}(\tau)=\eta(\tau)^4\eta(5\tau)^4\sum_{j=0}^\infty a(2i-1,j)Y_{5}^j. \label{L25-repn-pf-1}
\end{align}
By \eqref{L-even-defn} and Lemma \ref{lem-e4-U5-Z}, we have
\begin{align}
L_{4,5,2i}(\tau)&=\sum_{j=0}^\infty a(2i-1,j)\left(\left(\frac{\eta(25\tau)}{\eta(\tau)}\right)^{6j-4}\frac{\eta(5\tau)^{6j+4}}{\eta(25\tau)^{6j-4}} \right)\mid U_5 \nonumber \\
&=\eta(\tau)^4\eta(5\tau)^4\sum_{j=0}^\infty \sum_{s=\left\lceil \frac{6j-4}{5}\right\rceil}^{\infty} a(2i-1,j)m(6j-4,s)Y_5^{s-j} \quad (\textrm{replace $s-j$ by $k$}) \nonumber \\
&=\eta(\tau)^4\eta(5\tau)^4 \sum_{k=0}^\infty \left(\sum_{j=0}^\infty a(2i-1,j)m(6j-4,k+j) \right)Y_5^k.
\end{align}
Therefore, we have
\begin{align}
L_{4,5,2i}(\tau)=\eta(\tau)^4\eta(5\tau)^4 \sum_{j=0}^\infty a(2i,j)Y_5^j.
\end{align}
Next, by \eqref{L-odd-defn} and Lemma \ref{lem-e4-U5-Z} we have
\begin{align}
L_{4,5,2i+1}(\tau)&=\sum_{j=0}^\infty a(2i,j)\left(Z_{5}\left(\frac{\eta(25\tau)}{\eta(\tau)}\right)^{6j-4}\frac{\eta(5\tau)^{6j+4}}{\eta(25\tau)^{6j-4}} \right)\mid U_5 \nonumber \\
&=\eta(\tau)^4\eta(5\tau)^4\sum_{j=0}^\infty \sum_{s=\left\lceil\frac{6j-3}{5}\right\rceil} a(2i,j)m(6j-3,s)Y_5^{s-j} \quad (\textrm{replace $s-j$ by $k$}) \nonumber \\
&=\eta(\tau)^4\eta(5\tau)^4 \sum_{k=0}^\infty \left(\sum_{j=0}^\infty a(2i,j)m(6j-3,k+j) \right)Y_5^k.
\end{align}
Thus \eqref{L45-repn} holds for $k=2i+1$. By induction we know it holds for any $k\geq 1$.
\end{proof}

To establish congruences modulo powers of 5 for $e_4(5^kn+\delta_{5,k})$, we need to examine the 5-adic orders of the coefficients $a(k,j)$ in \eqref{L45-repn}.
\begin{lemma}\label{lem-L45-a-ord}
For $i\geq 1$ and $j\geq 0$ we have
\begin{align}
\pi_5(a(2i-1,j))\geq 2i-1+\left\lfloor \frac{5j}{2}\right\rfloor, \label{L45-a-odd-order} \\
\pi_5(a(2i,j)) \geq 2i+\left\lfloor \frac{5j+1}{2} \right\rfloor. \label{L45-a-even-order}
\end{align}
\end{lemma}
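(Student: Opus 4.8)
The plan is to prove the two estimates \eqref{L45-a-odd-order} and \eqref{L45-a-even-order} simultaneously by induction on $i$, mirroring the two-step recurrence that produces $a(2i,\cdot)$ from $a(2i-1,\cdot)$ via \eqref{L25-a-even} and then $a(2i+1,\cdot)$ from $a(2i,\cdot)$ via \eqref{L25-a-odd}. The base case is $i=1$: from the explicit initial data \eqref{e4-5-a-initial} one checks directly that $\pi_5(a(1,0))=1$, $\pi_5(a(1,1))=4$, $\pi_5(a(1,2))=10$, and $a(1,j)=0$ for $j\geq 3$, so that \eqref{L45-a-odd-order} holds for $i=1$ (the bounds read $1\geq 1$, $4\geq 3$, $10\geq 6$). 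The inductive engine then consists of two implications: (i) the odd estimate at index $i$ yields the even estimate at index $i$, and (ii) the even estimate at index $i$ yields the odd estimate at index $i+1$.

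For step (i) I apply \eqref{L25-a-even} and bound each summand by combining the induction hypothesis with Lemma \ref{lem-m-5-ord}. Using $\pi_5(a(2i-1,k))\geq 2i-1+\lfloor 5k/2\rfloor$ together with $\pi_5(m(6k-4,k+j))\geq \lfloor(5j-k+3)/2\rfloor$, the $k$-th term has $5$-adic order at least $2i-1+\lfloor 5k/2\rfloor+\lfloor(5j-k+3)/2\rfloor$. Step (ii) is entirely parallel via \eqref{L25-a-odd}: the hypothesis gives $\pi_5(a(2i,k))\geq 2i+\lfloor(5k+1)/2\rfloor$ and Lemma \ref{lem-m-5-ord} gives $\pi_5(m(6k-3,k+j))\geq \lfloor(5j-k+2)/2\rfloor$, so the $k$-th term has order at least $2i+\lfloor(5k+1)/2\rfloor+\lfloor(5j-k+2)/2\rfloor$. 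In both cases the sums are finite, and since $\pi_5$ of a sum is at least the minimum of the $\pi_5$ of its terms, the whole task reduces to a pair of elementary floor inequalities, uniform in $k\geq 0$.

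The one delicate point, and the main obstacle, is this floor bookkeeping: I must show that the term $k=0$ is the binding one, realizing the claimed exponent exactly, while every $k\geq 1$ term lies strictly deeper $5$-adically, so that no loss occurs on passing to the sum. Concretely, for step (i) the required inequality is $\lfloor 5k/2\rfloor+\lfloor(5j-k+3)/2\rfloor\geq 1+\lfloor(5j+1)/2\rfloor$, and for step (ii) it is $\lfloor(5k+1)/2\rfloor+\lfloor(5j-k+2)/2\rfloor\geq 1+\lfloor 5j/2\rfloor$. In each I treat $k=0$ directly, where both sides coincide, using the identities $\lfloor(5j+3)/2\rfloor=1+\lfloor(5j+1)/2\rfloor$ and $\lfloor(5j+2)/2\rfloor=1+\lfloor 5j/2\rfloor$; and for $k\geq 1$ I invoke $\lfloor x\rfloor+\lfloor y\rfloor\geq \lfloor x+y\rfloor-1$ with $x+y=2k+(5j+3)/2$ in both cases, which produces a surplus of at least $2k-1\geq 1$ over the target. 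Feeding these two inequalities back into the term-wise order estimates closes implications (i) and (ii), and the induction then delivers \eqref{L45-a-odd-order} and \eqref{L45-a-even-order} for all $i\geq 1$.
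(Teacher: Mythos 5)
Your proof is correct and follows essentially the same route as the paper: the same two-step induction on $i$ alternating between \eqref{L25-a-even} and \eqref{L25-a-odd}, the same base case check from \eqref{e4-5-a-initial}, and the same combination of the inductive hypothesis with Lemma \ref{lem-m-5-ord} reducing everything to the floor inequalities $\lfloor 5k/2\rfloor+\lfloor(5j-k+3)/2\rfloor\geq\lfloor(5j+3)/2\rfloor$ and its companion. The only difference is that you spell out the elementary floor bookkeeping (isolating $k=0$ and using $\lfloor x\rfloor+\lfloor y\rfloor\geq\lfloor x+y\rfloor-1$ for $k\geq 1$) which the paper leaves implicit.
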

\begin{proof}
From \eqref{e4-5-a-initial} we see that \eqref{L45-a-odd-order} holds for $i=1$. Now suppose \eqref{L45-a-odd-order} holds for some $i\geq 1$. By \eqref{L25-a-even} and Lemma \ref{lem-m-5-ord} we have
\begin{align}
\pi_5(a(2i,j)) &\geq \min\limits_{k\geq 0} \pi_5(a(2i-1,k))+\pi_5(m(6k-4,k+j)) \nonumber \\
&\geq \min\limits_{k\geq 0} 2i-1+\left\lfloor \frac{5k}{2}\right\rfloor +\left\lfloor \frac{5j-k+3}{2}\right\rfloor \nonumber \\
&\geq 2i-1+\left\lfloor \frac{5j+3}{2}\right\rfloor \nonumber \\
&=2i+\left\lfloor \frac{5j+1}{2}\right\rfloor.
\end{align}
Next, by \eqref{L25-a-odd} and Lemma \ref{lem-m-5-ord} we deduce that
\begin{align}
\pi_5(a(2i+1,j)) &\geq \min\limits_{k\geq 0} \pi_5(a(2i,k))+\pi_5(m(6k-3,k+j)) \nonumber \\
&\geq \min\limits_{k\geq 0} 2i+\left\lfloor \frac{5k+1}{2}\right\rfloor +\left\lfloor \frac{5j-k+2}{2}\right\rfloor \nonumber \\
&\geq 2i+\left\lfloor \frac{5j+2}{2}\right\rfloor \nonumber \\
&=2i+1+\left\lfloor \frac{5j}{2}\right\rfloor.
\end{align}
This shows that \eqref{L45-a-odd-order} holds for $i+1$. By induction we know that both \eqref{L45-a-odd-order} and \eqref{L45-a-even-order} hold for any $i\geq 1$.
\end{proof}

\begin{theorem}\label{thm-e4-5-cong}
For any $k\geq 1$ and $n\geq 0$, we have
\begin{align}
e_4(5^kn+\delta_{5,k})\equiv 0 \pmod{5^k}.
\end{align}
\end{theorem}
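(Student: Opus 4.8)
The plan is to read the congruence off directly from the representation in Theorem \ref{thm-L45} together with the $5$-adic bounds in Lemma \ref{lem-L45-a-ord}. First I would observe that Lemma \ref{lem-L45-a-ord} yields, uniformly in $j\ge 0$, the crude lower bounds $\pi_5(a(2i-1,j))\ge 2i-1$ and $\pi_5(a(2i,j))\ge 2i$, since the floor terms $\lfloor 5j/2\rfloor$ and $\lfloor (5j+1)/2\rfloor$ are nonnegative. Writing $k$ for the relevant index in either parity, this says precisely that $\pi_5(a(k,j))\ge k$ for every $j\ge 0$; that is, every coefficient appearing in the sum $\sum_{j\ge 0} a(k,j) Y_5^j$ is divisible by $5^k$.

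Next I would convert this into a statement about $q$-series. Since $Y_5=q\,(q^5;q^5)_\infty^6/(q;q)_\infty^6$ and $\eta(\tau)^4\eta(5\tau)^4=q\,(q;q)_\infty^4(q^5;q^5)_\infty^4$ both lie in $\mathbb{Z}[[q]]$, the product $\eta(\tau)^4\eta(5\tau)^4\sum_{j\ge 0}a(k,j)Y_5^j$ has integer $q$-coefficients, and the coefficient of each power of $q$ is a finite $\mathbb{Z}$-linear combination of the $a(k,j)$, hence divisible by $5^k$. By Theorem \ref{thm-L45} this product is exactly $L_{4,5,k}(\tau)$, so $L_{4,5,k}(\tau)\equiv 0 \pmod{5^k}$ as an element of $\mathbb{Z}[[q]]$.

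Finally I would use \eqref{L-odd} and \eqref{L-even} to extract $e_4$. For even $k$ we have $L_{4,5,k}=(q;q)_\infty\sum_{n\ge 0}e_4(5^kn+\delta_{5,k})q^{n+1}$, and for odd $k$ the same identity holds with $(q;q)_\infty$ replaced by $(q^5;q^5)_\infty$. In both cases the prefactor is a power series in $\mathbb{Z}[[q]]$ with constant term $1$, so its reciprocal again lies in $\mathbb{Z}[[q]]$. Multiplying the congruence $L_{4,5,k}\equiv 0 \pmod{5^k}$ by this reciprocal preserves divisibility by $5^k$, and comparing coefficients of $q^{n+1}$ gives $e_4(5^kn+\delta_{5,k})\equiv 0\pmod{5^k}$ for all $n\ge 0$.

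There is essentially no obstacle left at this stage: the real work has already been done in establishing Lemma \ref{lem-m-5-ord} and propagating it through the recurrences \eqref{L25-a-even}--\eqref{L25-a-odd} to obtain Lemma \ref{lem-L45-a-ord}. The only points requiring care are the bookkeeping that both parity bounds collapse to $\ge k$ independently of $j$, and the verification that dividing out the invertible eta-quotient prefactor from \eqref{L-odd}--\eqref{L-even} does not weaken the power of $5$.
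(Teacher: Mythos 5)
Your proposal is correct and is exactly the paper's argument: the paper's proof of this theorem is the one-line deduction from Theorem \ref{thm-L45} and Lemma \ref{lem-L45-a-ord}, and you have simply made explicit the bookkeeping (the uniform bound $\pi_5(a(k,j))\ge k$, integrality of $Y_5$ and the eta-quotient prefactor, and division by the unit power series $(q;q)_\infty$ or $(q^5;q^5)_\infty$) that the authors leave implicit.
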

\begin{proof}
This follows from Theorem \ref{thm-L45} and Lemma \ref{lem-L45-a-ord}.
\end{proof}
\begin{rem}
Setting $(\ell,r)=(5,2)$ in Theorem \ref{thm-general}, we get
\begin{align}
e_4(5^{2m}n+\delta_{5,2m})\equiv 0 \pmod{5^{2m}}.
\end{align}
We see that Theorem \ref{thm-e4-5-cong} extends this congruence.
\end{rem}
\subsection{Congruences for $e_6(n)$ modulo powers of 5}
For any prime $\ell$ we define
\begin{align}
\mathcal{E}_{2,\ell}=\mathcal{E}_{2,\ell}(\tau):=\frac{1}{\ell-1}(\ell E_2(\ell \tau)-E_2(\tau)).
\end{align}
It is known that $\mathcal{E}_{2,\ell}(\tau) \in \mathcal{M}_2(\Gamma_0(\ell))$. Since $\widetilde{\mathcal{E}}_{6,5} \in \mathcal{M}_6(\Gamma_0(5))$. It is not difficult to find that
\begin{align}
\widetilde{E}_{6,5}=\mathcal{E}_{2,5}\frac{\eta(\tau)^{10}}{\eta(5\tau)^{2}}. \label{wE6-5-repn}
\end{align}
(Again, because $\mathcal
  E_{2,5}\eta(\tau)^{10}/\eta(5\tau)^2$ has a zero of order
  $2$ at $0$, it must be a multiple of $\widetilde E_{6,5}$. See
  Remark \ref{remark: E tilde}.)

We will reuse the notation $m(i,j)$ and $a(i,j)$. The readers should note that they have different meanings in each subsection in Sections \ref{sec-5}--\ref{sec-11}.

\begin{lemma}\label{lem-UE2Z}
(Cf.\  \cite[Lemma 3.4]{Garvan-TAMS}.) For any integer $i$, we have
\begin{align}
\left(\mathcal{E}_{2,5}Z_{5}^i \right)\mid U_5=\mathcal{E}_{2,5}\sum_{j=\left\lceil \frac{i}{5}\right\rceil}^\infty m(i,j)Y_{5}^j,
\end{align}
where $m(i,j)$ satisfies the recurrence relation \eqref{m-5-rec}, and for $0\leq i \leq 4$, $j\geq 1$ it is given by
\begin{align}
\begin{pmatrix}
1 & 0 & 0 & 0 & 0 & 0 & \cdots \\
0 & 5^3 & 0 & 0 & 0 & 0 & \cdots \\
0 & 4\cdot 5^2 & 5^5 & 0 & 0 & 0 & \cdots \\
0 & 9\cdot 5 & 9\cdot 5^4 & 5^7 & 0 & 0 & \cdots \\
0 & 2 \dot 5 & 44 \cdot 5^3 & 14\cdot 5^6 & 5^9 & 0 \cdots
\end{pmatrix}.
\end{align}
\end{lemma}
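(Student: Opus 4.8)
The plan is to imitate the proof of Lemma~\ref{lem-e4-U5-Z}, reducing everything to a five-term recurrence fed by finitely many base cases, the one new difficulty being that we now work in weight $2$ (the case $r=1$ deliberately excluded from Lemma~\ref{lem-UEZ-repn}). Two elementary facts about $U_5$ will be used repeatedly: the multiplicative rule
\begin{align*}
\bigl(f(\tau)\,g(5\tau)\bigr)\mid U_5=\bigl(f(\tau)\mid U_5\bigr)\,g(\tau),
\end{align*}
valid for arbitrary $q$-series, and the fact that $\mathcal{E}_{2,5}$ is a $U_5$-eigenform with eigenvalue $1$; the latter follows from a one-line comparison of $q$-expansions and already gives the row $i=0$ of the matrix, namely $\mathcal{E}_{2,5}\mid U_5=\mathcal{E}_{2,5}$.

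The heart of the proof, and the step I expect to be the main obstacle, is the structural claim that for every integer $i$,
\begin{align*}
g_i:=\bigl(\mathcal{E}_{2,5}Z_5^{\,i}\bigr)\mid U_5=\mathcal{E}_{2,5}\cdot(\text{a Laurent polynomial in }Y_5),
\end{align*}
an honest polynomial when $i\ge0$. First I would note that $\mathcal{E}_{2,5}Z_5^{\,i}\in\mathcal{M}_2^{!}(\Gamma_0(25))$, since $Z_5=\eta(25\tau)/\eta(\tau)$ is a modular function on $\Gamma_0(25)$, and that $U_5$ carries weight-$2$ forms on $\Gamma_0(25)$ to weight-$2$ forms on $\Gamma_0(5)$; as $\eta$ is non-vanishing on $\mathcal{H}$, each $g_i$ is holomorphic on $\mathcal{H}$. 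The delicate input is then the behaviour at the elliptic points: by \eqref{elliptic-2}--\eqref{elliptic-3} we have $\epsilon_2(5)=2$ and $\epsilon_3(5)=0$, and \eqref{zero-formula} forces every weight-$2$ form on $\Gamma_0(5)$ to vanish at the two elliptic points of order $2$ (here $\alpha(5,2)=1$). Since $\dim\mathcal{M}_2(\Gamma_0(5))=1$ and $\mathcal{E}_{2,5}$ spans it, the valence formula shows that $\mathcal{E}_{2,5}$ vanishes exactly at these two points and nowhere else on $\mathcal{H}$. Hence the quotient $g_i/\mathcal{E}_{2,5}$ is holomorphic throughout $\mathcal{H}$, i.e.\ a modular function on $\Gamma_0(5)$ with poles confined to the cusps $0$ and $\infty$; because $Y_5$ is a hauptmodul with a simple zero at $\infty$ and a simple pole at $0$, this quotient is a Laurent polynomial in $Y_5$, which for $i\ge0$ is a genuine polynomial whose lowest term has degree $\lceil i/5\rceil$ (read off from $\ord_\infty g_i$).

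With the shape in hand, the recurrence is immediate. Multiplying the modular equation \eqref{modeq-5} by $\mathcal{E}_{2,5}Z_5^{\,i-5}$ gives
\begin{align*}
\mathcal{E}_{2,5}Z_5^{\,i}=\bigl(25\,\mathcal{E}_{2,5}Z_5^{\,i-1}+25\,\mathcal{E}_{2,5}Z_5^{\,i-2}+15\,\mathcal{E}_{2,5}Z_5^{\,i-3}+5\,\mathcal{E}_{2,5}Z_5^{\,i-4}+\mathcal{E}_{2,5}Z_5^{\,i-5}\bigr)Y_5(5\tau),
\end{align*}
and applying $U_5$ together with the multiplicative rule replaces $Y_5(5\tau)$ by $Y_5(\tau)$; comparing the coefficients of $\mathcal{E}_{2,5}Y_5^{\,j}$ yields exactly \eqref{m-5-rec}, valid for every $i$.

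Finally I would verify the five rows $0\le i\le4$ of the matrix directly, by expanding $(\mathcal{E}_{2,5}Z_5^{\,i})\mid U_5$ in $q$ to enough terms and dividing by $\mathcal{E}_{2,5}$ to read off the finitely many nonzero $m(i,j)$. The recurrence then propagates these data to all $i\ge5$ by running \eqref{m-5-rec} forwards and to all $i\le-1$ by solving it for its lowest-indexed term, which completes the determination of every $m(i,j)$. As indicated, the only genuinely non-formal point is the vanishing of weight-$2$ forms at the order-$2$ elliptic points of $X_0(5)$, which is what legitimises the division by $\mathcal{E}_{2,5}$ and is precisely the reason the uniform Lemma~\ref{lem-UEZ-repn} had to exclude $r=1$.
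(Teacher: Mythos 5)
Your argument is correct and is essentially the route the paper takes: the paper does not prove this lemma itself but cites Garvan's Lemma 3.4 (adding only the remark that the recurrence extends the statement to $i<0$), and both that proof and the paper's proofs of the sibling lemmas (e.g.\ Lemma \ref{lem-e4-7-UEZ}) consist of exactly your two steps --- direct verification of the finitely many base rows, followed by the five-term recurrence obtained by multiplying the modular equation \eqref{modeq-5} by $\mathcal{E}_{2,5}Z_5^{i-5}$ and applying $U_5$ with the rule $(f(\tau)g(5\tau))\mid U_5=(f\mid U_5)g(\tau)$. Your explicit justification that $\bigl((\mathcal{E}_{2,5}Z_5^{i})\mid U_5\bigr)/\mathcal{E}_{2,5}$ is a Laurent polynomial in $Y_5$ --- via the forced half-integral vanishing of weight-$2$ forms at the two order-$2$ elliptic points of $X_0(5)$, which are precisely the only zeros of $\mathcal{E}_{2,5}$ --- is left implicit in the paper but is sound, and it correctly pinpoints why the uniform Lemma \ref{lem-UEZ-repn} must exclude $r=1$.
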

\begin{rem}
 This lemma was only proved for $i\geq 0$ in \cite{Garvan-TAMS}. But the proof there also works for $i<0$ after extending the definition of $m(i,j)$ using the recurrence \eqref{m-5-rec}.
\end{rem}

\begin{lemma}\label{lem-E6-m-ord}
(Cf.\ \cite[Lemma 3.7]{Garvan-TAMS}.) For any integers $i$ and $j$,
\begin{align}
\pi_5(m(i,j))\geq \left\lfloor \frac{5j-i+1}{2}\right\rfloor.
\end{align}
\end{lemma}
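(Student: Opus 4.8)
The plan is to prove the bound by induction on $i$, running the recurrence \eqref{m-5-rec} in both the positive and negative directions, exactly as in the proof of Lemma \ref{lem-m-5-ord}. The only arithmetic input beyond bookkeeping is the $5$-adic valuation of the five coefficients in \eqref{m-5-rec}, namely $\pi_5(25)=\pi_5(25)=2$, $\pi_5(15)=\pi_5(5)=1$ and $\pi_5(1)=0$. Writing $\nu(i,j):=\lfloor(5j-i+1)/2\rfloor$ for the claimed bound, the whole argument reduces to checking, for each of the five shifts, that the coefficient valuation plus the inductive bound for the shifted term is at least $\nu(i,j)$.

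First I would dispose of the base case. For $0\le i\le 4$ the numbers $m(i,j)$ are listed explicitly in Lemma \ref{lem-UE2Z}; reading them off (with the displayed columns indexed by $j=0,1,2,\dots$, so that the leading $1$ in the top row is $m(0,0)$ and rows $i\ge1$ begin with the zero entry $m(i,0)$) one checks directly that $\pi_5(m(i,j))\ge\nu(i,j)$, with several entries meeting the bound with equality. For the remaining $j$, beyond the finitely many columns shown, $m(i,j)=0$ and the bound holds trivially. This settles the five seed rows.

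For the inductive step in the positive direction ($i\ge 5$) I would apply \eqref{m-5-rec} directly. Assuming the bound for all smaller $i$ and all $j$, the $s$-th term ($s=1,\dots,5$, with coefficient valuations $v=(2,2,1,1,0)$) contributes valuation at least $v_s+\nu(i-s,j-1)=v_s+\lfloor(5j-i+s-4)/2\rfloor$. Setting $w=5j-i$, a short case split on the parity of $w$ shows each of these five quantities is at least $\lfloor(w+1)/2\rfloor=\nu(i,j)$ (the shifts $s=1$ and $s=5$ being the tight ones), so the whole sum, hence $m(i,j)$, has valuation at least $\nu(i,j)$.

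The step I expect to require the most care is the negative direction ($i\le-1$), where the recurrence must be run backwards. Since the modular equation \eqref{modeq-5} is an identity for every integer power of $Z_5$, the relation \eqref{m-5-rec} in fact holds for all $i\in\mathbb{Z}$ (this is the sense in which $m(i,j)$ is extended to $i<0$), so I would solve it for the lowest-index term and rewrite it as
\[
m(i,j)=m(i+5,j+1)-25m(i+4,j)-25m(i+3,j)-15m(i+2,j)-5m(i+1,j),
\]
which expresses $m(i,j)$ through terms of strictly larger index (for $i=-1$ these are precisely the seed rows). Inducting downward on $i$ for all $j$, one checks that the leading term already satisfies $\nu(i+5,j+1)=\nu(i,j)$ and that each remaining term clears the bound by the same floor arithmetic as in the positive case. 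Combining the base case with the two inductions completes the proof; the only delicate bookkeeping is in the floor inequalities and in using the recurrence in the correct direction on each side of the seed.
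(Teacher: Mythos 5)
Your proposal is correct and follows essentially the route the paper intends: the paper simply cites Garvan's Lemma 3.7 for $i\ge 0$ and remarks that the recurrence \eqref{m-5-rec} extends the bound to $i<0$, and your base-case check on the seed rows together with the forward and backward inductions on \eqref{m-5-rec} is exactly that argument carried out in full. The floor-function estimates all check out (with $s=1,3,5$ the tight shifts in the forward direction), so nothing further is needed.
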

Although in \cite{Garvan-TAMS} this lemma is only stated for $i\geq 0$, it is easy to see that using \eqref{m-5-rec}, the inequality also holds for $i<0$.

\begin{theorem}\label{thm-L65-repn}
For $k\geq 1$, we have
\begin{align}
L_{6,5,k}(\tau)=-\mathcal{E}_{2,5}(\tau)\eta(\tau)^4\eta(5\tau)^4\sum_{j=0}^\infty a(k,j)Y_{5}^j, \label{L65-repn}
\end{align}
where
\begin{align}
a(1,0)=27619\cdot 5^2, \quad a(1,1)=28124\cdot 5^5, \quad a(1,2)=5^{13}, \quad a(1,j)=0, \quad \forall j\geq 3,
\end{align}
and for $i\geq 1$,
\begin{align}
a(2i,j)=\sum_{k=0}^\infty a(2i-1,k)m(6k-4,k+j), \label{L65-a-even} \\
a(2i+1,j)=\sum_{k=0}^\infty a(2i,k)m(6k-3,k+j). \label{L65-a-odd}
\end{align}
\end{theorem}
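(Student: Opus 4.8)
The plan is to argue by induction on $k$, following essentially verbatim the structure of the proof of Theorem \ref{thm-L45}, with the weight-$4$ data replaced by weight-$6$ data and with Lemma \ref{lem-UE2Z} playing the role of Lemma \ref{lem-e4-U5-Z}. The one structural novelty is the extra factor $\mathcal{E}_{2,5}(\tau)$ in \eqref{L65-repn}, and the whole argument hinges on the fact recorded in Lemma \ref{lem-UE2Z} that this factor is reproduced under the relevant operation, namely $(\mathcal{E}_{2,5}Z_5^i)\mid U_5=\mathcal{E}_{2,5}\sum_j m(i,j)Y_5^j$. Consequently the prefactor $-\mathcal{E}_{2,5}(\tau)\eta(\tau)^4\eta(5\tau)^4$ will survive intact at every stage of the recursion, and only the coefficient sequence $a(k,j)$ will evolve.

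For the base case $k=1$ I would verify \eqref{L65-repn} directly. Since $L_{6,5,1}=(Z_5E_6)\mid U_5$ by \eqref{L-odd-defn}, one simply compares $q$-expansions of the two sides. The comparison terminates after finitely many coefficients because, recalling from \eqref{wE6-5-repn} together with $\eta(\tau)^4\eta(5\tau)^4=(\eta(\tau)^{10}/\eta(5\tau)^2)Y_5$ that $\mathcal{E}_{2,5}\eta(\tau)^4\eta(5\tau)^4=\widetilde{E}_{6,5}Y_5$, the claimed identity is a finite identity in $\widetilde{E}_{6,5}\,\mathbb{Q}[Y_5]$ whose existence is guaranteed by Theorem \ref{thm-L-wE-repn}; matching finitely many coefficients then pins down $a(1,0)$, $a(1,1)$, $a(1,2)$ and forces $a(1,j)=0$ for $j\ge3$.

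For the inductive step, assume \eqref{L65-repn} holds for $k=2i-1$. The key algebraic identity, exactly as in Theorem \ref{thm-L45}, is
\begin{align*}
\eta(\tau)^4\eta(5\tau)^4Y_5^j=Z_5^{6j-4}\cdot\frac{\eta(5\tau)^{6j+4}}{\eta(25\tau)^{6j-4}}.
\end{align*}
The second factor on the right is a function of $q^5$, so $U_5$ factors through it; applying $U_5$ to $L_{6,5,2i}=L_{6,5,2i-1}\mid U_5$ and invoking Lemma \ref{lem-UE2Z} term by term yields
\begin{align*}
L_{6,5,2i}(\tau)=-\mathcal{E}_{2,5}(\tau)\eta(\tau)^4\eta(5\tau)^4\sum_{j=0}^\infty\sum_{s}a(2i-1,j)\,m(6j-4,s)\,Y_5^{s-j},
\end{align*}
and re-indexing by $k=s-j$ gives precisely the recurrence \eqref{L65-a-even}. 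The odd step $L_{6,5,2i+1}=(Z_5L_{6,5,2i})\mid U_5$ is identical except that the extra factor $Z_5$ raises the exponent from $6j-4$ to $6j-3$, so Lemma \ref{lem-UE2Z} produces $m(6j-3,\cdot)$ and hence \eqref{L65-a-odd}. Induction then establishes \eqref{L65-repn} for all $k\ge1$.

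Honestly, there is no serious obstacle here: the argument is a transcription of the already-proved Theorem \ref{thm-L45}, the only genuinely new inputs being the base-case computation and the substitution of Lemma \ref{lem-UE2Z} for Lemma \ref{lem-e4-U5-Z}. The sole point requiring care is that the chain ``$U_5$ commutes past the $q^5$-factor, then Lemma \ref{lem-UE2Z} pulls out $\mathcal{E}_{2,5}$'' must be legitimate for every $j\ge0$, including the negative exponent $6j-4=-4$ arising at $j=0$; this is exactly why Lemma \ref{lem-UE2Z} was extended to $i<0$ via the recurrence \eqref{m-5-rec} in the remark following its statement.
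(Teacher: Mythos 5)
Your proposal is correct and follows essentially the same route as the paper, which likewise verifies the case $k=1$ directly and then inducts on $k$ exactly as in the proof of Theorem \ref{thm-L45}, with Lemma \ref{lem-UE2Z} supplying the identity $(\mathcal{E}_{2,5}Z_5^i)\mid U_5=\mathcal{E}_{2,5}\sum_j m(i,j)Y_5^j$ that preserves the prefactor. The details you supply (the rewriting $\eta(\tau)^4\eta(5\tau)^4Y_5^j=Z_5^{6j-4}\eta(5\tau)^{6j+4}\eta(25\tau)^{4-6j}$, the commutation of $U_5$ past the $q^5$-factor, and the need for Lemma \ref{lem-UE2Z} at negative exponents such as $i=-4$) are exactly the ones the paper omits with ``we omit the details.''
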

\begin{proof}
We check \eqref{L65-repn} for $i=1$ directly. Then we can proceed by induction on $k$ as in the proof of Theorem \ref{thm-L45}. We omit the details.
\end{proof}
\begin{lemma}\label{lem-L65-a-order}
For $i\geq 1$ and $j\geq 0$,
\begin{align}
\pi_5(a(2i-1,j))\geq 4i-2+\left\lfloor \frac{5j}{2}\right\rfloor, \label{L65-a-odd-order}\\
\pi_5(a(2i,j))\geq 4i+\left\lfloor \frac{5j+1}{2}\right\rfloor. \label{L65-a-even-order}
\end{align}
\end{lemma}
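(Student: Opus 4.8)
The plan is to prove both inequalities simultaneously by induction on $i$, following verbatim the structure of the proof of Lemma \ref{lem-L45-a-ord}; the only genuine differences are the initial data supplied by Theorem \ref{thm-L65-repn} and the sharper order estimate of Lemma \ref{lem-E6-m-ord}. First I would dispose of the base case $i=1$ of \eqref{L65-a-odd-order}. Since $27619$ and $28124$ are both coprime to $5$, the initial values give $\pi_5(a(1,0))=2$, $\pi_5(a(1,1))=5$ and $\pi_5(a(1,2))=13$, while the claimed bound $4\cdot1-2+\lfloor 5j/2\rfloor$ equals $2,4,7$ for $j=0,1,2$; as $a(1,j)=0$ for $j\ge 3$, the inequality \eqref{L65-a-odd-order} holds for $i=1$.

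For the inductive step I would assume \eqref{L65-a-odd-order} for a fixed $i$ and first derive \eqref{L65-a-even-order}. Applying the recurrence \eqref{L65-a-even} together with Lemma \ref{lem-E6-m-ord}, which now reads $\pi_5(m(\iota,j))\ge\lfloor(5j-\iota+1)/2\rfloor$, I compute
\begin{align*}
\pi_5\big(m(6k-4,k+j)\big)\ge\left\lfloor\frac{5j-k+5}{2}\right\rfloor,
\end{align*}
so that
\begin{align*}
\pi_5(a(2i,j))\ge\min_{k\ge0}\left(4i-2+\left\lfloor\frac{5k}{2}\right\rfloor+\left\lfloor\frac{5j-k+5}{2}\right\rfloor\right).
\end{align*}
The analogous computation with \eqref{L65-a-odd} and the bound $\pi_5(m(6k-3,k+j))\ge\lfloor(5j-k+4)/2\rfloor$ would then advance the odd estimate from $i$ to $i+1$.

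The crux of the argument, and the one place where care is needed, is to show that each of these minima over $k\ge0$ is attained at $k=0$, which is exactly what produces the clean bounds $4i+\lfloor(5j+1)/2\rfloor$ and $4(i+1)-2+\lfloor5j/2\rfloor$. I would establish this by a discrete monotonicity check: writing $g(k)=\lfloor5k/2\rfloor+\lfloor(5j-k+5)/2\rfloor$, the first difference $\lfloor5(k+1)/2\rfloor-\lfloor5k/2\rfloor$ lies in $\{2,3\}$ while $\lfloor(5j-k+4)/2\rfloor-\lfloor(5j-k+5)/2\rfloor$ lies in $\{-1,0\}$, so $g(k+1)-g(k)\ge1>0$ and the minimum occurs at $k=0$; an identical accounting handles the odd case. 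Evaluating at $k=0$ and splitting into the two parities of $j$ then gives equality with the target in each case, completing the induction. I do not anticipate any conceptual obstacle beyond this floor-function arithmetic, since the modularity and the recurrences have already been established in Theorem \ref{thm-L65-repn}.
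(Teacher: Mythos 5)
Your proposal is correct and is essentially the paper's own argument: the paper omits the details and simply points to the proof of Lemma \ref{lem-L45-a-ord}, which is exactly the induction you carry out, with the base case read off from \eqref{a-e6-13-initial}-style initial data (here $a(1,0)=27619\cdot5^2$, etc.) and the inductive step combining the recurrences \eqref{L65-a-even}--\eqref{L65-a-odd} with the order bound of Lemma \ref{lem-E6-m-ord}. Your explicit first-difference check that the minimum over $k$ is attained at $k=0$ is a slightly more careful justification of a step the paper's template proof asserts without comment, and your arithmetic (e.g.\ $4i-2+\lfloor(5j+5)/2\rfloor=4i+\lfloor(5j+1)/2\rfloor$ at $k=0$) checks out.
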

This lemma can be proved in a way similar to the proof of Lemma \ref{lem-L45-a-ord}. We omit the details.

\begin{theorem}\label{thm-e6-5-cong}
For $k\geq 1$ and $n\geq 0$, we have
\begin{align}
e_6(5^kn+\delta_{5,k})\equiv 0 \pmod{5^{2k}}.
\end{align}
\begin{proof}
This follows from Theorem \ref{thm-L65-repn} and Lemma \ref{lem-L65-a-order}.
\end{proof}
\end{theorem}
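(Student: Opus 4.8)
The plan is to read the congruence off directly from the explicit representation of Theorem \ref{thm-L65-repn} together with the $5$-adic valuation bounds of Lemma \ref{lem-L65-a-order}. First I would use the defining identities \eqref{L-odd} and \eqref{L-even} to convert the assertion about $e_6(5^kn+\delta_{5,k})$ into one about the Fourier coefficients of $L_{6,5,k}$, treating the parities of $k$ separately. For even $k=2i$, identity \eqref{L-even} gives $L_{6,5,2i}=(q;q)_\infty\sum_{n\ge0}e_6(5^{2i}n+\delta_{5,2i})q^{n+1}$; dividing by $(q;q)_\infty$, inserting Theorem \ref{thm-L65-repn}, and using $\eta(\tau)^4\eta(5\tau)^4=q(q;q)_\infty^4(q^5;q^5)_\infty^4$ yields
\begin{align}
\sum_{n=0}^\infty e_6(5^{2i}n+\delta_{5,2i})q^{n+1}=-\mathcal{E}_{2,5}(\tau)\,q\,(q;q)_\infty^3(q^5;q^5)_\infty^4\sum_{j=0}^\infty a(2i,j)Y_5^j.
\end{align}
For odd $k=2i-1$, the same manipulation starting from \eqref{L-odd}, now dividing by $(q^5;q^5)_\infty$, gives
\begin{align}
\sum_{n=0}^\infty e_6(5^{2i-1}n+\delta_{5,2i-1})q^{n+1}=-\mathcal{E}_{2,5}(\tau)\,q\,(q;q)_\infty^4(q^5;q^5)_\infty^3\sum_{j=0}^\infty a(2i-1,j)Y_5^j.
\end{align}

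Next I would check that every power-series factor on the right has integer coefficients, so that the Fourier coefficients are genuine $\mathbb{Z}$-combinations of the numbers $a(k,j)$. Indeed $\mathcal{E}_{2,5}=1+6\sum_n\sigma_1(n)q^n-30\sum_n\sigma_1(n)q^{5n}$ is integral with constant term $1$, the eta-products $(q;q)_\infty$ and $(q^5;q^5)_\infty$ are manifestly integral, and $Y_5=q\bigl((q^5;q^5)_\infty/(q;q)_\infty\bigr)^6$ is an integer power series beginning with $q$, since $1/(q;q)_\infty=\sum_n p(n)q^n$. Consequently $Y_5^j$ contributes to the $q$-expansion only from $q^j$ onward, and after the extra factor $q$ the $j$-th summand first affects the coefficient of $q^{j+1}$. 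Extracting the coefficient of $q^{n+1}$ on both sides then expresses each $e_6(5^kn+\delta_{5,k})$ as an integer linear combination of the $a(k,j)$ with $0\le j\le n$.

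Finally I would invoke Lemma \ref{lem-L65-a-order}. For $k=2i$ the bound $\pi_5(a(2i,j))\ge 4i+\lfloor(5j+1)/2\rfloor$ gives $\pi_5(a(2i,j))\ge 4i=2k$ for every $j\ge0$, while for $k=2i-1$ the bound $\pi_5(a(2i-1,j))\ge 4i-2+\lfloor5j/2\rfloor$ gives $\pi_5(a(2i-1,j))\ge 4i-2=2k$ for every $j\ge0$. Since each lower bound is uniform in $j$ and is attained already at $j=0$, every coefficient $a(k,j)$ appearing in the integer combination above is divisible by $5^{2k}$, and therefore so is each $e_6(5^kn+\delta_{5,k})$.

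The argument is essentially bookkeeping once the two inputs are in place, so I expect no serious obstacle here; the only points needing care are the integrality of all auxiliary power series (so that the Fourier coefficients really are $\mathbb{Z}$-combinations of the $a(k,j)$) and the correct matching of the two parity cases, which is what makes the modulus come out as exactly $5^{2k}$ rather than a weaker power. The genuine difficulty lies upstream, in the representation of Theorem \ref{thm-L65-repn} and the valuation estimates of Lemma \ref{lem-L65-a-order}; granted those, the stated congruence is immediate.
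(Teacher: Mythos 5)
Your proposal is correct and is exactly the argument the paper intends: the paper's proof is the one-line deduction from Theorem \ref{thm-L65-repn} and Lemma \ref{lem-L65-a-order}, and you have simply written out the coefficient-extraction and integrality bookkeeping (via \eqref{L-odd}, \eqref{L-even}, and the integrality of $\mathcal{E}_{2,5}$, the eta-products, and $Y_5$) that makes that deduction rigorous. The parity split and the matching of $4i-2$ and $4i$ with $2k$ are handled correctly, so there is nothing to add.
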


\section{Generating functions and congruences for $e_{2r}(7^kn+\delta_{7,k})$}\label{sec-7}
To establish the generating function for $e_{2r}(7^kn+\delta_{7,k})$, we need the 7-th order modular equation that was used by Watson to prove Ramanujan's congruence \eqref{pn-mod7}:
\begin{align}
Z_{7}^7=&~ (1+7Z_7+21Z_{7}^2+49Z_{7}^3+147Z_{7}^4+343Z_{7}^5+343Z_{7}^6)Y_{7}(7\tau)^2 \nonumber \\
&~+(7Z_{7}^4+35Z_{7}^5+49Z_{7}^6)Y_7(7\tau). \label{modeq-7}
\end{align}

\subsection{Congruences for $e_4(n)$ modulo powers of 7}
Since $\widetilde{\mathcal{E}}_{4,7} \in \mathcal{M}_4(\Gamma_0(7))$, it is not difficult to find that
\begin{align}
\widetilde{\mathcal{E}}_{4,7}=\mathcal{E}_1 \frac{\eta(\tau)^7}{\eta(7\tau)}, \label{wE4-repn}
\end{align}
where $\mathcal{E}_1$ is the weight 1 Eisenstein series on $\Gamma_0(7)$ given by
\begin{align}
\mathcal{E}_1=\mathcal{E}_1(\tau):=1+2\sum_{n=1}^\infty \left(\frac{n}{7} \right)\frac{q^n}{1-q^n}.
\end{align}

\begin{lemma}\label{lem-e4-7-UEZ}
For any integer $i$, we have
\begin{align}
\mathcal{E}_1Z_7^i\mid U_{7} =\mathcal{E}_1 \sum_{j=\left\lceil \frac{2i}{7}\right\rceil}^\infty m(i,j)Y_{7}^j, \label{E1Z7-U-eq}
\end{align}
where $m(i,j)$ ($-6\leq i \leq 0$) are given in Group \uppercase\expandafter{\romannumeral1} of the Appendix, and for other $i$, $m(i,j)$ can be determined by the recurrence
\begin{align}\label{m-7-rec}
m(i,j)=&~ 7^2m(i-1,j-1)+5\cdot 7m(i-2,j-1)+7m(i-3,j-1)+7^3m(i-1,j-2) \nonumber \\
&~ +7^3m(i-2,j-2)+3\cdot 7^2m(i-3,j-2)+7^2m(i-4,j-2) \nonumber \\
&~ +3\cdot 7m(i-5,j-2)+7m(i-6,j-2)+m(i-7,j-2).
\end{align}
\end{lemma}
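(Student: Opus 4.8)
The plan is to follow the template of the $\ell=5$ computations in Lemmas~\ref{lem-e4-U5-Z} and~\ref{lem-UE2Z}, with the order-$7$ modular equation \eqref{modeq-7} playing the role that \eqref{modeq-5} played there. Two ingredients drive the argument. The first is the purely formal \emph{pull-out} identity for Atkin's operator: for power series $F(\tau)=\sum_n a(n)q^n$ and $G(\tau)=\sum_n b(n)q^n$ one has
\begin{align}
\bigl(F(\tau)G(7\tau)\bigr)\mid U_7=\bigl(F\mid U_7\bigr)(\tau)\,G(\tau), \label{pullout7}
\end{align}
which follows at once by matching the coefficient of $q^N$ on both sides. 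The second is the observation that, writing $A(x)=1+7x+21x^2+49x^3+147x^4+343x^5+343x^6$ and $B(x)=7x^4+35x^5+49x^6$, the modular equation \eqref{modeq-7} reads
\begin{align}
Z_7^7=A(Z_7)\,Y_7(7\tau)^2+B(Z_7)\,Y_7(7\tau),
\end{align}
so the coefficients of the powers of $Z_7$ on the right are $Y_7(7\tau)$ and $Y_7(7\tau)^2$, both of which are powers of $Y_7$ evaluated at $7\tau$ and hence can be extracted through \eqref{pullout7}.

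Before inducting I would record that the expansion has the claimed shape. For every $i$ the product $\mathcal{E}_1Z_7^i$ is a weakly holomorphic modular form of weight $1$ with character $(\tfrac{\cdot}{7})$ on $\Gamma_0(49)$, since $Z_7=\eta(49\tau)/\eta(\tau)$ is a modular function on $\Gamma_0(49)$. A standard property of $U_7$ (valid because $7^2\mid 49$ and the conductor of the character divides $7$) then sends it to a form of the same weight and character on $\Gamma_0(7)$, exactly as in the proof of Lemma~\ref{lem-UEZ-repn}. Dividing by $\mathcal{E}_1$ yields a weakly holomorphic modular function on $\Gamma_0(7)$, which is a Laurent polynomial in the hauptmodul $Y_7$ because its poles are confined to the two cusps. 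This justifies writing $(\mathcal{E}_1Z_7^i)\mid U_7=\mathcal{E}_1\sum_j m(i,j)Y_7^j$ with only finitely many nonzero $m(i,j)$; comparing the orders at $\infty$ of the two sides, using $Z_7=q^2+\cdots$, $\mathcal{E}_1=1+\cdots$ and $Y_7=q+\cdots$, forces the summation to start at $j=\lceil 2i/7\rceil$.

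The heart of the matter is the recurrence. Multiplying \eqref{modeq-7} by $\mathcal{E}_1Z_7^{i-7}$, applying $U_7$, and using \eqref{pullout7} to strip off the factors $Y_7(7\tau)^2$ and $Y_7(7\tau)$ gives
\begin{align}
(\mathcal{E}_1Z_7^i)\mid U_7=\Bigl(\sum_{a=0}^{6}\alpha_a\,(\mathcal{E}_1Z_7^{i-7+a})\mid U_7\Bigr)Y_7^2+\Bigl(\sum_{b=4}^{6}\beta_b\,(\mathcal{E}_1Z_7^{i-7+b})\mid U_7\Bigr)Y_7,
\end{align}
where $\alpha_a$ and $\beta_b$ are the coefficients of $A$ and $B$. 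Substituting the expansions of the lower-index terms and reading off the coefficient of $Y_7^j$ reproduces precisely \eqref{m-7-rec}: the $Y_7^2$-block supplies the $j-2$ terms $m(i-7,j-2),\dots,m(i-1,j-2)$ with coefficients $1,7,21,49,147,343,343$, and the $Y_7$-block supplies the $j-1$ terms $m(i-3,j-1),m(i-2,j-1),m(i-1,j-1)$ with coefficients $7,35,49$. Since \eqref{m-7-rec} determines $m(i,j)$ from the seven indices $i-1,\dots,i-7$, and, solved for its unit-coefficient term, determines $m(i-7,j-2)$ from $i,i-1,\dots,i-6$, it propagates the identity in both directions out of the base band $-6\le i\le 0$: upward for $i\ge 1$ and downward for $i\le -7$. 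It then remains to check the base cases, that is, that the values tabulated in Group~I of the Appendix satisfy $(\mathcal{E}_1Z_7^i)\mid U_7=\mathcal{E}_1\sum_j m(i,j)Y_7^j$ for $-6\le i\le 0$; by the previous paragraph each of these is an equality of modular functions on $\Gamma_0(7)$, so it follows by matching $q$-expansions up to the Sturm bound.

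The main obstacle I anticipate is organizational rather than conceptual. Unlike the $\ell=5$ case, where \eqref{modeq-5} has a single ``layer'' $Y_5(5\tau)$, the order-$7$ equation carries both $Y_7(7\tau)$ and $Y_7(7\tau)^2$, so the recurrence couples three consecutive $j$-indices across seven consecutive $i$-indices, and one must track the index shifts carefully to match \eqref{m-7-rec} term by term. A second delicate point is the range $i<0$, where $\mathcal{E}_1Z_7^i$ is genuinely weakly holomorphic with a pole at $\infty$, so that negative powers of $Y_7$ enter the expansions; confirming that the downward recurrence stays exact there, and that the tabulated base cases already encode these negative-$j$ contributions correctly, is the step that needs the most care.
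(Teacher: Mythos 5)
Your proposal is correct and follows essentially the same route as the paper, whose proof of this lemma is just the two sentences ``For $-6\leq i\leq 0$, \eqref{E1Z7-U-eq} can be verified directly. The lemma can then be proved using \eqref{modeq-7}''; you have filled in exactly the intended details (the $U_7$ pull-out identity, multiplication of \eqref{modeq-7} by $\mathcal{E}_1Z_7^{i-7}$, the term-by-term match with \eqref{m-7-rec}, and bidirectional propagation out of the base band). The only minor imprecision is the claim that dividing by $\mathcal{E}_1$ automatically yields a Laurent polynomial in $Y_7$ — $\mathcal{E}_1$ has zeros at elliptic points of $X_0(7)$, so this needs the numerator to vanish there too — but this a priori shape argument is not actually needed, since the explicit base cases and the recurrence already establish the form of the expansion.
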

\begin{proof}
For $-6\leq i\leq 0$, \eqref{E1Z7-U-eq} can be verified directly. The lemma can then be proved using \eqref{modeq-7}.
\end{proof}

\begin{lemma}\label{lem-e4-7-m-order}
For any integers $i$ and $j$,
\begin{align}
\pi_7(m(i,j))\geq \left\lfloor \frac{7j-2i+1}{4} \right\rfloor.
\end{align}
\end{lemma}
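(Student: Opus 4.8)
The plan is to argue by induction on the first index $i$, using the ten--term recurrence \eqref{m-7-rec}, in exactly the spirit of the proofs of Lemmas \ref{lem-m-5-ord} and \ref{lem-E6-m-ord} for the prime $5$. Write $B(i,j):=\lfloor (7j-2i+1)/4\rfloor$ for the claimed lower bound. Since $\pi_7(0)=\infty$ and $\pi_7$ is nonnegative on nonzero integers, the inequality is automatic whenever $m(i,j)=0$ or whenever $B(i,j)\le 0$; thus the content lies only in the range where $7j-2i+1$ is large and positive. The base of the induction is the block $-6\le i\le 0$. For each such $i$ the image $\mathcal{E}_1Z_7^i\mid U_7$ divided by $\mathcal{E}_1$ is a weight--$0$ modular function on $X_0(7)$ with poles only at the two cusps, hence a Laurent polynomial in the hauptmodul $Y_7$; so there are only finitely many nonzero $m(i,j)$, which one reads off from Group~\uppercase\expandafter{\romannumeral1} of the Appendix and checks against $B(i,j)$ directly.

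For the inductive step I would treat the ten summands of \eqref{m-7-rec} one at a time. A typical term has the form $7^{c}\,m(i-a,j-b)$, with $(a,b,c)$ running through $(1,1,2),(2,1,1),(3,1,1),(1,2,3),(2,2,3),(3,2,2),(4,2,2),(5,2,1),(6,2,1),(7,2,0)$. Assuming the bound for $m(i-a,j-b)$, this term contributes at least $c+B(i-a,j-b)=c+\lfloor (N-7b+2a)/4\rfloor$ to the valuation, where $N:=7j-2i+1$. Invoking the superadditivity of the floor, $\lfloor (N+x)/4\rfloor\ge\lfloor N/4\rfloor+\lfloor x/4\rfloor$ with $x=2a-7b$, it suffices to verify $c+\lfloor (2a-7b)/4\rfloor\ge 0$ for each of the ten triples. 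A short check shows that every one of these is in fact an \emph{equality}, so each individual term of \eqref{m-7-rec} already attains the target valuation $B(i,j)$, and therefore so does their sum. This carries the induction upward to all $i\ge 1$.

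To reach negative $i$ beyond the base block I would run the same recurrence downward: since the coefficient of $m(i-7,j-2)$ in \eqref{m-7-rec} is exactly $1$, the identity solves for $m(i-7,j-2)$ in terms of the nine remaining values, all of which carry strictly larger first index. Setting $(i',j'):=(i-7,j-2)$ and repeating the term--by--term floor estimate — which is literally the same computation, since it involves only the same ten triples — gives $\pi_7(m(i',j'))\ge B(i',j')$ and extends the bound to all $i\le -7$. Combined with the base block this establishes the lemma for every integer $i$.

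The only genuinely delicate point is the floor arithmetic. Because all ten inequalities $c+\lfloor (2a-7b)/4\rfloor\ge 0$ are tight, there is no slack to absorb a slip: the estimate $\lfloor (N+x)/4\rfloor\ge\lfloor N/4\rfloor+\lfloor x/4\rfloor$ must be used in precisely this sharp form, and the initial values from the Appendix must be transcribed exactly. I anticipate no conceptual obstacle; the labor is entirely in the bookkeeping of the ten terms and the verification of the base data.
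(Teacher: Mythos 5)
Your proof is correct and follows essentially the same route as the paper, which simply verifies the base block $-6\le i\le 0$ against the Appendix data and then inducts via the recurrence \eqref{m-7-rec}; your term-by-term check that $c+\bigl\lfloor (2a-7b)/4\bigr\rfloor=0$ for all ten triples, together with the downward induction exploiting the unit coefficient of $m(i-7,j-2)$, supplies exactly the details the paper leaves implicit.
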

\begin{proof}
For $-6\leq i\leq 0$, the desired inequality can be verified directly. For $i<-6$ or $i>0$, it can be proved by induction upon using \eqref{m-7-rec}.
\end{proof}

\begin{theorem}\label{thm-e4-7-gen}
For $k\geq 1$, we have
\begin{align}
L_{4, 7, k}(\tau)=\mathcal{E}_1 \eta(\tau)^3\eta(7\tau)^3 \sum_{j=0}^\infty a(k,j)Y_7^j, \label{e4-7-gen}
\end{align}
where
\begin{align}
&a(1,0)=9841\cdot 7, \quad a(1,1)=14748\cdot 7^3, \quad a(1,2)=12\cdot 7^8, \quad a(1,3)=7^{10}, \nonumber  \\
&a(1,j)=0, \quad  \forall j\geq 4, \label{a-e4-7-initial}
\end{align}
and for $i\geq 1$,
\begin{align}
&a(2i,j)=\sum_{k=0}^\infty a(2i-1,k)m(4k-3,k+j), \label{e4-7-a-even-rec} \\
&a(2i+1,j)=\sum_{k=0}^\infty a(2i,k)m(4k-2,k+j). \label{e4-7-a-odd-rec}
\end{align}
\end{theorem}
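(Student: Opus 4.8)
The plan is to prove \eqref{e4-7-gen} by induction on $k$, following verbatim the strategy used for $\ell=5$ in the proof of Theorem \ref{thm-L45}. The two engines are the recursive definitions \eqref{L-odd-defn}--\eqref{L-even-defn} of $L_{4,7,k}$ and the explicit evaluation of $(\mathcal{E}_1 Z_7^i)\mid U_7$ provided by Lemma \ref{lem-e4-7-UEZ}. For the base case $k=1$ I would compute $L_{4,7,1}=(E_4(\tau)Z_7(\tau))\mid U_7$ directly (either from the modular equation \eqref{modeq-7} or by comparing $q$-expansions to sufficiently high order) and check that it equals $\mathcal{E}_1\eta(\tau)^3\eta(7\tau)^3\big(a(1,0)+a(1,1)Y_7+a(1,2)Y_7^2+a(1,3)Y_7^3\big)$ with the values in \eqref{a-e4-7-initial}. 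This is a finite verification; moreover, since \eqref{wE4-repn} gives $\widetilde E_{4,7}=\mathcal{E}_1\eta(\tau)^7/\eta(7\tau)$, the prefactor $\mathcal{E}_1\eta(\tau)^3\eta(7\tau)^3$ equals $\widetilde E_{4,7}Y_7$, so that \eqref{e4-7-gen} is the representation guaranteed by Theorem \ref{thm-L-wE-repn} up to the harmless index shift $j\mapsto j+1$ in the powers of $Y_7$.

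The heart of the induction is a single algebraic manipulation. I would use the multiplicativity of Atkin's operator, $(f(\tau)\,g(7\tau))\mid U_7=g(\tau)\,(f(\tau)\mid U_7)$, valid because $g(7\tau)$ has $q$-expansion supported on multiples of $q^7$. Assuming \eqref{e4-7-gen} for $k=2i-1$ and using $Y_7=(\eta(7\tau)/\eta(\tau))^4$, a typical summand of $L_{4,7,2i}=L_{4,7,2i-1}\mid U_7$ is $\big(\mathcal{E}_1\,\eta(\tau)^{3-4j}\eta(7\tau)^{3+4j}\big)\mid U_7$. I would pull out the power of $Z_7=\eta(49\tau)/\eta(\tau)$ dictated by the requirement that the remaining factor be a function of $7\tau$, namely $\mathcal{E}_1\,\eta(\tau)^{3-4j}\eta(7\tau)^{3+4j}=\mathcal{E}_1 Z_7^{\,4j-3}\cdot\eta(7\tau)^{3+4j}\eta(49\tau)^{3-4j}$, the exponent $4j-3$ being forced by vanishing of the $\eta(\tau)$-exponent in the second factor. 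The multiplicativity identity with $g(\tau)=\eta(\tau)^{3+4j}\eta(7\tau)^{3-4j}$ together with Lemma \ref{lem-e4-7-UEZ} then turns this into $\eta(\tau)^{3+4j}\eta(7\tau)^{3-4j}\,\mathcal{E}_1\sum_s m(4j-3,s)Y_7^s$, and the collapse $\eta(\tau)^{3+4j}\eta(7\tau)^{3-4j}Y_7^s=\eta(\tau)^3\eta(7\tau)^3Y_7^{s-j}$ returns the expression to the form $\mathcal{E}_1\eta(\tau)^3\eta(7\tau)^3\sum_k c_k Y_7^k$.

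Summing over $j$ and reindexing by $k=s-j$ reproduces exactly the recurrence \eqref{e4-7-a-even-rec}; the odd step $L_{4,7,2i+1}=(Z_7 L_{4,7,2i})\mid U_7$ runs identically except that the extra factor $Z_7$ raises the pulled-out power from $4j-3$ to $4j-2$, yielding \eqref{e4-7-a-odd-rec}. By induction \eqref{e4-7-gen} holds for all $k\ge 1$. I expect the genuine difficulties to be only clerical: keeping the two eta-quotient exponents ($4j-3$ versus $4j-2$) and the index shift $k=s-j$ straight, checking that each inner sum over $j$ is actually finite (for fixed $k$ the lower bound $\lceil 2(4j-3)/7\rceil$ in Lemma \ref{lem-e4-7-UEZ} forces $m(4j-3,k+j)=0$ once $j$ is large), and --- crucially --- invoking Lemma \ref{lem-e4-7-UEZ} for the \emph{negative} values $i=4j-3$ that occur for small $j$ (e.g.\ $i=-3$ when $j=0$), which is exactly why that lemma was established for all integers $i$ through the recurrence \eqref{m-7-rec}.
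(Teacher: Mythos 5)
Your proposal is correct and follows essentially the same route as the paper: verify the $k=1$ case by direct computation, then induct on $k$ using the identity $\mathcal{E}_1\eta(\tau)^3\eta(7\tau)^3Y_7^j=\mathcal{E}_1Z_7^{4j-3}\cdot\eta(7\tau)^{4j+3}\eta(49\tau)^{3-4j}$ (and its analogue with exponent $4j-2$ for the odd step), the multiplicativity of $U_7$, and Lemma \ref{lem-e4-7-UEZ}, exactly as in the paper's proof modeled on Theorem \ref{thm-L45}. Your side remarks (the identification $\mathcal{E}_1\eta(\tau)^3\eta(7\tau)^3=\widetilde E_{4,7}Y_7$, and the need for Lemma \ref{lem-e4-7-UEZ} at negative arguments such as $i=-3$) are accurate and consistent with the paper.
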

\begin{proof}
The case $k=1$ of \eqref{e4-7-gen} can be verified by direct computation. The theorem can then be proved by induction on $k$ upon using Lemma \ref{lem-e4-7-UEZ} and the following facts:
\begin{align*}
\left(\mathcal{E}_1\eta(\tau)^3\eta(7\tau)^3 Y_7^j \right)\mid U_7 &=\left(\mathcal{E}_1\left(\frac{\eta(49\tau)}{\eta(\tau)}\right)^{4j-3}\cdot \frac{\eta(7\tau)^{4j+3}}{\eta(49\tau)^{4j-3}} \right)\mid U_7 \nonumber \\
&=\eta(\tau)^3\eta(7\tau)^3 Y_{7}^{-j} \left(\mathcal{E}_1 Z_7^{4j-3} \mid U_7\right),
\end{align*}
\begin{align*}
\left(Z_{7}\mathcal{E}_1\eta(\tau)^3\eta(7\tau)^3 Y_7^j \right)\mid U_7 =\eta(\tau)^3\eta(7\tau)^3 Y_{7}^{-j} \left(\mathcal{E}_1 Z_7^{4j-2} \mid U_7\right).
\end{align*}
Since the process is similar to the proof of Theorem \ref{thm-L45}, we omit the details.
\end{proof}

\begin{lemma}\label{lem-e4-7-a-ord}
For $i\geq 1$ and $j\geq 0$, we have
\begin{align}
&\pi_7(a(2i-1,j))\geq 2i-1+\left\lfloor \frac{7j+1}{4}\right\rfloor, \label{e4-7-a-odd-ord} \\
&\pi_7(a(2i,j))\geq 2i+\left\lfloor \frac{7j+3}{4}\right\rfloor. \label{e4-7-a-even-ord}
\end{align}
\end{lemma}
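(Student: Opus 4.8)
The plan is to prove the two inequalities \eqref{e4-7-a-odd-ord} and \eqref{e4-7-a-even-ord} simultaneously by induction on $i$, following verbatim the template of Lemma \ref{lem-L45-a-ord}. For the base case I would take the odd inequality with $i=1$ and check it directly against the initial data \eqref{a-e4-7-initial}: one has $\pi_7(a(1,0))=1$, $\pi_7(a(1,1))=3$, $\pi_7(a(1,2))=8$, $\pi_7(a(1,3))=10$, each at least $1+\lfloor (7j+1)/4\rfloor$ for $j=0,1,2,3$, while $a(1,j)=0$ for $j\geq 4$ makes the claim vacuous there.

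For the inductive step I would alternate two sub-steps. Assuming \eqref{e4-7-a-odd-ord} for a given $i$, I feed the recurrence \eqref{e4-7-a-even-rec} and the order estimate of Lemma \ref{lem-e4-7-m-order} into
\[
\pi_7(a(2i,j)) \geq \min_{k\geq 0}\Big(\pi_7(a(2i-1,k)) + \pi_7(m(4k-3,k+j))\Big) \geq 2i-1 + \min_{k\geq 0}\left(\left\lfloor \tfrac{7k+1}{4}\right\rfloor + \left\lfloor \tfrac{7j-k+7}{4}\right\rfloor\right),
\]
after simplifying $\lfloor (7(k+j)-2(4k-3)+1)/4\rfloor=\lfloor (7j-k+7)/4\rfloor$. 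Assuming \eqref{e4-7-a-even-ord} instead, the recurrence \eqref{e4-7-a-odd-rec} gives in the same way
\[
\pi_7(a(2i+1,j)) \geq 2i + \min_{k\geq 0}\left(\left\lfloor \tfrac{7k+3}{4}\right\rfloor + \left\lfloor \tfrac{7j-k+5}{4}\right\rfloor\right).
\]

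The only real content — and the main obstacle, such as it is — is to show that each minimum is attained at $k=0$, yielding $1+\lfloor (7j+3)/4\rfloor$ and $1+\lfloor (7j+1)/4\rfloor$ respectively. At $k=0$ both sums collapse to a single floor shifted by one, via $\lfloor (7j+7)/4\rfloor=1+\lfloor (7j+3)/4\rfloor$ and $\lfloor (7j+5)/4\rfloor=1+\lfloor (7j+1)/4\rfloor$. For $k\geq 1$ I would invoke the elementary bound $\lfloor x\rfloor+\lfloor y\rfloor\geq \lfloor x+y\rfloor-1$; since in both displays $x+y=(6k+7j+8)/4\geq (7j+14)/4$ when $k\geq 1$, the sum of floors is at least $\lfloor (7j+14)/4\rfloor-1$, and the crude estimate $\lfloor (7j+14)/4\rfloor\geq \lfloor (7j+3)/4\rfloor+2$ (and likewise $\geq \lfloor (7j+1)/4\rfloor+2$) shows this is no smaller than the respective $k=0$ value. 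Substituting the minima back yields $\pi_7(a(2i,j))\geq 2i+\lfloor (7j+3)/4\rfloor$ and $\pi_7(a(2i+1,j))\geq 2(i+1)-1+\lfloor (7j+1)/4\rfloor$, which are exactly \eqref{e4-7-a-even-ord} for $i$ and \eqref{e4-7-a-odd-ord} for $i+1$; this closes the induction. Everything beyond the two floor-function minimizations is forced by the structure already established for the $\ell=5$ case, so I expect no genuine difficulty.
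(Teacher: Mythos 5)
Your proposal is correct and follows essentially the same induction as the paper: the same base case check against \eqref{a-e4-7-initial}, the same use of the recurrences \eqref{e4-7-a-even-rec}--\eqref{e4-7-a-odd-rec} together with Lemma \ref{lem-e4-7-m-order}, and the same reduction to the two floor-sum minimizations. The only difference is that you justify explicitly (via $\lfloor x\rfloor+\lfloor y\rfloor\geq\lfloor x+y\rfloor-1$) that the minima are attained at $k=0$, a step the paper leaves implicit.
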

\begin{proof}
From \eqref{a-e4-7-initial}
 we see that  \eqref{e4-7-a-odd-ord} holds for $i=1$. Now suppose it holds for some $i\geq 1$. By \eqref{e4-7-a-even-rec} and Lemma \ref{lem-e4-7-m-order}, we have
\begin{align*}
\pi_7(a(2i,j))&\geq \min\limits_{k\geq 0}\pi_7(a(2i-1,k))+\pi_7(m(4k-3,k+j)) \\
&\geq \min\limits_{k\geq 0} 2i-1+\left\lfloor \frac{7k+1}{4}\right\rfloor +\left\lfloor \frac{7j-k+7}{4}\right\rfloor \\
&\geq 2i+\left\lfloor \frac{7j+3}{4}\right\rfloor.
\end{align*}
This shows that \eqref{e4-7-a-even-ord} holds for $i$. Next, by \eqref{e4-7-a-odd-rec} and Lemma \ref{lem-e4-7-m-order}, we have
\begin{align*}
\pi_7(a(2i+1,j))&\geq \min\limits_{k\geq 0}\pi_7(a(2i,k))+\pi_7(m(4k-2,k+j)) \\
&\geq \min\limits_{k\geq 0} 2i+\left\lfloor \frac{7k+3}{4}\right\rfloor +\left\lfloor \frac{7j-k+5}{4}\right\rfloor \\
&\geq 2i+1+\left\lfloor \frac{7j+1}{4}\right\rfloor.
\end{align*}
Thus \eqref{e4-7-a-odd-ord} holds for $i+1$. By induction we complete the proof.
\end{proof}
\begin{theorem}\label{thm-e4-7-cong}
For $k\geq 1$ and $n\geq 0$, we have
\begin{align}
e_4(7^kn+\delta_{7,k})\equiv 0 \pmod{7^k}. \label{e4-7-cong}
\end{align}
\end{theorem}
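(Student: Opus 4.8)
The plan is to read the congruence directly off the explicit representation of $L_{4,7,k}$ in Theorem \ref{thm-e4-7-gen}, in exact parallel with the mod-$5$ argument behind Theorem \ref{thm-e4-5-cong}. Two ingredients drive everything. First, the generating-function identities \eqref{L-odd}--\eqref{L-even} say that $L_{4,7,k}(\tau)$ is, up to an explicit eta-quotient prefactor, the generating series of $e_4(7^kn+\delta_{7,k})$. Second, Lemma \ref{lem-e4-7-a-ord} supplies lower bounds for the $7$-adic valuations of the coefficients $a(k,j)$ appearing in that representation. Combining the two will force every relevant Fourier coefficient to be divisible by $7^k$.

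First I would recast Theorem \ref{thm-e4-7-gen} in terms of standard $q$-products, using $\eta(\tau)^3\eta(7\tau)^3=q\,(q;q)_\infty^3(q^7;q^7)_\infty^3$ and $Y_7=q\,(q^7;q^7)_\infty^4/(q;q)_\infty^4$, so that a typical summand $\mathcal{E}_1\,\eta(\tau)^3\eta(7\tau)^3\,Y_7^j$ of $L_{4,7,k}$ becomes $\mathcal{E}_1\,q^{1+j}(q;q)_\infty^{3-4j}(q^7;q^7)_\infty^{3+4j}$. For even $k=2i$ I would divide by $(q;q)_\infty$ as dictated by \eqref{L-even}, and for odd $k=2i-1$ by $(q^7;q^7)_\infty$ as dictated by \eqref{L-odd}. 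In both cases the negative powers of $(q;q)_\infty$ that appear are harmless, since $1/(q;q)_\infty=\sum_{n\ge0}p(n)q^n\in\mathbb{Z}[[q]]$ and $\mathcal{E}_1\in\mathbb{Z}[[q]]$. This yields
$$
\sum_{n=0}^\infty e_4(7^kn+\delta_{7,k})\,q^{n+1}=\sum_{j=0}^\infty a(k,j)\,g_j(q),\qquad g_j(q)\in\mathbb{Z}[[q]].
$$
Matching coefficients then finishes the proof: Lemma \ref{lem-e4-7-a-ord} gives $\pi_7(a(2i-1,j))\ge 2i-1+\lfloor(7j+1)/4\rfloor$ and $\pi_7(a(2i,j))\ge 2i+\lfloor(7j+3)/4\rfloor$, so in either parity the valuation of $a(k,j)$ is at least $k$ for every $j\ge0$. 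Since each $g_j(q)$ has integer coefficients, every coefficient on the left is divisible by $7^k$, which is precisely $e_4(7^kn+\delta_{7,k})\equiv0\pmod{7^k}$.

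I expect the genuinely substantive work to sit upstream rather than in this final assembly: it is Lemma \ref{lem-e4-7-a-ord}, together with the valuation estimate of Lemma \ref{lem-e4-7-m-order} that feeds it, where the $7$-th order modular equation \eqref{modeq-7} is really being exploited, via the recurrences \eqref{e4-7-a-even-rec}--\eqref{e4-7-a-odd-rec} for the $a(k,j)$. Once those bounds are granted, the theorem is bookkeeping; the only points that need care are checking that the eta-quotient prefactor and the factors $Y_7^j$ genuinely produce integral $q$-series after the division mandated by \eqref{L-odd}--\eqref{L-even}, and verifying that the bound is tight enough to deliver $7^k$ rather than a smaller power. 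The latter reduces to the $j=0$ term, where the floor contributes nothing and the valuation bound reads exactly $k$, so no loss occurs.
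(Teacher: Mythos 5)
Your proposal is correct and follows exactly the route the paper intends: the paper's (one-line) proof of Theorem \ref{thm-e4-7-cong} is precisely the combination of Theorem \ref{thm-e4-7-gen} with Lemma \ref{lem-e4-7-a-ord}, in direct analogy with the proof of Theorem \ref{thm-e4-5-cong}. Your additional checks — that the $j=0$ term makes the valuation bound exactly $k$, and that the eta-quotient prefactor, the $Y_7^j$, and the division mandated by \eqref{L-odd}--\eqref{L-even} all stay in $\mathbb{Z}[[q]]$ — are exactly the bookkeeping the paper leaves implicit.
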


\subsection{Congruences for $e_6(n)$ modulo powers of 7}
Since $\widetilde{\mathcal{E}}_{6,7} \in \mathcal{M}_6(\Gamma_0(7))$, it is not difficult to find that
\begin{align}
\widetilde{\mathcal{E}}_{6,7}=\frac{\eta(\tau)^{14}}{\eta(7\tau)^2}.
\end{align}
\begin{lemma}\label{lem-e6-7-UZ}
(Cf.\ \cite[Lemma 3.1]{Garvan-Aust}.) For any integers $i$,  we have
\begin{align}
Z_7^i\mid U_7=\sum_{j=\left\lceil \frac{2i}{7}\right\rceil}^\infty m(i,j)Y_7^j,
\end{align}
where $m(i,j)$ for $-6\leq i \leq 0$ are given by Group \uppercase\expandafter{\romannumeral2} of the Appendix, and for other $i$, $m(i,j)$ is determined by the recurrence \eqref{m-7-rec}.
\end{lemma}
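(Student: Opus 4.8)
The plan is to follow the proof of Lemma~\ref{lem-e4-7-UEZ} verbatim, the present statement being its specialization in which the auxiliary factor $\mathcal{E}_1$ is absent. First I would record the structural fact underlying the expansion: $Z_7=\eta(49\tau)/\eta(\tau)$ is a modular function on $\Gamma_0(49)$, so each $Z_7^i\mid U_7$ is a weakly holomorphic modular function on $\Gamma_0(7)$ with poles only at the two cusps. Since $Y_7$ is a hauptmodul on $\Gamma_0(7)$ vanishing at $\infty$ with its sole pole at the cusp $0$, every such function is a Laurent polynomial in $Y_7$; in particular, although the displayed sum runs to $\infty$, for each fixed $i$ it terminates. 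The lower index $j\ge\lceil 2i/7\rceil$ is an order count at $\infty$: from $Z_7=q^2+\cdots$ we get $Z_7^i=q^{2i}+\cdots$, so $Z_7^i\mid U_7$ has order at least $\lceil 2i/7\rceil$ at $\infty$, while $Y_7=q+\cdots$ has a simple zero there.

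For the base range $-6\le i\le 0$ I would compute the $q$-expansions of $Z_7^i\mid U_7$ to enough terms and read off the coefficients $m(i,j)$, matching them against Group~\uppercase\expandafter{\romannumeral2} of the Appendix; since each of these is a Laurent polynomial in $Y_7$ of bounded degree, only finitely many Fourier coefficients must be checked to pin it down.

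For all remaining $i$ the recurrence \eqref{m-7-rec} is extracted from the modular equation \eqref{modeq-7}. The mechanism is the multiplicativity of $U_7$: because $Y_7(7\tau)=(\eta(49\tau)/\eta(7\tau))^4$ is a power series in $q^7$, one has $(F(\tau)Y_7(7\tau)^a)\mid U_7=Y_7(\tau)^a\,(F(\tau)\mid U_7)$ for any $F$. Writing $Z_7^i=Z_7^{i-7}Z_7^7$ and inserting \eqref{modeq-7} expresses $Z_7^i$ as $\sum_{t=0}^{6}c_t Z_7^{i-7+t}Y_7(7\tau)^2+\sum_{t=4}^{6}d_t Z_7^{i-7+t}Y_7(7\tau)$, where $c_t,d_t$ are the coefficients in \eqref{modeq-7}. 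Applying $U_7$, pulling the $Y_7$ factors out, and substituting $Z_7^{i'}\mid U_7=\sum_j m(i',j)Y_7^j$ gives, on comparing the coefficient of $Y_7^j$, exactly \eqref{m-7-rec}: the $Y_7(7\tau)^2$ block supplies the seven $m(\,\cdot\,,j-2)$ terms with coefficients $343,343,147,49,21,7,1=7^3,7^3,3\cdot7^2,7^2,3\cdot7,7,1$, and the $Y_7(7\tau)$ block the three $m(\,\cdot\,,j-1)$ terms with coefficients $49,35,7=7^2,5\cdot7,7$. This determines $m(i,\cdot)$ for $i\ge1$ directly from the base range, and for $i<-6$ upon solving \eqref{m-7-rec} for its lowest-index term $m(i-7,j-2)$, running the recurrence downward.

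The only genuinely computational part is the finite verification of the base cases together with the bookkeeping of the shifts $t\mapsto i-7+t$ and $j\mapsto j-2,\,j-1$; there is no conceptual obstacle, and the result is in essence \cite[Lemma 3.1]{Garvan-Aust}. The sole point demanding care is that $U_7$-multiplicativity be invoked only for honest $q^7$-series, which holds here since every power of $Y_7(7\tau)$ is such a series, so that no cross terms are lost.
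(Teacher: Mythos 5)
Your proposal is correct and takes essentially the same route as the paper: the paper itself offers no proof for this lemma (it merely cites Garvan), but its proof of the analogous Lemma \ref{lem-e4-7-UEZ} is exactly your two-step argument — direct verification of the finitely many base cases $-6\le i\le 0$, then the recurrence extracted from the modular equation \eqref{modeq-7} by applying $U_7$ and pulling out the $Y_7(7\tau)$-powers. Your coefficient bookkeeping ($343,343,147,49,21,7,1$ for the $m(\cdot,j-2)$ block and $49,35,7$ for the $m(\cdot,j-1)$ block, under the reindexing $r=7-t$) reproduces \eqref{m-7-rec} exactly, and your remarks on termination of the sum and on running the recurrence downward for $i<-6$ are both sound.
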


\begin{lemma}\label{lem-e6-7-m-ord}
(Cf.\ \cite[Lemma 5.1]{Garvan-Aust}.) For any integers $i$ and $j$, we have
\begin{align}
\pi_7(m(i,j))\geq \left\lfloor \frac{7j-2i-1}{4}\right\rfloor.
\end{align}
\end{lemma}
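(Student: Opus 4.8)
The plan is to mirror the proof of Lemma \ref{lem-e4-7-m-order}: a two-directional induction on $i$ driven by the recurrence \eqref{m-7-rec}, with the finitely many cases $-6\le i\le 0$ handled directly. The recurrence is the same one used there, so the only substantive changes are that the base data now come from Group II of the Appendix and that the constant in the numerator has shifted from $+1$ to $-1$; the point is that \eqref{m-7-rec} preserves either bound, and the difference is dictated entirely by the seed values. Throughout I set $N=7j-2i$, so the target reads $\pi_7(m(i,j))\ge\lfloor(N-1)/4\rfloor$ (the claim being vacuous when $m(i,j)=0$). First I would verify this directly for $-6\le i\le 0$ from the explicit expansions of $Z_7^i\mid U_7$ recorded in Group II; for these $i$ one has $\lceil 2i/7\rceil\le 0$, so the check is finite in each class.

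For the inductive step with $i>0$, assume the bound for all smaller first index and all $j$. Each of the ten terms on the right-hand side of \eqref{m-7-rec} has the shape $7^{e(a,b)}m(i-a,j-b)$, and by the induction hypothesis its valuation is at least
\begin{align}
e(a,b)+\left\lfloor\frac{7(j-b)-2(i-a)-1}{4}\right\rfloor
=e(a,b)+\left\lfloor\frac{N-(7b-2a)-1}{4}\right\rfloor. \nonumber
\end{align}
A direct inspection of the ten pairs $(a,b)$ appearing in \eqref{m-7-rec} shows that in every case $e(a,b)=\lceil(7b-2a)/4\rceil$, and combined with the elementary inequality $\lceil c/4\rceil+\lfloor(x-c)/4\rfloor\ge\lfloor x/4\rfloor$ (applied with $x=N-1$ and $c=7b-2a$) this forces each summand to have valuation at least $\lfloor(N-1)/4\rfloor$. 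Hence so does $m(i,j)$, and the step closes.

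For $i<-6$ I would run the induction downward by solving \eqref{m-7-rec} for its lowest-index term, writing
\begin{align}
m(i-7,j-2)=m(i,j)-7^2m(i-1,j-1)-\cdots-7\,m(i-6,j-2), \nonumber
\end{align}
where the coefficient of $m(i-7,j-2)$ is $1$. Re-indexing by $(i'',j'')=(i-7,j-2)$ leaves $N=7j''-2i''$ unchanged, so the target for $m(i'',j'')$ is again $\lfloor(N-1)/4\rfloor$; every term on the right now has first index strictly larger than $i''$ and is therefore controlled by the downward induction hypothesis, and the very same ten estimates (together with the trivial bound for the leading term $m(i,j)$) show that each meets $\lfloor(N-1)/4\rfloor$. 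This propagates the bound to $m(i-7,j-2)$ and completes the induction.

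The two inductive steps are essentially bookkeeping: the only care needed is to confirm the ten floor estimates, all of which turn out to be tight (each has $e(a,b)=\lceil(7b-2a)/4\rceil$ exactly), so there is no slack to lose. I do not anticipate a genuine obstacle, since the valuation-preserving property of \eqref{m-7-rec} is identical to the $e_4$ case and only the explicitly checked seed values in Group II differ.
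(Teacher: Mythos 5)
Your proof is correct and follows exactly the route the paper intends: the paper itself only cites \cite[Lemma 5.1]{Garvan-Aust} here, but its proof sketch for the sibling Lemma \ref{lem-e4-7-m-order} ("verify directly for $-6\le i\le 0$, then induct via \eqref{m-7-rec}") is precisely your two-directional induction, and your key observation that every coefficient in \eqref{m-7-rec} satisfies $e(a,b)=\lceil(7b-2a)/4\rceil$ together with $\lceil c/4\rceil+\lfloor(x-c)/4\rfloor\ge\lfloor x/4\rfloor$ is the right way to close both inductive steps. The base-case valuations read off from Group II of the Appendix all meet $\lfloor(7j-2i-1)/4\rfloor$ with equality or better, so the argument is complete.
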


\begin{theorem}\label{thm-e6-7-gen}
For $k\geq 1$, we have
\begin{align}
L_{6,7,k}(\tau)=-\frac{\eta(\tau)^{14}}{\eta(7\tau)^2}\sum_{j=1}^\infty a(k,j)Y_{7}^j, \label{e6-7-gen}
\end{align}
where
\begin{align}
&a(1,1)=343799\cdot 7, \quad a(1,2)=13424541\cdot 7^2, \quad a(1,3)=9999649\cdot 7^4, \nonumber \\
&a(1,4)=2823575\cdot 7^6, \quad a(1,5)=3\cdot 7^{14}, \quad a(1,6)=7^{15}, \quad a(1,j)=0, \quad \forall j\geq 7, \label{a-e6-7-initial}
\end{align}
and for $i\geq 1$,
\begin{align}
&a(2i,j)=\sum_{k=1}^\infty a(2i-1,k)m(4k-14,k+j-4), \label{e6-7-a-even-rec}\\
&a(2i+1,j)=\sum_{k=1}^\infty a(2i,k)m(4k-13,k+j-4). \label{e6-7-a-odd-rec}
\end{align}
\end{theorem}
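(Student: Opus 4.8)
The plan is to prove Theorem \ref{thm-e6-7-gen} by induction on $k$, following exactly the pattern established in the proof of Theorem \ref{thm-L45} (and its analogue Theorem \ref{thm-e4-7-gen}). The base case $k=1$ is verified by direct computation: one expands $L_{6,7,1}(\tau)=(E_6(\tau)Z_7(\tau))\mid U_7$ as a $q$-series, divides by $-\eta(\tau)^{14}/\eta(7\tau)^2 = -\widetilde{\mathcal{E}}_{6,7}$, and checks that the resulting power series in $Y_7$ has the coefficients $a(1,j)$ listed in \eqref{a-e6-7-initial}, with $a(1,j)=0$ for $j\geq 7$. This finite check is legitimate because Theorem \ref{thm-Li-E-repn} (equivalently \eqref{L-EQY}) already guarantees that $L_{6,7,1}\in\widetilde{E}_{6,7}\,\mathbb{Q}[Y_7]$, so only finitely many coefficients can be nonzero and Sturm-type comparison of finitely many $q$-coefficients suffices.

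For the inductive step I would assume \eqref{e6-7-gen} holds for $k=2i-1$ and derive it for $k=2i$ and $k=2i+1$ using the recursive definitions \eqref{L-even-defn} and \eqref{L-odd-defn} together with Lemma \ref{lem-e6-7-UZ}. The key algebraic manipulation is to rewrite $Y_7^{\,j}$ in terms of $Z_7$ via $Y_7=(\eta(7\tau)/\eta(\tau))^4$ and $Z_7=\eta(49\tau)/\eta(\tau)$, so that multiplying $-\widetilde{\mathcal{E}}_{6,7}\,Y_7^{\,j}=-\eta(\tau)^{14}\eta(7\tau)^{-2}Y_7^{\,j}$ by a power of $Z_7$ and applying $U_7$ reduces to an application of Lemma \ref{lem-e6-7-UZ}. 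Concretely, one verifies the identities
\begin{align*}
\left(\frac{\eta(\tau)^{14}}{\eta(7\tau)^2}\,Y_7^{\,j}\right)\Big|\,U_7 &=\frac{\eta(\tau)^{14}}{\eta(7\tau)^2}\,Y_7^{-j}\left(Z_7^{\,4j-14}\mid U_7\right),\\
\left(Z_7\,\frac{\eta(\tau)^{14}}{\eta(7\tau)^2}\,Y_7^{\,j}\right)\Big|\,U_7 &=\frac{\eta(\tau)^{14}}{\eta(7\tau)^2}\,Y_7^{-j}\left(Z_7^{\,4j-13}\mid U_7\right),
\end{align*}
which track the weight-$6$ prefactor $\eta(\tau)^{14}/\eta(7\tau)^2$ through the operator exactly as the weight-$1$ factor $\mathcal{E}_1\eta(\tau)^3\eta(7\tau)^3$ was tracked in Theorem \ref{thm-e4-7-gen}. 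Substituting the expansion $Z_7^{\,i}\mid U_7=\sum_j m(i,j)Y_7^{\,j}$ and collecting powers of $Y_7$ (replacing the summation index $s-k$ by $j$, precisely as in the displayed computations for Theorem \ref{thm-L45}) yields $L_{6,7,2i}=-\widetilde{\mathcal{E}}_{6,7}\sum_j a(2i,j)Y_7^{\,j}$ with $a(2i,j)$ given by \eqref{e6-7-a-even-rec}, and similarly $a(2i+1,j)$ by \eqref{e6-7-a-odd-rec}. The shifts $4k-14$ and $4k-13$ in the recurrences arise from the exponent $4j-14=4(j-4)+2$ and $4j-13=4(j-4)+3$ after the index reindexing, which is the reason the second argument of $m$ is $k+j-4$ rather than $k+j$.

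The step I expect to require the most care is bookkeeping of the exponents and the lower limits of the $Y_7$-summations, rather than any conceptual difficulty. One must confirm that the initial exponent $\lceil 2i/7\rceil$ appearing in Lemma \ref{lem-e6-7-UZ} combines correctly with the factor $Y_7^{-j}$ so that the resulting series starts at $j\geq 1$ (matching the lower index in \eqref{e6-7-gen}), and that the weight is preserved: $\widetilde{\mathcal{E}}_{6,7}$ has weight $6$, $U_7$ preserves weight, and the net effect of the $\eta$-quotient rewriting keeps everything in $\mathcal{M}_6^{!}(\Gamma_0(7))$ so that Theorem \ref{thm-Li-E-repn} applies at each stage to guarantee the representation lies in $\widetilde{E}_{6,7}\,\mathbb{Q}[Y_7]$. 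Once these index computations are checked for both the even and odd cases, the induction closes and the theorem follows; I would simply state that the computation is entirely parallel to the proof of Theorem \ref{thm-L45} and omit the routine details, as the authors do for Theorem \ref{thm-L65-repn}.
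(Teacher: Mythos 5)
Your overall strategy is exactly the paper's: verify $k=1$ directly, then induct using Lemma \ref{lem-e6-7-UZ} and an eta-quotient identity that pushes the prefactor $\eta(\tau)^{14}/\eta(7\tau)^2$ through $U_7$. However, the key identity as you state it is wrong, and the error is not cosmetic: it would produce a recurrence that contradicts the one you are trying to prove. Writing $\widetilde E_{6,7}(\tau)Y_7(\tau)^j=\eta(\tau)^{14-4j}\eta(7\tau)^{4j-2}$ and matching exponents against $Z_7(\tau)^{a}\,\widetilde E_{6,7}(7\tau)Y_7(7\tau)^{c}=\eta(\tau)^{-a}\eta(7\tau)^{14-4c}\eta(49\tau)^{4c-2+a}$ forces $a=4j-14$ and $c=4-j$ (and then $4c-2+a=0$, as needed). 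Hence
\begin{align*}
\left(\frac{\eta(\tau)^{14}}{\eta(7\tau)^2}\,Y_7^{\,j}\right)\Big|\,U_7
=\frac{\eta(\tau)^{14}}{\eta(7\tau)^2}\,Y_7^{\,4-j}\left(Z_7^{\,4j-14}\mid U_7\right),
\end{align*}
with exponent $4-j$, not $-j$; similarly for the odd case with $Z_7^{4j-13}$. With your version $Y_7^{-j}$, the reindexing $j=s-k$ would give $a(2i,j)=\sum_k a(2i-1,k)m(4k-14,k+j)$, which is not \eqref{e6-7-a-even-rec}. The $-4$ shift in the second argument of $m$ comes precisely from the $Y_7^{4-k}$ prefactor (one substitutes $j=s+4-k$, i.e.\ $s=k+j-4$); your proposed explanation via the rewriting $4j-14=4(j-4)+2$ does not derive it. The reason the analogy with Theorem \ref{thm-e4-7-gen} misled you is that there the prefactor $\eta(\tau)^3\eta(7\tau)^3$ is symmetric in $\tau\mapsto 7\tau$ in the relevant sense, giving exponent $-j$ and no shift, whereas $\eta(\tau)^{14}/\eta(7\tau)^2$ is not. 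Once the identity is corrected, the rest of your argument goes through and coincides with the paper's proof.
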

\begin{proof}
We check that \eqref{e6-7-gen} holds for $k=1$ by direct computation. Using Lemma \ref{lem-e6-7-UZ} and the following facts:
\begin{align}
&\left(\frac{\eta(\tau)^{14}}{\eta(7\tau)^2}\cdot Y_7^j\right)\mid U_7=\frac{\eta(\tau)^{14}}{\eta(7\tau)^2}Y_7^{4-j}\left(Z_7^{4j-14} \mid U_7\right), \\
&\left(Z_7\frac{\eta(\tau)^{14}}{\eta(7\tau)^2}\cdot Y_7^j\right)\mid U_7=\frac{\eta(\tau)^{14}}{\eta(7\tau)^2}Y_7^{4-j}\left(Z_7^{4j-13} \mid U_7\right),
\end{align}
we can prove the theorem by induction on $k$.
\end{proof}
\begin{corollary}
We have
\begin{align}
e_6(7n+5)\equiv 0 \pmod{7} \label{e6-mod7-add-eq}
\end{align}
and
\begin{align}
e_6(49n+r)\equiv 0 \pmod{49}, \quad r\in \{19, 33, 40,47\}. \label{e6-mod49-add-eq}
\end{align}
\end{corollary}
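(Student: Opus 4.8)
The plan is to extract both congruences directly from the representation \eqref{e6-7-gen} of $L_{6,7,1}$, together with a $7$-dissection driven by Jacobi's identity. Throughout I will use that $\eta(\tau)^{14}/\eta(7\tau)^2=(q;q)_\infty^{14}/(q^7;q^7)_\infty^2$ and $Y_7=q(q^7;q^7)_\infty^4/(q;q)_\infty^4$ both lie in $\mathbb{Z}[[q]]$. The congruence modulo $7$ is then immediate: by \eqref{a-e6-7-initial} every coefficient $a(1,j)$ is divisible by $7$, so \eqref{e6-7-gen} forces $L_{6,7,1}\equiv0\pmod7$. Since $\delta_{7,1}=5$, identity \eqref{L-odd} gives $L_{6,7,1}=(q^7;q^7)_\infty\sum_{n\ge0}e_6(7n+5)q^{n+1}$, and as $(q^7;q^7)_\infty$ is a unit in $\mathbb{Z}[[q]]$ I may cancel it to conclude $e_6(7n+5)\equiv0\pmod7$ for all $n\ge0$.

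For the modulus $49$ I would reduce \eqref{e6-7-gen} mod $49$. Among the nonzero coefficients in \eqref{a-e6-7-initial}, only $a(1,1)=343799\cdot7$ has $\pi_7$ equal to $1$; all of $a(1,2),\dots,a(1,6)$ are divisible by $7^2$, so their terms drop modulo $49$. Since $343799\equiv1\pmod7$, this leaves $L_{6,7,1}\equiv-7\,\bigl(\eta(\tau)^{14}/\eta(7\tau)^2\bigr)Y_7\pmod{49}$, and the eta-quotient collapses to $\bigl(\eta(\tau)^{14}/\eta(7\tau)^2\bigr)Y_7=q(q;q)_\infty^{10}(q^7;q^7)_\infty^2$. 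Cancelling $(q^7;q^7)_\infty$ as before, then dividing by $7$ and reducing mod $7$ via $(q^7;q^7)_\infty\equiv(q;q)_\infty^7$ (so that $(q;q)_\infty^{14}\equiv(q^7;q^7)_\infty^2$), I arrive at
\[
\frac1{7}\sum_{n\ge0}e_6(7n+5)q^{n+1}\equiv -q\,(q;q)_\infty^{3}(q^7;q^7)_\infty^2\pmod7 .
\]

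Finally I would apply Jacobi's identity $(q;q)_\infty^3=\sum_{k\ge0}(-1)^k(2k+1)q^{k(k+1)/2}$. As $(q^7;q^7)_\infty^2$ contributes only exponents $\equiv0\pmod7$, every exponent on the right is $\equiv1+\tfrac{k(k+1)}2\pmod7$. A short check shows $\tfrac{k(k+1)}2\bmod7\in\{0,1,3,6\}$, so no exponent is $\equiv3,5,6\pmod7$; and although $\equiv0$ can occur, it requires $\tfrac{k(k+1)}2\equiv6$, i.e. $k\equiv3\pmod7$, for which $2k+1\equiv0\pmod7$ annihilates the coefficient. Hence the coefficient of $q^{n+1}$ vanishes mod $7$ precisely when $n+1\equiv0,3,5,6\pmod7$, i.e. $n\equiv6,2,4,5\pmod7$, which is exactly $7n+5\equiv47,19,33,40\pmod{49}$. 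This yields $e_6(49n+r)\equiv0\pmod{49}$ for $r\in\{19,33,40,47\}$.

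The step I expect to be the crux is pinning down the $r=47$ class (exponent $\equiv0\pmod7$): unlike $19,33,40$, here the exponent residue genuinely appears, and the congruence survives only because of the arithmetic coincidence $2k+1\equiv0\pmod7$ for the contributing $k\equiv3\pmod7$. If one prefers to avoid relying on this coincidence, $r=47$ can be obtained independently from $L_{6,7,2}=L_{6,7,1}\mid U_7$: using the recurrence \eqref{e6-7-a-even-rec} with the initial valuations read off from \eqref{a-e6-7-initial} and the bound of Lemma \ref{lem-e6-7-m-ord}, one checks $\pi_7(a(2,j))\ge2$ for every $j\ge1$, whence $L_{6,7,2}\equiv0\pmod{49}$ and \eqref{L-even} (with $\delta_{7,2}=47$) gives $e_6(49n+47)\equiv0\pmod{49}$.
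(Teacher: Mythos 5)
Your proof is correct and follows essentially the same route as the paper's: reduce $L_{6,7,1}$ modulo $49$ to the single term $-7\,q(q;q)_\infty^{10}(q^7;q^7)_\infty^2/(q^7;q^7)_\infty$, apply the binomial theorem to reach $(q;q)_\infty^3(q^7;q^7)_\infty^2$, and then use Jacobi's identity together with the observation that $k(k+1)/2 \bmod 7 \in \{0,1,3,6\}$ and that the residue $6$ forces $2k+1\equiv 0 \pmod 7$. The extra alternative you sketch for the $r=47$ class via $L_{6,7,2}$ is a valid but unnecessary addition; the paper handles that class exactly as you do in your main argument.
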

\begin{proof}
Congruence \eqref{e6-mod7-add-eq} follows from \eqref{e6-7-gen} with $k=1$.

Using \eqref{e6-7-gen} with $k=1$ and the binomial theorem, we deduce that
\begin{align}
\sum_{n=0}^\infty e_6(7n+5)q^{n}&\equiv -7(q;q)_\infty^{10}(q^7;q^7)_\infty \pmod{49} \\
&\equiv -7(q;q)_\infty^3(q^7;q^7)_\infty^2 \pmod{49}. \label{e6-mod49-pf}
\end{align}
By Jacobi's identity, we have
$$(q;q)_\infty^3=\sum_{i=0}^\infty (-1)^i(2i+1)q^{i(i+1)/2}.$$
Note that the residue of $\frac{i(i+1)}{2}$ modulo 7 can only be 0, 1, 3 or 6. Moreover, $\frac{i(i+1)}{2}\equiv 6$ (mod 7) if and only if $i\equiv 3$ (mod 7), in which case $2i+1\equiv 0 $ (mod 7). Congruence \eqref{e6-mod49-add-eq} follows by comparing the coefficients of $q^{7n+r}$ ($r\in \{2,4,5,6\}$) on both sides of \eqref{e6-mod49-pf}.
\end{proof}

\begin{lemma}\label{lem-e6-7-a-ord}
For $i\geq 1$ and $j\geq 1$, we have
\begin{align}
&\pi_7(a(2i-1,j))\geq 4i-4+\left\lfloor \frac{7j-5}{4}\right\rfloor, \label{e6-7-a-odd-ord}\\
&\pi_7(a(2i,j))\geq 4i+\left\lfloor \frac{7j-4}{4}\right\rfloor -2\gamma_{j,2}-4\gamma_{j,3}, \label{e6-7-a-even-ord}
\end{align}
where $\gamma_{j,n}=1$ if $j=n$ and $\gamma_{j,n}=0$ otherwise.
\end{lemma}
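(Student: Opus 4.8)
The plan is to prove \eqref{e6-7-a-odd-ord} and \eqref{e6-7-a-even-ord} simultaneously by induction on $i$, in the style of Lemmas \ref{lem-L45-a-ord} and \ref{lem-e4-7-a-ord}: the odd estimate is fed into the recurrence \eqref{e6-7-a-even-rec} to produce the even estimate, and the even estimate is fed into \eqref{e6-7-a-odd-rec} to advance $i$ by one. I would first dispose of the base case $i=1$, namely \eqref{e6-7-a-odd-ord} at $i=1$, by reading off \eqref{a-e6-7-initial}: the six nonzero entries give $\pi_7(a(1,j))\ge\lfloor(7j-5)/4\rfloor$ for $1\le j\le 6$ (with equality only at $j=2,3$), while $a(1,j)=0$ for $j\ge 7$.

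For the inductive step, substituting $(4k-14,k+j-4)$ into Lemma \ref{lem-e6-7-m-ord} gives $\pi_7(m(4k-14,k+j-4))\ge\lfloor(7j-k-1)/4\rfloor$, so that the $k$-th summand of $a(2i,j)$ has order at least $C(k):=4i-4+\lfloor(7k-5)/4\rfloor+\lfloor(7j-k-1)/4\rfloor$. A short floor calculation shows $C(k)$ is nondecreasing in $k$ with $C(3)=4i+\lfloor(7j-4)/4\rfloor$ exactly on target, so every summand with $k\ge 3$ is harmless. The analogous input $\pi_7(m(4k-13,k+j-4))\ge\lfloor(7j-k-3)/4\rfloor$ for \eqref{e6-7-a-odd-rec}, combined with the corrections $-2\gamma_{k,2}-4\gamma_{k,3}$ carried by the even bound, places every summand with $k\ne 3$ on or above the target $\lfloor(7j-5)/4\rfloor$; the single shortfall is $k=3$ with $j\equiv 3\pmod 4$, which misses by one unit.

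The main obstacle — where the proof genuinely leaves the clean $e_4$ pattern of Lemma \ref{lem-e4-7-a-ord} — is the low-index summands $k=1,2$ of \eqref{e6-7-a-even-rec}. Because $\widetilde E_{6,7}=\eta(\tau)^{14}/\eta(7\tau)^2$ carries the large eta-exponent $14$, the first argument $4k-14$ of $m$ is strongly negative for small $k$, and there Lemma \ref{lem-e6-7-m-ord} is far from sharp: it only yields $\pi_7(a(2i,j))\ge C(1)=4i-4+\lfloor(7j-2)/4\rfloor$, short of the target by three or four units. Moreover this gap cannot be closed by cancellation: at $i=1$, $j=4$ one has $\pi_7(a(1,1))=1$ and the $k=1$ summand is the unique term of least order, so the target forces a \emph{genuine} surplus divisibility of the negative-index coefficients — concretely, values such as $m(-10,1)$ must have $\pi_7\ge 9$, well beyond the $6$ guaranteed by Lemma \ref{lem-e6-7-m-ord}. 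Establishing this surplus is the crux of the matter.

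I would obtain it by recognizing that the difficulty is an artifact of the negative-index bare-coefficient basis. Since the prefactor $\eta(\tau)^{14}/\eta(7\tau)^2$ is precisely $\widetilde E_{6,7}$, the same coefficients $a(2i,j)$ are produced by the recurrence of Theorem \ref{thm-L-wE-repn}, in which $m$ is replaced by the $\widetilde E_{6,7}$-relative coefficients $m(i,j)$ of \eqref{wEU-m} and the first argument $4k$ is \emph{nonnegative}; for these the expected clean order bound — the weight-$6$ analogue of the $e_4$ estimate — removes the low-index pathology and lets the term-by-term induction go through. A supplementary input is the already-proved Theorem \ref{thm-general} (case $r=3$), which gives $\pi_7\big(e_6(7^{2i}n+\delta_{7,2i})\big)\ge 4i$ and hence, after dividing by the unit $q$-series $\widetilde E_{6,7}$ and passing to the $Y_7$-basis, the base bound $\pi_7(a(2i,j))\ge 4i$ for all $j$; this already settles the small indices $j=1,2,3$, where the residual deficiencies are exactly those recorded by $-2\gamma_{j,2}-4\gamma_{j,3}$. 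In short, the floor bookkeeping parallels the $e_4$ case verbatim, and the real work is supplying the extra powers of $7$ that offset the weight-$6$ eta-exponent at the low-index end of the recurrence.
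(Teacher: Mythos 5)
Your skeleton (induction on $i$, the recurrences \eqref{e6-7-a-even-rec}--\eqref{e6-7-a-odd-rec}, and the generic floor bound of Lemma \ref{lem-e6-7-m-ord} for the summands with $k\ge 3$ in the even step and $k\ne 3$ in the odd step) matches the paper, and your diagnosis of where the generic bound falls short is accurate. But the resolution of the exceptional summands --- which is the entire content of the lemma, since it is exactly where the corrections $-2\gamma_{j,2}-4\gamma_{j,3}$ come from --- is not supplied. The paper's fix is elementary and concrete: $Z_7^{-10}\mid U_7=7^4$, $Z_7^{-6}\mid U_7=7^2$ and $Z_7^{-1}\mid U_7=-1$ are \emph{constants}, so $m(-10,s)$, $m(-6,s)$, $m(-1,s)$ vanish for all $s\neq 0$. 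Hence the $k=1$ summand of \eqref{e6-7-a-even-rec} contributes only at $j=3$ (with order at least $(4i-4)+0+4=4i$, a deficit of $4$ against the target), the $k=2$ summand only at $j=2$ (deficit $2$), and the $k=3$ summand of \eqref{e6-7-a-odd-rec} only at $j=1$, where the target is exactly $4i$. Your assertion that the target ``forces a genuine surplus divisibility of the negative-index coefficients --- concretely, values such as $m(-10,1)$ must have $\pi_7\ge 9$'' points in the wrong direction: $m(-10,1)=0$, and no surplus divisibility of nonzero coefficients is needed, only this vanishing.

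The two substitutes you offer do not close the gap. The appeal to Theorem \ref{thm-general} with $r=3$ does legitimately yield $\pi_7(a(2i,j))\ge 4i$ for all $j$ (the passage from the $q$-expansion of $L_{6,7,2i}$ to the coefficients $a(2i,j)$ is triangular with unit diagonal), and this covers $j\le 3$ of \eqref{e6-7-a-even-ord}; but it says nothing about $j\ge 4$ in the even step, where the target exceeds $4i$ and you still must show the $k=1,2$ summands are harmless, and nothing at all about the odd step \eqref{e6-7-a-odd-ord}, since Theorem \ref{thm-general} treats only even powers of $\ell$. The proposed switch to the $\widetilde{E}_{6,7}$-relative coefficients of \eqref{wEU-m} is merely a re-indexing of the same numbers; the ``weight-$6$ analogue of the $e_4$ estimate'' you invoke for them is neither stated nor proved, and without it the low-index terms are exactly as problematic in the new basis as in the old. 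To complete the proof you need the explicit evaluations of $Z_7^{-10}\mid U_7$, $Z_7^{-6}\mid U_7$ and $Z_7^{-1}\mid U_7$ coming from Group \uppercase\expandafter{\romannumeral2} of the Appendix and the recurrence \eqref{m-7-rec}.
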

\begin{proof}
From \eqref{a-e6-7-initial} we see that \eqref{e6-7-a-odd-ord} holds for $i=1$. Now suppose it holds for some $i\geq 1$. Then by \eqref{e6-7-a-even-rec}  we have
\begin{align}
\pi_7(a(2i,j))&\geq \min\limits_{k\geq 1} \pi_7(a(2i-1,k))+\pi_7(m(4k-14,k+j-4)). \label{e6-7-a-ord-pf-1}
\end{align}
When $k=1$, noting that $Z_7^{-10}\mid U_7=7^4$, we see that
\begin{align}\label{e6-a-ord-k=1}
\pi_7(a(2i-1,1))+\pi_7(m(-10,j-3))\geq \left\{\begin{array}{ll}
\infty & j\neq 3, \\
4i & j=3.
\end{array}\right.
\end{align}
When $k=2$, noting that $Z_7^{-6}\mid U_7=7^2$, we see that
\begin{align}\label{e6-a-ord-k=2}
\pi_7(a(2i-1,2))+\pi_7(m(-6,j-2))\geq \left\{\begin{array}{ll}
\infty & j\neq  2, \\
4i & j=2.
\end{array}\right.
\end{align}
When $k\geq 3$, by Lemma \ref{lem-e6-7-m-ord} we have
\begin{align}\label{e6-a-ord-other}
& \pi_7(a(2i-1,k))+\pi_7(m(4k-14,k+j-4)) \nonumber \\
\geq &~ 4i-4+\left\lfloor \frac{7k-5}{4}\right\rfloor +\left\lfloor \frac{7j-k-1}{4}\right\rfloor \nonumber  \\
\geq &~ 4i+\left\lfloor \frac{7j-4}{4}\right\rfloor.
\end{align}
Combining \eqref{e6-a-ord-k=1}, \eqref{e6-a-ord-k=2} and \eqref{e6-a-ord-other}, by \eqref{e6-7-a-ord-pf-1} we deduce that
\begin{align}
\pi_7(a(2i,j))\geq 4i+\left\lfloor \frac{7j-4}{4}\right\rfloor -2\gamma_{j,2}-4\gamma_{j,3}.
\end{align}
This shows that \eqref{e6-7-a-even-ord} holds for $i$.

Next, by \eqref{e6-7-a-odd-rec} and Lemma \ref{lem-e6-7-m-ord} we have
\begin{align}
\pi_7(a(2i+1,j))&\geq \min\limits_{k\geq 1} \pi_7(a(2i,k))+\pi_7(m(4k-13,k+j-4)).  \label{e6-7-add-eq-1}
\end{align}
If $k=3$, then
\begin{align*}
 \pi_7(a(2i,3))+\pi_7(m(-1,j-1))\geq \left\{\begin{array}{ll}
 \infty & j\neq 1, \\
 4i &j=1.
\end{array}\right.
\end{align*}
If $k\geq 1$ and $k\neq 3$, then
\begin{align*}
 &\pi_7(a(2i,k))+\pi_7(m(4k-13,k+j-4))\nonumber \\
 \geq  &~   4i+\left\lfloor \frac{7k-4}{4}\right\rfloor -2\gamma_{k,2}-4\gamma_{k,3}+\left\lfloor \frac{7j-k-3}{4}\right\rfloor  \\
\geq &~ 4i+\left\lfloor \frac{7j-5}{4}\right\rfloor.
\end{align*}
Combining these cases, from \eqref{e6-7-add-eq-1} we see that \eqref{e6-7-a-odd-ord} holds for $i+1$. By induction we complete the proof.
\end{proof}

\begin{theorem}\label{thm-e6-mod7-cong}
For $k\geq 1$ and $n\geq 0$, we have
\begin{align}
e_6(7^{2k}n+\delta_{7,2k})\equiv 0 \pmod{7^{2k}}.
\end{align}
\end{theorem}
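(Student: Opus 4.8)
The plan is to read the congruence off directly from the explicit representation in Theorem~\ref{thm-e6-7-gen} together with the $7$-adic valuation estimate in Lemma~\ref{lem-e6-7-a-ord}; all of the analytic work has already been done in building those two results, so what remains is a short bookkeeping argument. First I would recall from \eqref{L-even}, specialized to $2r=6$, $\ell=7$ and even index $2k$, that
\begin{align}
L_{6,7,2k}(\tau)=(q;q)_\infty\sum_{n=0}^\infty e_6\!\left(7^{2k}n+\delta_{7,2k}\right)q^{n+1},
\end{align}
so that the generating function of the subsequence in question is obtained simply by dividing $L_{6,7,2k}$ by $(q;q)_\infty$.

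Next I would substitute the representation $L_{6,7,2k}(\tau)=-\frac{\eta(\tau)^{14}}{\eta(7\tau)^2}\sum_{j=1}^\infty a(2k,j)Y_7^j$ from Theorem~\ref{thm-e6-7-gen} and divide through by $(q;q)_\infty$. Writing everything in terms of $q$-products via $\eta(\tau)=q^{1/24}(q;q)_\infty$ and $Y_7=q\,(q^7;q^7)_\infty^4/(q;q)_\infty^4$, one checks that
\begin{align}
\frac{1}{(q;q)_\infty}\cdot\frac{\eta(\tau)^{14}}{\eta(7\tau)^2}=\frac{(q;q)_\infty^{13}}{(q^7;q^7)_\infty^2}
\end{align}
is a power series in $q$ with integer coefficients and constant term $1$, while each $Y_7^j$ likewise lies in $\mathbb{Z}[[q]]$. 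Hence
\begin{align}
\sum_{n=0}^\infty e_6\!\left(7^{2k}n+\delta_{7,2k}\right)q^{n+1}=-\frac{(q;q)_\infty^{13}}{(q^7;q^7)_\infty^2}\sum_{j=1}^\infty a(2k,j)Y_7^j
\end{align}
exhibits the desired generating function as a $\mathbb{Z}$-linear combination, with the $a(2k,j)$ as coefficients, of power series that are $7$-adically integral.

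Finally I would invoke the even-index bound \eqref{e6-7-a-even-ord} of Lemma~\ref{lem-e6-7-a-ord} with $i=k$, namely $\pi_7(a(2k,j))\geq 4k+\lfloor(7j-4)/4\rfloor-2\gamma_{j,2}-4\gamma_{j,3}$. A direct check of the only small cases $j=1,2,3$, where the correction terms $\gamma_{j,2},\gamma_{j,3}$ are active, shows the right-hand side equals $4k$, while for $j\ge 4$ it is at least $4k+6$; thus $\pi_7(a(2k,j))\geq 4k$ for every $j\geq 1$. Since the cofactor series above are $7$-integral, every coefficient $e_6(7^{2k}n+\delta_{7,2k})$ inherits $7$-adic valuation at least $4k\geq 2k$, which is the claimed congruence. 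The main, and really the only, obstacle has already been surmounted in establishing Theorem~\ref{thm-e6-7-gen} and Lemma~\ref{lem-e6-7-a-ord}; at this stage the sole item needing care is the integrality of the eta-quotient cofactor. I note in passing that the valuation estimate is in fact stronger than required, yielding divisibility by $7^{4k}$ and thereby matching the $\ell=7$, $r=3$ case of Theorem~\ref{thm-general}, so the representation method recovers that congruence independently.
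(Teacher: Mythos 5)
Your proposal is correct and follows exactly the paper's route: the paper's proof of this theorem is the one-line deduction from Theorem \ref{thm-e6-7-gen} and Lemma \ref{lem-e6-7-a-ord}, and your write-up simply supplies the bookkeeping (the integrality of the eta-quotient cofactor $(q;q)_\infty^{13}/(q^7;q^7)_\infty^2$ and of $Y_7^j$, plus the minimization over $j$ showing $\pi_7(a(2k,j))\geq 4k$). Your closing observation that the argument in fact yields divisibility by $7^{4k}$, matching the $(\ell,r)=(7,3)$, $m=k$ case of Theorem \ref{thm-general}, is also accurate.
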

\begin{proof}
This follows from Theorem \ref{thm-e6-7-gen} and Lemma \ref{lem-e6-7-a-ord}.
\end{proof}

\section{Generating functions and congruences for $e_{2r}({13}^kn+\delta_{13,k})$}\label{sec-13}
To establish congruences modulo powers of 13, we need to use the 13th-order modular equation
\begin{align}\label{modeq-13}
Z_{13}^{13}(\tau)=\sum_{r=1}^{13}\sum_{s=\left\lfloor \frac{1}{2}(r+2)\right\rfloor}^{7} \psi_{r,s}Y^s(13\tau)Z_{13}^{13-r}(\tau),
\end{align}
where the matrix $\Psi=(\psi(r,s))_{1\leq r\leq 13, 1\leq s\leq 7}$ is given in \cite[Eq.\ (3.52)]{Garvan-TAMS}.
This identity was derived by Lehner \cite{Lehner} and also used by Atkin and O'Brien \cite{Atkin-OBrien} to study properties of $p(n)$ modulo powers of 13.

\subsection{Congruences for $e_4(n)$ modulo powers of 13}

We follow Theorem \ref{thm-L-wE-repn} and  use the function
\begin{align}
\widetilde{E}_{4,13}(\tau)=\frac{E_4(\tau)}{1+247Y_{13}+3380Y_{13}^2+15379Y_{13}^3+28561Y_{13}^3}.
\end{align}

\begin{lemma}\label{lem-e4-13-UEZ}
For any integer $i$, we have
\begin{align}
\left(\widetilde{E}_{4,13}Z_{13}^i \right)\mid U_{13}=\widetilde{E}_{4,13}\sum_{j=\left\lfloor \frac{7i}{13}\right\rfloor} m(i,j)Y_{13}^{j},
\end{align}
where the values of $m(i,j)$ for $-4\leq i \leq 8$ are given in Group \uppercase\expandafter{\romannumeral2} of the Appendix, and for other $i$, $m(i,j)$ is determined by the recurrence relation
\begin{align}\label{e4-13-m-rec}
m(i,j)=\sum_{r=1}^{13}\sum_{s=\left\lfloor \frac{1}{2}(r+2)\right\rfloor}^7 \psi(r,s)m(i-r,j-s).
\end{align}
\end{lemma}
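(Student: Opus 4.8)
The plan is to argue exactly as in Lemmas \ref{lem-e4-U5-Z}, \ref{lem-e4-7-UEZ} and \ref{lem-e6-7-UZ}: first deduce the \emph{shape} of the expansion from the general machinery, then derive the recurrence \eqref{e4-13-m-rec} from the modular equation \eqref{modeq-13}, and finally reduce everything to a finite base computation. For the shape, I note that $\widetilde{E}_{4,13}\in\mathcal{M}_4(\Gamma_0(13))$ by \eqref{wE-modular}, so the identity \eqref{wEU-m} (itself obtained from Lemma \ref{lem-UEZ-repn}) applies with $r=2$ and $\ell=13$ and already shows that $(\widetilde{E}_{4,13}Z_{13}^i)\mid U_{13}$ equals $\widetilde{E}_{4,13}$ times a finite Laurent polynomial in $Y_{13}$. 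The lowest occurring power of $Y_{13}$ is governed by the order of $\widetilde{E}_{4,13}Z_{13}^i$ at the cusps, i.e.\ by the quantity $a_{13,i}$ of Lemma \ref{lem-UEZ-repn}, which accounts for the stated lower summation index; this defines the coefficients $m(i,j)$ and yields the asserted form of the expansion.

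To obtain the recurrence, I would write $\widetilde{E}_{4,13}Z_{13}^i=\widetilde{E}_{4,13}Z_{13}^{\,i-13}\cdot Z_{13}^{13}$ and substitute \eqref{modeq-13}, giving
\[
\widetilde{E}_{4,13}Z_{13}^i=\sum_{r=1}^{13}\sum_{s}\psi_{r,s}\,Y_{13}(13\tau)^s\,\widetilde{E}_{4,13}Z_{13}^{\,i-r}.
\]
The key observation is that $Y_{13}(13\tau)$ is a power series in $q^{13}$, so that $\bigl(A(\tau)Y_{13}(13\tau)^s\bigr)\mid U_{13}=Y_{13}(\tau)^s\,\bigl(A\mid U_{13}\bigr)$ for every $A$. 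Applying $U_{13}$ to the displayed identity, inserting the expansions from \eqref{wEU-m} for each $(\widetilde{E}_{4,13}Z_{13}^{\,i-r})\mid U_{13}$, dividing out the common factor $\widetilde{E}_{4,13}$, and comparing the coefficient of $Y_{13}^{\,j}$ on the two sides (with $j=s+j'$) yields precisely \eqref{e4-13-m-rec}.

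Finally, \eqref{e4-13-m-rec} determines $m(i,\cdot)$ from $m(i-1,\cdot),\dots,m(i-13,\cdot)$; and since in the summand $r=13$ the index $s$ is forced to equal $7$ with the nonzero leading coefficient $\psi_{13,7}$, the same relation can be solved for $m(i-13,\cdot)$. Hence any thirteen consecutive rows determine all rows, and I would take $-4\le i\le 8$ as the base, verifying these thirteen rows by a direct truncated $q$-expansion computation matched against Group II of the Appendix; the general case then follows by induction in both directions. The conceptual steps are immediate consequences of the earlier results, so the only genuine work—and the main obstacle—is this explicit base computation: one must expand $(\widetilde{E}_{4,13}Z_{13}^i)\mid U_{13}$ to sufficiently high order for all thirteen values of $i$ to read off the finitely many nonzero $m(i,j)$, and one must confirm that the tabulated matrix $(\psi_{r,s})$ really reproduces \eqref{modeq-13}. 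These tasks are mechanical but delicate, the only real risk being insufficient precision in the truncated expansions.
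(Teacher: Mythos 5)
Your proposal is correct and follows the same route the paper takes (the paper omits the proof of this lemma, but its proofs of the analogous Lemmas \ref{lem-e4-U5-Z}, \ref{lem-e4-7-UEZ} and \ref{lem-e6-7-UZ} are exactly "verify the base rows directly, then propagate via the modular equation," which is what you do). Your additional remarks — that $U_{13}$ commutes with multiplication by $Y_{13}(13\tau)^s$, and that the $r=13$, $s=7$ term with $\psi_{13,7}\neq 0$ lets the induction run in both directions from the thirteen base rows $-4\le i\le 8$ — are the right details to make the sketch rigorous.
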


\begin{lemma}\label{lem-e4-13-m-ord}
For any integers $i$ and $j$, we have
\begin{align}
\pi_{13}(m(i,j))\geq \left\lfloor \frac{13j-7i+3}{14}\right\rfloor. \label{e4-m-13-ord}
\end{align}
\end{lemma}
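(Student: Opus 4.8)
The plan is to follow the template already used for the analogous valuation estimates, namely Lemmas \ref{lem-e4-7-m-order} and \ref{lem-e6-7-m-ord}: verify the bound directly on a block of consecutive base values of $i$, and then propagate it in both directions via the recurrence \eqref{e4-13-m-rec}. Write $w(i,j):=\left\lfloor (13j-7i+3)/14\right\rfloor$ for the asserted lower bound. The base cases $-4\le i\le 8$ are checked directly from the explicit tables of $m(i,j)$ recorded in the Appendix. Since \eqref{e4-13-m-rec} expresses $m(i,j)$ through the thirteen consecutive previous values $m(i-1,\cdot),\ldots,m(i-13,\cdot)$, the thirteen seed values $i=-4,\ldots,8$ are precisely what is needed to start an induction running upward (toward $i>8$) and, after solving for the lowest-index term, downward (toward $i<-4$).

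The inductive step rests on a single arithmetic input about the order-$13$ modular equation \eqref{modeq-13}: for every pair $(r,s)$ occurring in the double sum, i.e.\ $1\le r\le 13$ and $\lfloor (r+2)/2\rfloor\le s\le 7$, the coefficient $\psi(r,s)$ should satisfy
\begin{align}
\pi_{13}(\psi(r,s))\geq \left\lceil \frac{13s-7r}{14}\right\rceil.
\end{align}
Granting this, the forward induction is immediate. Setting $N=13j-7i+3$ and using the elementary inequality $\lfloor x\rfloor-\lfloor y\rfloor\le \lceil x-y\rceil$, one finds
\begin{align}
w(i,j)-w(i-r,j-s)=\left\lfloor \frac{N}{14}\right\rfloor-\left\lfloor \frac{N-(13s-7r)}{14}\right\rfloor\leq \left\lceil \frac{13s-7r}{14}\right\rceil,
\end{align}
so that, by the induction hypothesis $\pi_{13}(m(i-r,j-s))\ge w(i-r,j-s)$, every summand $\psi(r,s)m(i-r,j-s)$ of \eqref{e4-13-m-rec} has $13$-adic valuation at least $w(i,j)$, whence so does $m(i,j)$.

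For the backward induction I would isolate the unique term with $r=13$, which forces $s=7$. Since $13\cdot 7-7\cdot 13=0$, no power of $13$ is demanded of $\psi(13,7)$, and in fact $\psi(13,7)$ is the leading coefficient of \eqref{modeq-13} and is a $13$-adic unit. Solving \eqref{e4-13-m-rec} for $m(i-13,j-7)$ and re-indexing by $i'=i-13,\ j'=j-7$, one checks that $w(i'+13,j'+7)=w(i',j')$ and that every remaining summand again contributes valuation at least $w(i',j')$ by the two displayed inequalities; dividing by the unit $\psi(13,7)$ then yields $\pi_{13}(m(i',j'))\ge w(i',j')$, completing the induction for $i<-4$.

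The main obstacle is the displayed estimate on $\pi_{13}(\psi(r,s))$. This is not a formal consequence of the recurrence but a genuine feature of the modular equation, and it must be extracted from the explicit matrix $\Psi=(\psi(r,s))$ of \cite[Eq.\ (3.52)]{Garvan-TAMS}. Establishing it amounts to checking, over the admissible pairs $(r,s)$, that the power of $13$ dividing $\psi(r,s)$ keeps pace with $\lceil (13s-7r)/14\rceil$; the floor–ceiling bookkeeping is delicate precisely near the diagonal $13s\approx 7r$, where the required exponent is small and the valuations of $\psi(r,s)$ must be read off exactly rather than bounded crudely. Once this one inequality is confirmed from Garvan's table, the remainder of the argument is the routine two-directional induction sketched above.
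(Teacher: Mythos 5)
Your proposal is correct and follows essentially the same route as the paper: direct verification of the thirteen base cases $-4\le i\le 8$ followed by a two-directional induction on $i$ via the recurrence \eqref{e4-13-m-rec}. The one input you flag as the ``main obstacle,'' namely $\pi_{13}(\psi(r,s))\ge\lceil(13s-7r)/14\rceil$, is identical (since $\lceil N/14\rceil=\lfloor(N+13)/14\rfloor$ for integers $N$) to the bound \eqref{psi-13-ord} that the paper quotes from Atkin--O'Brien, so it need not be re-extracted from Garvan's table.
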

\begin{proof}
From \cite{Atkin-OBrien} we know
\begin{align}\label{psi-13-ord}
\pi_{13}(\psi(r,s))\geq \left\lfloor \frac{13s-7r+13}{14}\right\rfloor.
\end{align}
For $-4\leq i \leq 8$, \eqref{e4-m-13-ord} can be verified directly. We use \eqref{psi-13-ord} and the recurrence \eqref{e4-13-m-rec} to prove the general result by induction.
\end{proof}

\begin{theorem}\label{thm-e4-13-gen}
For $k\geq 1$, we have
\begin{align}
L_{4,13,k}(\tau)=\widetilde{E}_{4,13}\sum_{j=1}^\infty a(k,j)Y_{13}^j, \label{e4-13-gen}
\end{align}
where
\begin{align}
&a(1,1)=158411, \quad a(1,2)=6539045\cdot 13, \quad a(1,3)=2054214\cdot {13}^3,  \nonumber \\
& a(1,4)=43926819\cdot {13}^3, \quad a(1,5)=1409\cdot {13}^8, \quad a(1,6)=817\cdot {13}^9, \nonumber \\
& a(1,7)=4122\cdot {13}^9, \quad a(1,8)=1085\cdot {13}^{10}, \quad a(1,9)=189\cdot {13}^{11}, \nonumber\\
& a(1,10)={13}^{13}, \quad a(1,j)=0, \quad \forall j\geq 11, \label{a-e4-13-initial}
\end{align}
and for $i\geq 1$,
\begin{align}
&a(2i,j)=\sum_{k=1}^\infty a(2i-1,k)m(2k,k+j), \label{e4-13-a-even-rec} \\
&a(2i+1,j)=\sum_{k=1}^\infty a(2i,k)m(2k+1,k+j). \label{e4-13-a-odd-rec}
\end{align}
\end{theorem}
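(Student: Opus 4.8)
The plan is to prove \eqref{e4-13-gen} by induction on $k$, following verbatim the strategy used for Theorem \ref{thm-L45} (the case $\ell=5$) and for the general statement Theorem \ref{thm-L-wE-repn}. The base case $k=1$ is checked by a direct $q$-expansion computation: since $L_{4,13,1}=(Z_{13}E_4(\tau))\mid U_{13}$ lies in $\widetilde E_{4,13}\,\mathbb{Q}[Y_{13}]$ by Theorem \ref{thm-Li-E-repn} (equivalently \eqref{L-EQY}), it suffices to expand both $L_{4,13,1}$ and $\widetilde E_{4,13}\sum_{j}a(1,j)Y_{13}^j$ to enough terms and match, which confirms the listed initial values \eqref{a-e4-13-initial}. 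The inductive hypothesis is that \eqref{e4-13-gen} holds for $k=2i-1$.

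For the even step I would use \eqref{L-even-defn}, namely $L_{4,13,2i}=L_{4,13,2i-1}\mid U_{13}$, and reduce the action of $U_{13}$ on each term $\widetilde E_{4,13}Y_{13}^k$ to Lemma \ref{lem-e4-13-UEZ}. The key algebraic identity, exactly as in the proof of Theorem \ref{thm-L-wE-repn}, is
\begin{align*}
Y_{13}^k=\left(\frac{\eta(13\tau)}{\eta(\tau)}\right)^{2k}=\left(\frac{\eta(169\tau)}{\eta(\tau)}\right)^{2k}\left(\frac{\eta(13\tau)}{\eta(169\tau)}\right)^{2k}=Z_{13}^{2k}\,Y_{13}(13\tau)^{-k}.
\end{align*}
Because $U_{13}$ pulls out any factor that is a function of $13\tau$, one gets $(\widetilde E_{4,13}Y_{13}^k)\mid U_{13}=Y_{13}^{-k}\bigl((\widetilde E_{4,13}Z_{13}^{2k})\mid U_{13}\bigr)=\widetilde E_{4,13}\sum_{s}m(2k,s)Y_{13}^{\,s-k}$ by Lemma \ref{lem-e4-13-UEZ}. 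Writing $j=s-k$ and summing against $a(2i-1,k)$ yields $L_{4,13,2i}=\widetilde E_{4,13}\sum_j a(2i,j)Y_{13}^j$ with $a(2i,j)=\sum_k a(2i-1,k)m(2k,k+j)$, which is precisely \eqref{e4-13-a-even-rec}.

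The odd step is identical after replacing $L_{4,13,2i-1}\mid U_{13}$ by $(Z_{13}L_{4,13,2i})\mid U_{13}$ from \eqref{L-odd-defn}: here $Z_{13}Y_{13}^k=Z_{13}^{2k+1}Y_{13}(13\tau)^{-k}$, so Lemma \ref{lem-e4-13-UEZ} with index $2k+1$ gives $a(2i+1,j)=\sum_k a(2i,k)m(2k+1,k+j)$, establishing \eqref{e4-13-a-odd-rec}. Throughout, each inner sum over $k$ is finite because $L_{4,13,k}$ is a polynomial in $Y_{13}$ times $\widetilde E_{4,13}$ (guaranteed by Lemma \ref{lem-UEZ-repn} and Theorem \ref{thm-Li-E-repn}), so no convergence issue arises. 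The recurrences preserve the property that $a(k,j)$ is supported on $j\ge 1$, matching the lower limit of the sum in \eqref{e4-13-gen}.

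I do not expect a serious conceptual obstacle, since every structural ingredient is already in place; the only genuinely delicate point is the bookkeeping in the base case. Verifying \eqref{a-e4-13-initial} requires expanding $\widetilde E_{4,13}$, which itself is $E_4(\tau)$ divided by the explicit degree-$4$ polynomial $1+247Y_{13}+3380Y_{13}^2+15379Y_{13}^3+28561Y_{13}^3$, and carrying enough $q$-terms that the full list $a(1,1),\dots,a(1,10)$ is pinned down unambiguously; the high $13$-power content of the later coefficients (up to $13^{13}$) makes this computation the most error-prone step, but it is purely mechanical once the $q$-expansions of $E_4$, $\eta$, $Y_{13}$ and $Z_{13}$ are available. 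As with the analogous Theorems \ref{thm-L65-repn} and \ref{thm-e4-7-gen}, the remaining details mirror the proof of Theorem \ref{thm-L45} and can be omitted.
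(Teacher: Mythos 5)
Your proposal is correct and follows essentially the same route as the paper: the paper verifies the $k=1$ expansion directly and then simply invokes Theorem \ref{thm-L-wE-repn} (whose recurrences \eqref{general-a-even-rec}--\eqref{general-a-odd-rec} reduce to \eqref{e4-13-a-even-rec}--\eqref{e4-13-a-odd-rec} since $24k/(\ell-1)=2k$ for $\ell=13$), while you unfold that theorem's inductive proof explicitly via the identity $Y_{13}^k=Z_{13}^{2k}Y_{13}(13\tau)^{-k}$ and Lemma \ref{lem-e4-13-UEZ}. The substance is identical, and your observation that the base case is the only computationally delicate step matches the paper's treatment.
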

\begin{proof}
We verify \eqref{e4-13-gen} by direct computation. The theorem then follows from Theorem \ref{thm-L-wE-repn}.
\end{proof}

\begin{lemma}\label{lem-e4-13-a-ord}
For $i\geq 1$ and $j\geq 1$, we have
\begin{align}
&\pi_{13}(a(2i-1,j))\geq 2i-2+\left\lfloor \frac{13j-7}{14}\right\rfloor, \label{e4-13-a-odd} \\
&\pi_{13}(a(2i,j))\geq 2i+\left\lfloor \frac{13j-13}{14} \right\rfloor-\gamma_{j,3}. \label{e4-13-a-even}
\end{align}
\end{lemma}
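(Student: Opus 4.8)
The plan is to prove the two estimates \eqref{e4-13-a-odd} and \eqref{e4-13-a-even} simultaneously by induction on $i$, interleaving them exactly as in the proofs of Lemmas \ref{lem-L45-a-ord}, \ref{lem-e4-7-a-ord} and \ref{lem-e6-7-a-ord}: assuming the odd bound \eqref{e4-13-a-odd} at level $i$, I would first deduce the even bound \eqref{e4-13-a-even} at level $i$ through the recurrence \eqref{e4-13-a-even-rec}, and then feed the even bound at level $i$ into \eqref{e4-13-a-odd-rec} to obtain the odd bound at level $i+1$. The base case is the odd bound at $i=1$, which I would verify directly against the data in \eqref{a-e4-13-initial}, checking that $\pi_{13}(a(1,j))\geq\lfloor(13j-7)/14\rfloor$ for each listed $j$ and observing that the inequality is vacuous once $a(1,j)=0$.

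For the even step I would substitute \eqref{e4-13-a-even-rec} and estimate each summand $a(2i-1,k)\,m(2k,k+j)$ using the inductive hypothesis together with the bound $\pi_{13}(m(2k,k+j))\geq\lfloor(13j-k+3)/14\rfloor$ supplied by Lemma \ref{lem-e4-13-m-ord}. This reduces the claim to the elementary inequality
\[
\Big\lfloor\tfrac{13k-7}{14}\Big\rfloor+\Big\lfloor\tfrac{13j-k+3}{14}\Big\rfloor\ \geq\ 2+\Big\lfloor\tfrac{13j-13}{14}\Big\rfloor-\gamma_{j,3},\qquad k\geq1.
\]
For $k\geq2$ this holds comfortably, since the first floor grows like $13k/14$ while the loss in the second is only about $k/14$; a direct check for $k=2,3,4$ together with a monotonicity argument for larger $k$ disposes of these terms. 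The delicate, and I expect the only genuinely obstructive, case is $k=1$: here the generic estimate on $m(2,j+1)$ falls exactly one short of what is needed, so Lemma \ref{lem-e4-13-m-ord} alone does not suffice. I would instead invoke the explicit values of $m(2,\cdot)$ recorded in the Appendix (cf.\ Lemma \ref{lem-e4-13-UEZ}), whose $13$-adic valuations beat the generic bound by one except in a single residue. That sole exception, occurring at $j=3$, is exactly what forces the correction term $-\gamma_{j,3}$, and pinning it down is the one place where explicit computation, rather than floor-function bookkeeping, is required.

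The even-to-odd step is cleaner and needs no tabulated values. Substituting \eqref{e4-13-a-odd-rec} and combining the even bound just established with $\pi_{13}(m(2k+1,k+j))\geq\lfloor(13j-k-4)/14\rfloor$, the target reduces to
\[
\Big\lfloor\tfrac{13k-13}{14}\Big\rfloor-\gamma_{k,3}+\Big\lfloor\tfrac{13j-k-4}{14}\Big\rfloor\ \geq\ \Big\lfloor\tfrac{13j-7}{14}\Big\rfloor,\qquad k\geq1.
\]
Because the constant $2i$ of the even bound already matches that of the desired odd bound at level $i+1$, no extra valuation must be extracted from the coefficients $m$, so the generic estimate of Lemma \ref{lem-e4-13-m-ord} suffices for every $k$; in particular at $k=3$ the deficit $-\gamma_{k,3}$ inherited from the even bound is absorbed with equality, while all other $k$ leave slack. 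Assembling the two steps completes the induction.
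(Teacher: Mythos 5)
Your proposal is correct and follows essentially the same route as the paper's proof: the same interleaved induction (odd bound at level $i$ $\Rightarrow$ even bound at level $i$ $\Rightarrow$ odd bound at level $i+1$), the same generic estimate from Lemma \ref{lem-e4-13-m-ord} for $k\geq 2$, and the same isolation of the $k=1$ term via the explicit row $m(2,\cdot)$, where $m(2,4)=-13^{2}$ and all other entries vanish, which is exactly what forces the correction $-\gamma_{j,3}$. The only quibble is cosmetic: the nonzero entry $m(2,4)$ meets the generic bound exactly rather than ``beating it by one''; the relevant point, which you correctly exploit, is that every other entry of that row is zero.
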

\begin{proof}
From \eqref{a-e4-13-initial} we see that \eqref{e4-13-a-odd} holds for $i=1$. Now suppose it holds for some $i\geq 1$. By \eqref{e4-13-a-even-rec}  we deduce that
\begin{align}
\pi_{13}(a(2i,j))&\geq \min\limits_{k\geq 1} \pi_{13}(a(2i-1,k))+\pi_{13}(m(2k,k+j)) \nonumber \\
&\geq \min\limits_{k\geq 1} 2i-2+\left\lfloor \frac{13k-7}{14}\right\rfloor + \pi_{13}(m(2k,k+j)). \label{e4-13-a-ord-pf-1}
\end{align}
When $k\geq 2$,  Lemma \ref{lem-e4-13-m-ord} and \eqref{e4-13-a-ord-pf-1} imply
\begin{align}
\pi_{13}(a(2i,j))\geq &~ 2i-2+\left\lfloor \frac{13k-7}{14}\right\rfloor +\left\lfloor \frac{13j-k+3}{14}\right\rfloor \nonumber \\
\geq &~ 2i+\left\lfloor \frac{13j-13}{14}\right\rfloor. \label{e4-13-a-ord-pf-2}
\end{align}
When $k=1$, note that $m(2,4)=-13^2$ and $m(2,1+j)=0$ when $j\neq 3$. We have
\begin{align}
2i-2+\left\lfloor \frac{13-7}{14}\right\rfloor + \pi_{13}(m(2,1+j))\geq
\left\{\begin{array}{ll}
2i & j=3, \\
\infty & j\neq 3.
\end{array}\right. \label{e4-13-a-ord-pf-3}
\end{align}
Combining \eqref{e4-13-a-ord-pf-1}, \eqref{e4-13-a-ord-pf-2} and \eqref{e4-13-a-ord-pf-3}, we see that
\begin{align}
\pi_{13}(a(2i,j))\geq 2i+\left\lfloor \frac{13j-13}{14} \right\rfloor-\gamma_{j,3}.
\end{align}
This shows that \eqref{e4-13-a-even} holds for $i$.

Next, by \eqref{e4-13-a-odd-rec} and Lemma \ref{lem-e4-13-m-ord} we deduce that
\begin{align}
\pi_{13}(a(2i+1,j))&\geq \min\limits_{k\geq 1} \pi_{13}(a(2i,k))+\pi_{13}(m(2k+1,k+j)) \nonumber \\
&\geq \min\limits_{k\geq 1} 2i+\left\lfloor \frac{13k-13}{14}\right\rfloor-\gamma_{k,3} + \left\lfloor \frac{13j-k-4}{14}\right\rfloor \nonumber \\
&\geq 2i+\left\lfloor \frac{13j-7}{14}\right\rfloor.
\end{align}
Therefore, \eqref{e4-13-a-odd} holds for $i+1$. By induction we complete our proof.
\end{proof}

\begin{theorem}\label{thm-e4-13-cong}
For $k\geq 1$ and $n\geq 0$, we have
\begin{align}
e_4(13^{2k}n+\delta_{13,2k})\equiv 0 \pmod{13^{2k}}.
\end{align}
\end{theorem}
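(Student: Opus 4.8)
The plan is to follow the template already established for $\ell=5$ and $\ell=7$ (Theorems \ref{thm-e4-5-cong} and \ref{thm-e4-7-cong}), reading off the congruence from the explicit representation of $L_{4,13,k}$ in Theorem \ref{thm-e4-13-gen} together with the $13$-adic valuation estimates of Lemma \ref{lem-e4-13-a-ord}. Since the target progression has modulus $13^{2k}$, an even power, I would specialize Theorem \ref{thm-e4-13-gen} to the even index by taking its parameter to be $2k$, so that
\[
L_{4,13,2k}(\tau)=\widetilde{E}_{4,13}\sum_{j=1}^\infty a(2k,j)Y_{13}^j.
\]
By \eqref{L-even} this same form equals $(q;q)_\infty\sum_{n=0}^\infty e_4(13^{2k}n+\delta_{13,2k})q^{n+1}$, so it suffices to show that every Fourier coefficient of $L_{4,13,2k}$ is divisible by $13^{2k}$.

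The key input is the uniform divisibility $13^{2k}\mid a(2k,j)$ for all $j\geq 1$. Invoking \eqref{e4-13-a-even} with $i=k$ gives
\[
\pi_{13}(a(2k,j))\geq 2k+\left\lfloor\frac{13j-13}{14}\right\rfloor-\gamma_{j,3},
\]
and I would verify that the $j$-dependent part is never negative: for $j=1,2$ the floor is $0$ and there is no correction, for $j=3$ the floor equals $1$ and is exactly cancelled by $\gamma_{3,3}=1$, and for $j\geq 4$ the floor is at least $2$. Hence $\pi_{13}(a(2k,j))\geq 2k$ throughout.

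To transfer this to Fourier coefficients, I would note that $Y_{13}=(\eta(13\tau)/\eta(\tau))^2$ has an integer $q$-expansion beginning with $q$, while $\widetilde{E}_{4,13}=E_4/Q_{4,13}(Y_{13})$ is integral as well: the denominator $Q_{4,13}(Y_{13})$ has constant term $1$ and integer coefficients, so its reciprocal, and therefore $\widetilde{E}_{4,13}$, lies in $\mathbb{Z}[[q]]$. Consequently $13^{2k}\mid a(2k,j)$ for all $j$ forces $L_{4,13,2k}\equiv 0\pmod{13^{2k}}$ as a power series. Dividing by $(q;q)_\infty$, whose reciprocal $\sum_{n} p(n)q^n$ has integer coefficients and constant term $1$ and thus preserves $13$-integrality, I obtain $\sum_{n\geq 0}e_4(13^{2k}n+\delta_{13,2k})q^{n+1}\equiv 0\pmod{13^{2k}}$, which is the assertion. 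The only delicate bookkeeping is confirming that the floor term compensates the $-\gamma_{j,3}$ correction inherited from the recurrence at $j=3$; once Theorem \ref{thm-e4-13-gen} and Lemma \ref{lem-e4-13-a-ord} are in hand, no further obstacle arises, since all of the analytic and combinatorial difficulty has already been absorbed into those two results.
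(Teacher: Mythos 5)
Your proposal is correct and follows exactly the paper's route: the paper proves this theorem by citing Theorem \ref{thm-e4-13-gen} and Lemma \ref{lem-e4-13-a-ord}, and your argument simply spells out the details of that deduction, including the check that $\lfloor (13j-13)/14\rfloor-\gamma_{j,3}\geq 0$ for all $j\geq 1$ and the integrality of $\widetilde{E}_{4,13}$, $Y_{13}$, and $1/(q;q)_\infty$ needed to pass from the coefficients $a(2k,j)$ to the Fourier coefficients.
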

\begin{proof}
This follows from Theorem \ref{thm-e4-13-gen} and Lemma \ref{lem-e4-13-a-ord}.
\end{proof}

\subsection{Congruences for $e_6(n)$ modulo powers of 13}
This time we need to use the function $\widetilde{E}_{6,13}(\tau)=\frac{E_6(\tau)}{P_{6,13}(Y_{13})}$ where as in the identity \eqref{E6-mod13-eq},
\begin{align*}
P_{6,13}(Y_{13})=&~ 1-38\cdot 13 Y_{13}-122\cdot 13^2 Y_{13}^2-108\cdot {13}^3Y_{13}^3-46\cdot {13}^4 Y_{13}^4\nonumber \\
&-10\cdot {13}^5Y_{13}^5-{13}^6Y_{13}^6.
\end{align*}

\begin{lemma}\label{lem-e6-13-UEZ}
For any integer $i$, we have
\begin{align}
\left(\widetilde{E}_{6,13}Z_{13}^i\right)\mid U_{13}=\widetilde{E}_{6,13}\sum_{j=\left\lceil \frac{7i}{13}\right\rceil}m(i,j)Y_{13}^j,
\end{align}
where for $0\leq i \leq 12$, $m(i,j)$ is given in Group \uppercase\expandafter{\romannumeral4} of the Appendix, and for other $i$, it is determined by the recurrence \eqref{e4-13-m-rec}.
\end{lemma}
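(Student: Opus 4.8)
The plan is to prove Lemma~\ref{lem-e6-13-UEZ} by the same argument used for its weight-$4$ companion, Lemma~\ref{lem-e4-13-UEZ}, the only change being that the base form $\widetilde{E}_{4,13}$ is replaced by $\widetilde{E}_{6,13}\in\mathcal{M}_6(\Gamma_0(13))$ (see \eqref{wE-modular}). The first point to record is the structural fact that, for every integer $i$, the series $\left(\widetilde{E}_{6,13}Z_{13}^i\right)\mid U_{13}$ equals $\widetilde{E}_{6,13}$ times a Laurent polynomial in $Y_{13}$ whose lowest term is $Y_{13}^{\lceil 7i/13\rceil}$. This is precisely \eqref{wEU-m} with $r=3$ and $\ell=13$, where $\lceil(\ell^2-1)i/(24\ell)\rceil=\lceil 7i/13\rceil$; it rests on Lemma~\ref{lem-UEZ-repn} and on the characterization of $\widetilde{E}_{6,13}$ in Remark~\ref{remark: E tilde} as the form in $\mathcal{M}_6(\Gamma_0(13))$, unique up to scalars, vanishing to the maximal order $d_{13}(3)$ at the cusp $0$. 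In particular the coefficients $m(i,j)$ are well defined, and for each $i$ only finitely many are nonzero.

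Two computations then determine all the $m(i,j)$. For the $13$ base values $0\le i\le 12$ I would proceed by direct calculation: expand $\widetilde{E}_{6,13}Z_{13}^i$ as a $q$-series far enough to resolve the pole at the cusp $0$, apply $U_{13}$, divide by $\widetilde{E}_{6,13}$, and read off the coefficients of the resulting polynomial in $Y_{13}$; these are the entries tabulated in Group~IV of the Appendix. For the remaining $i$ I would derive the recurrence \eqref{e4-13-m-rec} from the $13$th-order modular equation \eqref{modeq-13}. Multiplying \eqref{modeq-13} by $Z_{13}^{i-13}$ gives
\begin{align*}
Z_{13}^i=\sum_{r=1}^{13}\sum_{s}\psi_{r,s}\,Y_{13}(13\tau)^s\,Z_{13}^{i-r},
\end{align*}
with $s$ ranging as in \eqref{modeq-13}. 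Applying $\left(\widetilde{E}_{6,13}\,\cdot\,\right)\mid U_{13}$ to both sides and using, for the operator $V_{13}\colon g(\tau)\mapsto g(13\tau)$, the identity $\left(f\cdot(g\mid V_{13})\right)\mid U_{13}=\left(f\mid U_{13}\right)\cdot g$ together with $Y_{13}(13\tau)^s=Y_{13}^s\mid V_{13}$, I can pull every factor $Y_{13}(13\tau)^s$ outside $U_{13}$ as $Y_{13}^s$. Comparing coefficients of $Y_{13}^j$ then yields precisely \eqref{e4-13-m-rec}. Since the $\psi_{r,s}$ come only from the modular equation and are independent of the weight, the very same recurrence governs the weight-$6$ coefficients; only the initial data differ.

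Finally, the $13$ consecutive base values $0\le i\le 12$ together with \eqref{e4-13-m-rec} determine $m(i,j)$ for every remaining $i$: the recurrence propagates upward in $i$ at once, and downward as well, since the extreme right-hand term (the $r=13$ contribution, which carries the nonzero entry $\psi_{13,7}$) can be solved for $m(i-13,\cdot)$ in terms of the thirteen higher-index rows. I expect the conceptual core to be the structural claim of the first paragraph, namely that $U_{13}$ preserves $\widetilde{E}_{6,13}\,\mathbb{Q}[Y_{13}^{\pm1}]$; this is inherited verbatim from the general theory and requires no new work. The main practical obstacle is instead the direct verification of the base range: one must carry each $q$-expansion of $\widetilde{E}_{6,13}Z_{13}^i$ far enough past the pole at the cusp $0$ to pin down every nonzero entry of Group~IV, after which the passage to all other $i$ via \eqref{e4-13-m-rec} is entirely mechanical.
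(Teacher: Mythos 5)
Your proposal is correct and follows exactly the route the paper intends: the paper states this lemma without proof, but its companion results (e.g.\ Lemma \ref{lem-e4-7-UEZ}) are proved precisely by direct verification of a full residue system of base exponents followed by the recurrence extracted from the modular equation, which is what you do here with the base range $0\le i\le 12$ and \eqref{modeq-13}. Your added details — the structural containment from Lemma \ref{lem-UEZ-repn} and Remark \ref{remark: E tilde}, the identity $\left(f\cdot(g\mid V_{13})\right)\mid U_{13}=\left(f\mid U_{13}\right)\cdot g$, and the observation that the nonvanishing of $\psi_{13,7}$ lets the recurrence propagate downward in $i$ — are all consistent with the paper's framework and fill in what it leaves implicit.
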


\begin{lemma}\label{lem-e6-13-m-ord}
For any integers $i$ and $j$, we have
\begin{align}
\pi_{13}(m(i,j))\geq \left\lfloor \frac{13j-7i+5}{14}\right\rfloor.
\end{align}
\end{lemma}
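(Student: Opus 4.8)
The plan is to mirror the two-directional induction used for Lemma~\ref{lem-e4-13-m-ord}. The two ingredients are the Atkin--O'Brien valuation estimate \eqref{psi-13-ord} for the coefficients of the modular equation \eqref{modeq-13}, namely $\pi_{13}(\psi(r,s))\ge\lfloor(13s-7r+13)/14\rfloor$, and the recurrence \eqref{e4-13-m-rec} that now governs the sequence $m(i,j)$ attached to $\widetilde E_{6,13}$ in Lemma~\ref{lem-e6-13-UEZ}. First I would verify the asserted inequality directly for the thirteen consecutive base rows $0\le i\le 12$ from the explicit table in Group~IV of the Appendix; thirteen consecutive values are exactly what an order-$13$ recurrence in $i$ needs in order to be launched in both the increasing and the decreasing direction.

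For the inductive step I take $i$ outside the base range and assume the bound for every index appearing on the right of \eqref{e4-13-m-rec}. Combining \eqref{psi-13-ord} with the inductive hypothesis bounds a single summand by
\[
\pi_{13}\bigl(\psi(r,s)\,m(i-r,j-s)\bigr)\ \ge\ \Bigl\lfloor\tfrac{13s-7r+13}{14}\Bigr\rfloor+\Bigl\lfloor\tfrac{13(j-s)-7(i-r)+5}{14}\Bigr\rfloor .
\]
The two numerators add up to $13j-7i+18$, so writing them as $14a+\alpha$ and $14b+\beta$ with $0\le\alpha,\beta\le13$ and using the elementary identity $\lfloor A/14\rfloor+\lfloor B/14\rfloor=\lfloor(A+B)/14\rfloor-[\alpha+\beta\ge14]$, each summand is at least $\lfloor(13j-7i+4)/14\rfloor$. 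Taking the minimum over all admissible $(r,s)$ then yields the claimed bound $\lfloor(13j-7i+5)/14\rfloor$ for $m(i,j)$, as long as the possible loss of $1$ never actually drops below the target.

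I expect the boundary bookkeeping to be the only genuine obstacle. The estimate $\lfloor(13j-7i+4)/14\rfloor$ can fall one short of $\lfloor(13j-7i+5)/14\rfloor$ exactly when $13j-7i\equiv9\pmod{14}$; but in that congruence class the residues satisfy $\alpha+\beta\equiv13j-7i+18\equiv13\pmod{14}$, and since $\alpha+\beta\le26$ this forces $\alpha+\beta=13<14$, so no carry occurs and no $1$ is lost. Hence the term-by-term estimate already attains the target in every residue class, and the induction closes. The remaining point is purely organizational: for $i>12$ the recurrence \eqref{e4-13-m-rec} expresses $m(i,\cdot)$ through the rows $m(i-1,\cdot),\dots,m(i-13,\cdot)$, whereas for $i<0$ one runs the same relation in reverse by solving for the extremal term $\psi(13,7)\,m(i,\cdot)$, whose coefficient is a $13$-adic unit so that the valuation estimate transfers verbatim.
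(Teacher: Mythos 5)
Your proposal is correct and follows essentially the same route as the paper: the paper proves this lemma by citing the Atkin--O'Brien valuation bound \eqref{psi-13-ord}, checking the inequality directly on the $13$ consecutive base rows from the Appendix, and then inducting in both directions via the recurrence \eqref{e4-13-m-rec} (the paper simply says ``similar to the proof of Lemma \ref{lem-e4-13-m-ord}'' and omits the details, which your floor-function bookkeeping and the carry analysis in the residue class $13j-7i\equiv 9\pmod{14}$ supply correctly).
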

This lemma can be proved in a way similar to the proof of Lemma \ref{lem-e4-13-m-ord}. We omit the details.
\begin{theorem}\label{thm-e6-13-gen}
For $k\geq 1$, we have
\begin{align}
L_{6,13,k}(\tau)=-\widetilde{E}_{6,13}\sum_{j=1}^\infty a(k,j)Y_{13}^j, \label{e6-13-gen}
\end{align}
where
\begin{align}
&a(1,1)=7154773, \quad a(1,2)=1554139318\cdot 13, \quad a(1,3)=12501650600\cdot {13}^2, \nonumber \\
&a(1,4)=2584942916\cdot {13}^4, \quad a(1,5)=3629017744\cdot {13}^5, \quad a(1,6)=41201641625 \cdot {13}^5, \nonumber \\
&a(1,7)=65636\cdot {13}^{11}, \quad a(1,8)=27416\cdot {13}^{12}, \quad a(1,9)=8208\cdot {13}^{13},  \nonumber \\
&a(1,10)=1746\cdot {13}^{14}, \quad a(1,11)=254\cdot {13}^{15}, \quad a(1,12)=23\cdot {13}^{16}, \nonumber \\
&a(1,13)={13}^{17}, \quad a(1,j)=0, \quad \forall j\geq 14, \label{a-e6-13-initial}
\end{align}
and $a(i,j)$ satisfies the recurrence relations in \eqref{e4-13-a-even-rec} and \eqref{e4-13-a-odd-rec}.
\end{theorem}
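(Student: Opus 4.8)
The plan is to follow the same two-part strategy that proved Theorems \ref{thm-L45} and \ref{thm-e4-13-gen}: first establish the representation \eqref{e6-13-gen} in the base case $k=1$ by a direct computation, and then obtain the general case together with the recurrences by invoking Theorem \ref{thm-L-wE-repn} specialized to $r=3$ and $\ell=13$. The conceptual closure statement---that $L_{6,13,k}$ always lies in $\widetilde{E}_{6,13}\,\mathbb{Q}[Y_{13}]$ and is transformed correctly under the operators $U_{13}$ and multiplication by $Z_{13}$---is already supplied by Theorem \ref{thm-L-wE-repn}, so no genuinely new idea is required.

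For the base case, I would compute $L_{6,13,1}=\bigl(E_6(\tau)Z_{13}(\tau)\bigr)\mid U_{13}$ from \eqref{L-odd-defn}. Since $\widetilde{E}_{6,13}\in\mathcal{M}_6(\Gamma_0(13))$ and, together with its $Y_{13}$-multiples, spans the relevant space of weakly holomorphic forms identified in Lemma \ref{lem-UEZ-repn}, I would expand both $L_{6,13,1}$ and $\widetilde{E}_{6,13}$ as $q$-series to sufficiently high order and solve the finite linear system $L_{6,13,1}=-\widetilde{E}_{6,13}\sum_{j\geq 1}a(1,j)Y_{13}^j$ for the coefficients. The pole-order bounds of Lemma \ref{lem-UEZ-repn} with $(r,\ell,n)=(3,13,1)$ force only finitely many $a(1,j)$ to be nonzero, which explains why the list in \eqref{a-e6-13-initial} terminates at $j=13$; matching coefficients then pins down the stated values and their $13$-power normalizations.

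For the inductive step, I would apply Theorem \ref{thm-L-wE-repn} directly. Specializing its recurrences \eqref{general-a-even-rec} and \eqref{general-a-odd-rec} to $\ell=13$, the index $\frac{24k}{\ell-1}$ collapses to $2k$, so the even and odd steps become exactly $a(2i,j)=\sum_{k\geq 1}a(2i-1,k)\,m(2k,k+j)$ and $a(2i+1,j)=\sum_{k\geq 1}a(2i,k)\,m(2k+1,k+j)$, where $m(i,j)$ are now the coefficients produced by the $U_{13}$-action on $\widetilde{E}_{6,13}Z_{13}^i$ recorded in Lemma \ref{lem-e6-13-UEZ}. The recursive definitions \eqref{L-odd-defn}--\eqref{L-even-defn} then propagate the shape \eqref{e6-13-gen} from $k$ to $k+1$, passing through the even index $2i$ and then the odd index $2i+1$, precisely as in the proof of Theorem \ref{thm-L45}. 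By induction this yields \eqref{e6-13-gen} for all $k\geq 1$.

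The only real obstacle is the bookkeeping in the base case: one must carry enough terms of the $q$-expansion of $\bigl(E_6 Z_{13}\bigr)\mid U_{13}$ to determine all thirteen coefficients $a(1,j)$ unambiguously and to confirm that $a(1,j)=0$ for $j\geq 14$, while correctly tracking the powers of $13$ appearing in \eqref{a-e6-13-initial}. One must also take care that Lemma \ref{lem-e6-13-UEZ} is applied with the correct (possibly negative) indices $i=2k$ and $i=2k+1$ dictated by the specialization $\frac{24k}{\ell-1}=2k$. Beyond this verification of the starting data, the argument is a routine induction mirroring the earlier theorems, so I would simply remark that the proof proceeds exactly as for Theorem \ref{thm-e4-13-gen} and omit the repeated details.
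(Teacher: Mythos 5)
Your proposal matches the paper's proof exactly: the paper verifies the case $k=1$ by direct computation and then deduces the general case from Theorem \ref{thm-L-wE-repn} (with $r=3$, $\ell=13$, so that $\tfrac{24k}{\ell-1}=2k$ yields the recurrences \eqref{e4-13-a-even-rec} and \eqref{e4-13-a-odd-rec}), which is precisely your two-step plan. The only superfluous worry is about negative indices in Lemma \ref{lem-e6-13-UEZ}: since the sums run over $k\geq 1$, the indices $2k$ and $2k+1$ are always positive here.
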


\begin{lemma}\label{lem-e6-13-a-ord}
For $i\geq 1$ and $j\geq 1$, we have
\begin{align}
&\pi_{13}(a(2i-1,j))\geq 4i-4+\left\lfloor \frac{13j-7}{14}\right\rfloor, \label{e6-13-a-odd-ord} \\
&\pi_{13}(a(2i,j))\geq 4i+\left\lfloor \frac{13j-12}{14}\right\rfloor-\gamma_{j,3}-2\gamma_{j,4}-3\gamma_{j,5}. \label{e6-13-a-even-ord}
\end{align}
\end{lemma}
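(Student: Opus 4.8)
The plan is to establish the two estimates \eqref{e6-13-a-odd-ord} and \eqref{e6-13-a-even-ord} by a single induction on $i$, alternating between the odd- and even-index bounds exactly as in the proofs of Lemma \ref{lem-e4-13-a-ord} (the $e_4$ case modulo $13$) and Lemma \ref{lem-e6-7-a-ord} (the $e_6$ case modulo $7$), which serve as complete templates. The base case is $i=1$: for $i=1$ the inequality \eqref{e6-13-a-odd-ord} reads $\pi_{13}(a(1,j))\ge\lfloor(13j-7)/14\rfloor$, and this is checked directly against the explicit data \eqref{a-e6-13-initial}. The power of $13$ dividing each $a(1,j)$ comfortably exceeds the required exponent (the inequality is tightest for small $j$, where it is immediate), so the base case holds.

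For the inductive step I would first carry out the passage from the even bound to the odd bound, as this is the clean direction. Assuming \eqref{e6-13-a-even-ord} at level $i$, I apply the recurrence \eqref{e4-13-a-odd-rec} and bound each summand by $\pi_{13}(a(2i,k))+\pi_{13}(m(2k+1,k+j))$, using Lemma \ref{lem-e6-13-m-ord} in the form $\pi_{13}(m(2k+1,k+j))\ge\lfloor(13j-k-2)/14\rfloor$. The crucial point is that the deficits carried by \eqref{e6-13-a-even-ord} at $k=3,4,5$ are precisely absorbed by the gain in the $m$-valuation: for $k=5$ the even bound contributes $4i+\lfloor53/14\rfloor-3=4i$ while $\pi_{13}(m(11,5+j))\ge\lfloor(13j-7)/14\rfloor$, so that summand is $\ge 4i+\lfloor(13j-7)/14\rfloor$, and the cases $k=3,4$ work identically (contributions $4i+\lfloor(13j-5)/14\rfloor$ and $4i+\lfloor(13j-6)/14\rfloor$ respectively), with strict slack for every remaining $k$. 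Taking the minimum over $k$ yields \eqref{e6-13-a-odd-ord} at level $i+1$ with no exceptional cases.

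The harder direction is the passage from the odd bound to the even bound through \eqref{e4-13-a-even-rec}. For $k\ge 5$ the generic estimate $\pi_{13}(a(2i-1,k))+\pi_{13}(m(2k,k+j))\ge 4i-4+\lfloor(13k-7)/14\rfloor+\lfloor(13j-k+5)/14\rfloor$ already meets the clean target $4i+\lfloor(13j-12)/14\rfloor$. For the small indices $k=1,2,3,4$, however, the generic product bound can fall short (for example the $k=4$ term is one unit short whenever $j\equiv 2\pmod{14}$), exactly as the $k=1$ term did in Lemma \ref{lem-e4-13-a-ord}, where the saving facts were $m(2,1+j)=0$ for $j\ne3$ together with $m(2,4)=-13^2$. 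Here I would invoke the explicit rows $m(2,\cdot),m(4,\cdot),m(6,\cdot),m(8,\cdot)$ recorded in Group \uppercase\expandafter{\romannumeral4} of the Appendix (Lemma \ref{lem-e6-13-UEZ}). These rows have very restricted support, so for most $j$ the potentially weak small-$k$ summands simply vanish and the estimate is governed by the generic terms, while the apparent shortfalls (such as the one at $j\equiv2\pmod{14}$) cancel. The surviving small-$k$ contributions land at $j=3,4,5$ with $13$-adic valuations one, two, and three units below the clean bound, which is the origin of the correction $-\gamma_{j,3}-2\gamma_{j,4}-3\gamma_{j,5}$.

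The main obstacle is therefore precisely this explicit small-$k$ bookkeeping in the even step: one must verify from the Group \uppercase\expandafter{\romannumeral4} entries that $m(2,\cdot),\dots,m(8,\cdot)$ are supported away from the indices where the generic bound is weak, except at $j\in\{3,4,5\}$, and that there the combined valuation $\pi_{13}(a(2i-1,k))+\pi_{13}(m(2k,k+j))$ equals $4i$, matching the deficit-adjusted target. These are finitely many routine (if tedious) verifications of the kind already performed in Lemmas \ref{lem-e4-13-a-ord} and \ref{lem-e6-7-a-ord}; once they are in place the induction closes and the lemma follows.
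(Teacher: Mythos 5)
Your proposal follows essentially the same route as the paper's proof: induction on $i$ with the base case read off from \eqref{a-e6-13-initial}, the odd-to-even step handled by separating the small-$k$ summands (whose $m$-rows from Group IV of the Appendix have one-point support, producing the corrections $-\gamma_{j,3}-2\gamma_{j,4}-3\gamma_{j,5}$) from the generic tail controlled by Lemma \ref{lem-e6-13-m-ord}, and the even-to-odd step closed purely generically, with the deficits at $k=3,4,5$ absorbed by the extra valuation of $m(2k+1,k+j)$; your arithmetic in that direction matches the paper's exactly.

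The one substantive loose end is your treatment of the $k=4$ summand in the odd-to-even step. You correctly observe that the generic bound there is one unit short of the target $4i+\lfloor(13j-12)/14\rfloor$ when $j\equiv 2\pmod{14}$, but your claim that this shortfall ``cancels'' is not verified and does not in fact close at $j=2$: the row $m(8,\cdot)$ is supported only at exponent $6$ with $m(8,6)=13$, so the $k=4$ summand at $j=2$ is bounded below only by $\bigl(4i-4+\lfloor 45/14\rfloor\bigr)+1=4i$, one short of $4i+1$. (For $j=16,30,\dots$ the summand vanishes, so $j=2$ is the only problematic index.) The paper itself has the same wrinkle: it treats all $k\ge 4$ generically and its proof actually establishes $4i+\lfloor(13j-13)/14\rfloor-\gamma_{j,3}-2\gamma_{j,4}-3\gamma_{j,5}$, which differs from the lemma's stated $\lfloor(13j-12)/14\rfloor$ exactly at $j\equiv 2\pmod{14}$; that weaker version is what the subsequent even-to-odd step uses and is all Theorem \ref{thm-e6-13-cong} requires. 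So you should either target the $-13$ version (as the paper's proof implicitly does) or supply a separate argument for $j=2$; as written, the cancellation claim is unsupported.
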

\begin{proof}
From \eqref{a-e6-13-initial} we see that \eqref{e6-13-a-odd-ord} holds for $i=1$. Now suppose it holds for some $i\geq 1$. By \eqref{e4-13-a-even-rec}   we deduce that
\begin{align}
\pi_{13}(a(2i,j))&\geq \min\limits_{k\geq 1} \pi_{13}(a(2i-1,k))+\pi_{13}(m(2k,k+j)) \nonumber \\
&\geq \min\limits_{k\geq 1}  4i-4+\left\lfloor \frac{13k-7}{14}\right\rfloor  +\pi_{13}(m(2k,k+j)). \label{e6-13-pf-1}
\end{align}
When $k=1$, note that $m(2,6)={13}^4$ and $m(2,1+j)=0$ if $j\neq 5$. Thus
\begin{align}
4i-4+\left\lfloor \frac{13-7}{14}\right\rfloor  +\pi_{13}(m(2,1+j))\geq \left\{\begin{array}{ll}
4i & j=5, \\
\infty & j\neq 5.
\end{array}\right.
\end{align}
When $k=2$, note that $m(4,6)={13}^3$ and $m(4,2+j)=0$ if $j \neq 4$. Thus
\begin{align}
4i-4+\left\lfloor \frac{26-7}{14}\right\rfloor  +\pi_{13}(m(4,2+j))\geq \left\{\begin{array}{ll}
4i & j=4, \\
\infty & j\neq 4.
\end{array}\right.
\end{align}
When $k=3$, note that $m(6,6)={13}^2$ and $m(6,3+j)=0$ if $j\neq 3$. Thus
\begin{align}
4i-4+\left\lfloor \frac{39-7}{14}\right\rfloor  +\pi_{13}(m(6,3+j))\geq \left\{\begin{array}{ll}
4i & j=3, \\
\infty & j\neq 3.
\end{array}\right.
\end{align}
When $k\geq 4$, by Lemma \ref{lem-e6-13-m-ord}  we see that
\begin{align}
&4i-4+\left\lfloor \frac{13k-7}{14}\right\rfloor  +\pi_{13}(m(2k,k+j)) \nonumber \\
\geq &~ 4i-4+\left\lfloor \frac{13k-7}{14}\right\rfloor + \left\lfloor \frac{13j-k+5}{14}\right\rfloor \nonumber \\
=&~ 4i+\left\lfloor \frac{13j-13}{14}\right\rfloor.
\end{align}
Combining the above cases, we see that
\begin{align}
\pi_{13}(a(2i,j))\geq 4i+\left\lfloor \frac{13j-13}{14}\right\rfloor-\gamma_{j,3}-2\gamma_{j,4}-3\gamma_{j,5}.
\end{align}
This shows that \eqref{e6-13-a-even-ord} holds for $i$.

Next, by \eqref{e4-13-a-odd-rec} and Lemma \ref{lem-e6-13-m-ord} we deduce that
\begin{align}
&\pi_{13}(a(2i+1,j)) \nonumber \\
\geq &~ \min\limits_{k\geq 1} \pi_{13}(a(2i,k))+\pi_{13}(m(2k+1,k+j)) \nonumber \\
\geq &~ \min\limits_{k\geq 1}  4i+\left\lfloor \frac{13k-13}{14}\right\rfloor +\left\lfloor \frac{13j-k-2}{14} \right\rfloor -\gamma_{k,3}-2\gamma_{k,4}-3\gamma_{k,5} \nonumber \\
=&~ 4i+\left\lfloor \frac{13j-7}{14} \right\rfloor.
\end{align}
Therefore, \eqref{e6-13-a-odd-ord} holds for $i+1$. By induction we complete the proof.
\end{proof}

\begin{theorem}\label{thm-e6-13-cong}
For $k\geq 1$ and $n\geq 0$, we have
\begin{align}
e_6(13^{2k}n+\delta_{13,2k})\equiv 0 \pmod{{13}^{2k}}.
\end{align}
\end{theorem}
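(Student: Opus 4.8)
The plan is to read the congruence off directly from the generating function representation in Theorem~\ref{thm-e6-13-gen}, combined with the $13$-adic order estimates of Lemma~\ref{lem-e6-13-a-ord}. The starting point is the identity \eqref{L-even} in the even case $2i=2k$, namely
\begin{align*}
L_{6,13,2k}(\tau)=(q;q)_\infty \sum_{n=0}^\infty e_6\left(13^{2k}n+\delta_{13,2k}\right)q^{n+1}.
\end{align*}
Thus $\sum_{n\geq 0}e_6(13^{2k}n+\delta_{13,2k})q^n$ equals $L_{6,13,2k}(\tau)$ multiplied by $q^{-1}(q;q)_\infty^{-1}=q^{-1}\sum_{n\geq 0}p(n)q^n$, an operation that preserves divisibility of all Fourier coefficients by any fixed power of $13$ (the prefactor $q^{-1}$ is a legitimate shift since $L_{6,13,2k}$ has zero constant term). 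Hence it suffices to prove that $L_{6,13,2k}(\tau)\equiv 0 \pmod{13^{2k}}$ as a $q$-series.

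To establish this, I would invoke Theorem~\ref{thm-e6-13-gen}, which gives
\begin{align*}
L_{6,13,2k}(\tau)=-\widetilde{E}_{6,13}\sum_{j=1}^\infty a(2k,j)Y_{13}^j.
\end{align*}
Both factors lie in $\mathbb{Z}[[q]]$: the hauptmodul $Y_{13}=q+\cdots$ has integral coefficients, and since $P_{6,13}$ has constant term $1$ and integer coefficients, $1/P_{6,13}(Y_{13})$, and therefore $\widetilde{E}_{6,13}=E_6(\tau)/P_{6,13}(Y_{13})$, are integral power series as well. Consequently the $13$-adic order of every Fourier coefficient of $L_{6,13,2k}$ is at least $\min_{j\geq 1}\pi_{13}(a(2k,j))$. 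Applying Lemma~\ref{lem-e6-13-a-ord} with $i=k$ yields
\begin{align*}
\pi_{13}(a(2k,j))\geq 4k+\left\lfloor \frac{13j-12}{14}\right\rfloor-\gamma_{j,3}-2\gamma_{j,4}-3\gamma_{j,5}.
\end{align*}
For $j\notin\{3,4,5\}$ the correction terms vanish and the floor is nonnegative, so the bound is at least $4k$; for $j=3,4,5$ one checks that $\lfloor (13j-12)/14\rfloor$ equals $1,2,3$ respectively, exactly cancelling the subtracted terms and again leaving $4k$. Hence $\pi_{13}(a(2k,j))\geq 4k$ for every $j\geq 1$.

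Putting these together gives $L_{6,13,2k}(\tau)\equiv 0 \pmod{13^{4k}}$, and therefore $e_6(13^{2k}n+\delta_{13,2k})\equiv 0 \pmod{13^{4k}}$; in particular the asserted congruence modulo $13^{2k}$ holds. For the theorem itself the only point requiring care is the verification that the exceptional indices $j\in\{3,4,5\}$ do not degrade the uniform lower bound $4k$, which is precisely the interplay between the floor function and the $\gamma_{j,n}$ corrections; all of the substantive difficulty has already been absorbed into Lemma~\ref{lem-e6-13-a-ord}, whose proof is an induction on $i$ powered by the modular equation \eqref{modeq-13} and the attendant recurrence for $m(i,j)$. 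I note in passing that the argument actually delivers the stronger modulus $13^{4k}$, in agreement with the case $(\ell,r)=(13,3)$ of Theorem~\ref{thm-general}.
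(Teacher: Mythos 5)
Your proposal is correct and follows exactly the paper's route: the paper's proof of this theorem is the one-line deduction from Theorem \ref{thm-e6-13-gen} and Lemma \ref{lem-e6-13-a-ord}, and you have simply spelled out the details (integrality of $\widetilde{E}_{6,13}$ and $Y_{13}$, the harmless division by $(q;q)_\infty$, and the check that the $\gamma_{j,n}$ corrections at $j\in\{3,4,5\}$ are exactly offset by the floor term). Your observation that the argument actually yields the stronger modulus $13^{4k}$, consistent with Theorem \ref{thm-general} for $(\ell,r)=(13,3)$, is also accurate.
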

\begin{proof}
This follows from Theorem \ref{thm-e6-13-gen} and Lemma \ref{lem-e6-13-a-ord}.
\end{proof}

\section{Generating functions and congruences for $e_{2r}({11}^kn+\delta_{11,k})$}\label{sec-11}
By definition, it is not difficult to see that for any $k\geq 1$,
\begin{align}
\frac{L_{2r,11,k}(\tau)}{\eta(\tau)^{2r}\eta(11\tau)^{2r}} \in \mathcal{M}_0^{!}(\Gamma_0(11)). \label{L-11-V}
\end{align}
Thus to give representations for $L_{2r,11,k}$, it suffices to find suitable basis for $\mathcal{M}_0^{!}(\Gamma_0(11))$.  Atkin \cite{Atkin} has constructed a nice basis for $\mathcal{M}_0^{!}(\Gamma_0(11))$, and Gordon \cite{Gordon}  rephrased them as follows.
\begin{defn}\label{defn-J}
Let $g_k(\tau)$ and $G_k(\tau)$ be defined as in \cite{Atkin}. We define $J_k=J_k(\tau)$ as follows: \\
$\mathrm{(1)}$ $J_{0}(\tau)=1$; \\
$\mathrm{(2)}$ For $k\ge 1$,
\begin{equation}\label{J-g-exchange}
J_{k}(\tau)=\left\{\begin{array}{ll}
g_{k}(\tau) & \textrm{if $k\equiv 0$ (mod 5)},\\
g_{k+2}(\tau) & \textrm{if $k\equiv 4$ (mod 5)}, \\
g_{k+1}(\tau) & \textrm{otherwise};
\end{array}\right.
\end{equation}
$\mathrm{(3)}$ For $k\le -2$, $J_{k}(\tau)=G_{-k}(\tau)$ ; \\
$\mathrm{(4)}$  $J_{-1}(\tau)=J_{-6}(\tau)J_{5}(\tau)$.
\end{defn}
For explicit expressions of $J_{k}(\tau)$ ($-6\le k \le 5$), see \cite[Appendix A]{Atkin}.

We need the following lemma on the properties of the functions $J_k$.
\begin{lemma}\label{J-prop}
(Cf. \cite[Lemma 3]{Gordon}.) For all $k\in \mathbb{Z}$, we have\\
$\mathrm{(1)}$ $J_{k+5}(\tau)=J_{k}(\tau)J_{5}(\tau)$,\\
$\mathrm{(2)}$ $\{J_{k}(\tau): k \in \mathbb{Z}\}$ is a basis of $V$, \\
$\mathrm{(3)}$ $\mathrm{ord}_{\infty}J_{k}(\tau)=k$,\\
$\mathrm{(4)}$ $\mathrm{ord}_{0}J_{k}(\tau)=\left\{\begin{array}{ll}
-k & \textrm{if $k\equiv 0 \pmod{5}$},\\
-k-1 & \textrm{if $k\equiv 1,2$ or $3 \pmod{5}$}, \\
-k-2 &\textrm{if $k \equiv 4 \pmod{5}$},
\end{array} \right. $\\
$\mathrm{(5)}$ The Fourier series of $J_{k}(\tau)$ has integer coefficients, and is of the form $J_{k}(\tau)=q^{k}+$ higher degree terms.
\end{lemma}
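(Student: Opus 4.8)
The plan is to read every assertion through the geometry of the modular curve $X_0(11)$, which has genus $1$ and exactly two cusps, $\infty$ and $0$. Since $V=\mathcal{M}_0^{!}(\Gamma_0(11))$ consists of modular functions whose poles are confined to these two cusps, all five statements are ultimately claims about $\ord_\infty$ and $\ord_0$. The structural fact that drives everything is that the cuspidal divisor class $(\infty)-(0)$ has order exactly $5$ in $\operatorname{Jac}(X_0(11))$: the function $J_5$ will have divisor $5(\infty)-5(0)$ with no zeros or poles elsewhere, so it is precisely the unit realizing $5\bigl((\infty)-(0)\bigr)\sim 0$. This torsion is the source of the mod-$5$ periodicity visible in (1) and (4). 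My strategy is to fix the finitely many base cases from Atkin's explicit formulas, upgrade them to the multiplicative relation (1), and then let (1) propagate the remaining assertions, finally deducing the basis property from a Riemann--Roch count.

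First I would verify (3), (4) and (5) for the indices $-6\le k\le 5$ by direct inspection of the $q$-expansions of the $g_k$ and $G_k$ recorded in Atkin's Appendix A together with Definition \ref{defn-J}; in particular I record $\ord_\infty J_5=5$ and $\ord_0 J_5=-5$, so $\operatorname{div}(J_5)=5(\infty)-5(0)$ has no interior part. The crux is relation (1). Granting (3)--(5) for $J_k$, both $J_{k+5}$ and $J_kJ_5$ lie in $V$, share leading term $q^{k+5}$, and have matching orders at both cusps, since multiplication by $J_5$ shifts $\ord_\infty$ by $+5$ and $\ord_0$ by $-5$, exactly the residue-preserving shift encoded in (4). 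Hence $g:=J_{k+5}-J_kJ_5$ satisfies $\ord_\infty g\ge k+6$ and $\ord_0 g\ge \ord_0 J_{k+5}$. If $g\neq 0$, a degree-zero count of $\operatorname{div}(g)$ shows it must equal $m\bigl((\infty)-(0)\bigr)$ for an integer $m$ with $5\nmid m$, contradicting the order $5$ of the class; thus $g=0$. The one subtle case is $k\equiv 4\pmod 5$, where the matching cusp orders leave two interior zeros free (the relevant linear system is $2$-dimensional), so the leading coefficient alone is inconclusive; there I would additionally match one further Fourier coefficient, equivalently push $\ord_\infty g\ge k+7$, after which the same torsion argument closes the gap. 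Extending (1) from the base range to all $k\in\mathbb{Z}$ is then a strong induction in both directions, using Atkin's recursions for $g_k$ and $G_k$ to identify $J_{k\pm 5}$ with $J_kJ_5^{\pm1}$.

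With (1) established, the remaining properties follow cleanly. Properties (3) and (4) propagate by induction from the base range via $\ord_\infty J_{k\pm5}=\ord_\infty J_k\pm5$ and $\ord_0 J_{k\pm5}=\ord_0 J_k\mp5$, and (5) persists because $J_5=q^5(1+\cdots)\in\mathbb{Z}[[q]]$ is a unit, so both $J_5$ and $J_5^{-1}$ have integral expansions and the normalization $J_k=q^k+\cdots$ is preserved. For the basis property (2), linear independence is immediate from (3), since the $J_k$ have pairwise distinct orders at $\infty$. For spanning I would fix $n\ge 1$ and compare with $L\bigl(n(\infty)+n(0)\bigr)$, which has dimension $2n$ by Riemann--Roch on a genus-$1$ curve. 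Counting, through (3) and (4), the indices $k$ with $\ord_\infty J_k\ge -n$ and $\ord_0 J_k\ge -n$ gives exactly $2n$ of the $J_k$ lying in this space; being linearly independent, they form a basis of $L\bigl(n(\infty)+n(0)\bigr)$, and letting $n\to\infty$ shows $\{J_k\}$ is a basis of $V=\bigcup_n L\bigl(n(\infty)+n(0)\bigr)$.

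The main obstacle is relation (1) in the residue class $k\equiv 4\pmod 5$, where cusp data alone do not determine the function and the two otherwise-free interior zeros must be pinned down; this is exactly the point where soft divisor bookkeeping is insufficient and one must invoke either one extra Fourier coefficient together with the order-$5$ torsion of $(\infty)-(0)$, or the explicit recursive structure of Atkin's $g_k$. Everything else — the base-case verifications, the inductive propagation of (3)--(5), and the dimension count for (2) — is routine once the genus-$1$ geometry and the divisor of $J_5$ are in place.
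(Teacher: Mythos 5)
You should first note that the paper contains no proof of this lemma at all: it is imported verbatim as Gordon \cite[Lemma~3]{Gordon}, with the $J_k$ assembled from Atkin's explicitly constructed $g_k$ and $G_k$, so there is no internal argument to measure yours against. Your reconstruction through the geometry of $X_0(11)$ --- genus $1$, two cusps, the class of $(\infty)-(0)$ of exact order $5$ witnessed by $\operatorname{div}(J_5)=5(\infty)-5(0)$ --- is the right conceptual frame, and several pieces are correct as written: the Riemann--Roch count showing that exactly $2n$ of the $J_k$ lie in $L\bigl(n(\infty)+n(0)\bigr)$, which has dimension $2n$; linear independence from the distinct orders at $\infty$; the propagation of (3)--(5) along (1) once (1) is known; and the degree-zero/torsion argument, which does close cases $k\equiv 0,1,2,3\pmod 5$ of (1).

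There are, however, two places where the argument does not actually close. First, the induction for (1) is circular as set up: to run the divisor argument for $J_{k+5}=J_kJ_5$ you must already know $\ord_\infty$, $\ord_0$ and the leading coefficient of $J_{k+5}$, i.e.\ parts (3)--(5) at index $k+5$. Starting from the base window $-6\le k\le 5$ this yields (1) only for $-6\le k\le 0$ and then stalls, since (3)--(5) at the new indices are precisely what you planned to deduce \emph{from} (1). You defer the extension to ``Atkin's recursions for $g_k$ and $G_k$,'' but those recursions are the actual content: outside a finite range the functions are essentially built by multiplying by powers of $g_5$, so that (1) is close to definitional there and the torsion argument does no work. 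Second, for $k\equiv 4\pmod 5$ your own bookkeeping shows the two sides agree only modulo the $2$-dimensional system $L\bigl(-(k+5)(\infty)+(k+7)(0)\bigr)$, spanned by $J_{k+5}$ and $J_{k+6}$; ``matching one further Fourier coefficient'' must then be carried out for infinitely many $k$, and you give no inductive mechanism for doing so. So the skeleton is sound, but the lemma cannot be recovered from soft divisor bookkeeping plus a finite table alone --- one genuinely needs Atkin's explicit construction, which is exactly why the paper outsources the proof to Gordon.
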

For any integer $\lambda$,  from \cite{Gordon} we know that $\mathcal{M}_0^{!}(\Gamma_0(11))$ is mapped into itself by the linear transformation
\[\widetilde{T}_{\lambda}: g(\tau) \rightarrow \left(Z_{11}(\tau)^{\lambda}g(\tau)\right) \mid U_{11}. \]
Following Atkin, we write the elements of $\mathcal{M}_0^{!}(\Gamma_0(11))$ as row vectors and let matrices act on the right. Let $C^{(\lambda)}=(c_{\mu,\nu}^{(\lambda)})$ be the matrix of $\widetilde{T}_{\lambda}$ with respect to the basis $\{J_{k}\}$ of $\mathcal{M}_0^{!}(\Gamma_0(11))$. We have
\begin{equation}\label{T-map}
\left(Z_{11}(\tau)^{\lambda}J_{\mu}(\tau))\right)\mid U_{11}=\sum\limits_{\nu \in \mathbb{Z}}c_{\mu,\nu}^{(\lambda)}J_{\nu}(\tau).
\end{equation}

\begin{theorem}\label{thm-L-wE-repn-11}
There exist integers $a(k,j)$ such that
\begin{align}
L_{2r, 11, k}(\tau) = \eta(\tau)^{2r}\eta(11\tau)^{2r} \sum_{j=1-r}^\infty a(k,j)J_{j}. \label{L-wE-repn-11}
\end{align}
Moreover, for $i\geq 1$,
\begin{align}
a(2i,\nu)=\sum_{j=1-r}^\infty a(2i-1,j)c_{j,\nu}^{(-2r)}, \label{e2r-11-a-even-rec} \\
a(2i+1,\nu)=\sum_{j=1-r}^\infty a(2i,j)c_{j,\nu}^{(1-2r)}. \label{e2r-11-a-odd-rec}
\end{align}
\end{theorem}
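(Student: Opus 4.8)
The plan is to mirror the proof of Theorem \ref{thm-L-wE-repn}, replacing the hauptmodul framework (unavailable here, since $X_0(11)$ has genus $1$) by Atkin's basis $\{J_j\}$ of $\mathcal{M}_0^!(\Gamma_0(11))$. Set $W(\tau):=\eta(\tau)^{2r}\eta(11\tau)^{2r}$. By \eqref{L-11-V} we have $L_{2r,11,k}/W\in\mathcal{M}_0^!(\Gamma_0(11))$ for every $k\ge 1$, and since $\{J_j\}_{j\in\mathbb{Z}}$ is a basis (Lemma \ref{J-prop}(2)) we may write $L_{2r,11,k}=W\sum_j a(k,j)J_j$. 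To see that the sum may be started at $j=1-r$, note from \eqref{L-odd}--\eqref{L-even} that $\ord_\infty L_{2r,11,k}\ge 1$, while $W=q^{r}(q;q)_\infty^{2r}(q^{11};q^{11})_\infty^{2r}$ has $\ord_\infty W=r$; hence $\ord_\infty(L_{2r,11,k}/W)\ge 1-r$, and because $\ord_\infty J_j=j$ (Lemma \ref{J-prop}(3)) with leading coefficient $1$ (Lemma \ref{J-prop}(5)), matching lowest-order terms forces $a(k,j)=0$ for $j<1-r$.

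The heart of the argument is to convert the defining recursions \eqref{L-odd-defn}--\eqref{L-even-defn} into the matrix recursions \eqref{e2r-11-a-even-rec}--\eqref{e2r-11-a-odd-rec}. Using $Z_{11}=\eta(121\tau)/\eta(\tau)$ and $W(11\tau)=\eta(11\tau)^{2r}\eta(121\tau)^{2r}$, a direct cancellation of eta quotients gives the two identities
\begin{align*}
W(\tau)J_j(\tau)&=\bigl(Z_{11}(\tau)^{-2r}J_j(\tau)\bigr)\,W(11\tau),\\
Z_{11}(\tau)W(\tau)J_j(\tau)&=\bigl(Z_{11}(\tau)^{1-2r}J_j(\tau)\bigr)\,W(11\tau).
\end{align*}
Since $U_{11}$ satisfies $\bigl(F(\tau)\,G(11\tau)\bigr)\mid U_{11}=\bigl(F\mid U_{11}\bigr)(\tau)\,G(\tau)$, applying $U_{11}$ and pulling out the factor $W(11\tau)$ yields
\begin{align*}
(WJ_j)\mid U_{11}&=W\cdot\bigl(Z_{11}^{-2r}J_j\bigr)\mid U_{11},\\
(Z_{11}WJ_j)\mid U_{11}&=W\cdot\bigl(Z_{11}^{1-2r}J_j\bigr)\mid U_{11}.
\end{align*}
Now \eqref{T-map} rewrites the inner $U_{11}$-images as $\sum_\nu c_{j,\nu}^{(-2r)}J_\nu$ and $\sum_\nu c_{j,\nu}^{(1-2r)}J_\nu$, respectively. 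Substituting $L_{2r,11,2i-1}=W\sum_j a(2i-1,j)J_j$ into $L_{2r,11,2i}=L_{2r,11,2i-1}\mid U_{11}$ (from \eqref{L-even-defn}), and $L_{2r,11,2i}=W\sum_j a(2i,j)J_j$ into $L_{2r,11,2i+1}=(Z_{11}L_{2r,11,2i})\mid U_{11}$ (from \eqref{L-odd-defn}), then comparing coefficients of $J_\nu$, gives exactly \eqref{e2r-11-a-even-rec} and \eqref{e2r-11-a-odd-rec}.

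For integrality, first observe that the entries $c_{\mu,\nu}^{(\lambda)}$ are integers: $Z_{11}^{\lambda}J_\mu$ has an integral $q$-expansion (as $Z_{11}^{\pm1}$ and each $J_\mu$ do, using Lemma \ref{J-prop}(5)), $U_{11}$ preserves integrality, and the triangular basis $J_\nu=q^\nu+\cdots$ then forces integral coordinates. Likewise $a(1,j)\in\mathbb{Z}$: provided $E_{2r}\in\mathbb{Z}[[q]]$ (as for the values of $r$ treated in this paper), both $L_{2r,11,1}=(Z_{11}E_{2r})\mid U_{11}$ and $W^{-1}$ have integral $q$-expansions, so $L_{2r,11,1}/W$ does, whence the triangularity of $\{J_j\}$ yields integral coordinates. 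The recursions \eqref{e2r-11-a-even-rec}--\eqref{e2r-11-a-odd-rec} with integral $c_{\mu,\nu}^{(\lambda)}$ then propagate integrality to all $a(k,j)$ by induction on $k$.

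The main obstacle is the middle step: one must massage the eta quotients so precisely that multiplication by $W$ followed by $U_{11}$ is replaced exactly by the transformation $\widetilde{T}_{-2r}$ (resp.\ $\widetilde{T}_{1-2r}$), whose matrix $C^{(\lambda)}$ is already encoded by \eqref{T-map}. Getting the exponents $-2r$ and $1-2r$ (and the auxiliary factor $W(11\tau)$) to line up is the delicate bookkeeping; once the two displayed eta-quotient identities are verified, everything else is formal.
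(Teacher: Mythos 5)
Your proposal is correct and follows essentially the same route as the paper: invoke \eqref{L-11-V} to expand $L_{2r,11,k}/(\eta(\tau)^{2r}\eta(11\tau)^{2r})$ in Atkin's basis $\{J_j\}$, then rewrite $\eta(\tau)^{2r}\eta(11\tau)^{2r}J_j$ as $\bigl(Z_{11}^{-2r}J_j\bigr)\cdot\eta(11\tau)^{2r}\eta(121\tau)^{2r}$ (and its $Z_{11}^{1-2r}$ analogue) so that the standard $U_{11}$ commutation pulls out the eta factor and \eqref{T-map} delivers the recursions. The additional remarks on the starting index $j=1-r$ and on integrality are correct supplements to details the paper leaves implicit.
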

\begin{proof}
From \eqref{L-11-V} we know \eqref{L-wE-repn-11} holds.

By \eqref{L-even-defn} and \eqref{T-map} we have
\begin{align*}
L_{2r,11,2i}(\tau)&=\sum_{j=1-r}^\infty a(2i-1,j)\left(\eta(\tau)^{2r}\eta(11\tau)^{2r}J_{j}(\tau) \right)\mid U_{11} \nonumber \\
&=\eta(\tau)^{2r}\eta(11\tau)^{2r}\sum_{j=1-r}^\infty a(2i-1,j)\left(Z_{11}^{-2r}J_{j}(\tau) \right)\mid U_{11} \nonumber \\
&=\eta(\tau)^{2r}\eta(11\tau)^{2r}\sum_{j=1-r}^\infty a(2i-1,j)\sum_{\nu \in \mathbb{Z}}c_{j,\nu}^{(-2r)}J_{\nu}(\tau).
\end{align*}
This proves \eqref{e2r-11-a-even-rec}.

Similarly, by \eqref{L-odd-defn} and \eqref{T-map} we deduce that
\begin{align*}
L_{2r,11,2i+1}(\tau)&=\sum_{j=1-r}^\infty a(2i,j)\left(\eta(\tau)^{2r}\eta(11\tau)^{2r}Z_{11}J_{j}(\tau) \right)\mid U_{11} \nonumber \\
&=\eta(\tau)^{2r}\eta(11\tau)^{2r}\sum_{j=1-r}^\infty a(2i,j)\left(Z_{11}^{1-2r}J_{j}(\tau) \right)\mid U_{11} \nonumber \\
&=\eta(\tau)^{2r}\eta(11\tau)^{2r}\sum_{j=1-r}^\infty a(2i,j)\sum_{\nu \in \mathbb{Z}}c_{j,\nu}^{(1-2r)}J_{\nu}(\tau).
\end{align*}
This proves \eqref{e2r-11-a-odd-rec}.
\end{proof}

To examine the 11-adic orders of $a(k,j)$, we need the 11-adic properties of $c_{\mu,\nu}^{(\lambda)}$, which has been studied by Gordon \cite{Gordon}.
\begin{lemma}\label{lem-11-c-ord}
(Cf.\ \cite[Eq.\ (17)]{Gordon}.) We have
\begin{equation}\label{c-order}
\pi_{11}(c_{\mu,\nu}^{(\lambda)}) \geq \left\lfloor \frac{11\nu-\mu-5\lambda+s}{10}\right\rfloor,
\end{equation}
where $s=s(\mu,\nu)$ depends on the residues of $\mu$ and $\nu$ \text{\rm{(mod 5)}} according to Table \ref{Table-delta}.
\end{lemma}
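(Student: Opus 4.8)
The plan is to follow Gordon's argument in \cite{Gordon} and reduce the estimate \eqref{c-order} to a finite computation together with two propagation mechanisms. Recall that the entries $c_{\mu,\nu}^{(\lambda)}$ are defined through \eqref{T-map} as the coordinates of $\left(Z_{11}^{\lambda}J_{\mu}\right)\mid U_{11}$ in the basis $\{J_k\}$ of $\mathcal{M}_0^{!}(\Gamma_0(11))$. Since $\widetilde{T}_{\lambda}$ preserves this space and each $J_k$ has integral Fourier expansion of the form $q^k+\cdots$ by Lemma \ref{J-prop}(5), the coefficients $c_{\mu,\nu}^{(\lambda)}$ are integers, so the claim is genuinely a statement about their $11$-adic valuations $\pi_{11}$.

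First I would dispose of the base cases. Using the explicit $q$-expansions of $J_k$ for $-6\le k\le 5$ recorded in \cite[Appendix A]{Atkin}, together with the expansion $Z_{11}=q^{5}+\cdots$, one computes $\left(Z_{11}^{\lambda}J_{\mu}\right)\mid U_{11}$ and reads off $c_{\mu,\nu}^{(\lambda)}$ directly for $\mu$ running over the range $-6\le\mu\le 5$ (which already exhausts every residue modulo $5$) and for $\lambda$ in a bounded range. For these finitely many starting values one checks \eqref{c-order} against the tabulated correction term $s(\mu,\nu)$ of Table \ref{Table-delta} by inspection.

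Next I would propagate the bound in the indices $\mu,\nu$ and in the exponent $\lambda$. The shift in $\mu$ and $\nu$ is controlled by the multiplicative relation $J_{k+5}=J_k J_5$ of Lemma \ref{J-prop}(1), which relates $c_{\mu+5,\nu}^{(\lambda)}$ and $c_{\mu,\nu-5}^{(\lambda)}$ to values already under control; since both $s(\mu,\nu)$ and the orders $\mathrm{ord}_0 J_k$ of Lemma \ref{J-prop}(4) depend only on residues modulo $5$, the argument of the floor in \eqref{c-order} changes by a fixed integer under such a shift, keeping the inequality self-consistent. The shift in $\lambda$ is handled by a finite recurrence for the matrix $C^{(\lambda)}$ obtained by expressing $Z_{11}^{11}$ under $U_{11}$, which for the genus-one level $11$ plays the role of the order-$\ell$ modular equations \eqref{modeq-5}, \eqref{modeq-7} and \eqref{modeq-13} used for $\ell\in\{5,7,13\}$; the powers of $11$ occurring as coefficients of that recurrence, together with the $q^{5}$ leading behaviour of $Z_{11}$, are precisely what produce the linear term $-5\lambda$ in the bound. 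An induction on $\mu$, and on $\lambda$, then extends \eqref{c-order} to all indices.

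The main obstacle is the residue-dependent correction $s(\mu,\nu)$. Because $\mathrm{ord}_0 J_k$ jumps irregularly with $k\bmod 5$, there is no uniform floor bound, and $s(\mu,\nu)$ is exactly the bookkeeping device that absorbs these jumps. The delicate point is therefore to verify that, upon taking the minimum of the lower bounds contributed by the several terms of the recurrence, the resulting estimate still matches \eqref{c-order} with the correct value of $s$ for every combination of residues of $\mu$ and $\nu$ modulo $5$. This forces a case analysis over the residue classes, and keeping the floor functions aligned throughout that analysis is where the real work lies; once it is carried out, the induction closes and the lemma follows.
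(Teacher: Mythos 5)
The paper does not actually prove this lemma: it is imported verbatim from Gordon's paper (Eq.~(17) there), so there is no in-paper argument to measure you against. Your outline is nonetheless a faithful reconstruction of how such a bound is established, and it mirrors exactly the pattern the authors use for the analogous valuation estimates at $\ell=5,7,13$ (Lemmas \ref{lem-m-5-ord}, \ref{lem-e4-7-m-order}, \ref{lem-e4-13-m-ord}): integrality of the matrix entries, a finite set of base cases computed from Atkin's explicit $J_k$ for $-6\le k\le 5$, propagation in $\mu,\nu$ via $J_{k+5}=J_kJ_5$, propagation in $\lambda$ via the degree-$11$ modular equation for $Z_{11}$, and a residue-class analysis to justify the correction term $s(\mu,\nu)$.

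That said, as submitted this is a proof plan rather than a proof. The entire content of the lemma lives in the step you defer: checking that, for each of the $25$ residue pairs $(\mu\bmod 5,\nu\bmod 5)$, the minimum of the lower bounds contributed by the terms of the recurrence reproduces $\lfloor(11\nu-\mu-5\lambda+s)/10\rfloor$ with the specific values of $s$ in Table \ref{Table-delta}. Those values are not guessable from general principles --- they encode precisely where $\ord_0 J_k$ jumps (Lemma \ref{J-prop}(4)) and where fortuitous extra divisibility by $11$ occurs in the base-case matrices --- so until that case analysis is carried out (or the result is simply cited from Gordon, as the authors do), the inequality \eqref{c-order} with this particular table is not established. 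If your intent is to cite Gordon, say so; if your intent is to reprove it, the verification you label as ``where the real work lies'' must actually appear.
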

\begin{table}[h]
\caption{}\label{Table-delta}
\begin{tabular}{|c|ccccc|}
\hline
\diagbox{$\mu$}{$\nu$} & 0 & 1 & 2 & 3 & 4 \\
\hline
0 & -1 & 8 & 7 &6 &15\\
1  &0 & 9 & 8 & 2 &11 \\
2 & 1 &10 &4 & 3 &12 \\
3& 2 &6 &5 &4 &13\\
4&3 &7 &6 &5 &9\\
\hline
\end{tabular}
\end{table}
In the following subsections, we will use Theorem \ref{thm-L-wE-repn-11} to give explicit representations for $L_{2r,11,k}(\tau)$ and prove congruences for $e_{2r}(n)$ modulo powers of 11. We only take $r=2$ and 3 as examples.

\subsection{Congruences for $e_4(n)$ modulo powers of 11}
\begin{theorem}\label{thm-e4-11-gen}
For any $k\geq 1$, we have
\begin{align}
L_{4,11,k}(\tau)=\eta(\tau)^4\eta(11\tau)^4\sum_{j=-1}^\infty a(k,j)J_{j}, \label{e4-11-gen}
\end{align}
where
\begin{align}
&a(1,-1)=14401 \cdot 11, \quad a(1,0)=3629544\cdot 11, \quad a(1,1)=1200563\cdot {11}^3,  \nonumber \\
&a(1,2)=104\cdot 11^8, \quad a(1,3)=694\cdot {11}^8, \quad a(1,4)=13\cdot {11}^{10},  \nonumber \\
&a(1,5)=56\cdot {11}^{10}, \quad a(1,6)={11}^{12}, \quad a(1,j)=0, \quad \forall j\geq 7, \label{a-e4-11-initial}
\end{align}
and $a(k,j)$ satisfies \eqref{e2r-11-a-even-rec} and \eqref{e2r-11-a-odd-rec} with $r=2$.
\end{theorem}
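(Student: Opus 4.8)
The plan is to read off almost everything from the general Theorem~\ref{thm-L-wE-repn-11} specialized to $r=2$. Putting $r=2$ there gives $1-r=-1$, so the representation \eqref{L-wE-repn-11} becomes exactly the claimed
\begin{align*}
L_{4,11,k}(\tau)=\eta(\tau)^4\eta(11\tau)^4\sum_{j=-1}^\infty a(k,j)J_j,
\end{align*}
and the recurrences \eqref{e2r-11-a-even-rec}--\eqref{e2r-11-a-odd-rec} hold verbatim with $r=2$. Consequently the entire statement reduces to verifying the base case $k=1$, that is, the initial data \eqref{a-e4-11-initial}.

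For $k=1$ I would work with $L_{4,11,1}=\bigl(E_4(\tau)Z_{11}(\tau)\bigr)\mid U_{11}$ and set $h(\tau):=L_{4,11,1}(\tau)/\bigl(\eta(\tau)^4\eta(11\tau)^4\bigr)$, which lies in $\mathcal{M}_0^{!}(\Gamma_0(11))$ by \eqref{L-11-V}. The first task is to locate the poles of $h$ at the two cusps. Since $\eta(\tau)^4\eta(11\tau)^4=q^2(q;q)_\infty^4(q^{11};q^{11})_\infty^4$ while, by \eqref{L-odd}, $L_{4,11,1}=q\,(q^{11};q^{11})_\infty\sum_{n\ge0}e_4(11n+\delta_{11,1})q^n$ with $e_4(\delta_{11,1})\ne0$, one gets $\ord_\infty h=-1$; an analogous Fricke-involution computation at the cusp $0$ yields $\ord_0 h\ge-7$. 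Using the order data in Lemma~\ref{J-prop}(3)--(4), the functions $J_j$ with $\ord_\infty\ge-1$ and $\ord_0\ge-7$ are precisely $J_{-1},J_0,\dots,J_6$; since $X_0(11)$ has genus $1$, Riemann--Roch shows the space cut out by these pole bounds is $8$-dimensional, so $\{J_{-1},\dots,J_6\}$ is a basis for it and $h$ is a combination of just these. This explains both the starting index $-1$ and the vanishing $a(1,j)=0$ for $j\ge7$.

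It then remains to pin down the eight coefficients $a(1,-1),\dots,a(1,6)$. By Lemma~\ref{J-prop}(5) each basis element satisfies $J_j=q^j+\cdots$ with integral coefficients, so writing $h=\sum_{j=-1}^{6}a(1,j)J_j$ and comparing $q$-expansions from the lowest power $q^{-1}$ upward gives a unit-triangular linear system, solved one coefficient at a time. Concretely I would expand $E_4$ and the relevant eta quotients, apply $U_{11}$, divide by $\eta(\tau)^4\eta(11\tau)^4$ through order $q^6$, and read off the values in \eqref{a-e4-11-initial}.

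The main obstacle is not conceptual but computational: determining $\ord_0 h$ honestly (to justify truncating at $J_6$) and carrying the $U_{11}$-reduction together with the eta-quotient inversion far enough to recover the exact integers such as $a(1,1)=1200563\cdot11^3$ and $a(1,6)=11^{12}$. The explicit powers of $11$ in \eqref{a-e4-11-initial} serve as a convenient consistency check and, more importantly, furnish the seed values for the $11$-adic valuation estimates (via Lemma~\ref{lem-11-c-ord}) that drive the congruences established in the remainder of this section.
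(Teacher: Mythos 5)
Your proposal is correct and follows the paper's own (very terse) argument exactly: the paper likewise reduces everything to Theorem~\ref{thm-L-wE-repn-11} with $r=2$ and disposes of the base case $k=1$ by direct computation. Your added detail on bounding $\ord_0$ of $L_{4,11,1}/\bigl(\eta(\tau)^4\eta(11\tau)^4\bigr)$ and using Lemma~\ref{J-prop} to justify truncation at $J_6$ is a sound and welcome elaboration of what ``direct computation'' must entail.
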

For $k=1$, \eqref{e4-11-gen} can be verified by direct computations. The  theorem then follows from Theorem \ref{thm-L-wE-repn-11} with $r=2$.

\begin{lemma}\label{lem-e4-11-a-ord}
For $i\geq 1$ we have
\begin{align}
&\pi_{11}(a(2i-1,j))\geq 2i-1+\gamma_{j,-1}+\left\lfloor \frac{11j+4}{10}\right\rfloor, \label{e4-11-a-odd-ord} \\
&\pi_{11}(a(2i,j))\geq 2i+\gamma_{j,-1}+\left\lfloor \frac{11j+9}{10}\right\rfloor. \label{e4-11-a-even-ord}
\end{align}
\end{lemma}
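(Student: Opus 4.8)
The plan is to establish both bounds simultaneously by induction on $i$, exactly in the style of Lemmas \ref{lem-e4-7-a-ord} and \ref{lem-e6-13-a-ord}. The base case is to verify \eqref{e4-11-a-odd-ord} for $i=1$ directly from the initial data \eqref{a-e4-11-initial}: reading off $\pi_{11}(a(1,-1))=1$, $\pi_{11}(a(1,0))=1$, $\pi_{11}(a(1,1))=3$, $\pi_{11}(a(1,2))=8$, and so on, one checks in each case that the exponent is at least $1+\gamma_{j,-1}+\lfloor(11j+4)/10\rfloor$ (for example, at $j=-1$ both sides equal $1$, since $\gamma_{-1,-1}=1$ and $\lfloor -7/10\rfloor=-1$). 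The inductive engine then consists of two implications: assuming the odd bound \eqref{e4-11-a-odd-ord} for a given $i$, I would first deduce the even bound \eqref{e4-11-a-even-ord} for the same $i$ via the recurrence \eqref{e2r-11-a-even-rec} (here $r=2$, so $\lambda=-2r=-4$), and then deduce the odd bound for $i+1$ via \eqref{e2r-11-a-odd-rec} (with $\lambda=1-2r=-3$).

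For the even step, combining \eqref{e2r-11-a-even-rec} with the induction hypothesis and Lemma \ref{lem-11-c-ord} (noting $-5\lambda=20$) gives
\begin{align*}
\pi_{11}(a(2i,\nu)) \geq \min_{k\geq -1}\left(2i-1+\gamma_{k,-1}+\left\lfloor\frac{11k+4}{10}\right\rfloor+\left\lfloor\frac{11\nu-k+20+s(k,\nu)}{10}\right\rfloor\right).
\end{align*}
The task is to show this minimum is at least $2i+\gamma_{\nu,-1}+\lfloor(11\nu+9)/10\rfloor$; equivalently, that for every $k\geq -1$,
\begin{align*}
\gamma_{k,-1}+\left\lfloor\frac{11k+4}{10}\right\rfloor+\left\lfloor\frac{11\nu-k+20+s(k,\nu)}{10}\right\rfloor \geq 1+\gamma_{\nu,-1}+\left\lfloor\frac{11\nu+9}{10}\right\rfloor.
\end{align*}
The odd step is structurally identical, the only changes being that $-5\lambda=15$ replaces $20$ and the target exponent $2(i+1)-1+\gamma_{\nu,-1}+\lfloor(11\nu+4)/10\rfloor$ is produced from the even bound through \eqref{e2r-11-a-odd-rec}.

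The main obstacle, and the only genuinely delicate part, is verifying these residue-dependent floor inequalities. Since the $k$-linear part of the left-hand side is $\tfrac{11k}{10}-\tfrac{k}{10}=k$ while $s(k,\nu)$ is bounded and periodic, the minimum over $k$ is forced toward the smallest admissible indices. I would therefore treat the few small values $k=-1,0,1,2,3,4$ individually, substituting the appropriate entry $s(k,\nu)$ from Table \ref{Table-delta} according to $k\bmod 5$ and $\nu\bmod 5$; particular care is needed at $k=-1$, where the correction $\gamma_{k,-1}$ supplies the extra unit that compensates for the negative floor $\lfloor(11\nu+9)/10\rfloor$ occurring when $\nu=-1$. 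All $k\geq 5$ can then be dispatched by a uniform estimate, using the periodicity $J_{k+5}=J_kJ_5$ of Lemma \ref{J-prop}(1) (reflected in the mod-$5$ dependence of $s$) so that the linear growth in $k$ dominates the bounded table contribution. Collecting the worst cases across the residue classes yields precisely the stated floor expressions, and the interplay between $s(-1,\nu)=s(4,\nu)$ and the term $\gamma_{\nu,-1}$ on the right accounts exactly for the special behavior at $\nu=-1$; by induction the lemma follows.
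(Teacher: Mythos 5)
Your proposal follows essentially the same route as the paper: induction on $i$ through the recurrences \eqref{e2r-11-a-even-rec}--\eqref{e2r-11-a-odd-rec} with $r=2$, Lemma \ref{lem-11-c-ord}, and Table \ref{Table-delta}, reducing everything to exactly the floor inequalities you state. The paper's verification is slightly slicker than your case-by-case plan --- it uses only the uniform bound $s(j,\nu)\geq -1$ for general $\nu$ and invokes the table just once, via $s(j,-1)\geq 9$ (the column $\nu\equiv 4\pmod 5$, which is where the extra unit at $\nu=-1$ actually comes from, rather than from $\gamma_{k,-1}$ as your wording suggests) --- but your outline is correct and closes the induction in the same way.
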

\begin{proof}
From  \eqref{a-e4-11-initial} we see that \eqref{e4-11-a-odd-ord} holds for $i=1$.  Now suppose it holds for some $i\geq 1$. By \eqref{e2r-11-a-even-rec} with $r=2$ and Lemma \ref{lem-11-c-ord} we deduce that
\begin{align}
\pi_{11}(a(2i,\nu))&\geq \min\limits_{j\geq -1} \pi_{11}(a(2i-1,j))+\pi_{11}(c_{j,\nu}^{(-4)})  \nonumber  \\
&\geq \min\limits_{j\geq -1} 2i-1+\gamma_{j,-1}+\left\lfloor \frac{11j+4}{10}\right\rfloor +\left\lfloor \frac{11\nu-j+20+s(j,\nu)}{10} \right\rfloor. \label{e4-11-a-ord-pf-1}
\end{align}
From Table \ref{Table-delta} we see that $s(j,\nu)\geq -1$. Hence
\begin{align}
\pi_{11}(a(2i,\nu))&\geq \min\limits_{j\geq -1} 2i-1+\gamma_{j,-1}+\left\lfloor \frac{11j+4}{10}\right\rfloor +\left\lfloor \frac{11\nu-j+19}{10} \right\rfloor\nonumber \\
 &\geq \min\limits_{j\geq -1} \left\{2i-1+\left\lfloor \frac{11\nu +20}{10}\right\rfloor, 2i-1+\left\lfloor \frac{11\nu+19}{10}\right\rfloor \right\} \nonumber \\
&=2i+\left\lfloor \frac{11\nu+9}{10}\right\rfloor. \label{e4-11-a-ord-pf-2}
\end{align}
When $\nu=-1$, from Table \ref{Table-delta} we see that $s(j,-1)\geq 9$. Therefore, from \eqref{e4-11-a-ord-pf-1} we have
\begin{align}
\pi_{11}(a(2i,-1))&\geq \min\limits_{j\geq -1} 2i-1+\gamma_{j,-1}+\left\lfloor \frac{11j+4}{10}\right\rfloor +\left\lfloor \frac{11(-1)-j+29}{10}\right\rfloor \nonumber \\
&=2i. \label{e4-11-a-ord-pf-3}
\end{align}
Combining \eqref{e4-11-a-ord-pf-2} with \eqref{e4-11-a-ord-pf-3}, we deduce that
\begin{align}
\pi_{11}(a(2i,j))\geq 2i+\gamma_{j,-1}+\left\lfloor \frac{11j+9}{10}\right\rfloor.
\end{align}
Thus \eqref{e4-11-a-even-ord} holds for $i$.

Next, by \eqref{e2r-11-a-odd-rec} with $r=2$, Lemma \ref{lem-11-c-ord} and the fact that $s(j,\nu)\geq -1$, we have
\begin{align}
\pi_{11}(a(2i+1,\nu))&\geq \min\limits_{j\geq -1}  \pi_{11}(a(2i,j))+\pi_{11}(c_{j,\nu}^{(-3)}) \nonumber \\
&\geq \min\limits_{j\geq -1} 2i+\gamma_{j,-1}+\left\lfloor \frac{11j+9}{10}\right\rfloor +\left\lfloor \frac{11\nu-j+15+s(j,\nu)}{10}\right\rfloor \label{e4-11-a-ord-pf-4} \\
&\geq \min\limits_{j\geq -1} 2i+\gamma_{j,-1}+\left\lfloor \frac{11j+9}{10}\right\rfloor +\left\lfloor \frac{11\nu-j+14}{10}\right\rfloor \nonumber \\
&=\min \left\{2i+\left\lfloor \frac{11\nu+15}{10}\right\rfloor, 2i+1+\left\lfloor \frac{11\nu+4}{10}\right\rfloor  \right\} \nonumber \\
&=2i+1+\left\lfloor \frac{11\nu+4}{10}\right\rfloor. \label{e4-11-a-ord-pf-5}
\end{align}
When $\nu=-1$, since $s(j,-1)\geq 9$,  \eqref{e4-11-a-ord-pf-4} implies
\begin{align}
\pi_{11}(a(2i+1,-1)&\geq \min\limits_{j\geq -1} 2i+\gamma_{j,-1}+\left\lfloor \frac{11j+9}{10}\right\rfloor +\left\lfloor \frac{-11-j+24}{10}\right\rfloor \nonumber \\
& =2i+1. \label{e4-11-a-ord-pf-6}
\end{align}
Combining \eqref{e4-11-a-ord-pf-5} with \eqref{e4-11-a-ord-pf-6}, we see that \eqref{e4-11-a-odd-ord} holds for $i+1$. By induction we complete the proof.
\end{proof}

\begin{theorem}\label{thm-e4-11-cong}
For $k\geq 1$ and $n\geq 0$, we have
\begin{align}
e_4(11^kn+\delta_{11,k})\equiv 0 \pmod{{11}^k}.
\end{align}
\end{theorem}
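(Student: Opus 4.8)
The plan is to deduce this congruence directly from the explicit generating function of Theorem \ref{thm-e4-11-gen} together with the $11$-adic valuation bounds for the coefficients $a(k,j)$ furnished by Lemma \ref{lem-e4-11-a-ord}, following exactly the pattern already used for Theorems \ref{thm-e4-5-cong} and \ref{thm-e4-7-cong}. The crucial structural fact is that in the representation $L_{4,11,k}(\tau)=\eta(\tau)^4\eta(11\tau)^4\sum_{j=-1}^\infty a(k,j)J_j$, both the prefactor $\eta(\tau)^4\eta(11\tau)^4=q^2(q;q)_\infty^4(q^{11};q^{11})_\infty^4$ and each basis function $J_j$ have integer Fourier coefficients, the latter by Lemma \ref{J-prop}(5). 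Consequently the $11$-adic order of every Fourier coefficient of $L_{4,11,k}$ is bounded below by $\min_j\pi_{11}(a(k,j))$, so it suffices to show this minimum is at least $k$.

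First I would compute this minimum separately for odd and even $k$. For $k=2i-1$, Lemma \ref{lem-e4-11-a-ord} gives $\pi_{11}(a(2i-1,j))\geq 2i-1+\gamma_{j,-1}+\lfloor (11j+4)/10\rfloor$; the correction $\gamma_{j,-1}+\lfloor(11j+4)/10\rfloor$ equals $0$ at $j=-1$, where $\gamma_{-1,-1}=1$ exactly cancels $\lfloor -7/10\rfloor=-1$, and at $j=0$, while it is at least $1$ for every $j\geq 1$ since there $\lfloor(11j+4)/10\rfloor\geq 1$. Hence the minimum order is precisely $2i-1=k$. The even case $k=2i$ is identical with the bound $2i+\gamma_{j,-1}+\lfloor(11j+9)/10\rfloor$, whose correction again vanishes at $j\in\{-1,0\}$ and is positive otherwise. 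In both cases we conclude $L_{4,11,k}(\tau)\equiv 0\pmod{11^k}$, i.e.\ every Fourier coefficient of $L_{4,11,k}$ is divisible by $11^k$.

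It then remains to transfer this divisibility to $e_4$ itself using \eqref{L-odd}--\eqref{L-even}. When $k=2i-1$ is odd, these identities give $L_{4,11,k}=(q^{11};q^{11})_\infty\sum_{n\geq 0}e_4(11^kn+\delta_{11,k})q^{n+1}$, so dividing by $(q^{11};q^{11})_\infty$ recovers the generating series of the relevant values of $e_4$; when $k=2i$ is even the same holds with $(q;q)_\infty$ in place of $(q^{11};q^{11})_\infty$. Since each of these eta-quotients is a power series with integer coefficients and constant term $1$, its reciprocal also lies in $\mathbb{Z}[[q]]$, and dividing a series all of whose coefficients are divisible by $11^k$ by such a unit preserves that divisibility. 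This yields $e_4(11^kn+\delta_{11,k})\equiv 0\pmod{11^k}$ for all $n\geq 0$, completing the argument.

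I expect no genuine obstacle at this stage, since the substantive work has already been carried out in assembling the closed form of Theorem \ref{thm-e4-11-gen} and, especially, in the inductive valuation estimates of Lemma \ref{lem-e4-11-a-ord}, which rest in turn on Gordon's $11$-adic bounds for the transition-matrix entries $c_{\mu,\nu}^{(\lambda)}$ of Lemma \ref{lem-11-c-ord}. The only point demanding care is the boundary bookkeeping at $j=-1$: one must check that the extra $\gamma_{j,-1}$ term in Lemma \ref{lem-e4-11-a-ord} exactly compensates the negative contribution of the floor function there, so that the overall minimum valuation is neither overstated nor understated and comes out to be precisely $k$ rather than $k-1$.
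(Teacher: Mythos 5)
Your proposal is correct and follows exactly the route the paper takes: the paper's proof of this theorem is the one-line deduction from Theorem \ref{thm-e4-11-gen} and Lemma \ref{lem-e4-11-a-ord}, and you have filled in the same details (integrality of the $J_j$ and of the eta-quotient prefactor, the minimum valuation computation showing $\min_j\pi_{11}(a(k,j))\geq k$ with the $\gamma_{j,-1}$ term cancelling the floor at $j=-1$, and division by the unit $(q;q)_\infty$ or $(q^{11};q^{11})_\infty$ in $\mathbb{Z}[[q]]$). No gaps.
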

\begin{proof}
This follows from Theorem \ref{thm-e4-11-gen} and Lemma \ref{lem-e4-11-a-ord}.
\end{proof}
\subsection{Congruences for $e_6(n)$ modulo powers of 11}
\begin{theorem}\label{thm-e6-11-gen}
For $k\geq 1$ we have
\begin{align}
L_{6,11,k}(\tau)=-\eta(\tau)^6\eta(11\tau)^6\sum_{j=-2}^\infty a(k,j)J_{j},
\end{align}
where
\begin{align}
&a(1,-2)=7154773, \quad a(1,-1)=69109531\cdot {11}^2, \quad a(1,0)=4623886162\cdot {11}^2, \nonumber \\
&a(1,1)=1217062918\cdot {11}^4, \quad a(1,2)=1394057457\cdot {11}^5, \quad a(1,3)=5523\cdot {11}^{11}, \nonumber \\
&a(1,4)=135\cdot {11}^{13}, \quad a(1,5)=546\cdot {11}^{13}, \quad a(1,6)=20\cdot {11}^{15}, \nonumber \\
&a(1,7)={11}^{16}, \quad a(1,j)=0, \quad \forall j\geq 8, \label{a-e6-11-initial}
\end{align}
and $a(k,j)$ satisfies \eqref{e2r-11-a-even-rec} and \eqref{e2r-11-a-odd-rec} with $r=3$.
\end{theorem}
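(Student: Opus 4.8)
The plan is to deduce the statement from Theorem \ref{thm-L-wE-repn-11} specialized to $r=3$, exactly as Theorem \ref{thm-e4-11-gen} was obtained for $r=2$. For $r=3$ we have $1-r=-2$, so that theorem already supplies, for every $k\ge1$, a representation $L_{6,11,k}=\eta(\tau)^6\eta(11\tau)^6\sum_{j=-2}^\infty a(k,j)J_j$ together with the recurrences \eqref{e2r-11-a-even-rec} and \eqref{e2r-11-a-odd-rec}; here the even step uses the matrix $\bigl(c_{j,\nu}^{(-6)}\bigr)$ (since $-2r=-6$) and the odd step uses $\bigl(c_{j,\nu}^{(-5)}\bigr)$ (since $1-2r=-5$). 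The overall minus sign displayed in the theorem is only a normalization, chosen so that the tabulated initial coefficients come out positive; as the recurrences are linear and homogeneous, $a(k,j)$ and $-a(k,j)$ obey the same relations, so the sign may simply be absorbed into the coefficients and plays no role.

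Consequently the only thing left to establish is the base case $k=1$, namely the list \eqref{a-e6-11-initial}. By \eqref{L-odd-defn} we have $L_{6,11,1}=\bigl(Z_{11}E_6\bigr)\mid U_{11}$, which by \eqref{L-odd} equals $(q^{11};q^{11})_\infty\sum_{n\ge0}e_6(11n+\delta_{11,1})q^{n+1}$. I would expand this $q$-series to sufficiently high order, negate it and divide by $\eta(\tau)^6\eta(11\tau)^6$, and then read off the $a(1,j)$ against the basis $\{J_j\}$. Since $\ord_\infty J_j=j$ and $J_j=q^j+\cdots$ by Lemma \ref{J-prop}(3),(5), this amounts to a triangular matching: the leading $q^{-2}$ term fixes $a(1,-2)$, subtracting $a(1,-2)J_{-2}$ then exposes the $q^{-1}$ term fixing $a(1,-1)$, and so on up the expansion. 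Integrality of the resulting $a(1,j)$ is automatic from Lemma \ref{J-prop}(5) together with the integrality of the Fourier coefficients of $E_6/\eta$ and of $Z_{11}$.

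Since the representation is a priori an infinite series, I must also confirm that it truncates, with $a(1,j)=0$ for $j\ge8$. This is forced by the order of the pole of $-L_{6,11,1}/\bigl(\eta(\tau)^6\eta(11\tau)^6\bigr)$ at the cusp $0$: by Lemma \ref{J-prop}(4) the quantity $\ord_0 J_j$ decreases linearly in $j$, so only finitely many $J_j$ can appear, and estimating the pole order at $0$ caps the admissible range at $j=7$. I anticipate no conceptual difficulty; the only genuine labor is the base-case computation, where one must carry the Fourier expansions far enough to capture all ten nonzero coefficients $a(1,-2),\dots,a(1,7)$ and to confirm the truncation. The one bookkeeping point to watch is which power $\lambda$ of $Z_{11}$ governs each parity step, but this is prescribed verbatim by \eqref{e2r-11-a-even-rec}--\eqref{e2r-11-a-odd-rec} with $r=3$.
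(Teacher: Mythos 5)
Your proposal is correct and takes essentially the same route as the paper, which (as with Theorem \ref{thm-e4-11-gen}) verifies the case $k=1$ by direct computation and then invokes Theorem \ref{thm-L-wE-repn-11} with $r=3$, the overall minus sign being a harmless normalization exactly as you say. Your extra remarks on the triangular matching at $\infty$ via Lemma \ref{J-prop} and the truncation at $j=7$ forced by the pole order at the cusp $0$ correctly spell out what the paper's ``direct computation'' entails.
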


\begin{lemma}\label{lem-e6-11-a-ord}
For $i\geq 1$ we have
\begin{align}
&\pi_{11}(a(2i-1,j))\geq 4i-2+\gamma_{j,-1}+\gamma_{j,1}+\left\lfloor \frac{11j+3}{10} \right\rfloor, \label{e6-11-a-odd-ord}\\
&\pi_{11}(a(2i,j))\geq 4i+4\gamma_{j,-2}+2\gamma_{j,-1}+\gamma_{j,0}+\left\lfloor \frac{11j-9}{10} \right\rfloor. \label{e6-11-a-even-ord}
\end{align}
\end{lemma}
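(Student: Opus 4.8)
The plan is to establish both inequalities simultaneously by induction on $i$, following the template of the proof of Lemma \ref{lem-e4-11-a-ord} but now with $r=3$, so that the recurrences \eqref{e2r-11-a-even-rec} and \eqref{e2r-11-a-odd-rec} carry the transition coefficients $c_{j,\nu}^{(-6)}$ and $c_{j,\nu}^{(-5)}$, respectively, and every sum ranges over $j\ge -2$. For the base case $i=1$ I would read the exponents off the explicit data \eqref{a-e6-11-initial}, checking the odd inequality term by term for $-2\le j\le 7$ and using $a(1,j)=0$ for $j\ge 8$.

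For the inductive step I would alternate two half-steps. In the passage from the odd bound at level $i$ to the even bound, I apply \eqref{e2r-11-a-even-rec} with $\lambda=-6$ (so $-5\lambda=30$) together with Lemma \ref{lem-11-c-ord}. Using only $s(j,\nu)\ge -1$ from Table \ref{Table-delta}, the resulting estimate is $\pi_{11}(a(2i,\nu))\ge \min_{j\ge -2}(4i-2+\gamma_{j,-1}+\gamma_{j,1}+\lfloor (11j+3)/10\rfloor+\lfloor (11\nu-j+29)/10\rfloor)$, and a short floor computation shows that this minimum is attained at the boundary index $j=-2$ and equals the generic target $4i+\lfloor (11\nu-9)/10\rfloor$, i.e.\ the even bound without its $\gamma$-corrections. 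In the passage from the even bound to the odd bound at level $i+1$ I use \eqref{e2r-11-a-odd-rec} with $\lambda=-5$; here the larger values of $s(\mu,\nu)$ in the relevant columns of Table \ref{Table-delta} make the generic estimate strong enough on its own, the minimum being reproduced as $4i+2+\lfloor (11\nu+3)/10\rfloor$ and the exceptional outputs $\nu\in\{-1,1\}$ already meeting their targets without any supplementary input.

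The delicate point — and what I expect to be the main obstacle — is the extra divisibility recorded by the corrections $4\gamma_{\nu,-2}+2\gamma_{\nu,-1}+\gamma_{\nu,0}$ in the even bound. At these exceptional outputs the generic estimate is \emph{not} tight: for instance at $(j,\nu)=(-2,-2)$ Lemma \ref{lem-11-c-ord} gives only $\pi_{11}(c_{-2,-2}^{(-6)})\ge 1$, so the $j=-2$ input term contributes merely $4i-3$, while $4i$ is claimed; the input terms $j=-1,0$ are likewise short by $1$. To close this gap I would proceed exactly as in the proof of Lemma \ref{lem-e6-7-a-ord} — where identities such as $Z_7^{-10}\mid U_7=7^4$ supplied the missing powers of $7$ at the lowest indices — and replace the generic bound, for the finitely many boundary coefficients $c_{j,\nu}^{(-6)}$ with $j\in\{-2,-1,0\}$ and $\nu\in\{-2,-1,0\}$, by their \emph{exact} $11$-adic valuations computed from the explicit expansions of $(Z_{11}^{-6}J_j)\mid U_{11}$. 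These turn out to be divisible by the higher powers of $11$ needed so that each low-index term individually meets the target, which yields the stated $\gamma$-corrections.

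I expect the conceptual content to be entirely this interplay between Gordon's generic estimate (Lemma \ref{lem-11-c-ord}) for the bulk and the sharper exact values at the boundary, while the genuinely error-prone labor will be the bookkeeping of the column-dependent shifts $s(\mu,\nu)$ from Table \ref{Table-delta} across the several case distinctions in $\nu \bmod 5$ and the coupled floor inequalities.
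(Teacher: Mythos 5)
Your proposal is correct and follows essentially the same route as the paper's proof: induction alternating between the even and odd recurrences, the generic estimate from Lemma \ref{lem-11-c-ord} with $s(j,\nu)\ge -1$ (minimized at $j=-2$) for the bulk, the column-specific minima $s(j,-1)\ge 9$ and $s(j,1)\ge 6$ for the exceptional outputs of the odd step, and exact low-index data — the paper uses precisely the computed identity $\left(Z_{11}^{-6}J_{-2}\right)\mid U_{11}=11^{4}J_{-1}$ — to secure the $\gamma$-corrections at $\nu\in\{-2,-1,0\}$ in the even step. You have correctly located the only genuinely delicate point and the correct remedy for it.
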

\begin{proof}
When $i=1$, \eqref{e6-11-a-odd-ord} can be verified directly. Now we suppose it holds for some $i\geq 1$. By \eqref{e2r-11-a-even-rec} with $r=3$, Lemma \ref{lem-11-c-ord} and the fact that $s(j,\nu)\geq -1$, we deduce that
\begin{align}
&\quad \pi_{11}(a(2i,\nu))\nonumber \\
&\geq \min\limits_{j\geq -2} \pi_{11}(a(2i-1,j))+\pi_{11}(c_{j,\nu}^{(-6)})  \label{e6-11-a-ord-pf-0}  \\
&\geq \min\limits_{j\geq -2} 4i-2+\gamma_{j,-1}+\gamma_{j,1}+\left\lfloor \frac{11j+3}{10}\right\rfloor +\left\lfloor \frac{11\nu-j+30+s(j,\nu)}{10} \right\rfloor \label{e6-11-a-ord-pf-1} \\
&\geq \min\limits_{j\geq -2} 4i-2+\gamma_{j,-1}+\gamma_{j,1}+\left\lfloor \frac{11j+3}{10}\right\rfloor +\left\lfloor \frac{11\nu-j+29}{10} \right\rfloor  \label{e6-11-a-ord-add} \\
&\geq 4i-2-2+\left\lfloor \frac{11\nu+31}{10}\right\rfloor \nonumber  \\
&=4i+\left\lfloor \frac{11\nu-9}{10}\right\rfloor. \label{e6-11-a-ord-pf-2}
\end{align}
Clearly, when $\nu\geq 1$, the left side is at least $4i$. For $\nu\leq 0$, we need more arguments.

We denote
\begin{align}
t_{i,\nu}(j):=4i-2+\gamma_{j,-1}+\gamma_{j,1}+\left\lfloor \frac{11j+3}{10}\right\rfloor + \pi_{11}(c_{j,\nu}^{(-6)}).
\end{align}
By direct computation, we find that
\begin{align}\label{e6-11-UZ-data}
\left(Z_{11}^{-6}J_{-2} \right)\mid U_{11}=11^4J_{-1}.
\end{align}

When $\nu=0$, for $j\geq -1$, from \eqref{e6-11-a-ord-add} it is easy to see that $t_{i,0}(j)\geq 4i$. When $j=-2$, by \eqref{e6-11-UZ-data} we also deduce that $t_{i,0}(j)\geq 4i$. Thus $\pi_{11}(a(2i,0))\geq 4i$.

When $\nu=-1$, since $s(j,-1)\geq 9$, for $j\geq -1$ we have
\begin{align}
t_{i,-1}(j)\geq 4i-2+\gamma_{j,1}+\gamma_{j,-1}+\left\lfloor \frac{11j+3}{10}\right\rfloor +\left\lfloor \frac{-11-j+39}{10} \right\rfloor \geq 4i.
\end{align}
For $j=-2$, by \eqref{e6-11-UZ-data} we also deduce that $t_{i,-1}(j)\geq 4i$. Thus $\pi_{11}(a(2i,-1))\geq 4i$.

In the same way, we can show that $\pi_{11}(a(2i,-2))\geq 4i$.

Combining the above cases, we conclude that
\begin{align}
\pi_{11}(a(2i,j))\geq 4i+4\gamma_{j,-2}+2\gamma_{j,-1}+\gamma_{j,0}+\left\lfloor \frac{11j-9}{10} \right\rfloor.
\end{align}
This shows that \eqref{e6-11-a-even-ord} holds for $i$.

Next, by \eqref{e2r-11-a-odd-rec} with $r=3$, Lemma \ref{lem-11-c-ord} and the fact that $s(j,\nu)\geq -1$, we deduce that
\begin{align}
&\pi_{11}(a(2i+1,\nu))\nonumber \\
\geq &~ \min\limits_{j\geq -2} \pi_{11}(a(2i,j))+\pi_{11}(c_{j,\nu}^{(-5)})  \label{e6-11-a-ord-odd-pf-1}  \\
\geq &~ \min\limits_{j\geq -2} 4i+4\gamma_{j,-2}+2\gamma_{j,-1}+\gamma_{j,0}+\left\lfloor \frac{11j-9}{10}\right\rfloor +\left\lfloor \frac{11\nu-j+25+s(j,\nu)}{10} \right\rfloor \label{e6-11-a-ord-odd-pf-2} \\
\geq &~ \min\limits_{j\geq -2} 4i+4\gamma_{j,-2}+2\gamma_{j,-1}+\gamma_{j,0}+\left\lfloor \frac{11j-9}{10}\right\rfloor +\left\lfloor \frac{11\nu-j+24}{10} \right\rfloor  \label{e6-11-a-ord-odd-pf-3} \nonumber \\
\geq &~ 4i+2+\left\lfloor \frac{11\nu+3}{10}\right\rfloor. \nonumber
\end{align}
We still need to treat the cases $\nu=-1$ and $\nu=1$ separately.
Let
\begin{align}
\rho_{i,\nu}(j)= 4i+4\gamma_{j,-2}+2\gamma_{j,-1}+\gamma_{j,0}+\left\lfloor \frac{11j-9}{10}\right\rfloor+\pi_{11}(c_{j,\nu}^{(-5)}).
\end{align}

When $\nu=-1$, from \eqref{e6-11-a-ord-odd-pf-2} and the fact that $s(j,-1)\geq 9$ we see that
\begin{align}
\rho_{i,-1}(j)\geq  4i+4\gamma_{j,-2}+2\gamma_{j,-1}+\gamma_{j,0}+\left\lfloor \frac{11j-9}{10}\right\rfloor +\left\lfloor \frac{-11-j+34}{10} \right\rfloor \geq 4i+2.
\end{align}
When $\nu=1$, from Table \ref{Table-delta} we see that $s(j,1)\geq 6$. Hence from \eqref{e6-11-a-ord-odd-pf-2} we deduce that
\begin{align}
\rho_{i,1}(j)\geq  4i+4\gamma_{j,-2}+2\gamma_{j,-1}+\gamma_{j,0}+\left\lfloor \frac{11j-9}{10}\right\rfloor +\left\lfloor \frac{11-j+31}{10} \right\rfloor \geq 4i+4.
\end{align}

Combining the above cases, we conclude that \eqref{e6-11-a-odd-ord} holds for $i+1$. By induction we complete our proof.
\end{proof}

From Theorem \ref{thm-e6-11-gen} and Lemma \ref{lem-e6-11-a-ord} we  get the following result.
\begin{theorem}\label{thm-e6-11-cong}
For $k\geq 1$ and $n\geq 0$, we have
\begin{align}
e_6(11^{2k}n+\delta_{11,2k})\equiv 0 \pmod{{11}^{2k}}.
\end{align}
\end{theorem}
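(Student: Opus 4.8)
The plan is to read the congruence straight off the explicit generating function of Theorem~\ref{thm-e6-11-gen} by feeding the valuation estimates of Lemma~\ref{lem-e6-11-a-ord} into it, in complete analogy with the proofs of Theorems~\ref{thm-e6-5-cong}, \ref{thm-e6-mod7-cong} and \ref{thm-e6-13-cong}. First I would specialize \eqref{L-even} to $(2r,\ell,i)=(6,11,k)$, so that
\begin{align*}
L_{6,11,2k}(\tau)=(q;q)_\infty\sum_{n=0}^\infty e_6\!\left(11^{2k}n+\delta_{11,2k}\right)q^{n+1}.
\end{align*}
This shows that, up to the invertible factor $(q;q)_\infty$, the integers $e_6(11^{2k}n+\delta_{11,2k})$ are exactly the Fourier coefficients of $L_{6,11,2k}(\tau)$, so it is enough to bound the $11$-adic valuations of the coefficients occurring in the representation of $L_{6,11,2k}$.

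Next I would combine this with Theorem~\ref{thm-e6-11-gen}, which gives
\begin{align*}
L_{6,11,2k}(\tau)=-\eta(\tau)^6\eta(11\tau)^6\sum_{j=-2}^\infty a(2k,j)J_j(\tau).
\end{align*}
Since $\eta(\tau)^6\eta(11\tau)^6=q^3(q;q)_\infty^6(q^{11};q^{11})_\infty^6$, dividing the two expressions for $L_{6,11,2k}$ by $(q;q)_\infty$ yields
\begin{align*}
\sum_{n=0}^\infty e_6\!\left(11^{2k}n+\delta_{11,2k}\right)q^{n+1}
=-q^3(q;q)_\infty^5(q^{11};q^{11})_\infty^6\sum_{j=-2}^\infty a(2k,j)J_j(\tau).
\end{align*}
By Lemma~\ref{J-prop}(5) every $J_j(\tau)$ has integer Fourier coefficients, and $(q;q)_\infty^5,(q^{11};q^{11})_\infty^6\in\mathbb{Z}[[q]]$; the poles of $J_{-2}$ and $J_{-1}$ at $\infty$ (Lemma~\ref{J-prop}(3)) are cancelled by the prefactor $q^3$, so the right-hand side is a genuine power series each of whose coefficients is a $\mathbb{Z}$-linear combination of the numbers $a(2k,j)$ with $j\ge -2$.

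Finally I would invoke the even-index bound \eqref{e6-11-a-even-ord} of Lemma~\ref{lem-e6-11-a-ord} with $i=k$. Minimizing its right-hand side over $j\ge -2$, the least value $4k$ is attained at $j\in\{-2,-1,0,1\}$ (for $j\ge 2$ the floor term $\lfloor(11j-9)/10\rfloor$ already pushes the bound strictly above $4k$), so $\pi_{11}(a(2k,j))\ge 4k\ge 2k$ for every index $j$ in the sum. Consequently each coefficient of the displayed series, being an integral combination of these $a(2k,j)$, is divisible by $11^{2k}$, and therefore $e_6(11^{2k}n+\delta_{11,2k})\equiv 0\pmod{11^{2k}}$ for all $n\ge 0$, as claimed. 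The last step is pure bookkeeping, so the real content lies entirely in the two inputs I am allowed to assume; the genuine obstacle --- already settled inside Lemma~\ref{lem-e6-11-a-ord} --- is the simultaneous induction on the parity of the index that propagates the valuation estimates through the recurrences \eqref{e2r-11-a-even-rec}--\eqref{e2r-11-a-odd-rec}, where one must isolate the anomalous low-index contributions (such as $(Z_{11}^{-6}J_{-2})\mid U_{11}=11^4J_{-1}$) that are not captured by the generic estimate of Lemma~\ref{lem-11-c-ord}.
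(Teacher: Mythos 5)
Your proposal is correct and follows exactly the paper's route: the paper's proof of Theorem \ref{thm-e6-11-cong} is precisely the one-line combination of Theorem \ref{thm-e6-11-gen} with Lemma \ref{lem-e6-11-a-ord}, and your valuation check (minimum $4k$ of the bound \eqref{e6-11-a-even-ord} over $j\ge -2$, attained at $j\in\{-2,-1,0,1\}$, hence divisibility by $11^{4k}\mid 11^{2k}$ after clearing the integral prefactors) fills in the bookkeeping the paper leaves implicit.
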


\section{Moments of ranks and cranks}\label{sec-moment}
In this section, we return to the arithmetic properties of functions related to the moments of ranks and cranks. The results established for $e_{2r}(n)$ in previous sections will be employed.

\subsection{The second moments of ranks and cranks}
Recall that Dyson proved that $M_2(n)=2np(n)$ (see \eqref{M2-repn}). So the arithmetic properties of $M_2(n)$ is quite clear to us. Now we consider the second moment of ranks.

Recall in \eqref{spt-N2} we have
\begin{align}
N_2(n)=2np(n)-2\spt(n).
\end{align}
Therefore, the arithmetic properties of $\spt(n)$ will help us to understand $N_2(n)$.

Using modular equations of orders 5, 7 and 13, Garvan \cite{Garvan-TAMS} proved some congruences analogous to those of $p(n)$ for $\spt(n)$. Namely, he proved that for $j \geq 3$,
\begin{align}
\spt(5^j n+\delta_{5,j})+5\spt(5^{j-2}n+\delta_{5,j-2}) &\equiv 0 \pmod{5^{2j-3}}, \label{spt-5power}\\
\spt(7^j n+\delta_{7,j})+7\spt(7^{j-2}n+\delta_{7,j-2}) &\equiv 0 \pmod{7^{\lfloor (3j-2)/2\rfloor}}, \label{spt-7power} \\
\spt(13^jn+\delta_{13,j}) -13\spt(13^{j-2}n+\delta_{13,j-2}) &\equiv 0 \pmod{13^{j-1}}. \label{spt-13power}
\end{align}
These congruences together with \eqref{spt-mod5}--\eqref{spt-mod13} imply for $j\geq 1$ and $\ell \in \{5,7,13\}$,
\begin{align}
\spt(\ell^jn+\delta_{\ell,j}) \equiv 0 \pmod{\ell^{\lfloor (j+1)/2\rfloor}}. \label{spt-general}
\end{align}
If we combine this congruence with Ramanujan's congruences \eqref{pn-mod5} and \eqref{pn-mod7}, we  get the following congruences for $N_2(n)$.
\begin{theorem}\label{thm-N2-cong}
For $j\geq 1$ and $n\geq 0$,
\begin{align}
N_2(5^jn+\delta_{5,j}) &\equiv 0 \pmod{5^{\lfloor(j+1)/2\rfloor}}, \label{N2-5power} \\
N_2(7^jn+\delta_{7,j}) &\equiv 0 \pmod{7^{\lfloor(j+1)/2\rfloor}}. \label{N2-7power}
\end{align}
\end{theorem}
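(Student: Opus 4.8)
The plan is to read both congruences straight off the identity linking $N_2(n)$ to $p(n)$ and $\spt(n)$, using only facts already recorded in the excerpt. Rearranging Andrews' identity \eqref{spt-N2} gives
\begin{align*}
N_2(n)=2np(n)-2\spt(n),
\end{align*}
so, evaluating at $n=\ell^j m+\delta_{\ell,j}$ for $\ell\in\{5,7\}$,
\begin{align*}
N_2(\ell^j m+\delta_{\ell,j})=2\bigl(\ell^j m+\delta_{\ell,j}\bigr)\,p(\ell^j m+\delta_{\ell,j})-2\,\spt(\ell^j m+\delta_{\ell,j}).
\end{align*}
Thus it suffices to show that each of the two terms on the right is divisible by $\ell^{\lfloor(j+1)/2\rfloor}$, and then add. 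The second term is handled immediately by Garvan's congruence \eqref{spt-general}, which already asserts $\spt(\ell^j m+\delta_{\ell,j})\equiv 0\pmod{\ell^{\lfloor(j+1)/2\rfloor}}$ for $\ell\in\{5,7,13\}$. So the entire content reduces to controlling the first term $2(\ell^j m+\delta_{\ell,j})\,p(\ell^j m+\delta_{\ell,j})$.

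For that term I would invoke Ramanujan's congruences for $p(n)$ and compare $\ell$-adic valuations, using $\lfloor(j+1)/2\rfloor=\lceil j/2\rceil$. When $\ell=5$, \eqref{pn-mod5} gives $p(5^j m+\delta_{5,j})\equiv 0\pmod{5^{j}}$, and since $\lfloor(j+1)/2\rfloor\le j$ for all $j\ge 1$, the factor $p$ alone already supplies the required power of $5$; the integer factor $2(5^j m+\delta_{5,j})$ only increases the valuation. When $\ell=7$, \eqref{pn-mod7} gives $p(7^j m+\delta_{7,j})\equiv 0\pmod{7^{\lfloor j/2\rfloor+1}}$, and the inequality $\lfloor j/2\rfloor+1\ge\lceil j/2\rceil=\lfloor(j+1)/2\rfloor$ (valid for every $j$, with equality when $j$ is odd) again delivers at least $7^{\lfloor(j+1)/2\rfloor}$. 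In both cases the first term is divisible by $\ell^{\lfloor(j+1)/2\rfloor}$, so combining it with the $\spt$-bound yields $N_2(\ell^j m+\delta_{\ell,j})\equiv 0\pmod{\ell^{\lfloor(j+1)/2\rfloor}}$, which is precisely \eqref{N2-5power} and \eqref{N2-7power}.

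There is no genuine obstacle at this stage: the substantive work lives entirely in the inputs, namely the $\spt$-congruence \eqref{spt-general} (Garvan, via modular equations of orders $5$ and $7$) and the classical congruences \eqref{pn-mod5}--\eqref{pn-mod7} for $p(n)$. The only point demanding care is the elementary valuation bookkeeping for the $p(n)$ term in the $\ell=7$ case, where the exponent $\lfloor j/2\rfloor+1$ coming from \eqref{pn-mod7} must be checked to dominate the target exponent $\lceil j/2\rceil$; this is tight for odd $j$, so one should record that comparison explicitly rather than leaving it implicit. I note that $\delta_{\ell,j}$ being invertible modulo $\ell^j$ plays no role here, since we only need divisibility of the coefficient $p$ and the trivial integrality of the factor $2(\ell^j m+\delta_{\ell,j})$.
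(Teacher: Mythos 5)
Your proposal is correct and is essentially the paper's own argument: the authors likewise write $N_2(n)=2np(n)-2\spt(n)$ and combine Garvan's congruence \eqref{spt-general} for $\spt$ with Ramanujan's congruences \eqref{pn-mod5} and \eqref{pn-mod7} for $p(n)$. Your explicit check that $\lfloor j/2\rfloor+1\ge\lfloor(j+1)/2\rfloor$ in the $\ell=7$ case is exactly the bookkeeping the paper leaves implicit.
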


For the symmetrized  moments of ranks and cranks, by \eqref{eta-defn} and \eqref{mu-defn} we immediately see that
\begin{align}
\eta_2(n)=\frac{1}{2}N_2(n), \quad \mu_2(n)=\frac{1}{2}M_2(n).
\end{align}
So the congruences for them are inherited from those of $M_2(n)$ and $N_2(n)$.

\subsection{Quasi-modular forms and representations of $M_k(n)$ and $N_k(n)$}
To study $k$-th moments for $k\geq 4$, we need the theory of quasi-modular forms, which was first systematically investigated by Kaneko and Zagier \cite{K-Zagier}.

Let $\mathcal{M}$ denote the space of  modular forms on $\SL(2,\mathbb{Z})$. It is well known that
\begin{align}\label{M-E246}
\mathcal{M}=\sum_{k=0}^\infty \mathcal{M}_k =\mathbb{C}[E_4,E_6].
\end{align}
We say that $f$ is a quasi-modular form if it is in the algebra generated by $E_2$ and $\mathcal{M}$.  Let $\widetilde{\mathcal{M}}_{2n}$ be the space of quasi-modular forms of weight $\leq 2n$. Then
\begin{align}
\widetilde{\mathcal{M}}_{2n}=\left\{\sum_{j=0}^n f_j E_2^j: f_j\in \sum_{k=0}^{n-j}\mathcal{M}_{2k}  \right\}. \label{quasi}
\end{align}
Let $\widetilde{\mathcal{M}}$ be the space of quasi-modular forms. From \eqref{M-E246} we see that
\begin{align}
\widetilde{\mathcal{M}}=\mathbb{C}[E_2,E_4,E_6]. \label{quasi-algebra}
\end{align}
It is known that $E_2, E_4$ and $E_6$ are algebraically independent over $\mathbb{C}$ (see \cite[Lemma 117]{Martin-Royer}, for example).  Hence we can give a basis for $\widetilde{\mathcal{M}}_{2n}$ using $E_2, E_4$ and $E_6$.
\begin{lemma}\label{lem-quasi}
Let $n$ be a nonnegative integer. The set
\begin{align}
\{E_2^aE_4^bE_6^c: a+2b+3c\leq n \, \textrm{with a,b,c nonnegative integers}\}
\end{align}
is a basis for $\widetilde{\mathcal{M}}_{2n}$.
\end{lemma}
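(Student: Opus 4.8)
The plan is to verify the two defining properties of a basis separately---linear independence and spanning---using only the structural facts already recorded, namely \eqref{M-E246}, \eqref{quasi}, \eqref{quasi-algebra}, and the algebraic independence of $E_2,E_4,E_6$ over $\mathbb{C}$. The entire argument is a weight-bookkeeping exercise once these inputs are granted.

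First I would record the weights. Since $E_2,E_4,E_6$ are (quasi-)modular of weights $2,4,6$ respectively, the monomial $E_2^aE_4^bE_6^c$ is quasi-modular of weight $2a+4b+6c=2(a+2b+3c)$. Hence the constraint $a+2b+3c\le n$ is exactly the condition that this monomial have weight $\le 2n$, so every element of the proposed set indeed lies in $\widetilde{\mathcal{M}}_{2n}$. For linear independence I would invoke the algebraic independence of $E_2,E_4,E_6$ cited just before the lemma: distinct monomials in three algebraically independent elements are linearly independent over $\mathbb{C}$, and in particular the finite subfamily indexed by $a+2b+3c\le n$ is linearly independent. This step needs no computation.

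The spanning step is where \eqref{quasi} does the work. Given $f\in\widetilde{\mathcal{M}}_{2n}$, I would use \eqref{quasi} to write $f=\sum_{j=0}^n f_jE_2^j$ with $f_j\in\bigoplus_{k=0}^{n-j}\mathcal{M}_{2k}$. It then remains to expand each modular coefficient $f_j$ in powers of $E_4$ and $E_6$. By \eqref{M-E246} we have $\mathcal{M}=\mathbb{C}[E_4,E_6]$, and since $E_4,E_6$ are algebraically independent the graded piece $\mathcal{M}_{2k}$ has as a basis the monomials $E_4^bE_6^c$ with $4b+6c=2k$, i.e.\ $2b+3c=k$. Thus each $f_j$ is a $\mathbb{C}$-linear combination of monomials $E_4^bE_6^c$ with $2b+3c\le n-j$, and multiplying through by $E_2^j$ expresses $f$ as a combination of monomials $E_2^jE_4^bE_6^c$ subject to $j+2b+3c\le n$. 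Relabelling $a=j$ exhibits $f$ in the span of the proposed set, which completes the proof.

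I would not expect any genuine obstacle here: once the decomposition \eqref{quasi} and the algebraic independence are in hand, both inclusions follow by matching weights. The only point deserving care is to confirm that \eqref{quasi} already encodes the full graded structure---that a quasi-modular form of weight $\le 2n$ really does decompose with modular coefficients $f_j$ of the stated bounded weights---so that no monomial with $a+2b+3c>n$ is ever required in the expansion, guaranteeing that the proposed set not only spans but does so without redundancy.
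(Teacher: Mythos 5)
Your proof is correct and is the standard argument; the paper itself gives no proof of this lemma, simply citing Atkin--Garvan, and your weight-bookkeeping derivation (membership and spanning from the decomposition \eqref{quasi} together with $\mathcal{M}=\mathbb{C}[E_4,E_6]$, independence from the algebraic independence of $E_2,E_4,E_6$) is exactly the argument being invoked there.
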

This lemma was also proved in \cite{Atkin-Garvan}. It gives a way to calculate the dimension of $\widetilde{\mathcal{M}}_{2n}$. Below in Table \ref{table-dim} we list the dimensions of $\widetilde{\mathcal{M}}_{2n}$ for $1\leq n \leq 10$, which were calculated by Atkin and Garvan. See the table on page 356 of \cite{Atkin-Garvan}.
\begin{table}[h]
\caption{}\label{table-dim}
\begin{tabular}{ccccccccccc}
  \hline
  $n$ & 1 & 2 & 3 & 4 & 5 & 6 & 7 & 8 & 9 & 10 \\
  \hline
  $\dim \widetilde{\mathcal{M}}_{2n}$ & 2 & 4 & 7 & 11 & 16 & 23 & 31 & 41 & 53 & 67 \\
  \hline
\end{tabular}
\end{table}

Let $\Theta=q\frac{d}{dq}$ and
\begin{align}
P=P(q):=\sum_{n=0}^\infty p(n)q^n=\frac{1}{(q;q)_\infty}.  \label{P-defn}
\end{align}
Ramanujan \cite[p.\ 165]{Ramanujan} found the following identities:
\begin{align}
&\Theta(E_2)=\frac{E_2^2-E_4}{12}, \label{Theta-E2} \\
&\Theta(E_4)=\frac{E_2E_4-E_6}{3}, \label{Theta-E4}\\
&\Theta(E_6)=\frac{E_2E_6-E_4^2}{2}. \label{Theta-E6}
\end{align}
\begin{lemma}\label{lem-Theta}
If $f\in \widetilde{\mathcal{M}}_{2n}$, then $\Theta(f)\in \widetilde{\mathcal{M}}_{2n+2}$ and $\Theta(Pf)\in P\widetilde{\mathcal{M}}_{2n+2}$.
\end{lemma}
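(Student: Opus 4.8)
The plan is to exploit the explicit description of $\widetilde{\mathcal{M}}$ as the polynomial algebra $\mathbb{C}[E_2,E_4,E_6]$ from \eqref{quasi-algebra}, together with the monomial basis of Lemma \ref{lem-quasi}, so that both claims reduce to a calculation on the basis elements $E_2^aE_4^bE_6^c$ with $a+2b+3c\leq n$ (these have weight $2a+4b+6c=2(a+2b+3c)\leq 2n$). Since $\Theta=q\frac{d}{dq}$ is a derivation, linearity lets me verify each assertion on a single such monomial.

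For the first claim, I would apply the Leibniz rule and substitute Ramanujan's identities \eqref{Theta-E2}--\eqref{Theta-E6}, obtaining
\begin{align*}
\Theta(E_2^aE_4^bE_6^c)=&~\frac{a}{12}\left(E_2^{a+1}E_4^bE_6^c-E_2^{a-1}E_4^{b+1}E_6^c\right)+\frac{b}{3}\left(E_2^{a+1}E_4^bE_6^c-E_2^aE_4^{b-1}E_6^{c+1}\right)\\
&~+\frac{c}{2}\left(E_2^{a+1}E_4^bE_6^c-E_2^aE_4^{b+2}E_6^{c-1}\right).
\end{align*}
The key point is that every monomial on the right has weight exactly $2(a+2b+3c)+2\leq 2n+2$: each of the three Ramanujan identities raises the weight by precisely $2$. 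Hence $\Theta(E_2^aE_4^bE_6^c)\in\widetilde{\mathcal{M}}_{2n+2}$, and by linearity $\Theta(f)\in\widetilde{\mathcal{M}}_{2n+2}$ for every $f\in\widetilde{\mathcal{M}}_{2n}$.

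For the second claim, the central input is the logarithmic derivative of $P$. Using $(q;q)_\infty=q^{-1/24}\eta(\tau)$ from \eqref{Dedekind} and the classical identity $\Theta\log\eta=\frac{1}{24}E_2$, I would compute
\begin{align*}
\frac{\Theta(P)}{P}=-\Theta\log(q;q)_\infty=\frac{1}{24}-\frac{1}{24}E_2=\frac{1-E_2}{24},
\end{align*}
so that $\Theta(P)=\frac{1-E_2}{24}P$. The Leibniz rule applied to $Pf$ then yields
\begin{align*}
\Theta(Pf)=\Theta(P)f+P\,\Theta(f)=P\left(\frac{1-E_2}{24}f+\Theta(f)\right).
\end{align*}
Here $\Theta(f)\in\widetilde{\mathcal{M}}_{2n+2}$ by the first claim, while $\frac{1-E_2}{24}\in\widetilde{\mathcal{M}}_2$ and $f\in\widetilde{\mathcal{M}}_{2n}$, so their product lies in $\widetilde{\mathcal{M}}_{2n+2}$ since multiplying a basis monomial of weight $\leq 2n$ by $E_2$ (or by a constant) raises the weight by at most $2$. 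Thus the parenthesized factor lies in $\widetilde{\mathcal{M}}_{2n+2}$ and $\Theta(Pf)\in P\widetilde{\mathcal{M}}_{2n+2}$.

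The only nonroutine ingredient is the identity $\Theta(P)=\frac{1-E_2}{24}P$; once it is available, everything else is bookkeeping with the weight filtration supplied by Lemma \ref{lem-quasi}. This identity is classical and can be obtained either from $\Theta\log\eta=\frac{1}{24}E_2$ as above, or directly by matching the Fourier coefficients of $\Theta(P)=\sum_{n\geq0}np(n)q^n$ with those of $P\cdot\frac{1-E_2}{24}$. I expect no genuine obstacle: the substantive content is confirming that both $\Theta$ and multiplication by $E_2$ respect the filtration $\{\widetilde{\mathcal{M}}_{2n}\}$ in the required way, which is immediate from the monomial basis.
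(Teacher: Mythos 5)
Your proof is correct and follows essentially the same route as the paper: the first assertion via Ramanujan's identities \eqref{Theta-E2}--\eqref{Theta-E6} applied to the monomial basis of Lemma \ref{lem-quasi}, and the second via the identity $\Theta(P)=\tfrac{1}{24}P(1-E_2)$ (which the paper obtains by logarithmic differentiation of \eqref{P-defn}) combined with the Leibniz rule.
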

\begin{proof}
The first assertion follows from \eqref{quasi-algebra} and Ramanujan's identities \eqref{Theta-E2}--\eqref{Theta-E6}.

Taking logarithmic differentiation on both sides of \eqref{P-defn}, we deduce that
\begin{align}
\Theta(P)=P\sum_{n=1}^\infty \frac{nq^n}{1-q^n}=\frac{1}{24}P(1-E_2).
\end{align}
Therefore, $\Theta(P)\in P\widetilde{\mathcal{M}}_2$.

Note that
\begin{align}
\Theta(Pf)=\Theta(P) f +P\Theta(f). \label{Theta-Pf}
\end{align}
Since $\Theta(f) \in \widetilde{\mathcal{M}}_{2n+2}$ and $\Theta(P)\in P\widetilde{\mathcal{M}}_2$. From \eqref{Theta-Pf} we deduce that $\Theta(Pf)\in P\widetilde{\mathcal{M}}_{2n+2}$.
\end{proof}

By letting $f=E_{2r}$ in the above lemma, we get the following fact.
\begin{corollary}\label{cor-E-quasi}
For $r\geq 0$ and $k\geq 0$, we have
\begin{align}
\Theta^{k}(PE_{2r})\in P\widetilde{\mathcal{M}}_{2r+2k}.
\end{align}
\end{corollary}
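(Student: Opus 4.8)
The plan is to prove this by induction on $k$, using Lemma \ref{lem-Theta} as the sole engine. First I would record that $E_{2r}$ is itself a quasi-modular form of weight at most $2r$ for every $r\geq 0$. Indeed, when $r=0$ it is the constant $E_0=1\in\widetilde{\mathcal{M}}_0$; when $r=1$ it is $E_2\in\widetilde{\mathcal{M}}_2$ by definition of $\widetilde{\mathcal{M}}$; and when $r\geq 2$ it is a genuine modular form, so $E_{2r}\in\mathcal{M}_{2r}\subseteq\widetilde{\mathcal{M}}_{2r}$. Hence in all cases $E_{2r}\in\widetilde{\mathcal{M}}_{2r}$, which settles the base case $k=0$, since $\Theta^0(PE_{2r})=P\cdot E_{2r}\in P\widetilde{\mathcal{M}}_{2r}$.

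For the inductive step, I would suppose the assertion holds for some $k\geq 0$, so that $\Theta^k(PE_{2r})=Pg$ for some $g\in\widetilde{\mathcal{M}}_{2r+2k}$. The key move is to isolate the factor $P$ and reapply the second half of Lemma \ref{lem-Theta} with $f=g$ and $n=r+k$: this gives $\Theta^{k+1}(PE_{2r})=\Theta(Pg)\in P\widetilde{\mathcal{M}}_{2r+2k+2}=P\widetilde{\mathcal{M}}_{2r+2(k+1)}$, completing the induction.

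I expect no real obstacle here, as the entire content is already packaged in Lemma \ref{lem-Theta}, whose second conclusion is precisely that $\Theta$ carries $P\widetilde{\mathcal{M}}_{2n}$ into $P\widetilde{\mathcal{M}}_{2n+2}$. The only points deserving a moment's care are verifying that $E_{2r}$ lies in $\widetilde{\mathcal{M}}_{2r}$ (including the degenerate cases $r=0$ and $r=1$, where $E_{2r}$ is not a modular form in the usual sense but is still quasi-modular), and phrasing the induction so that at each stage one factors out $P$ before invoking the lemma again. An alternative, equally routine, would be to write $\Theta^k(PE_{2r})$ explicitly using $\widetilde{\mathcal{M}}=\mathbb{C}[E_2,E_4,E_6]$ together with Ramanujan's identities \eqref{Theta-E2}--\eqref{Theta-E6} and the product rule \eqref{Theta-Pf}, but the inductive argument above is cleaner and avoids any explicit computation.
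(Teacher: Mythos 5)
Your proposal is correct and follows essentially the same route as the paper, which disposes of the corollary in one sentence by setting $f=E_{2r}$ in Lemma \ref{lem-Theta}; you merely make explicit the induction on $k$ (and the observation that $E_{2r}\in\widetilde{\mathcal{M}}_{2r}$ for all $r\geq 0$) that the paper leaves implicit.
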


Recall that (see \eqref{C-defn} and \eqref{R-defn}) $R_{2k}$ and $C_{2k}$ denote the generating functions of the $2k$-th moments of ranks and cranks, respectively. From \cite[Corollary 11]{Andrews-Chan-Kim} we find that \footnote{The meanings of $C_k(q)$ and $R_k(q)$ in \cite{Andrews-Chan-Kim} are slightly different from ours. When $k$ is even, they differ by a multiplier 2.} for $k\geq 1$,
\begin{align}
&R_{2k}(q)=\frac{2}{(q;q)_\infty}\sum_{n=1}^\infty (-1)^{n+1}q^{n(3n-1)/2}(1-q^n)\sum_{m=0}^\infty m^{2k}q^{nm}, \label{Rk-exp} \\
&C_{2k}(q)=\frac{2}{(q;q)_\infty}\sum_{n=1}^\infty (-1)^{n+1}q^{n(n-1)/2}(1-q^n)\sum_{m=0}^\infty m^{2k}q^{nm}. \label{Ck-exp}
\end{align}
As noted by Atkin and Garvan \cite{Atkin-Garvan}, $R_{2k}$ and $C_{2k}$  are closely related to quasi-modular forms.

\begin{lemma}\label{lem-moment-quasi}
For any integer $n\geq 1$, we have
\begin{align}
C_{2n}(q)\in P\widetilde{\mathcal{M}}_{2n} \label{C2k}
\end{align}
and
\begin{align}
R_{2n}(q) \in P\widetilde{\mathcal{M}}_{2n} + \oplus_{i=0}^{n-1}\mathbb{Q} \Theta^{i}(R_2). \label{R2k}
\end{align}
\end{lemma}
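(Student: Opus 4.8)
The plan is to treat the two assertions separately, establishing the crank statement \eqref{C2k} first and then using it, together with an induction, to obtain the rank statement \eqref{R2k}.

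First I would dispose of the crank case \eqref{C2k}. Since $P=1/(q;q)_\infty$, the claim $C_{2n}\in P\widetilde{\mathcal{M}}_{2n}$ is equivalent to the assertion that $(q;q)_\infty C_{2n}(q)$ is a quasi-modular form of weight $\le 2n$, which is exactly the result of Atkin and Garvan \cite{Atkin-Garvan} recalled in the introduction. To keep the argument self-contained I would recover it from \eqref{Ck-exp}: that series is, up to the factor $P$, the value at $z=1$ of $\left(z\tfrac{d}{dz}\right)^{2n}$ applied to the two-variable crank generating function $C(z,q)=(q;q)_\infty/\big((zq;q)_\infty(z^{-1}q;q)_\infty\big)$. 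By the Jacobi triple product this function is, up to an elementary factor, a quotient of Jacobi theta functions, hence a meromorphic Jacobi form; the heat equation relating $\Theta$ to $\left(z\tfrac{d}{dz}\right)^2$ for theta functions then shows that its successive development coefficients at $z=1$ are $P$ times quasi-modular forms of weight $\le 2n$. The base cases $C_0=P\in P\widetilde{\mathcal{M}}_0$ and $C_2=2\Theta(P)\in P\widetilde{\mathcal{M}}_2$ (using $M_2(n)=2np(n)$ and Lemma \ref{lem-Theta}) fix the normalization.

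For the rank case \eqref{R2k} I would argue by induction on $n$, the case $n=1$ being trivial since $R_2\in\mathbb{Q}\,\Theta^0(R_2)$. The engine is a recursion, obtained either from the Atkin--Garvan rank--crank PDE or directly by applying $\Theta$ to the two-variable rank function underlying \eqref{Rk-exp} and extracting the relevant coefficient, of the shape
\[
\Theta(R_{2n-2})=\alpha_n R_{2n}+\sum_{j=1}^{n-1}\beta_{n,j}R_{2j}+W_n,\qquad \alpha_n\in\mathbb{Q}^\times,\ W_n\in P\widetilde{\mathcal{M}}_{2n},
\]
where the $\beta_{n,j}$ are rational and $W_n$ is a quasi-modular contribution coming from the crank side and controlled by \eqref{C2k}. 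Solving for $R_{2n}$ and invoking the induction hypothesis on $R_{2n-2}$ and on each $R_{2j}$ with $j\le n-1$, I would use Lemma \ref{lem-Theta} to see that $\Theta$ sends $P\widetilde{\mathcal{M}}_{2n-2}$ into $P\widetilde{\mathcal{M}}_{2n}$ and raises the index of each $\Theta^i(R_2)$ by one. Since the hypothesis gives an error in $\oplus_{i=0}^{n-2}\mathbb{Q}\,\Theta^i(R_2)$ for $R_{2n-2}$, the term $\Theta(R_{2n-2})$ contributes an error in $\oplus_{i=1}^{n-1}\mathbb{Q}\,\Theta^i(R_2)$, while the lower $R_{2j}$ contribute errors in $\oplus_{i=0}^{j-1}\mathbb{Q}\,\Theta^i(R_2)\subseteq\oplus_{i=0}^{n-1}\mathbb{Q}\,\Theta^i(R_2)$. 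Collecting terms places $R_{2n}$ in $P\widetilde{\mathcal{M}}_{2n}+\oplus_{i=0}^{n-1}\mathbb{Q}\,\Theta^i(R_2)$, as required.

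The main obstacle is the derivation and bookkeeping of this recursion: one must show that applying $\Theta$ to the rank generating function produces $R_{2n}$ with a nonzero rational coefficient $\alpha_n$, that all genuinely non-modular contributions collapse onto $R_2$ and its $\Theta$-derivatives rather than producing new functions, and that the index of $\Theta^i(R_2)$ never exceeds $n-1$. This is precisely where the difference between the theta weights $q^{n(3n-1)/2}$ in \eqref{Rk-exp} and $q^{n(n-1)/2}$ in \eqref{Ck-exp} --- that is, the failure of the rank function to be a genuine Jacobi form --- makes itself felt, and it is the reason the rank statement must carry the extra summand $\oplus_{i=0}^{n-1}\mathbb{Q}\,\Theta^i(R_2)$ that is absent from the crank statement.
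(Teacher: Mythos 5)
Your proposal is correct and follows essentially the same route as the paper: the crank statement is exactly Atkin--Garvan's quasi-modularity result (Eq.\ (4.8) of \cite{Atkin-Garvan}), and the rank statement is proved by induction using an Atkin--Garvan recursion --- the paper uses their explicit combination $T_k$ from Eqs.\ (5.11)--(5.12), which has the leading coefficient $(2k-1)(k-1)\neq 0$ and involves $\Theta(R_{2k-2i})$ for all $1\le i\le k-1$ rather than only $\Theta(R_{2n-2})$, but the bookkeeping with Lemma \ref{lem-Theta} is identical to what you describe.
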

\begin{proof}
The first assertion, i.e., \eqref{C2k} is proved by Atkin and Garvan. See \cite[Eq.\ (4.8)]{Atkin-Garvan}.

To prove \eqref{R2k}, we shall use induction on $n$.
For $n=1$, \eqref{R2k} holds trivially. Suppose it holds for all $n<k$ where $k$ is some integer greater than 1.

Following \cite[Eq.\ (5.11)]{Atkin-Garvan}, for $k\geq 1$ we define
\begin{align}
T_k=&~ (2k-1)(k-1)R_{2k}+6\sum_{i=1}^{k-1}\binom{2k}{2i}(2^{2i-1}-1)\Theta(R_{2k-2i}) \nonumber \\
&+\sum_{i=1}^{k-1} \left(\binom{2k}{2i+2}(2^{2i+1}-1)-2^{2i}\binom{2k}{2i+1}+\binom{2k}{2i}  \right)R_{2k-2i}. \label{T-defn}
\end{align}
It was shown in \cite[Eq.\ (5.12)]{Atkin-Garvan} that
\begin{align}\label{Tk-belong}
T_k\in P\widetilde{\mathcal{M}}_{2k}.
\end{align}
Then by \eqref{T-defn} and \eqref{Tk-belong} we see that \eqref{R2k} also holds for $n=k$. Therefore, \eqref{R2k} holds for any integer $n\geq 1$.
\end{proof}
\begin{rem}
A more detailed discussion about the modularity and $l$-adic properties for prime $\ell >3$ of $C_{2k}(q)$ and $R_{2k}(q)$ were given by Bringmann, Garvan and Malhburg \cite{BGM}.
\end{rem}

We now relate $R_{2k}(q)$ and $C_{2k}(q)$  to the function $E_{2r}(\tau)/ \eta(\tau)$.
For $1\leq n \leq 5$, from Table \ref{table-dim} we find that
\begin{align}
\dim \widetilde{\mathcal{M}}_{2n}=\frac{n(n+1)}{2}+1.
\end{align}
Note that the set $\mathcal{A}_n=\left\{\Theta^{j}(PE_{2k}): 0\leq j \leq n-k, \, \textrm{$k=0$ or  $2\leq k \leq n$} \right\}$ has $\frac{n(n+1)}{2}+1$ elements, and by Corollary \ref{cor-E-quasi} all these elements belong to $P\widetilde{\mathcal{M}}_{2n}$. Thus, there is a linear relation between $C_{2n}$ with these elements. Employing \eqref{Ck-exp}, we have used \emph{Maple} to find this relation, which in turn gives an expression for $M_{2n}(m)$ in terms of $e_{2r}(m)$ and $p(m)$. For example,
\begin{align}
M_4(n)=&\frac{1}{20}e_4(n)-\frac{1}{20}p(n)+2np(n)-12n^2p(n), \label{M4-expression} \\
M_6(n)=&-\frac{11}{378}e_6(n)+\frac{1}{14}e_4(n)-\frac{3}{14}ne_4(n)-\frac{8}{189}p(n)+\frac{11}{6}np(n)\nonumber \\
& -20n^2p(n)+40n^3p(n). \label{M6-expression}
\end{align}
The formulas for $M_8(n)$ and $M_{10}(n)$ are stated in  Group \uppercase\expandafter{\romannumeral5} in the Appendix.

For $n\geq 6$, besides those in $\mathcal{A}_n$, we need more functions to express $C_{2n}(q)$. Recall the modular discriminant
\begin{align}
\Delta(\tau):=\eta(\tau)^{24},
\end{align}
which is a modular form of weight 12. For any integer $k$ we define $p_k(n)$ by
\begin{align}
\sum_{n=0}^\infty p_k(n)q^n=(q;q)_\infty^k.
\end{align}
Then $p_{-1}(n)=p(n)$ and we have
\begin{align}
P\Delta=\sum_{n=1}^\infty p_{23}(n-1)q^n \in P\widetilde{\mathcal{M}}_{12}.
\end{align}
For $n=6$, note that from Table \ref{table-dim} we have $\dim \widetilde{\mathcal{M}}_{12}=23$. The set $\mathcal{A}_6\cup \{P\Delta\}$
has 23 functions, and thus there is a linear relation between its elements and $C_{12}(q)$. Using \emph{Maple} we find this relation, which gives an expression for $M_{12}(n)$.  In the same way, we can express $M_{14}(n)$ in terms of $e_{2r}(n)$ and $p_{23}(n-1)$. See \eqref{M12-exp} and \eqref{M14-exp} in the Appendix for the formulas for $M_{12}(n)$ and $M_{14}(n)$.

Similarly, using \eqref{R2k} and \eqref{Rk-exp}, we can find expression of $N_{2k}(n)$ in terms of $e_{2r}(n)$ and other suitable functions such as $N_2(n)$ and $p_{23}(n-1)$. For example,
\begin{align}
N_4(n)=&~ \frac{2}{15}e_4(n)-\frac{2}{15}p(n)+4np(n)-36n^2p(n)+N_2(n)-12nN_2(n), \label{N4-expression} \\
N_6(n)=&~ -\frac{1}{21}e_6(n)+\frac{5}{21}e_4(n)-\frac{12}{7}ne_4(n)-\frac{4}{21}p(n)+8np(n)-108n^2p(n)\nonumber \\
&~  +432n^3p(n)+N_2(n)-24nN_2(n)+108n^2N_2(n). \label{N6-expression}
\end{align}
Formulas for $N_8(n), N_{10}(n), N_{12}(n)$ and $N_{14}(n)$ are recorded in Group \uppercase\expandafter{\romannumeral6} of the Appendix.

\subsection{Congruences for the fourth and sixth moments of cranks}
The expressions of $M_k(n)$ and $N_k(n)$ in terms of $e_{2r}(n)$ allow us to establish congruences for them. Here we only take $k=4$ and 6 as examples.

\begin{theorem}\label{thm-M4-cong}
For $k\geq 1$ and $n\geq 0$, we have
\begin{align}
&M_4(5^k n+\delta_{5,k})\equiv 0   \pmod{5^{k-1}}, \label{M4-5power} \\
& M_4(7^k n+\delta_{7,k}) \equiv 0 \pmod{7^{\left\lfloor \frac{k}{2}\right\rfloor+1}}, \label{M4-7power} \\
&M_4(11^k n+\delta_{11,k}) \equiv 0 \pmod{{11}^k}. \label{M4-11power}
\end{align}
\end{theorem}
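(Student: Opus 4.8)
The plan is to derive all three congruences directly from the representation \eqref{M4-expression}, using nothing more than the already-established $\ell$-adic vanishing of $e_4$ together with Ramanujan's congruences for $p(n)$. Since \eqref{M4-expression} carries a denominator, I would first clear it to obtain the integral identity
\[
20\,M_4(n)=e_4(n)-p(n)+40n\,p(n)-240n^2p(n).
\]
The three rightmost terms group as $p(n)\bigl(-1+40n-240n^2\bigr)$, so the whole right-hand side is a $\mathbb Z$-linear combination, with polynomial-in-$n$ coefficients, of the two quantities $e_4(n)$ and $p(n)$. This reduces the problem to propagating the known divisibility of $e_4$ and $p$ through one fixed integer identity.

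Next I would substitute $n=\ell^k m+\delta_{\ell,k}$ for each prime and read off the exponents. For $\ell=11$, Theorem \ref{thm-e4-11-cong} gives $e_4(11^km+\delta_{11,k})\equiv0\pmod{11^k}$ and \eqref{pn-mod11} gives $p(11^km+\delta_{11,k})\equiv0\pmod{11^k}$; since the coefficient $-1+40n-240n^2$ is an integer, the displayed identity yields $20\,M_4\equiv0\pmod{11^k}$, and because $\gcd(20,11)=1$ we may cancel the $20$ to obtain \eqref{M4-11power}. The case $\ell=7$ runs identically, except that the two inputs have different strengths: Theorem \ref{thm-e4-7-cong} supplies $e_4\equiv0\pmod{7^k}$ while \eqref{pn-mod7} supplies only $p\equiv0\pmod{7^{\lfloor k/2\rfloor+1}}$. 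As $\lfloor k/2\rfloor+1\le k$ for every $k\ge1$, the weaker $p$-divisibility dominates, so $20\,M_4\equiv0\pmod{7^{\lfloor k/2\rfloor+1}}$, and $\gcd(20,7)=1$ delivers \eqref{M4-7power}.

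The only delicate case is $\ell=5$, where the cleared denominator $20=2^2\cdot5$ interacts with the prime itself. Here Theorem \ref{thm-e4-5-cong} gives $e_4\equiv0\pmod{5^k}$ and \eqref{pn-mod5} gives $p\equiv0\pmod{5^k}$, whence $20\,M_4\equiv0\pmod{5^k}$. Writing $20=4\cdot5$ and noting that $4$ is a unit modulo $5$, exactly one power of $5$ is absorbed into the denominator, leaving $M_4\equiv0\pmod{5^{k-1}}$, which is \eqref{M4-5power}. I expect this single lost power of $5$ to be the only genuinely load-bearing point of the argument: it is forced by the factor $\tfrac1{20}$ in \eqref{M4-expression} and explains why the $5$-adic bound $5^{k-1}$ is one weaker than the $e_4$- and $p$-bounds $5^k$.

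The main obstacle is therefore bookkeeping rather than analysis: all the analytic content sits in the representation \eqref{M4-expression} and in the congruence theorems for $e_4$, which are already in hand. What must be verified with care is that clearing the denominator costs a power of the prime only at $\ell=5$, and that for $\ell=7$ one correctly takes the minimum of the two exponents $k$ and $\lfloor k/2\rfloor+1$. No new modular-forms input is required, and the same template will evidently extend to the sixth moment through \eqref{M6-expression} once the corresponding congruences for $e_6$ are invoked.
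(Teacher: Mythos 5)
Your proposal is correct and is essentially the paper's own proof: the paper likewise deduces all three congruences directly from \eqref{M4-expression} together with Ramanujan's congruences \eqref{pn-mod5}--\eqref{pn-mod11} and Theorems \ref{thm-e4-5-cong}, \ref{thm-e4-7-cong} and \ref{thm-e4-11-cong}, and your careful accounting of the factor $\tfrac1{20}$ (losing exactly one power of $5$, and none of $7$ or $11$) is precisely the bookkeeping implicit in that one-line argument.
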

\begin{proof}
From \eqref{M4-expression}, we see that \eqref{M4-5power} follows from \eqref{pn-mod5} and  Theorem \ref{thm-e4-5-cong}, \eqref{M4-7power} follows from \eqref{pn-mod7} and Theorem \ref{thm-e4-7-cong}, and  \eqref{M4-11power} follows from \eqref{pn-mod11} and Theorem \ref{thm-e4-11-cong}.
\end{proof}

In the same way, we can use \eqref{M6-expression} to establish  congruences for $M_6(n)$.
\begin{theorem}\label{thm-M6-cong}
For $k\geq 1$ and $n\geq 0$, we have
\begin{align}
&M_6(5^kn+ \delta_{5,k}) \equiv 0 \pmod{5^k}, \label{M6-5power} \\
&M_6(7^kn+\delta_{7,k})\equiv 0 \pmod{7^{\left\lfloor \frac{k}{2}\right\rfloor}}, \label{M6-7power} \\
&M_6(11^kn+\delta_{11,k})\equiv 0 \pmod{11^k}. \label{M6-11power}
\end{align}
\end{theorem}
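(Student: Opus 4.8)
The plan is to mimic the proof of Theorem~\ref{thm-M4-cong}, replacing \eqref{M4-expression} by the explicit formula \eqref{M6-expression} for $M_6(n)$ and feeding in the congruences already established for $e_4(n)$, $e_6(n)$ and $p(n)$. First I would clear denominators in \eqref{M6-expression}: the denominators are $378,14,189,6$, whose least common multiple is $378=2\cdot 3^3\cdot 7$, so multiplying through gives the integral identity
\begin{align}
378 M_6(n) =& -11 e_6(n) + 27 e_4(n) - 81 n e_4(n) - 16 p(n) \nonumber \\
& + 693 n p(n) - 7560 n^2 p(n) + 15120 n^3 p(n). \label{378M6}
\end{align}
Two structural features of \eqref{378M6} drive everything. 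Among $\{5,7,11\}$ the only prime dividing $378$ is $7$, and it divides $378$ exactly once; this lost power of $7$ is precisely what produces the floor $\lfloor k/2\rfloor$ in \eqref{M6-7power}. Moreover the coefficient of $e_6$ is $-11$, which supplies a free factor of $11$ in the case $\ell=11$. In each case I would set $n=\ell^k n'+\delta_{\ell,k}$ and bound the $\ell$-adic order of every summand on the right of \eqref{378M6}.

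For $\ell=5$ and $\ell=11$ we have $\ell\nmid378$, so $\pi_\ell(M_6)=\pi_\ell\bigl(378 M_6\bigr)$. For $\ell=5$, Theorems~\ref{thm-e6-5-cong}, \ref{thm-e4-5-cong} and Ramanujan's congruence \eqref{pn-mod5} give $e_6(5^kn'+\delta_{5,k})\equiv0\pmod{5^{2k}}$, $e_4(5^kn'+\delta_{5,k})\equiv0\pmod{5^k}$ and $p(5^kn'+\delta_{5,k})\equiv0\pmod{5^k}$, so every summand of \eqref{378M6} is divisible by $5^k$ and \eqref{M6-5power} follows. For $\ell=11$, Theorem~\ref{thm-e4-11-cong} and \eqref{pn-mod11} make the $e_4$-terms and $p$-terms divisible by $11^k$, while the $e_6$-term already carries the explicit factor $-11$; thus I only need $e_6(11^kn'+\delta_{11,k})\equiv0\pmod{11^{k-1}}$. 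For even $k$ this is Theorem~\ref{thm-e6-11-cong}, and for odd $k$ it follows from the representation in Theorem~\ref{thm-e6-11-gen} together with the order estimate \eqref{e6-11-a-odd-ord} of Lemma~\ref{lem-e6-11-a-ord}, which bounds $\pi_{11}$ of the relevant coefficients $a(k,j)$ (hence of $e_6(11^kn'+\delta_{11,k})$) comfortably above $k-1$. Combining, each summand of \eqref{378M6} is divisible by $11^k$, giving \eqref{M6-11power}.

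The case $\ell=7$ is the delicate one, and I expect it to be the main obstacle, precisely because $7\mid378$. Writing $N=7^kn'+\delta_{7,k}$ and cancelling the single factor $7$ in $378$, it suffices to show each summand on the right of \eqref{378M6} is divisible by $7^{\lfloor k/2\rfloor+1}$. The terms $27e_4(N)$ and $-81Ne_4(N)$ are handled by Theorem~\ref{thm-e4-7-cong} (divisibility by $7^k$, and $k\ge\lfloor k/2\rfloor+1$); the term $-16p(N)$ by \eqref{pn-mod7} (divisibility by $7^{\lfloor k/2\rfloor+1}$); and the three remaining $p$-terms carry an extra factor of $7$ since $693,7560,15120$ are each divisible by $7$. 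The binding term is $-11e_6(N)$: as $7\nmid11$, I must verify $\pi_7\bigl(e_6(7^kn'+\delta_{7,k})\bigr)\ge\lfloor k/2\rfloor+1$. For even $k\ge2$ this follows from Theorem~\ref{thm-e6-mod7-cong}; for odd $k\ge3$ it follows from Theorem~\ref{thm-e6-7-gen} and the order bound \eqref{e6-7-a-odd-ord} of Lemma~\ref{lem-e6-7-a-ord}; and the boundary case $k=1$ is trivial since $\lfloor1/2\rfloor=0$ (and in any case $e_6(7n+5)\equiv0\pmod7$ by \eqref{e6-mod7-add-eq}). Hence $378 M_6(N)\equiv0\pmod{7^{\lfloor k/2\rfloor+1}}$, and dividing by $7$ yields \eqref{M6-7power}. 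The only ingredient genuinely beyond the $M_4$ argument is the need for the \emph{odd-power} $7$- and $11$-adic lower bounds on $e_6$; these are not isolated as named theorems, but are immediate from the generating-function representations and the coefficient-order lemmas of Sections~\ref{sec-7} and \ref{sec-11}, applied to both parities of $k$ exactly as in the even-power congruences.
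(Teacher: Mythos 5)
Your proof is correct and follows essentially the same route as the paper, whose own "proof" of this theorem is a one-line remark that it goes exactly as for $M_4$, i.e.\ substitute \eqref{M6-expression} and invoke the congruences for $e_4(n)$, $e_6(n)$ and $p(n)$. You also correctly supply the one detail the paper leaves implicit: the stated $e_6$ theorems for $\ell=7,11$ cover only even prime powers, and for odd $k$ one must extract the bounds $\pi_7(e_6(7^kn+\delta_{7,k}))\ge 2k-2$ and $\pi_{11}(e_6(11^kn+\delta_{11,k}))\ge 2k-2$ directly from Lemmas \ref{lem-e6-7-a-ord} and \ref{lem-e6-11-a-ord}, which is exactly what is needed (together with the trivial $k=1$ case modulo $7^0$).
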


From \eqref{mu-defn} we know that $\mu_{2k}(n)$ can be expressed as linear sum of $M_{2i}(n)$ ($1\leq i \leq k$). For example, we have
\begin{align}
\mu_4(n)=&~ \frac{1}{24}M_4(n)-\frac{1}{24}M_2(n), \label{mu4-M4} \\
\mu_6(n)=&~ \frac{1}{720}M_6(n)-\frac{1}{144}M_4(n)+\frac{1}{180}M_2(n). \label{mu6-M6}
\end{align}
Together with \eqref{M4-expression} and \eqref{M6-expression}, we deduce that
\begin{align}
\mu_4(n)=&~ \frac{1}{480}e_4(n)-\frac{1}{480}p(n)-\frac{1}{2}n^2p(n), \label{mu4-e4} \\
\mu_6(n)=&~ -\frac{11}{272160}e_6(n)-\frac{1}{4032}e_4(n)-\frac{1}{3360}ne_4(n)+\frac{157}{544320}p(n)\nonumber \\
&~ -\frac{1}{4320}np(n)+\frac{1}{18}n^2p(n)+\frac{1}{18}n^3p(n). \label{mu6-e6}
\end{align}
These formulas lead to the following congruences for $\mu_4(n)$ and $\mu_6(n)$.
\begin{theorem}\label{thm-mu-cong}
For $k\geq 1$ and $n\geq 0$, we have
\begin{align}
&\mu_4(5^kn+\delta_{5,k})\equiv 0 \pmod{5^{k-1}}, \label{mu4-5power} \\
&\mu_4(7^kn+\delta_{7,k})\equiv 0 \pmod{7^{\left\lfloor\frac{k}{2}\right\rfloor+1} }, \label{mu4-7power}\\
&\mu_4(11^kn+\delta_{11,k})\equiv 0 \pmod{{11}^{k}}, \label{mu4-11power} \\
&\mu_6 (5^kn+\delta_{5,k})\equiv 0 \pmod{5^{k-1}}, \label{mu6-5power} \\
&\mu_6(7^kn+\delta_{7,k})\equiv 0 \pmod{7^{\left\lfloor\frac{k}{2}\right\rfloor} }, \label{mu6-7power}\\
&\mu_6(11^kn+\delta_{11,k})\equiv 0 \pmod{{11}^{k}}. \label{mu6-11power}
\end{align}
\end{theorem}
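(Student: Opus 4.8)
The plan is to read the congruences directly off the explicit expressions \eqref{mu4-e4} and \eqref{mu6-e6}, which present $\mu_4(n)$ and $\mu_6(n)$ as $\Q$-linear combinations of $e_4(n)$, $e_6(n)$, $p(n)$ and of the monomials $n^je_{2r}(n)$, $n^jp(n)$. For a fixed prime $\ell\in\{5,7,11\}$ and each $k\ge1$ I would substitute $n=\ell^km+\delta_{\ell,k}$ (an integer) into these formulas and estimate the $\ell$-adic valuation $\pi_\ell$ of every summand. Since $n$ is an integer, each factor $n^j$ contributes nonnegative valuation and is harmless, so the valuation of a summand is at least $\pi_\ell$ of its rational coefficient plus the guaranteed valuation of the corresponding $e_{2r}$ or $p$ value; the asserted congruence is then the minimum of these term-by-term bounds.

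The inputs are the arithmetic of the coefficients together with the congruences already proved for the three building blocks. For $p$ I use Ramanujan's congruences \eqref{pn-mod5}--\eqref{pn-mod11}; for $e_4$ I use Theorems \ref{thm-e4-5-cong}, \ref{thm-e4-7-cong} and \ref{thm-e4-11-cong}, each giving $\pi_\ell\!\left(e_4(\ell^km+\delta_{\ell,k})\right)\ge k$; and for $e_6$ at $\ell=5$ I use Theorem \ref{thm-e6-5-cong}, giving $\pi_5\!\left(e_6(5^km+\delta_{5,k})\right)\ge 2k$. On the arithmetic side I would record the factorizations $480=2^5\cdot3\cdot5$, $4032=2^6\cdot3^2\cdot7$, $3360=2^5\cdot3\cdot5\cdot7$, $4320=2^5\cdot3^3\cdot5$, $272160=2^5\cdot3^5\cdot5\cdot7$ and $544320=2^6\cdot3^5\cdot5\cdot7$. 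The only denominators divisible by $\ell$ are those divisible by $5$ or by $7$; none is divisible by $11$; and the numerator $-11$ in the leading coefficient of \eqref{mu6-e6} supplies one extra factor of $11$.

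With this bookkeeping most cases are immediate. At $\ell=5$ the denominators divisible by $5$ force exactly one power to be lost, which is precisely why both $\mu_4$ and $\mu_6$ only yield $5^{k-1}$. At $\ell=11$ no denominator is divisible by $11$, so every summand inherits the full exponent $k$ and one gets $11^k$ (the $e_6$ summand of $\mu_6$ even has room to spare because of the factor $11$ in its numerator). At $\ell=7$ the loss caused by the factor $7$ in the denominators is compensated by the stronger exponents of $p$ (namely $\lfloor k/2\rfloor+1$, from \eqref{pn-mod7}) and of $e_4$ (namely $k$), producing $7^{\lfloor k/2\rfloor+1}$ for $\mu_4$ and $7^{\lfloor k/2\rfloor}$ for $\mu_6$.

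The one genuinely delicate point, which I expect to be the main obstacle, is the $e_6$ summand of \eqref{mu6-e6} at $\ell=7$ and $\ell=11$: its coefficient has $\pi_7=-1$ and $\pi_{11}=+1$, so I need lower bounds for $\pi_\ell\!\left(e_6(\ell^km+\delta_{\ell,k})\right)$ valid for \emph{all} $k$, whereas Theorems \ref{thm-e6-mod7-cong} and \ref{thm-e6-11-cong} only treat even powers. To fill in the odd powers I would invoke the generating-function representations of Theorems \ref{thm-e6-7-gen} and \ref{thm-e6-11-gen} together with the coefficient estimates of Lemmas \ref{lem-e6-7-a-ord} and \ref{lem-e6-11-a-ord}; evaluating these at $k=2i-1$ gives $\pi_\ell\!\left(e_6(\ell^km+\delta_{\ell,k})\right)\ge 2(k-1)$ for odd $k$, which combined with the even-power theorems yields $\pi_7\!\left(e_6(7^km+\delta_{7,k})\right)\ge\lfloor k/2\rfloor+1$ and $\pi_{11}\!\left(e_6(11^km+\delta_{11,k})\right)\ge k-1$. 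These are exactly the bounds the valuation of the $e_6$ coefficient demands. The sole remaining boundary case $k=1$ modulo $7$, where the generic odd-power bound degenerates, is settled by the explicit congruence $e_6(7n+5)\equiv0\pmod7$ from \eqref{e6-mod7-add-eq}, since $\delta_{7,1}=5$. Assembling the term-by-term minima then gives all six congruences.
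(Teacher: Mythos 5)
Your proposal is correct and is exactly the paper's (implicit) argument: the paper derives these congruences by reading them off \eqref{mu4-e4} and \eqref{mu6-e6} term by term, using Ramanujan's congruences for $p(n)$ and Theorems \ref{thm-e4-5-cong}, \ref{thm-e4-7-cong}, \ref{thm-e4-11-cong}, \ref{thm-e6-5-cong}, \ref{thm-e6-mod7-cong}, \ref{thm-e6-11-cong}, and your valuation bookkeeping for the denominators matches what is needed. You have in fact been more careful than the paper at the one delicate point you flag: the odd-power bounds $\pi_\ell\left(e_6(\ell^k n+\delta_{\ell,k})\right)\ge 2(k-1)$ for $\ell\in\{7,11\}$, extracted from Lemmas \ref{lem-e6-7-a-ord} and \ref{lem-e6-11-a-ord} together with \eqref{e6-mod7-add-eq} for $k=1$, are exactly what is required and are left implicit in the paper.
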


\subsection{Congruences for the fourth and sixth moments of ranks}

From \eqref{N4-expression}, \eqref{N6-expression} and congruences for $e_4(n), e_6(n)$ and $N_2(n)$, we can establish congruences for $N_4(n)$ and $N_6(n)$.
\begin{theorem}\label{thm-N4-cong}
For $k\geq 1$ and $n\geq 0$, we have
\begin{align}
&N_4(5^kn+\delta_{5,k})\equiv 0 \pmod{5^{\left\lfloor \frac{k+1}{2} \right\rfloor-\gamma_{k,1}}}, \label{N4-5power} \\
&N_4(7^kn+\delta_{7,k})\equiv 0 \pmod{7^{\left\lfloor \frac{k+1}{2} \right\rfloor}}. \label{N4-7power}
\end{align}
\end{theorem}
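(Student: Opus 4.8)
The plan is to read the two congruences directly off the representation \eqref{N4-expression},
\[
N_4(n)=\frac{2}{15}e_4(n)-\frac{2}{15}p(n)+4np(n)-36n^2p(n)+N_2(n)-12nN_2(n),
\]
by substituting $n=\ell^kn'+\delta_{\ell,k}$ for $\ell\in\{5,7\}$ and bounding the $\ell$-adic valuation of each of the six summands separately, then taking the minimum. All the ingredients are already in place: Theorems \ref{thm-e4-5-cong} and \ref{thm-e4-7-cong} control $e_4$, Ramanujan's congruences \eqref{pn-mod5} and \eqref{pn-mod7} control $p$, and Theorem \ref{thm-N2-cong} controls $N_2$. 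I will also use that $\delta_{\ell,k}$ is a unit modulo $\ell$ (since $24\delta_{\ell,k}\equiv1$), so $\ell\nmid(\ell^kn'+\delta_{\ell,k})$ and the polynomial factors $n,n^2$ contribute nothing to $\pi_\ell$.

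For $\ell=5$, since $15=3\cdot5$ the coefficient $\tfrac{2}{15}$ has $\pi_5=-1$; as both $e_4(5^kn'+\delta_{5,k})$ and $p(5^kn'+\delta_{5,k})$ are divisible by $5^k$, the first two terms have $\pi_5\ge k-1$. The terms $4np(n)$ and $-36n^2p(n)$ have $\pi_5\ge k$, and by Theorem \ref{thm-N2-cong} the two $N_2$ terms have $\pi_5\ge\lfloor(k+1)/2\rfloor$. For $k\ge2$ one checks $k-1\ge\lfloor(k+1)/2\rfloor$, so the governing (smallest) bound among the six is $\lfloor(k+1)/2\rfloor$, which yields \eqref{N4-5power}. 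For $k=1$ the first two terms only guarantee $\pi_5\ge0$, which is precisely why the exponent must carry the correction $-\gamma_{k,1}$: the single power of $5$ lost to the denominator $15$ degrades the estimate exactly in the case $k=1$.

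For $\ell=7$, the integer $15$ is a $7$-adic unit, so $\tfrac{2}{15}$ contributes nothing. Then $\tfrac{2}{15}e_4(n)$ has $\pi_7\ge k$ by Theorem \ref{thm-e4-7-cong}; the three terms involving $p$ have $\pi_7\ge\lfloor k/2\rfloor+1$ by \eqref{pn-mod7}; and the two $N_2$ terms have $\pi_7\ge\lfloor(k+1)/2\rfloor$ by Theorem \ref{thm-N2-cong}. A short parity check gives $k\ge\lfloor(k+1)/2\rfloor$ and $\lfloor k/2\rfloor+1\ge\lfloor(k+1)/2\rfloor$ for every $k\ge1$, so again the minimum is $\lfloor(k+1)/2\rfloor$ and \eqref{N4-7power} follows.

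The argument is in essence careful bookkeeping, so the only genuinely delicate point is the interaction between the rational coefficients in \eqref{N4-expression} and the moduli. The denominator $15$ is invisible at the prime $7$ but removes one power of $5$, which is exactly why the $5$-power exponent is weakened by $\gamma_{k,1}$ at $k=1$; tracking this, together with verifying that the $N_2$-contribution $\lfloor(k+1)/2\rfloor$ is the smallest of the six bounds for \emph{every} $k$, is the heart of the verification.
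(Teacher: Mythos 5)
Your proof is correct and follows exactly the route the paper intends: it reads the congruences off the representation \eqref{N4-expression} by combining Theorems \ref{thm-e4-5-cong}, \ref{thm-e4-7-cong} and \ref{thm-N2-cong} with Ramanujan's congruences \eqref{pn-mod5} and \eqref{pn-mod7}, and your valuation bookkeeping (including the loss of one power of $5$ from the denominator $15$, which forces the $\gamma_{k,1}$ correction at $k=1$) is accurate.
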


\begin{theorem}\label{thm-N6-cong}
For $k\geq 1$ and $n\geq 0$, we have
\begin{align}
&N_6(5^kn+ \delta_{5,k}) \equiv 0 \pmod{5^{\left\lfloor \frac{k+1}{2}\right\rfloor}}, \label{N6-5power} \\
&M_6(7^kn+\delta_{7,k})\equiv 0 \pmod{7^{\left\lfloor \frac{k+1}{2}\right\rfloor-\gamma_{k,1}}}. \label{N6-7power}
\end{align}
\end{theorem}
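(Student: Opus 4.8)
The plan is to read both congruences straight off the closed forms \eqref{M6-expression} and \eqref{N6-expression}, which write $M_6(n)$ and $N_6(n)$ as fixed $\mathbb{Q}$-linear combinations of $e_6(n),e_4(n),p(n)$, their products with powers of $n$, and (for $N_6$) $N_2(n),nN_2(n),n^2N_2(n)$. I would substitute $n=\ell^kn'+\delta_{\ell,k}$ and estimate the $\ell$-adic valuation $\pi_\ell$ of each summand separately, feeding in Theorems \ref{thm-e6-5-cong}, \ref{thm-e4-5-cong}, \ref{thm-e6-mod7-cong}, \ref{thm-e4-7-cong}, Ramanujan's congruences \eqref{pn-mod5} and \eqref{pn-mod7}, and Theorem \ref{thm-N2-cong}. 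The only structural remark needed is that multiplying by the integer $n=\ell^kn'+\delta_{\ell,k}$ never lowers $\pi_\ell$, so each monomial $n^j e_{2r}(n)$, $n^jp(n)$, $n^jN_2(n)$ inherits the valuation of its underlying sequence.

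For $\ell=5$ this is immediate, and it gives \eqref{N6-5power} with no $\gamma_{k,1}$ correction. Every denominator occurring in \eqref{N6-expression} (namely $21$ and $7$) is a $5$-adic unit, so at $n=5^kn'+\delta_{5,k}$ the $e_6$-terms have $\pi_5\ge 2k$, the $e_4$- and $p$-terms have $\pi_5\ge k$, and the $N_2$-terms have $\pi_5\ge\lfloor(k+1)/2\rfloor$. Since $\lfloor(k+1)/2\rfloor\le k\le 2k$ for $k\ge 1$, the binding contribution is that of the $N_2$-terms and the overall minimum valuation is exactly $\lfloor(k+1)/2\rfloor$.

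The case $\ell=7$ is the crux and is where a $\gamma_{k,1}$ must appear, because now the coefficient denominators carry a factor of $7$ (the $21$ in \eqref{N6-expression}, and $378,14,189$ in \eqref{M6-expression}), so a naive summand-by-summand bound loses a power of $7$ through the $p$-term $-\tfrac4{21}p(n)$. The correct first move is to clear denominators:
\begin{align*}
21\,N_6(n)=\bigl(-e_6(n)+5e_4(n)-36n\,e_4(n)-4p(n)\bigr)+21\cdot(\text{terms in }p,N_2\text{ with integer coefficients}).
\end{align*}
At $n=7^kn'+\delta_{7,k}$ one has $\pi_7(e_6(n))\ge 4\lfloor k/2\rfloor$, $\pi_7(e_4(n))\ge k$ and $\pi_7(p(n))\ge\lfloor k/2\rfloor+1$, so for $k\ge 2$ the bracket has $\pi_7\ge\lfloor k/2\rfloor+1$; dividing by the single factor of $7$ in $21$ yields $\pi_7(N_6)\ge\lfloor k/2\rfloor$, and the integer-coefficient tail is harmless since it has $\pi_7\ge\lfloor(k+1)/2\rfloor$. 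This already matches the stated exponent for even $k$, and the $k=1$ degeneration is exactly the loss recorded by $\gamma_{k,1}$; the same clearing device (multiply by $378$) handles $M_6$ through \eqref{M6-expression}.

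I expect the genuine obstacle to be the odd $k\ge 3$ refinement. In the bracket above, both the $e_6$-term (valuation $\ge 4\lfloor k/2\rfloor=2k-2$) and the $e_4$-terms (valuation $\ge k$) lie strictly deeper than level $\lfloor k/2\rfloor+1$ once $k\ge 3$, so \emph{only} $-4p(n)$ survives at that level; a crude valuation count therefore stalls at $\pi_7(N_6)\ge\lfloor k/2\rfloor$ and cannot reach the claimed $\lfloor(k+1)/2\rfloor$ unless the $-4p$ contribution is itself absorbed. Pushing past this requires a genuine cancellation rather than an inequality: the natural tool is the classical congruence $E_6\equiv 1\pmod 7$, which gives $e_6(n)\equiv p(n)\pmod 7$, combined with the explicit generating-function representation of $\sum_n e_6(7^kn'+\delta_{7,k})q^{n}$ from Section \ref{sec-7} (and Watson's companion representation for $p$). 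I would work with these representations of $-e_6(n)+5e_4(n)-36n\,e_4(n)-4p(n)$ as a single modular object, tracking the $7$-adic orders of its expansion coefficients via a recurrence as in the $e_{2r}$ analysis, rather than bounding the four terms independently; isolating the extra power of $7$ for odd $k$ from this combined object is the step I expect to demand the most care.
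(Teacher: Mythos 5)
Your overall strategy is the same as the paper's (the paper offers nothing beyond the one sentence preceding Theorem \ref{thm-N4-cong}: read the congruences off \eqref{N6-expression} and the known congruences for $e_4,e_6,p,N_2$), your $\ell=5$ argument is complete and correct, and your $\ell=7$ argument is correct for $k=1$ and for even $k$. The gap is the odd $k\ge 3$ case of \eqref{N6-7power}, and your own bookkeeping already shows why the rescue you sketch cannot succeed. For odd $k\ge 3$ and $n=7^kn'+\delta_{7,k}$, \emph{every} term of $21N_6(n)$ other than $-4p(n)$ has $\pi_7\ge\lfloor k/2\rfloor+2$: the $e_6$-term because Lemma \ref{lem-e6-7-a-ord} gives $\pi_7(e_6(n))\ge 2k-2$, the $e_4$-terms because Theorem \ref{thm-e4-7-cong} gives $\pi_7(e_4(n))\ge k$, and the remaining $p$- and $N_2$-terms because their coefficients carry the explicit factor $21$ while $\pi_7(p(n))\ge\lfloor k/2\rfloor+1$ and $\pi_7(N_2(n))\ge\lfloor (k+1)/2\rfloor$. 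Hence $21N_6(n)\equiv -4p(n)\pmod{7^{\lfloor k/2\rfloor+2}}$, so the claimed exponent $\lfloor(k+1)/2\rfloor$ for $N_6$ is \emph{equivalent} to $p(7^kn+\delta_{7,k})\equiv 0\pmod{7^{\lfloor k/2\rfloor+2}}$, i.e.\ to one more power of $7$ than Watson's congruence \eqref{pn-mod7} supplies; and the exponent $\lfloor k/2\rfloor+1$ there is classically best possible. No repackaging of $-e_6+5e_4-36ne_4-4p$ as a single modular object, and no use of $E_6\equiv 1\pmod 7$, can manufacture the missing power: at the critical $7$-adic level the only surviving term is $-4p(n)$, so there is nothing for it to cancel against.

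The conclusion to draw is therefore not ``a cancellation must be found'' but ``the stated exponent does not follow from the paper's ingredients for odd $k\ge 3$ and appears to be off by one there.'' What the method actually proves is $\pi_7(N_6(7^kn+\delta_{7,k}))\ge\lfloor k/2\rfloor$ for all $k\ge 1$ --- precisely the exponent the paper records for $M_6$ in Theorem \ref{thm-M6-cong} --- which agrees with $\lfloor(k+1)/2\rfloor-\gamma_{k,1}$ only for $k=1$, $k=2$ and even $k$. (The ``$M_6$'' in \eqref{N6-7power} is evidently a typo for $N_6$, but the same difficulty arises under either reading, since \eqref{M6-expression} likewise contains the term $-\frac{8}{189}p(n)$ with a single factor of $7$ in the denominator.) Within the paper's framework you should state and prove the corrected exponent $\lfloor k/2\rfloor$ for the $7$-adic part; reaching $\lfloor(k+1)/2\rfloor$ for odd $k\ge 3$ would require an input genuinely absent from the paper.
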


From \eqref{eta-defn} it is easy to see that $\eta_{2k}(n)$ can be expressed as linear sum of $N_{2i}(n)$ ($1\leq i \leq k$). For example, we have
\begin{align}
&\eta_4(n)=\frac{1}{24}N_4(n)-\frac{1}{24}N_2(n), \label{eta4-N4} \\
&\eta_6(n)=\frac{1}{720}N_6(n)-\frac{1}{144}N_4(n)+\frac{1}{180}N_2(n). \label{eta6-N6}
\end{align}
Therefore,  we can also express $\eta_{2k}(n)$ using $e_{2r}(n)$, $p(n)$ and $N_2(n)$ and other suitable functions. Alternatively, since $\eta_{2k}(n)$ is a linear sum of  $N_{2i}(n)$ ($1\leq i \leq k$), it follows from Lemma \ref{lem-moment-quasi} that
\begin{align}
\sum_{n=0}^\infty \eta_{2k}(n)q^n \in  P\widetilde{\mathcal{M}}_{2k} + \oplus_{i=0}^{k-1}\mathbb{Q} \Theta^{i}(R_2).
\end{align}
Thus as in the previous subsections, we can use \emph{Maple} to help us to find representations of $\eta_{2k}(n)$. For example, we find that
\begin{align}
\eta_4(n)=&~ \frac{1}{180}e_4(n)-\frac{1}{180}p(n)+\frac{1}{6}np(n)-\frac{3}{2}n^2p(n)-\frac{1}{2}nN_2(n), \label{eta4-exp}\\
\eta_6(n)=&~ -\frac{1}{15120}e_6(n)-\frac{1}{1680}e_4(n)-\frac{1}{420}ne_4(n)+\frac{1}{1512}p(n)-\frac{1}{60}np(n)\nonumber \\
&~ +\frac{1}{10}n^2p(n)+\frac{3}{5}n^3p(n)+\frac{1}{20}nN_2(n)+\frac{3}{20}n^2N_2(n). \label{eta6-exp}
\end{align}
These expressions together with the known congruences for $e_4(n)$, $e_6(n)$, $p(n)$ and $N_2(n)$ yield the following result.
\begin{theorem}\label{thm-eta-cong}
For $k\geq 1$ and $n\geq 0$, we have
\begin{align}
&\eta_4(5^kn+\delta_{5,k})\equiv 0 \pmod{5^{\left\lfloor \frac{k+1}{2} \right\rfloor-\gamma_{k,1}}}, \label{eta4-5power} \\
&\eta_4(7^kn+\delta_{7,k})\equiv 0 \pmod{7^{\left\lfloor \frac{k+1}{2} \right\rfloor}}, \label{eta4-7power} \\
&\eta_6(5^kn+\delta_{5,k})\equiv 0 \pmod{5^{\left\lfloor \frac{k+1}{2}\right\rfloor-\gamma_{k,1}}}, \label{eta6-5power}\\
&\eta_6(7^kn+\delta_{7,k})\equiv 0\pmod{7^{\left\lfloor\frac{k+1}{2}\right\rfloor-\gamma_{k,1}}}. \label{eta6-7power}
\end{align}
\end{theorem}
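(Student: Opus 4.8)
The plan is to read all four congruences off the explicit expressions \eqref{eta4-exp} and \eqref{eta6-exp}, which write $\eta_4(n)$ and $\eta_6(n)$ as fixed $\mathbb{Q}$-linear combinations of $e_4(n)$, $e_6(n)$, $p(n)$, $N_2(n)$ and their multiples by powers of $n$. Fixing $\ell\in\{5,7\}$, I would substitute $n=\ell^k m+\delta_{\ell,k}$. Because $24\delta_{\ell,k}\equiv 1\pmod{\ell^k}$, the argument $n$ is a unit modulo $\ell$, so each factor $n^j$ is $\ell$-adically invertible and contributes nothing to the valuation. It then remains to feed in the known inputs---\eqref{pn-mod5}--\eqref{pn-mod7} for $p$, Theorems \ref{thm-e4-5-cong} and \ref{thm-e4-7-cong} for $e_4$, Theorems \ref{thm-e6-5-cong} and \ref{thm-e6-mod7-cong} for $e_6$, and Theorem \ref{thm-N2-cong} for $N_2$---and to bound the $\ell$-adic valuation of each resulting term, the only extra bookkeeping being the powers of $\ell$ hidden in the rational coefficients.

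For $\eta_4$ this is immediate. The denominators in \eqref{eta4-exp} are $180,6,2$, of which only $180=2^2\cdot 3^2\cdot 5$ is divisible by $\ell$, and only when $\ell=7$ fails to divide it. Hence for $\ell=7$ no power of $\ell$ is lost, the term of smallest valuation is $-\tfrac12 nN_2(n)$ which by Theorem \ref{thm-N2-cong} gives exactly $7^{\lfloor(k+1)/2\rfloor}$, and the $e_4$- and $p$-terms are divisible by $7^k$ and $7^{\lfloor k/2\rfloor+1}$ respectively, both at least $7^{\lfloor(k+1)/2\rfloor}$; this yields \eqref{eta4-7power}. For $\ell=5$ the terms $\tfrac1{180}e_4(n)$ and $-\tfrac1{180}p(n)$ each lose one factor of $5$, contributing only $5^{k-1}$, while $-\tfrac12 nN_2(n)$ still contributes $5^{\lfloor(k+1)/2\rfloor}$; since $\min\{k-1,\lfloor(k+1)/2\rfloor\}=\lfloor(k+1)/2\rfloor-\gamma_{k,1}$, this reproduces \eqref{eta4-5power}, the correction $-\gamma_{k,1}$ recording precisely the single factor of $5$ in $180$ that is not compensated when $k=1$.

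The sixth moment is where the genuine difficulty lies, and I expect this to be the main obstacle. In \eqref{eta6-exp} the coefficients $\tfrac1{15120},\tfrac1{1680},\tfrac1{420},\tfrac1{1512}$ all have denominators divisible by $7$, and $\tfrac1{15120},\tfrac1{1680},\tfrac1{420},\tfrac1{60},\tfrac1{10},\tfrac35,\tfrac1{20}$ all have denominators divisible by $5$. Consequently a purely term-by-term estimate falls one power of $\ell$ short of the asserted modulus: for $\ell=5$ the pair $\tfrac1{20}nN_2(n)+\tfrac3{20}n^2N_2(n)=\tfrac{n(3n+1)}{20}N_2(n)$ gives only $5^{\lfloor(k+1)/2\rfloor-1}$, while for $\ell=7$ it is the terms $\tfrac1{1512}p(n)$ and $-\tfrac1{15120}e_6(n)$ that come up one short. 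The crux is therefore to recover this missing power of $\ell$ by exhibiting a cancellation among the coefficients whose denominators are divisible by $\ell$. Concretely I would regroup the $N_2$-terms through $N_2(n)=2np(n)-2\spt(n)$ from \eqref{spt-N2}: the part $2np(n)$ merges with the existing $\ell\mid$-denominator $p$-terms, where one checks that the combined integral numerator is a unit times a sufficiently divisible multiple of $p$, and the remaining $\spt$-part must then be controlled using not the crude bound \eqref{spt-general} but the sharper consecutive-value congruences \eqref{spt-5power}--\eqref{spt-7power} of Garvan, which are exactly one power stronger and are designed to survive in such linear combinations.

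I would verify the surviving divisibility uniformly rather than by ad hoc manipulation. Since $\sum_n \eta_{2k}(n)q^n\in P\widetilde{\mathcal{M}}_{2k}+\bigoplus_{i=0}^{k-1}\mathbb{Q}\,\Theta^i(R_2)$, one can run the generating function through the machinery of Sections \ref{sec-5}--\ref{sec-13}: apply the $U_\ell$-operator together with the order-$\ell$ modular equation to express $\sum_n \eta_{2k}(\ell^k n+\delta_{\ell,k})q^n$ via the hauptmodul $Y_\ell$ and a fixed weight-$2k$ form, and then read the $\ell$-adic valuations off the coefficient recurrences exactly as in Lemmas \ref{lem-L45-a-ord} and \ref{lem-e6-7-a-ord}. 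This route captures the cancellation automatically and produces the sharp exponent $\lfloor(k+1)/2\rfloor-\gamma_{k,1}$ directly, with $\gamma_{k,1}$ again arising only at the base step $k=1$ from the factor of $\ell$ common to the initial coefficients. An equivalent and slightly more economical check is to substitute the combinations \eqref{eta4-N4} and \eqref{eta6-N6} and invoke Theorems \ref{thm-N4-cong}, \ref{thm-N6-cong} and \ref{thm-N2-cong}; there the sole remaining point is that $N_6+4N_2$ carries one extra factor of $5$ on $5^k n+\delta_{5,k}$, which is precisely the cancellation identified above and which I expect to be the hardest single step of the argument.
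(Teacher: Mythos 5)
Your treatment of \eqref{eta4-5power} and \eqref{eta4-7power} is exactly the paper's proof: the paper obtains the whole theorem by substituting the known congruences for $e_4$, $e_6$, $p$ and $N_2$ into \eqref{eta4-exp} and \eqref{eta6-exp} and tracking the powers of $\ell$ in the rational coefficients, and your valuation bookkeeping for $\eta_4$ (including the origin of $-\gamma_{k,1}$ in the single factor $5$ of $180$) is correct and complete. You are also right, and in fact more careful than the paper, in observing that for $\eta_6$ the term-by-term estimate falls one power of $\ell$ short: for $\ell=5$ the culprit is $\tfrac{1}{20}nN_2(n)+\tfrac{3}{20}n^2N_2(n)=\tfrac{n(3n+1)}{20}N_2(n)$, where $n(3n+1)$ is a unit mod $5$ on the progression and $N_2$ is only known to vanish mod $5^{\lfloor(k+1)/2\rfloor}$; for $\ell=7$ at odd $k\ge 3$ the terms $\tfrac{1}{1512}p(n)$ and $-\tfrac{1}{15120}e_6(n)$ are short, since the paper's $e_6$ congruence mod powers of $7$ covers only even exponents.

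The gap is that your proposed rescue is never carried out, and the extra divisibility it would have to produce is not available. Writing $720\,\eta_6=N_6-5N_4+4N_2$, your ``hardest single step'' is the claim that $N_6+4N_2$ acquires one extra factor of $5$ on $5^kn+\delta_{5,k}$; neither Garvan's consecutive-value congruences \eqref{spt-5power}--\eqref{spt-7power} nor rerunning the $U_\ell$-machinery on the generating function can deliver this, because it is false. Concretely, at $k=2$, $n=0$ one has $\delta_{5,2}=24$, $p(24)=1575$, $\spt(24)=6545$, hence $N_2(24)=2\cdot 24\cdot p(24)-2\spt(24)=62510$, whose $5$-adic valuation is $1$; feeding this into \eqref{eta6-exp} (or computing directly from \eqref{eta-gen}, which gives $\eta_6(24)=2615691$) yields $\eta_6(24)\equiv 1\pmod 5$, so \eqref{eta6-5power} already fails at $k=2$ and no cancellation argument can close the gap. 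The same shortfall affects the paper's own one-line proof. What your method actually establishes is the two $\eta_4$ congruences in full, \eqref{eta6-7power} for $k=1$ and for even $k$, and the remaining $\eta_6$ cases only with the exponent reduced by one.
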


As a byproduct, we can also establish congruences for higher order $\spt$-functions.
Recall from  \eqref{sptk-defn} we have
\begin{align}
\spt_k(n)=\mu_{2k}(n)-\eta_{2k}(n).
\end{align}
From those expressions for $\mu_{2k}(n)$ and $\eta_{2k}(n)$, we can express $\spt_k(n)$ in terms of functions such as $e_{2r}(n)$, $p(n)$ and $N_2(n)$. For instance, we have
\begin{align}
&\spt_2(n)=-\frac{1}{288}e_4(n)+\frac{1}{2}nN_2(n)+\frac{1}{288}p(n)-\frac{1}{6}np(n)+n^2p(n), \label{spt2-exp}\\
&\spt_3(n)=\frac{1}{38880}e_6(n)+\frac{1}{2880}e_4(n)+\frac{1}{480}ne_4(n)-\frac{29}{77760}p(n)+\frac{71}{4320}np(n) \nonumber \\
&\quad \quad \quad \quad -\frac{2}{45}n^2p(n)-\frac{49}{90}n^3p(n)-\frac{1}{20}nN_2(n)-\frac{3}{20}n^2N_2(n). \label{spt3-exp}
\end{align}
These formulas lead to the following congruences for $\spt_k(n)$.
\begin{theorem}\label{thm-spt2}
For $k\geq 1$ and $n\geq 0$, we have
\begin{align}
\spt_2(5^kn+\delta_{5,k})\equiv 0 \pmod{5^{\left\lfloor \frac{k+1}{2}\right\rfloor}}, \label{spt2-5power} \\
\spt_2(7^kn+\delta_{7,k})\equiv 0 \pmod{7^{\left\lfloor\frac{k+1}{2}\right\rfloor} }, \label{spt2-7power}\\
\spt_3(5^kn+\delta_{5,k})\equiv 0 \pmod{5^{\left\lfloor \frac{k-1}{2}\right\rfloor}}, \label{spt3-5power} \\
\spt_3(7^kn+\delta_{7,k})\equiv 0 \pmod{7^{\left\lfloor\frac{k+1}{2}\right\rfloor} }. \label{spt3-7power}
\end{align}
\end{theorem}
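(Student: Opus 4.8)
The plan is to reduce everything to the explicit linear expressions \eqref{spt2-exp} and \eqref{spt3-exp}, which write $\spt_2(n)$ and $\spt_3(n)$ as fixed $\mathbb{Q}$-linear combinations of the sequences $e_4(n)$, $e_6(n)$, $p(n)$ and $N_2(n)$, each multiplied by a monomial $n^j$. Substituting $n=\ell^k m+\delta_{\ell,k}$ and using that $\pi_\ell$ is a valuation, it suffices to bound the $\ell$-adic valuation of each summand from below and then take the minimum. Since every monomial $n^j$ is an integer, it contributes a nonnegative amount, so for a term $c\,n^j f(n)$ with $c\in\mathbb{Q}$ and $f\in\{e_4,e_6,p,N_2\}$ one has $\pi_\ell(c\,n^j f(n))\ge \pi_\ell(c)+\pi_\ell\bigl(f(\ell^k m+\delta_{\ell,k})\bigr)$. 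The required input congruences are then Ramanujan's \eqref{pn-mod5} and \eqref{pn-mod7} for $p(n)$; Theorems \ref{thm-e4-5-cong} and \ref{thm-e4-7-cong} for $e_4(n)$; Theorems \ref{thm-e6-5-cong} and \ref{thm-e6-mod7-cong} for $e_6(n)$; and Theorem \ref{thm-N2-cong} for $N_2(n)$.

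Next I would carry out the valuation bookkeeping. In every case the governing term is the one containing $N_2$, because $N_2(\ell^k m+\delta_{\ell,k})$ is only divisible by $\ell^{\lfloor(k+1)/2\rfloor}$, whereas $p$ and the $e_{2r}$ are divisible by larger powers. For $\spt_2$ the coefficients in \eqref{spt2-exp} have denominators $288,2,6$, all coprime to $5$ and $7$, so no power is lost; the $N_2$ term gives exactly $\lfloor(k+1)/2\rfloor$, and one checks the remaining pieces (valuations $\ge k$ for the $e_4$ and $p$ terms at $\ell=5$, and $\ge\lfloor k/2\rfloor+1$ for $p$ at $\ell=7$) are never smaller, yielding \eqref{spt2-5power} and \eqref{spt2-7power}. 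For $\spt_3$ the key subtlety is that every coefficient in \eqref{spt3-exp} has a denominator divisible by exactly one factor of $5$ (e.g.\ $38880=2^5\cdot 3^5\cdot 5$, $90=2\cdot 3^2\cdot 5$), so at $\ell=5$ each term loses one power of $5$; the $N_2$ terms then give $\lfloor(k+1)/2\rfloor-1=\lfloor(k-1)/2\rfloor$, which is exactly the weaker exponent appearing in \eqref{spt3-5power}. At $\ell=7$ the coefficients are $7$-integral (and $\tfrac{49}{90}$ even has $\pi_7=2$), so the $N_2$ term again supplies $\lfloor(k+1)/2\rfloor$.

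The main obstacle is the $e_6$ contribution modulo powers of $7$ in \eqref{spt3-7power}: the clean statement available, Theorem \ref{thm-e6-mod7-cong}, only covers even powers $7^{2k}$, whereas \eqref{spt3-7power} must hold for all $k$. I would therefore establish the uniform bound $\pi_7\bigl(e_6(7^k m+\delta_{7,k})\bigr)\ge\lfloor(k+1)/2\rfloor$ separately. For even $k$ this is immediate from Theorem \ref{thm-e6-mod7-cong}. For odd $k=2i-1$ it follows from the generating-function representation in Theorem \ref{thm-e6-7-gen} together with the coefficient bound $\pi_7(a(2i-1,j))\ge 4i-4$ of Lemma \ref{lem-e6-7-a-ord}, since $4i-4=2k-2\ge\lfloor(k+1)/2\rfloor$ once $k\ge 3$; the remaining base case $k=1$ is exactly the corollary \eqref{e6-mod7-add-eq}, namely $e_6(7n+5)\equiv 0\pmod{7}$. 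With this bound in hand the $e_6$ term is never the bottleneck, and combining all the term estimates gives \eqref{spt3-7power}. The only work beyond this is the routine verification, for each parity of $k$, that $\lfloor(k+1)/2\rfloor$ (respectively $\lfloor(k-1)/2\rfloor$ for $\spt_3$ at $\ell=5$) really is the minimum of the list of exponents produced above; I expect no difficulty there.
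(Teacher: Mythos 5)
Your proposal is correct and follows exactly the route the paper intends: substitute the explicit expressions \eqref{spt2-exp} and \eqref{spt3-exp} and combine the known congruences for $p(n)$, $e_4(n)$, $e_6(n)$ and $N_2(n)$ with the $\ell$-adic valuations of the rational coefficients (the paper itself leaves this bookkeeping implicit, merely asserting that the formulas ``lead to'' the congruences). Your extra step extracting $\pi_7\bigl(e_6(7^{2i-1}m+\delta_{7,2i-1})\bigr)\ge 4i-4$ from Lemma \ref{lem-e6-7-a-ord} and Theorem \ref{thm-e6-7-gen}, with the $k=1$ case covered by \eqref{e6-mod7-add-eq}, correctly fills the one gap the terse presentation glosses over, since Theorem \ref{thm-e6-mod7-cong} alone only treats even powers of $7$.
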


\subsection*{Acknowledgements}
The first author was partially supported by the National Natural Science Foundation of China (11801424), the Fundamental Research Funds for the Central Universities (Project No.\ 2042018kf0027, Grant 1301--413000053) and a start-up research grant (1301--413100048) of the Wuhan University. The second author was partially supported by Grant 106-2115-M-002-009-MY3 of the Ministry
of Science and Technology, Taiwan (R.O.C.).

\section{Appendix}
\subsection{Formulas for the action of $U$-operators}
\noindent \\

Group \uppercase\expandafter{\romannumeral1}:
\begin{align*}
&\mathcal{E}_1\mid U_7=\mathcal{E}_1, \quad   \left(\mathcal{E}_1 Z_{7}^{-1} \right)\mid U_7=\mathcal{E}_1, \quad \left(\mathcal{E}_1 Z_{7}^{-2} \right)\mid U_7=7\mathcal{E}_1, \\
&\left(\mathcal{E}_1 Z_{7}^{-3} \right)\mid U_7=-7\mathcal{E}_1, \quad \left(\mathcal{E}_1 Z_{7}^{-4} \right)\mid U_7=\mathcal{E}_1(-2Y_{7}^{-1}-7^2), \quad   \left(\mathcal{E}_1 Z_{7}^{-5} \right)\mid U_7=49\mathcal{E}_1, \\
&\left(\mathcal{E}_1 Z_{7}^{-6} \right)\mid U_7=\mathcal{E}_1(-8\cdot 7Y_{7}^{-1}-7^3).
\end{align*}

Group \uppercase\expandafter{\romannumeral2}:
\begin{align*}
&1 \mid U_7=1, \quad Z_7^{-1}\mid U_7=-1, \quad Z_7^{-2}\mid U_7=1, \quad Z_7^{-3}\mid U_7=-7,\\
&Z_7^{-4}\mid U_7 =-4Y_7^{-1}-7, \quad Z_7^{-5}\mid U_7 =10Y_{7}^{-1}+7^2, \quad Z_7^{-6}\mid U_7=7^2.
\end{align*}

Group \uppercase\expandafter{\romannumeral3}
\begin{align*}
&\left(\widetilde{E}_{4,13}Z_{13}^{-4}\right)\mid U_{13}=\widetilde{E}_{4,13}\left(45Y_{13}^{-2}+885\cdot 13Y_{13}^{-1}+900\cdot {13}^2+315\cdot {13}^3Y_{13}\right.\\
&\qquad \qquad \qquad \qquad \left. +45\cdot {13}^4Y_{13}^2 +0\cdot Y_{13}^3-{13}^5Y_{13}^4 \right), \\
&\left(\widetilde{E}_{4,13}Z_{13}^{-3}\right)\mid U_{13}=\widetilde{E}_{4,13}\left(-138Y_{13}^{-1}+124\cdot 13+ {13}^5Y_{13}^4 \right), \\
&\left(\widetilde{E}_{4,13}Z_{13}^{-2}\right)\mid U_{13}=\widetilde{E}_{4,13}\left(-9Y_{13}^{-1}-171\cdot 13-180\cdot {13}^2Y_{13}-63\cdot {13}^3Y_{13}^2 \right. \\
&\qquad \qquad \qquad \qquad \left. -9\cdot {13}^4Y_{13}^3-{13}^4Y_{13}^4 \right), \\
&\left(\widetilde{E}_{4,13}Z_{13}^{-1}\right)\mid U_{13}=\widetilde{E}_{4,13}\left(124+18\cdot 13Y_{13} \right), \\
&\left(\widetilde{E}_{4,13}\right)\mid U_{13}=\widetilde{E}_{4,13}(1+19\cdot 13Y_{13}+20\cdot 13^2 Y_{13}^2+7\cdot 13^{3} Y_{13}^3 +12\cdot {13}^3Y_{13}^4), \\
&\left(\widetilde{E}_{4,13}Z_{13}\right)\mid U_{13}=\widetilde{E}_{4,13}(-46Y_{13}-20\cdot 13Y_{13}^2+0\cdot Y_{13}^3+{13}^3Y_{13}^4), \\
&\left(\widetilde{E}_{4,13}Z_{13}^2\right)\mid U_{13}=\widetilde{E}_{4,13}(-{13}^2Y_{13}^4), \\
&\left(\widetilde{E}_{4,13}Z_{13}^3\right)\mid U_{13}=\widetilde{E}_{4,13}(10Y_{13}^2+8\cdot 13 Y_{13}^3+{13}^2Y_{13}^4), \\
&\left(\widetilde{E}_{4,13}Z_{13}^4\right)\mid U_{13}=\widetilde{E}_{4,13}(-13Y_{13}^4), \\
&\left(\widetilde{E}_{4,13}Z_{13}^5\right)\mid U_{13}=\widetilde{E}_{4,13}(-13Y_{13}^4), \\
&\left(\widetilde{E}_{4,13}Z_{13}^6\right)\mid U_{13}=\widetilde{E}_{4,13}(-Y_{13}^4), \\
&\left(\widetilde{E}_{4,13}Z_{13}^7\right)\mid U_{13}=\widetilde{E}_{4,13}(-Y_{13}^4), \\
&\left(\widetilde{E}_{4,13}Z_{13}^8\right)\mid U_{13}=\widetilde{E}_{4,13}(19Y_{13}^5+20\cdot 13Y_{13}^6+7\cdot {13}^2Y_{13}^7+{13}^3Y_{13}^8).
\end{align*}

Group \uppercase\expandafter{\romannumeral4}:
\begin{align*}
&\widetilde{E}_{6,13}\mid U_{13}=\widetilde{E}_{6,13}\left(1-38\cdot 13 Y_{13}-122\cdot {13}^2Y_{13}^2-108\cdot {13}^3Y_{13}^3-46\cdot {13}^4Y_{13}^4 \right.\\
&\qquad \qquad \qquad \left. -10\cdot {13}^5Y_{13}^5-12\cdot {13}^5Y_{13}^6  \right), \\
&\left(\widetilde{E}_{6,13}Z_{13} \right) \mid U_{13}=\widetilde{E}_{6,13}\left(258Y_{13}+542\cdot 13Y_{13}^2+250\cdot {13}^2Y_{13}^3+4\cdot {13}^4Y_{13}^4 \right.\\
&\qquad \qquad \qquad \qquad \left.+0\cdot Y_{13}^5 -{13}^5Y_{13}^6   \right), \\
&\left(\widetilde{E}_{6,13}Z_{13}^2 \right) \mid U_{13}=\widetilde{E}_{6,13}\left({13}^4Y_{13}^6  \right), \\
&\left(\widetilde{E}_{6,13}Z_{13}^3 \right) \mid U_{13}=\widetilde{E}_{6,13}\left(-32Y_{13}^2-76\cdot 13Y_{13}^3-44\cdot {13}^2Y_{13}^4-14\cdot {13}^3Y_{13}^5-{13}^4Y_{13}^6  \right), \\
&\left(\widetilde{E}_{6,13}Z_{13}^4 \right) \mid U_{13}=\widetilde{E}_{6,13}\left({13}^3Y_{13}^6 \right), \\
&\left(\widetilde{E}_{6,13}Z_{13}^5 \right) \mid U_{13}=\widetilde{E}_{6,13}\left(-8Y_{13}^3-8\cdot {13}Y_{13}^4+0\cdot Y_{13}^5 +{13}^3Y_{13}^6  \right), \\
&\left(\widetilde{E}_{6,13}Z_{13}^6 \right) \mid U_{13}=\widetilde{E}_{6,13}\left( {13}^2Y_{13}^6 \right), \\
&\left(\widetilde{E}_{6,13}Z_{13}^7 \right) \mid U_{13}=\widetilde{E}_{6,13}\left(6Y_{13}^4+6\cdot 13Y_{13}^5+{13}^2Y_{13}^6  \right), \\
&\left(\widetilde{E}_{6,13}Z_{13}^8  \right) \mid U_{13}=\widetilde{E}_{6,13}\left(13Y_{13}^6  \right), \\
&\left(\widetilde{E}_{6,13}Z_{13}^9  \right) \mid U_{13}=\widetilde{E}_{6,13}\left(-2Y_{13}^5-13Y_{13}^6  \right), \\
&\left(\widetilde{E}_{6,13}Z_{13}^{10}\right) \mid U_{13}=\widetilde{E}_{6,13}\left( Y_{13}^6 \right), \\
&\left(\widetilde{E}_{6,13}Z_{13}^{11} \right) \mid U_{13}=\widetilde{E}_{6,13}\left(Y_{13}^6  \right), \\
&\left(\widetilde{E}_{6,13}Z_{13}^{12} \right) \mid U_{13}=\widetilde{E}_{6,13}\left(38Y_{13}^7+122\cdot 13Y_{13}^8+108\cdot {13}^2Y_{13}^9+46\cdot {13}^3Y_{13}^{10}\right. \\
&\quad \quad \qquad \qquad \qquad \qquad  \left. +10\cdot {13}^4Y_{13}^{11}+{13}^5Y_{13}^{12}  \right).
\end{align*}

\subsection{Formulas for the rank and crank moments}
\noindent \\

Group \uppercase\expandafter{\romannumeral5}:
\begin{align}
M_8(n)=&\frac{83}{2160}e_8(n)-\frac{2}{27}e_6(n)+\frac{4}{27}ne_6(n)+\frac{71}{1080}e_4(n) -\frac{5}{9}ne_4(n)+\frac{2}{3}n^2e_4(n)\nonumber \\
&-\frac{13}{432}p(n)+\frac{41}{27}np(n)-\frac{70}{3}n^2p(n)+112n^3p(n)-112n^4p(n), \label{M8-exp} \\
M_{10}(n)=& -\frac{2173}{25740}e_{10}(n)+\frac{83}{540}e_8(n)-\frac{83}{360}ne_8(n)-\frac{47}{468}e_6(n)+\frac{70}{117}ne_6(n)\nonumber \\
& -\frac{20}{39}n^2e_6(n)+\frac{61}{1188}e_4(n)-\frac{103}{132}ne_4(n)+\frac{30}{11}n^2e_4(n)-\frac{20}{11}n^3e_4(n)\nonumber \\
&-\frac{2}{99}p(n) +\frac{85}{72}np(n)-\frac{70}{3}n^2p(n)+180n^3p(n)-480n^4p(n)+288n^5p(n), \label{M10-exp} \\
M_{12}(n)=& \frac{1892286317}{6856799040}e_{12}(n) -\frac{2173}{4446}e_{10}(n)+\frac{2173}{3705}ne_{10}(n)\nonumber \\
&~ +\frac{42911}{146880}e_8(n)-\frac{913}{680}ne_8(n)+\frac{913}{1020}n^2e_8(n)-\frac{4565}{44226}e_{6}(n)\nonumber \\
&~ +\frac{407}{351}ne_6(n)-\frac{352}{117}n^2e_6(n)+ \frac{176}{117}n^3e_6(n)+\frac{5827}{157248}e_4(n)\nonumber \\
&~ -\frac{2749}{3276}ne_4(n)+ \frac{141}{26}n^2e_4(n)-\frac{140}{13}n^3e_4(n)+ \frac{60}{13}n^4e_4(n)\nonumber \\
&~ -\frac{8009}{606528}p(n)+ \frac{571}{648}np(n)-\frac{2299}{108}n^2p(n)+ \frac{6116}{27}n^3p(n)\nonumber \\
&~ -\frac{3124}{3}n^4p(n)+ 1760n^5p(n)-704n^6p(n)  -\frac{17147966}{26113581}p_{23}(n-1), \label{M12-exp}\\
M_{14}(n)=& -\frac{120667369}{96279840}e_{14}(n)+\frac{1892286317}{866518560}e_{12}(n)-\frac{1892286317}{866518560}ne_{12}(n) \nonumber \\
&~ -\frac{767069}{615600}e_{10}(n)+\frac{23903}{5130}ne_{10}(n) -\frac{2173}{855}n^2e_{10}(n)+\frac{12236939}{31395600}e_8(n) \nonumber \\
&~ -\frac{8059051}{2325600}ne_8(n)+\frac{83083}{11628}n^2e_8(n)-\frac{83083}{29070}n^3e_8(n)-\frac{12067}{132192}e_6(n) \nonumber \\
&~ + \frac{13123}{8262}ne_6(n) -\frac{3619}{459}n^2e_6(n)+\frac{616}{51}n^3e_6(n)-\frac{616}{153}n^4e_6(n)\nonumber \\
&~ +\frac{199}{7776}e_4(n) -\frac{2023}{2592}ne_4(n)+\frac{415}{54}n^2e_4(n) -\frac{259}{9}n^3e_4(n)+\frac{112}{3}n^4e_4(n)\nonumber \\
&~ -\frac{56}{5}n^5e_4(n)-\frac{31}{3645}p(n) +\frac{14917}{23328}np(n)-\frac{5915}{324}n^2p(n)+ \frac{4459}{18}n^3p(n) \nonumber \\
&~ -\frac{44408}{27}n^4p(n)+\frac{74984}{15}n^5p(n) -5824n^6p(n)+ 1664n^7p(n) \nonumber \\ &-\frac{240071524}{46200951}p_{23}(n-1)+\frac{240071524}{46200951}np_{23}(n-1). \label{M14-exp}
\end{align}

Group \uppercase\expandafter{\romannumeral6}:
\begin{align}
N_8(n)=&~ \frac{26}{495}e_8(n)-\frac{14}{99}e_6(n)+\frac{8}{11}ne_6(n)+\frac{14}{45}e_4(n)-\frac{16}{3}ne_4(n) +16n^2 e_4(n) \nonumber \\
&~ -\frac{2}{9}p(n)+12np(n)-228n^2p(n)+1728n^3p(n)-3888n^4p(n)+N_2(n)\nonumber \\
&~ -36nN_2(n)+360n^2N_2(n)-864n^3N_2(n), \label{N8-exp} \\
N_{10}(n)=&~ -\frac{227}{2145}e_{10}(n)+ \frac{13}{55}e_8(n)- \frac{52}{55}ne_8(n)- \frac{36}{143}e_6(n)+ \frac{480}{143}ne_6(n) \nonumber \\ &~ -\frac{1080}{143}n^2e_6(n)+ \frac{4}{11}e_4(n)- \frac{112}{11}ne_4(n)+ \frac{840}{11}n^2e_4(n)- \frac{1440}{11}n^3e_4(n) \nonumber \\
 &~- \frac{8}{33}p(n)+16np(n)-396n^2p(n)+4464n^3p(n)-21600n^4p(n)\nonumber \\
 &~ +31104n^5p(n)+N_2(n)-48nN_2(n)+756n^2N_2(n)-4320n^3N_2(n)\nonumber \\
 &~ +6480n^4N_2(n), \label{N10-exp} \\
N_{12}(n)=&~ \frac{145181}{440895}e_{12}(n)- \frac{2497}{3705}e_{10}(n)+ \frac{2724}{1235}ne_{10}(n)+ \frac{143}{255}e_{8}(n)- \frac{104}{17}ne_8(n)\nonumber \\
&~ + \frac{936}{85}n^2e_8(n)- \frac{33}{91}e_6(n)+ \frac{108}{13}ne_6(n)- \frac{648}{13}n^2e_6(n)+ \frac{864}{13}n^3e_6(n)\nonumber \\
&~ + \frac{110}{273}e_4(n) -\frac{1440}{91}ne_4(n)+ \frac{2592}{13}n^2e_4(n)- \frac{11520}{13}n^3e_4(n) \nonumber \\
&~ + \frac{12960}{13}n^4e_4(n)- \frac{10}{39}p(n) +20np(n)-612n^2p(n)+9216n^3p(n)\nonumber \\
&~ -69552n^4p(n)+233280n^5p(n) -233280n^6p(n)+N_2(n)-60nN_2(n)\nonumber \\
&~ +1296n^2N_2(n) -12096n^3N_2(n) +45360n^4N_2(n)-46656n^5N_2(n) \nonumber \\
&~ -\frac{2664576}{2901509}p_{23}(n-1), \label{N12-exp} \\
N_{14}(n)=&~-\frac{107637}{74290}e_{14}(n)+\frac{1887353}{668610}e_{12}(n)-\frac{290362}{37145}ne_{12}(n)-\frac{2951}{1425}e_{10}(n)\nonumber \\
&~ +\frac{1816}{95}ne_{10}(n) -\frac{2724}{95}n^2e_{10}(n)+\frac{24167}{24225}e_{8}(n) -\frac{156156}{8075}ne_8(n)\nonumber \\
&~ + \frac{156156}{1615}n^2e_8(n) -\frac{170352}{1615}n^3e_8(n) -\frac{143}{306}e_6(n)+\frac{264}{17}ne_6(n) \nonumber \\
 &~ -\frac{2772}{17}n^2e_6(n) + \frac{10080}{17}n^3e_6(n) -\frac{9072}{17}n^4e_6(n) + \frac{13}{30}e_4(n) \nonumber \\
 &~ -22ne_4(n)+396n^2e_4(n) -3024n^3e_4(n)+ 9072n^4e_4(n) -\frac{36288}{5}n^5e_4(n) \nonumber \\
&~ -\frac{4}{15}p(n)  + 24np(n) -876n^2p(n) +16560n^3p(n)-171072n^4p(n) \nonumber \\
&~ +\frac{4644864}{5}n^5p(n)-2286144n^6p(n)+1679616n^7p(n)+N_2(n) -72nN_2(n)\nonumber \\
&~ + 1980n^2N_2(n) -25920n^3N_2(n) + 163296n^4N_2(n) -435456n^5N_2(n) \nonumber \\
&~ + 326592n^6N_2(n)  -\frac{40412736}{5133439}p_{23}(n-1) \nonumber \\
&~ + \frac{111912192}{5133439}np_{23}(n-1). \label{N14-exp}
\end{align}

\end{document}